\documentclass[11pt]{amsart}
\usepackage{geometry}
\usepackage{booktabs}
\usepackage[greek,english]{babel}
\usepackage[dvipsnames]{xcolor}
\usepackage[colorlinks=true,linkcolor=red,urlcolor=blue]{hyperref}
\usepackage{amsthm}
\usepackage{graphicx}
\usepackage{times}
\usepackage{xcolor}
\graphicspath{ {./images/} }
\usepackage{tikz}
\usetikzlibrary{shapes.geometric, arrows.meta, positioning, calc}
\usepackage{multirow}
\usetikzlibrary{arrows.meta, positioning}
\usepackage{caption}
\usepackage{subcaption}
\usepackage[square,numbers]{natbib}
\usepackage[ruled,vlined]{algorithm2e}
\usepackage{hyperref}
\usepackage{comment}
\usepackage{enumitem}
\hypersetup{
    colorlinks=true,
    linkcolor=blue,
    filecolor=magenta,      
    urlcolor=cyan,
    pdftitle={Campi Cannerozzi Tzouanas - Optimal mean-field CCE},
    pdfpagemode=FullScreen,
}

\usepackage[title]{appendix}
\usepackage{fancyhdr}	
\usepackage[foot]{amsaddr}
\usepackage[colorinlistoftodos]{todonotes}

\numberwithin{equation}{section}
\newtheorem{theorem}{Theorem}[section]
\newtheorem{corollary}{Corollary}[section]
\newtheorem{lemma}{Lemma}[section]
\newtheorem{proposition}{Proposition}[section]
\newtheorem{remark}{Remark}[section]

\newtheorem{assumption}{Assumption}[section]
\newtheorem{definition}{Definition}[section]
\newtheorem{assumptions}{Assumptions}[section]

\newcommand{\bbA}{{\ensuremath{\mathbb A}} }

\newcommand{\bbF}{{\ensuremath{\mathbb F}} }

\newcommand{\cA}{{\ensuremath{\mathcal A}} }
\newcommand{\cB}{{\ensuremath{\mathcal B}} }

\newcommand{\cD}{{\ensuremath{\mathcal D}} }
\newcommand{\cE}{{\ensuremath{\mathcal E}} }
\newcommand{\cF}{{\ensuremath{\mathcal F}} }
\newcommand{\cG}{{\ensuremath{\mathcal G}} }

\newcommand{\cK}{{\ensuremath{\mathcal K}} }
\newcommand{\cL}{{\ensuremath{\mathcal L}} }
\newcommand{\cM}{{\ensuremath{\mathcal M}} }
\newcommand{\cN}{{\ensuremath{\mathcal N}} }

\newcommand{\cP}{{\ensuremath{\mathcal P}} }

\newcommand{\cR}{{\ensuremath{\mathcal R}} }

\newcommand{\cW}{{\ensuremath{\mathcal W}} }

\newcommand{\cZ}{{\ensuremath{\mathcal Z}} }

\newcommand{\bfJ}{{\ensuremath{\mathbf J}} }

\newcommand{\frm}{{\ensuremath{\mathfrak m}} }

\newcommand{\dC}{{\ensuremath{\mathrm C}} }

\newcommand{\dM}{{\ensuremath{\mathrm M}} }

\newcommand{\F}{\mathbb{F}}
\newcommand{\R}{\mathbb{R}}

\newcommand{\N}{\mathbb{N}}
\newcommand{\Q}{\mathbb{Q}}

\renewcommand{\P}{\mathbb{P}}
\newcommand{\E}{\mathbb{E}}

\newcommand{\1}{\ensuremath{\mathbf{1}}}

\title[Optimal Mean Field CCEs]{Optimal Coarse Correlated Equilibria in Mean Field Games: \\ Linear Programming and No-Regret Learning}
\date{\today}
\author[Campi]{Luciano Campi $^{\ast,1}$}
\email{\href{mailto:luciano.campi@unimi.it}{$^{1}$luciano.campi@unimi.it}}
\author[Cannerozzi]{Federico Cannerozzi $^{\dagger,2}$}
\email{\href{mailto:federico.cannerozzi@uni-bielefeld.de}{$^{2}$federico.cannerozzi@uni-bielefeld.de}}
\author[Tzouanas]{Ioannis Tzouanas $^{\dagger,3}$}
\email{\href{mailto:ioannis.tzouanas@uni-bielefeld.de}{$^{3}$ioannis.tzouanas@uni-bielefeld.de}}
\address{$^{\ast}$Department of Mathematics “Federigo Enriques”, University of Milan, Via Saldini 50, 20133, Milan, Italy.}
\address{$^{\dagger}$Center for Mathematical Economics (IMW), Bielefeld University, Universit\"atsstrasse 25, 33615, Bielefeld, Germany.}

\begin{document}

\begin{abstract}
We introduce optimal coarse correlated equilibria for continuous-time mean field games. A coarse correlated equilibrium is a randomized recommendation scheme from which no player can gain by ignoring the recommendation and switching to an alternative strategy. The problem is as follows: a moderator selects, among all mean-field coarse correlated equilibria, one that optimizes a prescribed performance criterion, which may differ from the representative player’s objective. After formulating the problem, we develop a linear programming (LP) formulation, prove the existence of optimal LP coarse correlated equilibria, and relate the LP characterization to the original probabilistic setting. Building on this characterization, we design a no-regret primal-dual algorithm, based on an equivalent Lagrangian formulation of the external-regret constraint, for learning such equilibria. We provide explicit convergence rates for the learning algorithm, and numerical examples illustrate the method.
\end{abstract}

\maketitle

{\noindent \small \textbf{Keywords:} mean-field games; coarse correlated equilibria; linear programming; no-regret learning; primal-dual scheme.}

\smallskip

{\noindent \small \textbf{AMS 2020:} 91A16, 91A26, 68T05.}

\smallskip

\section{Introduction}

Mean field games (MFGs), introduced independently by Lasry and Lions \cite{LasryLions2007} and by Huang, Malham\'e and Caines \cite{HuangMalhameCaines2006}, provide a tractable limit framework for symmetric games with many players and interactions of mean-field type.
The standard MFG solution is the infinitely many players analogue of a Nash equilibrium: a representative player optimizes against a fixed population flow, and this flow must be generated when all players use an optimal response.
This connection can be made rigorous by proving convergence of Nash equilibria to MFG solutions or by showing that MFG solutions generate approximate Nash equilibria with vanishing error (see \cite{carmona2018probabilistic,carmona2018probabilistic_vol2} and the references therein).

The Nash paradigm, however, is only one possible notion of equilibrium.
In many situations a moderator can send recommendations to agents, thereby correlating their behavior without imposing actions.
This leads naturally to coarse correlated equilibria (CCEs), which appeared implicitly in Hannan's work on repeated play \cite{hannan1957} and explicitly in Moulin and Vial \cite{moulin_vial1978}.
In a CCE, a moderator draws a strategy profile from a publicly known distribution and privately recommends to each player her component.
Each player decides whether to commit before the lottery is realized, assuming that the others commit.
If she commits, she follows her own recommendation; if she deviates, she does so without observing the moderator's realization.
The lottery is a CCE if every player prefers to commit rather than deviate in this ex ante sense.
CCEs generalize both Nash equilibria and Aumann's correlated equilibria (CEs) \cite{aumann1970,aumann1987}.
Among their main advantages, CCEs may lead to higher payoffs than Nash equilibria \cite{moulin2022nplayer,moulinraysengupta2014}, may improve on correlated equilibria in relevant classes of games \cite{neyman1997correlated}, and can arise naturally as the outcome of no-regret learning algorithms and dynamics \cite{Blum_Mansour_2007,MasCollel,roughgarden_2016}.

The recent MFG literature has started to explore these other-than-Nash equilibrium concepts.
CEs for finite-state discrete-time MFGs were studied in \cite{bonesini2025correlated,campifischer2021}, while learning procedures for CEs and CCEs were considered in \cite{muller2022learningcorrelatedequilibriameanfield,muller2022learningequilibriameanfieldgames}.
In continuous time, CCEs were introduced in \cite{campi2024coarse}, and further constructive analyses were developed in \cite{campi2025LQ,cannerozzi2026cooperation}.
These works show that CCEs may be abundant: infinitely many CCEs can coexist with a unique MFG solution, and may even exist when no MFG solution exists \cite[Chapter 2]{cannerozzi2025thesis}.
This abundance of equilibria naturally raises a selection problem: among all coarse correlated equilibria, one may wish to choose an optimal CCE according to a prescribed criterion.
The aim of the present paper is to address the CCE selection problem by developing a general optimization approach and providing an associated learning algorithm.

Following \cite{campi2025LQ,campi2024coarse}, our notion of mean-field CCE is given by a recommended strategy together with a stochastic flow of measures.
The recommended strategy is modeled as a strict control, i.e., a stochastic process taking values in the action space.
The stochasticity of the flow comes from the moderator's randomization over recommendations, and the recommended strategy may be correlated with this flow.
Such a pair is required to satisfy two conditions.
First, consistency: at each time, the stochastic flow coincides with the conditional law of the representative player's state and recommended action, conditional on the whole stochastic flow.
Second, optimality: the representative player has no incentive to deviate before the recommendation is realized.
In particular, admissible deviations are chosen ex ante and cannot depend on the realization of the moderator's recommendation, but only on the player's own source of randomness.
In this context, we introduce an explicit objective criterion for the moderator, possibly different from the objective of the representative player, and we study the problem of selecting an optimal mean-field CCE according to this criterion.
We refer to this framework as the probabilistic formulation of mean-field CCEs.

The paper is built around two main contributions, which are complementary and closely connected.
First, we introduce a linear programming (LP) formulation of optimal mean-field CCEs.
The LP formulation provides a convenient framework for existence results and for the moderator's optimization problem over the set of mean-field CCEs.
Second, and equally importantly, we develop a no-regret learning algorithm to compute CCEs within this LP framework.
The no-regret formulation gives the constructive counterpart: it reformulates the CCE optimality condition as an external-regret constraint, and uses primal-dual updates to compute an optimal mean-field CCE.

In stochastic control, the linear programming approach is a classical relaxation technique, well suited to existence results and numerical approximation \cite{Kurtz_cond_markov,Kurtz_Stockbridge_SICON_98,kurtz_stockbridge2017singular}.
In the MFG literature, linear programming formulations have been developed in \cite{bouveret_dumitrescu2020sicon,dumitrescu2021EJP,LinProgFictDumitrescu,guo2025continuous,leutscher_thesis}.
Linear programming methods are also classical in the computation of CEs and CCEs in finite-player finite-action games and their compact representations, where the equilibrium conditions can be written as linear inequalities; see, for instance, \cite{jiangleyton2015,paparough2008} and \cite[Problem~18.3]{roughgarden_2016}.
The linear programming approach prescribes to formulate the controlled dynamics directly in terms of occupation measures.
In our setting, an occupation measure describes the joint distribution of time, state and recommended action along the trajectory of the representative player, together with the terminal law.
The controlled dynamics are encoded by linear martingale constraints.
The moderator is allowed to randomize over the space of admissible occupation measures, instead of selecting a deterministic equilibrium flow of occupation measures, thus extending the LP formulation developed for MFG Nash equilibria in \cite{dumitrescu2021EJP,LinProgFictDumitrescu,leutscher_thesis} to the CCE framework.
A correlated recommendation is therefore represented by a probability measure on the space of admissible occupation measures.
The consistency condition is imposed at the level of the induced flow, and the CCE no-deviation condition becomes a family of linear inequalities comparing the recommended payoff with the payoff of every admissible ex ante deviation.
This representation has two important consequences: it gives a tractable relaxed formulation of the moderator's optimization problem over mean-field CCEs, and it provides the compactness and convexity properties needed to guarantee existence of an optimal LP-CCE.
Under our assumptions, the set of LP-CCEs is compact and convex, while the moderator's criterion is continuous and linear on this set.
As a consequence, an optimal LP-CCE exists.
We also relate the LP and probabilistic formulations.
Under convexity assumptions, we show that an optimal LP-CCE can be represented by a mean-field CCE in the probabilistic formulation; when the dynamics are independent of the mean-field term, this representation preserves optimality for the moderator among mean-field CCEs.

The second main contribution is a no-regret primal-dual algorithm built on the LP formulation.
In line with the classical connection between no-regret learning and CCEs \cite{Blum_Mansour_2007,Foster_Vohra,hannan1957,MasCollel,roughgarden_2016}, we define the external regret as the maximal gain obtainable by replacing the recommendation with a fixed admissible deviation that does not observe the realization of the moderator's randomization.
A correlated flow is a mean-field CCE precisely when its external regret is non-positive, that is, when the representative player cannot benefit from deciding ex ante to ignore the recommendation and use a different admissible strategy.
This reformulation turns the moderator's selection problem into a constrained optimization problem over LP-correlated flows.
Introducing a Lagrange multiplier for the external-regret constraint yields an equivalent saddle-point formulation.
Our algorithm alternates between a primal update and a dual update.
The primal variable is a probability measure over admissible martingale flows, while the dual variable penalizes positive regret.
The primal step minimizes a Bregman-regularized Lagrangian, in the spirit of mirror-descent and primal-dual methods \cite{Beck_Teboulle,Chambolle_Contreras,Shavel_Shwartz}, and the dual step updates the multiplier according to the regret of the current iterate.
Under a uniqueness condition for the best deviation, the external-regret functional admits an envelope-type derivative.
This differentiability is key to prove the convergence, because it yields the three-point inequalities used to control the primal-dual gap.
We obtain an $O(N^{-1/2})$ convergence bound for averaged iterates, and the averaged regret is controlled at the same order up to the moderator objective gap.
Consequently, subsequential limits of the averaged iterates are saddle points of the Lagrangian and yield optimal LP-CCEs.
The same algorithm can also be used when the moderator has no additional performance criterion, in which case it reduces to a procedure for computing mean-field CCEs by driving external regret to zero, with rates consistent with the classical no-regret literature \cite{Cesa-Bianchi_Lugosi_2006}.

To the best of our knowledge, this is the first no-regret learning algorithm for CCE selection in a continuous-time stochastic MFG.
Existing learning algorithms for MFGs are mostly Nash-oriented.
Fictitious play for MFGs was introduced by Cardaliaguet and Hadikhanloo \cite{CardaliaguetHadikhanloo} and further developed in \cite{hadikhanloo2017,PerrinLauriereEliePerolat}.
Dumitrescu, Leutscher and Tankov \cite{LinProgFictDumitrescu} combined fictitious play with the LP formulation for MFGs with optimal stopping and absorption.
A complementary line of work uses order-theoretic methods based on Tarski's fixed point theorem and monotone best-response iteration in submodular MFGs \cite{dianetti2021submodular,dianetti2023unifying}.
Our method differs from these approaches in both target and mechanism: it targets CCEs rather than Nash equilibria, and it uses external regret rather than repeated best response to a mean-field history.
In fictitious play, agents repeatedly compute best responses to an estimated population behavior; here, instead, the algorithm searches for a recommendation whose average performance cannot be improved by any fixed ex ante deviation.
Moreover, the convergence result does not rely on a Lasry--Lions monotonicity condition, which is consistent with the findings of \cite{muller2022learningcorrelatedequilibriameanfield,muller2021learning} on learning algorithms for mean-field CEs and CCEs.
Similar no-regret and primal-dual ideas have been used for optimal CCE computation in finite or extensive-form games \cite{AnagnostidesFarinaPanageasSandholm,CaiDaskalakis2025proximal,FarinaOptimalCCE}, while the present paper adapts this philosophy to continuous-time mean-field dynamics through the LP formulation.

We also provide a parametrized implementation of the learning scheme.
Since the infinite dimensional primal-dual algorithm cannot be implemented directly, we introduce a family of randomized policies parametrized by neural networks and a finite-dimensional correlation device.
This is related in spirit to recent computational approaches to incentive design and Stackelberg-type MFGs, where an external agent optimizes her own criterion subject to the equilibrium behavior of a large population \cite{DayanikliLauriere,MarrisGempThomasTacchettiTuyls,MarrisMullerLancotTuylsGraepel,ThomaPiliourasMarrisDeepIncentiveLearning}.
The induced measure-valued flows are constrained through the martingale formulation of the dynamics, while the moderator's objective and the external regret are evaluated on the parametrized class.
We also refer to recent machine-learning methods for MFGs and mean-field control \cite{LauriereActorCriticJMLR}, and to related deep reinforcement learning approaches \cite{GuoHuZhang_MF_OMO,GuoXuZariphopoulou,HuZhang_ML-OML,MagninoShaoWuLauriere}.
The resulting primal-dual gradient method is tested on a simple flocking model and on the emission abatement game introduced, in its mean-field formulation, in \cite{campi2025LQ}.
The experiments illustrate how correlated recommendations can interpolate between Nash-type behavior and more coordinated outcomes, and how the moderator's objective can select equilibria with different welfare or aggregate-performance properties.

\subsection*{Structure of the paper}
The rest of the paper is organized as follows:
Section~\ref{sec:assumptions} states the standing assumptions.
Section~\ref{section: Prob formulation} introduces the probabilistic formulation of mean-field CCEs.
Section~\ref{section: Linear Programming} develops the linear programming formulation and defines LP-CCEs.
Section~\ref{section: solution to LP problem} proves existence of an optimal LP-CCE and studies the relation between the LP and the probabilistic formulations.
Section~\ref{section: Learning algorithm} introduces the external-regret formulation and the primal-dual learning algorithm, together with its convergence analysis.
Section~\ref{Section: Implementation and Examples} presents the parametrized implementation of the algorithm.
Section~\ref{section: examples} contains the numerical experiments, while Sections~\ref{sec:appendix:technical_results_existence}, ~\ref{section: Appendix: Learning Part: Technical Results} and~\ref{app:strictCCE} collect the technical results.

\section{Assumptions}\label{sec:assumptions}

In the following, fix a finite time horizon $T>0$, an action space $A\subseteq\R$, a constant $\sigma>0$, an initial distribution $m_0^*\in\cP(\R)$, and the following functions:
\begin{align*}
    (b,f,f^0): [0,T] \times \R \times \cP_2(\R \times A) \times A \to \R \times \R \times \R,\\ 
    (g,g^0): \R \times \cP_2(\R \times A) \to \R \times \R.
\end{align*}

The following assumptions will be in force throughout the whole paper. Throughout, $c > 0$ denotes a generic constant, possibly changing from line to line, uniform over the relevant variables.

\begin{assumptions}\label{standing_assumptions}
\begin{enumerate}[label=(S\arabic*)]
    \item $A$ is a compact subset of $\R$.
    \item $m^*_0 \in \cP_{\bar{p}}(\R)$, $\bar{p} > 2$.
    
    \item The function $(t,x,m,a) \mapsto b(t,x,m,a)$ is measurable and jointly continuous in $(x,m,a)$ for any $t$ fixed, and it is defined by 
    \[
        b(t,x,m,a) = \bar{b}\left(t,x,\int_{\R \times A}\hat{b}(t,y)m(dy,du),a \right),
    \]
    where $\bar{b}:[0,T] \times \R \times \R \times A \to \R$ is measurable and continuous in $(x,y,a)$ for every fixed $t$, and $\hat{b} :[0,T] \times \R \to \R$ is continuous.
    The functions $\bar{b}$ and $\hat{b}$ are bounded.
    Moreover, $\bar b$ is Lipschitz continuous in its second variable $x$, uniformly with respect to the other variables.

    \item The functions $(t,x,m,a) \mapsto f(t,x,m,a)$ and $(t,x,m,a) \mapsto f^0(t,x,m,a)$ are jointly measurable and continuous in $(x,m,a)$ for any fixed $t \in [0,T]$
    and are defined by 
    \begin{align*}
        f(t,x,m,a) & = \bar{f}\left(t,x, \int_{\R \times A}\hat{f}(t,y)m(dy,du),a \right), \\
        f^0(t,x,m,a) & = \bar{f}^0\left(t,x, \int_{\R \times A}\hat{f}^0(t,y)m(dy,du),a \right),
    \end{align*}
    where $(\bar{f},\bar{f}^0):[0,T] \times \R \times \R \times A \to \R \times \R$ are measurable and continuous in $(x,y,a)$ for every fixed $t$, and $(\hat{f},\hat{f}^0) :[0,T] \times \R \to \R$ are continuous.
    $\bar{f}(t,x,y,a)$ and $\bar{f}^0(t,x,y,a)$ satisfy a quadratic growth condition, i.e., for any $(t,x,y,a) \in [0,T]\times \R \times \R \times A$, it holds
    \begin{align*}
        & \vert \bar{f}(t,x,y,a) \vert \leq c(1 + \vert x \vert^2 + \vert a \vert^2 + \vert y \vert^2)  \\
        & \vert \bar{f}^{0}(t,x,y,a) \vert \leq c(1 + \vert x \vert^2 + \vert a \vert^2 + \vert y \vert^2).
    \end{align*}
    $\bar{f}(t,x,y,a)$ and $\bar{f}^0(t,x,y,a)$ are locally Lipschitz in $y$, i.e., for any $y,y' \in \R$, $(t,x,a) \in [0,T]\times \R \times A$, it holds
    \begin{align*}
        & \vert \bar{f}(t,x,y,a) - \bar{f}(t,x,y',a)\vert \leq c(1 + \vert x \vert + \vert a \vert + \vert y \vert + \vert y' \vert  ) \vert y - y' \vert, \\
        & \vert \bar{f}^0(t,x,y,a) - \bar{f}^0(t,x,y',a)\vert \leq c(1 + \vert x \vert + \vert a \vert + \vert y \vert + \vert y' \vert  ) \vert y - y' \vert.
    \end{align*}
    $\hat{f}, \hat{f}^0 \in \dC^{1,2}([0,T] \times \R)$ and have bounded derivatives.
    
    \item $(g,g^0):\R \times \cP_2(\R \times A) \to \R \times \R$ are measurable and  jointly continuous in $(x,m)$, and such that
    \begin{align*}
        g(x,m)  = \bar{g}\left(x,\int_{\R \times A}\hat{g}(y)m(dy,du)\right), \quad g^0(x,m)  = \bar{g}^0\left(x,\int_{\R \times A}\hat{g}^0(y)m(dy,du)\right),
    \end{align*}
    where $(\bar{g},\bar{g}^0):\R \times \R \to \R \times \R$ and $(\hat{g},\hat{g}^0): \R \to \R$ are continuous.
    $\bar{g}(x,y)$ and $\bar{g}^0(x,y)$ have quadratic growth in $(x,y)$, in the sense that
 \begin{align*}
        & \vert \bar{g}(x,y) \vert \leq c(1 + \vert x \vert^2 + \vert y \vert^2),  \quad \vert \bar{g}^{0}(x,y) \vert \leq c(1 + \vert x \vert^2+ \vert y \vert^2).
    \end{align*}    $\bar{g}(x,y)$ and $\bar{g}^0(x,y)$ are locally Lipschitz in $y$, in the sense that, for any $y,y' \in \R$, $x \in \R$, it holds
    \begin{align*}
        &     \vert \bar{g}(x,y) - \bar{g}(x,y')\vert \leq c(1 + \vert x \vert + \vert y \vert + \vert y' \vert  ) \vert y - y' \vert, \\
        &     \vert \bar{g}^0(x,y) - \bar{g}^0(x,y')\vert \leq c(1 + \vert x \vert + \vert y \vert + \vert y' \vert  ) \vert y - y' \vert.
    \end{align*}
    Moreover, $\hat{g}, \hat{g}^0 \in \dC(\R)$ and have linear growth.
\end{enumerate}
\end{assumptions}

Before we proceed, we make some comments on the Assumption \ref{standing_assumptions}:
\begin{itemize}
    \item The compactness of the action space $A$ is mainly used in the linear programming formulation. It is standard in compactification arguments for relaxed controls and occupation measures, as in \cite{dumitrescu2021EJP,LinProgFictDumitrescu,Kurtz_Stockbridge_SICON_98,kurtz_stockbridge2017singular}.

    \item We take the volatility coefficient to be a positive constant. This avoids introducing martingale measures. Moreover, the constant nondegenerate volatility and the boundedness and Lipschitz continuity of the drift in the state variable allow us to rely on strong existence and pathwise uniqueness results for SDEs.

    \item Although the measure argument $m$ belongs to $\cP_2(\R\times A)$, the interactions in (S3)--(S5) are of scalar state-marginal type, in the sense that
    \[
    \int_{\mathbb R\times A}\hat h(t,y)m(dy,du) = \int_{\mathbb R}\hat h(t,y)m^x(dy),
    \qquad m^x(\cdot):=m(\cdot\times A).
    \]
    We keep the joint measure $m$ in the formulation because the consistency condition (see \eqref{eq:CCE:consistency}) is imposed on the joint law of the state and the recommended action. This is coherent with the assumptions made in \cite[Assumption 3.1]{dumitrescu2021EJP}.
    
\end{itemize}

\section{Probabilistic Formulation}\label{section: Prob formulation}
Let $(\Omega,\cF,\bbF,\P)$ be a complete filtered probability space satisfying the usual assumptions.
Let $W$ be a $\bbF$-Brownian motion and $\xi$ an independent real-valued $\cF_0$-measurable random variable with law $m_0^*$. 
We denote as $\bbF^{\xi,W} = (\cF^{\xi,W}_t)_{t \in [0,T]}$ the filtration generated by $(\xi,W)$ and satisfying the usual conditions.
Moreover, we impose the following structural condition on $\cF_0$:
\begin{assumption}\label{assumption:F_0}
The $\sigma$-algebra $\mathcal F_0$ is large enough to support a uniform random variable $U \sim \mathrm{Unif}(0,1)$, independent of $\xi$ and $W$.
\end{assumption}
Assumption \ref{assumption:F_0} ensures that $\cF_0$ is big enough to support the extra-randomization that the mediator uses to randomize the players' strategies, independently of the idiosyncratic shocks that determine the representative player's state dynamics.

\smallskip
In order to define the mean-field CCE, we adopt the following definitions, which are inspired by \cite{campi2025LQ,campi2024coarse}. We denote by $\dM(\cP_2)$ the set of all measurable maps $m:[0,T] \to \cP_2(\R \times A)$, where $\cP_2(\R \times A)$ is equipped with the topology of weak convergence of probability measures with continuous test functions of at most quadratic growth. We equip $\dM(\cP_2)$ with the $\sigma$-field generated by the maps $m\mapsto m_t(B)$, for any $t \in [0,T]$, $B\in\cB_{\R\times A}$.

\begin{definition}[Correlated measure flow]\label{probabilistic:def_corr_flow}
A correlated flow is a pair $(\lambda,\mu)$ satisfying the following properties:
\begin{enumerate}[label=\roman*)]
    \item $\lambda=(\lambda_t)_{t \in [0,T]}$ is an $\bbF$-progressively measurable process with values in $A$.
    \item $\mu=(\mu_t)_{t \in [0,T]}$ is a $\cF_0$-measurable random variable with values in $\dM(\cP_2)$, such that the map $(t,\omega)\mapsto \mu_t(\omega)$ is $\cB([0,T])\otimes\cF_0$-measurable.
    \item $\mu$ is independent of $\xi$ and $W$.
\end{enumerate}
We refer to $\lambda$ as the \textit{recommended strategy} and to $\mu$ as the \textit{random flow of measures}. 
\end{definition}

We now assign the dynamics and the cost functional, to be minimized, associated to the representative player.
We distinguish two cases: if the representative player decides to play according to the recommendation $\lambda$, the dynamics is given by
\begin{equation}\label{strong_formulation:repr_player:dynamics}
    dX_{t}= b(t,X_{t},\mu_{t},\lambda_{t})dt+\sigma dW_{t}, \quad X_0 = \xi,
\end{equation}
and the cost functional is given by
\begin{equation}
    J(\lambda,\mu):= \mathbb{E} \bigg[ \int_{0}^{T}f(t,X_{t},\mu_{t},\lambda_{t})dt+g(X_{T},\mu_{T}) \bigg].
\end{equation}

If instead the representative player ignores the recommendation, she will do so without having any information on the outcome of the moderator's lottery. She will then use a strategy $\beta\in\mathbb{A}$, where
\begin{equation}\label{probabilistic:admissible_deviations}
    \bbA := \{\beta:[0,T] \times \Omega \to A: \, \text{$\beta$ is $\bbF^{\xi,W}$-progressively measurable}\}.
\end{equation}
If the representative player ignores the mediator's recommendation, we refer to her as the deviating player.
For any admissible deviation $\beta \in \bbA$, the dynamics of the deviating player is given by
\begin{equation}\label{strong_formulation:dev_player:dynamics}
    dX_{t}= b(t,X_{t},\mu_{t},\beta_{t})dt+\sigma dW_{t}, \quad X_0 = \xi,
\end{equation}
and the cost functional is given by
\begin{equation}
    J(\beta,\mu):= \mathbb{E} \bigg[ \int_{0}^{T}f(t,X_{t},\mu_{t},\beta_{t})dt+g(X_{T},\mu_{T}) \bigg].
\end{equation}

Observe that any $\beta\in\bbA$ is $\bbF^{\xi,W}$-progressively measurable.
Since $\mu$ is independent of $(\xi,W)$, every admissible deviation $\beta\in\bbA$ is independent of the random flow of measures $\mu$.
Moreover, under Assumptions \ref{standing_assumptions}, for any correlated flow $(\lambda,\mu)$ and any deviation $\beta \in \bbA$, there exists a pathwise unique solution to both equations \eqref{strong_formulation:repr_player:dynamics} and \eqref{strong_formulation:dev_player:dynamics}.

\begin{definition}[Mean-field Coarse Correlated Equilibrium]\label{def:CCE}
A correlated flow $(\lambda,\mu)$ is a mean-field CCE if the following properties hold:
\begin{enumerate}
\item \label{Optimality Strong sense} (Optimality) For every deviation $\beta\in\mathbb{A}$, it holds
\begin{equation}\label{Optimality Strong Inequality}
    J(\lambda,\mu)\leq J(\beta,\mu).
\end{equation}
\item \label{Consistency Strong sense} (Consistency) It holds
    \begin{equation}\label{eq:CCE:consistency}
    \mu_{t}(\cdot)=\P((X_{t},\lambda_t) \in \cdot|\mu), \quad \P\text{-a.s., } \;\forall t\in [0,T].
    \end{equation}
\end{enumerate}
We refer to a mean-field CCE also as coarse correlated solution to the MFG.
\end{definition}

Observe that Definition \ref{def:CCE} implies the one given in \cite{campi2024coarse}, as the consistency condition \eqref{eq:CCE:consistency} implies
\[
\mu^x_t(\cdot) = \P( X_t \in \cdot | \mu^x).
\]
Sometimes we will use the notation $X^\lambda$ when we want to stress the dependence of the state on the recommended strategy $\lambda$, and likewise for the deviations.

\begin{remark}[Information asymmetry]\label{rmk:information_asymmetry}
We stress that the optimality condition \eqref{Optimality Strong Inequality} is formulated by comparing the costs yielded by the correlated measure flow $(\lambda,\mu)$ and by admissible deviations, which belong to the smaller class of $\bbF^{\xi,W}$-adapted processes.
This is due to the information asymmetry between the committing player and the deviating player. Indeed, if a player deviates from moderator's recommendation $\lambda$, she will do so without knowing the outcome of mediator's lottery over the set of strategy profiles. 
In the mean-field formulation, this is reflected by the independence between the random flow of measures $\mu$ and any admissible deviation $\beta \in \bbA$, as $\mu$'s stochasticity comes from the mediator's lottery only. In other terms, when a player deviates, only the Brownian filtration $\bbF^{\xi,W}$ will be available to her, and so the deviating strategy will be independent of the flow of measures $\mu$.
\end{remark}

\begin{remark}[Relation with coarse correlated solution of \cite{campi2024coarse}]
Differently from \cite{campi2024coarse} and more in line with \cite{campi2025LQ}, we do not introduce explicitly the extra randomness of the moderator through the construction of a  product probability space.
Instead, we suppose that $\cF_{0}$ is large enough to provide room for moderator's lottery over the representative player's strategies. 
In line with \cite{campi2024coarse} and differently from \cite{campi2025LQ}, we consider a (stochastic) flow of measures instead of just averages. We notice here that the measure flows take values in $\cP(\R \times A)$ and not in $\cP(\R)$, as we consider the joint measure of the state and the recommendation, in the same spirit of \cite{dumitrescu2021EJP,LinProgFictDumitrescu,leutscher_thesis}.
Finally, as (at the equilibrium) we consider the flow of joint laws of $(X^{\lambda}_t,\lambda_t)$, we cannot suppose that $\mu$ takes values in $\dC([0,T],\cP_2(\R \times A) )$, but we must consider only measurable flows of probability measures. 
\end{remark}

We now introduce an additional cost, that the moderator aims at minimizing over all coarse correlated solutions $(\lambda,\mu)$ of the mean field game.
For each correlated flow $(\lambda,\mu)$, define the objective functional
\begin{equation}
    J^{0}(\lambda,\mu):=\mathbb{E} \bigg[ \int_{0}^{T}f^{0}(t,X_{t},\mu_{t},\lambda_{t})dt+g^0(X_{T},\mu_{T}) \bigg],
\end{equation}
where $f^0, g^0$ are as in Assumptions \ref{standing_assumptions} and $X$ satisfies \eqref{strong_formulation:repr_player:dynamics}.
\begin{definition}\label{def:probabilistic_formulation:optimal_CCE}
A correlated flow $(\lambda^{*},\mu^{*})$ is optimal for the moderator if $(\lambda^{*},\mu^{*})$ is a mean-field CCE and it satisfies
\[
J^{0}(\lambda^{*},\mu^{*}) \leq J^{0}(\lambda,\mu) \quad \forall (\lambda,\mu) \text{ mean-field CCE}. 
\]
\end{definition}

\begin{remark}
For the sake of completeness, we recall from \cite[Definition 3.2]{dumitrescu2021EJP} the definition of MFG solutions in the strong formulation, in the case where there is no absorption and the representative player solves only a control problem with regular controls.
We say that a pair $(\alpha^\star,\mu^\star)$, with $\alpha^\star\in \bbA$ and $\mu^\star$ a deterministic measurable flow of probability measures on $\R \times A$, is a solution to the MFG problem if
\begin{enumerate}
\item \label{Optimality Strong sense Nash} (Optimality) For every deviation $\beta \in \bbA$, it holds
\begin{equation}\label{probabilistic:def:MFG:optimality}
    J(\alpha^\star,\mu^\star)\leq J(\beta,\mu^\star).
\end{equation}
\item \label{probabilistic:def:MFG:consistency} (Consistency) It holds
    \begin{equation}\label{eq:NE:consistency}
    \mu^\star_{t}(\cdot)=\P((X^\star_{t},\alpha^\star_t) \in \cdot), \;\forall t\in [0,T],
    \end{equation}
    where $X^\star$ denotes the solution to \eqref{strong_formulation:repr_player:dynamics} for $(\lambda,\mu)=(\alpha^\star,\mu^\star)$.
\end{enumerate}
It is straightforward to see that 
Definition \ref{def:CCE} of mean-field CCEs extends the notion of solution to the MFG.
\end{remark}

\section{Linear Programming}\label{section: Linear Programming}
Our next goal is to adopt a linear programming formulation for our problem.
Following \cite{dumitrescu2021EJP,LinProgFictDumitrescu,leutscher_thesis}, the key idea is to formulate the dynamics of the representative player in terms of martingale problems, as done by \cite{Kurtz_cond_markov,Kurtz_Stockbridge_SICON_98,kurtz_stockbridge2017singular}.

\subsection*{Preliminaries}

Let $V_2$ be the set of measurable flows $m=(m_t)_{t\in[0,T]}$ from $[0,T]$ to $\cP_2(\R\times A)$ such that
\[
    \int_0^T\int_{\R\times A} |x|^2 m_t(dx,da)dt<\infty.
\]
We identify two flows which agree $dt$-a.e. on $[0,T]$. To each $m\in V_2$, we associate the Borel finite positive measure on $[0,T]\times\R\times A$ with time marginal $dt$ defined by $m_t(dx,da)dt$.
We endow $V_2$ with the following weak convergence topology:
We say that $m^n \to m$ in $V_2$ if 
\begin{equation}\label{eq:def:topology}
    \int_0^T \int_{\R \times A} g(t,x,a) m^n_t(dx,da) dt \overset{n \to \infty}{\longrightarrow} \int_0^T \int_{\R \times A} g(t,x,a) m_t(dx,da) dt
\end{equation}
for any $g:[0,T] \times \R \times A \to \R$ continuous and with at most quadratic growth in $(t,x,a)$.
We denote such a topology by $\tau^{(2)}$.
We remark that, since $[0,T]$ and $A$ are compact, it is enough to consider as test functions continuous functions with at most quadratic growth in $x$, uniformly in $(t,a)$.

We will consider also the stable topology on $V_2$, defined as follows: We say that $m^n \to m$ in $V_2$ in the stable topology if 
\begin{equation}\label{eq:def:stable_topology}
    \int_0^T \int_{\R \times A} g(t,x,a) m^n_t(dx,da) dt  \overset{n \to \infty}{\longrightarrow} \int_0^T \int_{\R \times A} g(t,x,a) m_t(dx,da) dt
\end{equation}
for any measurable function $g:[0,T]\times\R\times A\to\R$ such that, for every $t\in[0,T]$, $g(t,\cdot,\cdot)$ is continuous and has at most quadratic growth in $x$.
We denote this topology by $\bar{\tau}^{(2)}$.

On $\cP_2(\R)$ consider the topology generated by weak convergence with continuous test functions of at most quadratic growth, which we still denote by $\tau^{(2)}$, with little abuse of notation.
We will consider the product space $V_2 \times \cP_2(\R)$, endowed either with the product topology $\tau^{(2)}\otimes \tau^{(2)}$ or with the product topology $\bar{\tau}^{(2)}\otimes\tau^{(2)}$.
We denote $(\eta,\overline{\eta}) \in V_2 \times \cP_2(\R)$ by $\boldsymbol{\eta}$ as well.

\smallskip
\begin{definition}[Martingale property for fixed flow of measures]\label{LP:def:pre_martingale_constraint}
Let $m \in V_2$. We say that $\boldsymbol{\eta} = (\eta,\overline{\eta})$ in $V_2 \times \cP_2(\R) $ satisfies the martingale property relative to $m$ if, for any test function $u \in \dC^{1,2}_{b}([0,T]\times \R)$, it holds
\begin{equation}\label{eq:def:pre_martingale_property}
    \int_{\R} u(T,x) \overline{\eta}(dx) -\int_{\R} u(0,x)m^*_0(dx) = \int_{0}^{T}\int_{\R \times A} (\partial_t+\cL ) u(t,x,m_t,a)\eta_{t}(dx,da)dt,
\end{equation}
where $\cL u$ is defined by
\begin{multline}\label{LP:kolmogorov_operator}
    \cL u(t,x,m,a):=\frac{\sigma^2}{2}\partial^{2}_x u(t,x) +b(t,x,m,a)\partial_x u(t,x), \\
    (t,x,m,a) \in [0,T] \times \R \times \cP_2(\R \times A) \times A.
\end{multline}
We denote by $\cR[m]\subseteq V_2 \times \cP_2(\R)$ the set of $\boldsymbol{\eta}$ which satisfy the martingale property relative to $m$.
\end{definition}

Define $\cR_0$ as the $\tau^{(2)} \otimes \tau^{(2)}$-closure of the convex hull of the set
\[
\bigcup_{m \in V_2} \cR[m] \subseteq V_2 \times \cP_2(\R).
\]
The following holds:
\begin{lemma}[\cite{leutscher_thesis}, Proposition 3.8, Lemma 3.10, Lemma 3.13]\label{lemma:compactness_a_priori_R0}
 $(\cR_0,\tau^{(2)} \otimes \tau^{(2)})$ is compact and convex.
Moreover, on $\cR_0$, the topologies $\tau^{(2)} \otimes \tau^{(2)}$ and $\bar{\tau}^{(2)} \otimes \tau^{(2)}$ coincide.
Finally, there exists a constant $C>0$ so that, for any $(\eta,\overline{\eta}) \in \cR_0$, it holds
\begin{equation}\label{uniform_bound}
    \int_{\R \times A} (\vert x \vert^2 + \vert a \vert^2) \eta_{t}(dx,da) \leq C, \quad \forall \, t \in [0,T], \quad \int_{\R} \vert x \vert^2 \overline{\eta}(dx) \leq C.
\end{equation}
\end{lemma}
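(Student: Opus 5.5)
The natural route is to prove the uniform moment bound first and then obtain compactness from it via tightness. Convexity holds by definition, since $\cR_0$ is the closure of a convex hull and closures of convex sets in a linear topological structure are convex.

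\textbf{Step 1: moment bound.} Fix $m\in V_2$ and $(\eta,\overline\eta)\in\cR[m]$. Test \eqref{eq:def:pre_martingale_property} with a truncated quadratic function: take $u_k(t,x)=\phi_k(x)$, where $\phi_k\in\dC^2_b$ agrees with $|x|^2$ on $[-k,k]$ and satisfies $|\phi_k'(x)|\le c(1+|x|)$, $|\phi_k''|\le c$. It is convenient to include a time-localized version $u(s,x)=\phi_k(x)\1_{s\le t}$, smoothed in time, so as to get the bound at each fixed $t$. Since $b$ is bounded uniformly in $m$ by (S3) and $A$ is compact, one obtains
\[
\int \phi_k\, d\eta_t\le \int\phi_k\, dm_0^*+c\int_0^t\int (1+|x|^2)\,\eta_s(dx,da)\,ds .
\]
Gronwall then gives a bound independent of $k$ and $m$, and letting $k\to\infty$ by monotone convergence yields \eqref{uniform_bound} on $\cup_m\cR[m]$. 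The bound is linear in $\boldsymbol\eta$, so it passes to the convex hull. It also passes to the closure, because $|x|^2$ is approximated from below by continuous bounded functions (lower semicontinuity).

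The time-localization needs care, since $\eta$ is only a measurable flow and the bound is wanted for every $t$. A cleaner route is to observe that for a single $\cR[m]$ element one can identify $\eta_t$ with the law of a solution of a relaxed SDE, by Kurtz–Stockbridge superposition. Alternatively, one proves the bound for $dt$-a.e. $t$ and uses the identification of flows that agree $dt$-a.e.

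\textbf{Step 2: compactness.} The bound gives uniform $(2+\epsilon)$-type control only if higher moments are available. This is where $\bar p>2$ enters: repeat Step 1 with $|x|^{\bar p}$ to get uniform $\bar p$-moments. This yields tightness of the measures $\eta_t(dx,da)dt$ on $[0,T]\times\R\times A$ and of $\overline\eta$, together with uniform integrability of $|x|^2$. Prokhorov plus uniform integrability then gives relative compactness in $\tau^{(2)}\otimes\tau^{(2)}$, and $\cR_0$ is closed by definition. Any weak limit keeps time marginal $dt$, so it again belongs to $V_2\times\cP_2(\R)$.

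\textbf{Step 3: equality of topologies.} The time marginal is fixed to be Lebesgue for every element. By the classical result on stable convergence (Jacod–Mémin / Schäl), weak convergence of measures with a fixed first marginal is equivalent to convergence against Carathéodory integrands, i.e. functions measurable in $t$ and continuous in $(x,a)$. The uniform $\bar p$-moment bound extends this from bounded integrands to those of quadratic growth. Hence $\bar\tau^{(2)}$ and $\tau^{(2)}$ have the same convergent sequences on $\cR_0$, and since the set is metrizable and compact, the topologies coincide.

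\textbf{Main obstacle.} I expect the main obstacle to be the rigorous truncation argument in Step 1, namely obtaining $t$-pointwise bounds from the integrated identity, together with the need for higher moments in Step 2.
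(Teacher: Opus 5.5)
The paper gives no argument of its own here: it imports the three facts from Leutscher's thesis (Proposition 3.8, Lemmas 3.10 and 3.13), within the LP framework developed there. Your proposal is instead a self-contained proof tailored to the present setting, and its architecture is sound. It consists of:
\begin{enumerate}
\item moment bounds from the martingale constraint with time-localized truncated test functions;
\item transfer to the convex hull by linearity and to the closure by lower semicontinuity, which gives the bound for a.e.\ $t$, hence for all $t$ after modifying $\eta$ on a $dt$-null set;
\item compactness from tightness plus uniform integrability of $|x|^2$ supplied by $\bar p>2$;
\item identification of the topologies via the classical fixed-marginal theorem of Jacod--M\'emin and Sch\"al.
\end{enumerate}
The last step is where your route buys the most. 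Because every element of $V_2$ has time marginal $dt$, it needs neither the martingale constraint nor the $\bar p$-bound. Indeed, $\tau^{(2)}$-convergence already gives uniform integrability of $|x|^2$ along the sequence, since $\int_0^T\!\int|x|^2\eta^n_t(dx,da)\,dt$ converges. So, for integrands with a $t$-uniform quadratic growth bound, you actually get that $\tau^{(2)}$ and $\bar\tau^{(2)}$ coincide on all of $V_2$, not only on $\cR_0$. This shortcut relies on the fixed time marginal. It would not be available where the time marginal of the occupation measure varies, e.g.\ with absorption.

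One step, however, fails as written. In Step 2 you propose to repeat Step 1 with $|x|^{\bar p}$. But Step 1 closes only because its right-hand side $\int_0^t\int(1+|x|^2)\,\eta_s(dx,da)\,ds$ is finite a priori, as $\eta\in V_2$. Even there the order must be reversed: at fixed $k$ the two sides involve different integrands. So you must first let $k\to\infty$ on the left, and only then apply Gronwall to the integrable function $t\mapsto\int|x|^2\eta_t(dx,da)$. For $|x|^{\bar p}$ no a priori integrability of $t\mapsto\int|x|^{\bar p}\eta_t(dx,da)$ is available. The analogous inequality is then consistent with this quantity being infinite, and Gronwall gives nothing. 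Since compactness in $\tau^{(2)}$ genuinely requires uniform integrability of $|x|^2$, this needs a real fix, not just polishing.

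One fix is to use truncations that close the estimate at fixed $k$: bounded $\phi_k\in\dC^2_b$ with $\phi_k\uparrow(1+x^2)^{\bar p/2}$ and $\cL\phi_k\le c\,\phi_k$ uniformly in $k$ and $m$. For instance, $\phi_k=\Psi_k\bigl((1+x^2)^{\bar p/2}\bigr)$ with $\Psi_k(y)=ky/(k+y)$ works, because $\Psi_k'(y)\,y\le\Psi_k(y)$ and $\Psi_k''\le 0$. Gronwall then applies to the bounded function $t\mapsto\int\phi_k\,d\eta_t$, and monotone convergence concludes. Alternatively, use the superposition representation you mention in Step 1. Under it, $\eta$ is the law of a relaxed-controlled diffusion with bounded drift, so $|X_t|\le|\xi|+T\sup|b|+\sigma|W_t|$ gives the $\bar p$-moments directly.
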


\subsection*{CCEs in the linear programming formulation}

We now introduce the LP-analogue of correlated flows and deviations from the moderator's suggestion.
Following \cite[Section 6]{campi2024coarse} and \cite[Section 3]{campi2025LQ}, we restrict to the LP-analogue of consistent recommendations, that is, we consider only correlated measure flows which satisfy the consistency condition \eqref{eq:CCE:consistency}. In this way, we will have to deal with the optimality property only.

\begin{definition}[Consistent Martingale Property]\label{LP:def:consistent_mtg_property}
Let $\rho \in \cP\big( V_2 \times \cP_2(\R) \big)$.
We say that $\rho$ satisfies the consistent martingale property if, for any test functions $u \in \dC^{1,2}_{b}([0,T]\times \R)$ and $\phi \in \dC_{b}(V_2 \times \cP_2(\R),\tau^{(2)}\otimes\tau^{(2)})$, the following equality is satisfied:
\begin{multline}\label{eq:consistent_mtg_property}
    \int_{V_2 \times \cP_2(\R)} \phi(m,\overline{m}) \bigg( \int_{\R} u(T,x) \overline{m}(dx) -\int_{\R} u(0,x)m_0^*(dx) \\
    - \int_{0}^{T}\int_{\R \times A} (\partial_t+\cL ) u(t,x,m_t,a)m_{t}(dx,da)dt \bigg) \rho(dm,d\overline{m}) = 0.
\end{multline}
We denote by $\cM$ the set of measures $\rho$ that satisfy the consistent martingale property \eqref{eq:consistent_mtg_property}.
We refer to $\rho$ as a consistent LP-correlated flow.
\end{definition}

We now consider admissible deviations.
To this extent, for a Polish metric space $(E,d_E)$, denote by $\cW_2$ the $2$-Wasserstein distance on $\cP_2(E)$ (see, e.g, \cite[Definition 6.1]{villani2008optimal}).
In what follows, $\cW_2$ is used either on $\R$ or on $\R\times A$, endowed with the corresponding product metric.
For $m,m'\in V_2$, define the pseudo-distance
\[
    d_b(m,m'):=\operatorname*{ess\,sup}_{t\in[0,T]}\sup_{(x,a)\in\R\times A}\left|b(t,x,m_t,a)-b(t,x,m'_t,a)\right|.
\]
Notice that $d_b$ is finite under Assumption \ref{standing_assumptions}, since $b$ is bounded.
For $\boldsymbol{\eta}=(\eta,\overline{\eta})$ and $\boldsymbol{\eta}'=(\eta',\overline{\eta}')$ in $\cR_0$, set
\[
    d_{\cR_0}(\boldsymbol{\eta},\boldsymbol{\eta}') :=\int_0^T\cW_2(\eta_t,\eta'_t)dt+\cW_2(\overline{\eta},\overline{\eta}').
\]
Notice that $d_{\cR_0}(\boldsymbol{\eta},\boldsymbol{\eta}') = 0$ if and only if $\overline{\eta} = \overline{\eta}'$ and $\eta_t = \eta'_t$ $dt$-a.e., i.e. $\eta = \eta'$ in $V_2$.

\begin{definition}[Admissible deviations in the LP formulation]\label{LP:def:admissible_deviations}
A pair of functions $(\kappa,\overline{\kappa}): V_2 \to \cR_0$ is an admissible deviation in the LP formulation if the following holds:
\begin{enumerate}
    \item\label{LP:def:admissible_deviations:dynamics} For any $m \in V_2$, $(\kappa,\overline{\kappa})(m)$ belongs to $\cR[m]$.
    
    \item\label{LP:def:admissible_deviations:independence} The marginal over $A$ of $\kappa(m)$ is constant in $m$, i.e., letting $\kappa_t(dx,da \vert m ) := \kappa_t (m)(dx,da)$, it holds
\begin{equation}\label{LP:deviations:independence}
    \begin{aligned}
        & \int_{\R \times C} \kappa_t(dx,da \vert m_1) = \int_{\R \times C} \kappa_t(dx,da \vert m_2) \quad \forall\, C \in \cB_{A}, \, \forall\, m_1, m_2 \in V_2, \, dt\text{-a.e. }t\in[0,T].
    \end{aligned}
    \end{equation}
    \item \label{LP:def:admissible_deviations:stability}
     There exists a constant $C_{\kappa}>0$ such that, for every $m,m'\in V_2$,
    \[
        d_{\cR_0}\big((\kappa(m),\overline{\kappa}(m)),(\kappa(m'),\overline{\kappa}(m'))\big)\leq C_{\kappa}d_b(m,m').
    \]
    \item\label{LP:def:admissible_deviations:continuity} The map
    \[
        \cR_0 \ni (m,\overline m) \mapsto (\kappa(m),\overline{\kappa}(m)) \in \cR_0
    \]
    is continuous from $(\cR_0,\tau^{(2)}\otimes\tau^{(2)})$ into itself.
\end{enumerate}
The set of all admissible deviations is denoted by $\cD$.
\end{definition}
Properties \eqref{LP:def:admissible_deviations:dynamics}, \eqref{LP:def:admissible_deviations:independence} and \eqref{LP:def:admissible_deviations:stability} are the LP analogue of the deviation constraint for CCEs in the probabilistic formulation.
In particular, property \eqref{LP:def:admissible_deviations:dynamics} means that the deviation is dynamically admissible: once the flow $m$ is fixed, the deviating flow must be compatible with the dynamics associated with $m$.
Property \eqref{LP:def:admissible_deviations:independence} requests that the deviating player fixes the distribution of alternative actions without observing the outcome of the moderator's lottery, and therefore the $A$-marginal of the deviation cannot depend on the realized flow $m$.
Property \eqref{LP:def:admissible_deviations:stability} requires that the admissible deviation can depend on the flow $m$ only through the way in which $m$ affects the controlled dynamics.
Regarding the continuity property \eqref{LP:def:admissible_deviations:continuity}, only the induced map on $\cR_0$ is relevant for the LP-CCE condition, because Lemma \ref{LP:lemma:restriction_to_R_0} below shows that every $\rho\in\cM$ is supported on $\cR_0$. We stress that this requirement is not restrictive for deviations coming from the probabilistic formulation, since Proposition \ref{LP:relation:prop_deviations} shows that any admissible deviation $\beta\in\bbA$ induces a pair $(\kappa,\overline{\kappa})\in\cD$ according to Definition \ref{LP:def:admissible_deviations}.

\smallskip
Under Assumption \ref{standing_assumptions}, the terminal costs depend on $m\in\cP_2(\R\times A)$ only through its first marginal. Hence, with a slight abuse of notation, for $\overline{m} \in\cP_2(\R)$, we write
\[
    g(x,\overline{m} ) := \bar g\left(x,\int_{\R}\hat g(y)\overline{m} (dy)\right), \qquad     g^0(x,\overline{m} ) := \bar g^0\left(x,\int_{\R}\hat g^0(y)\overline{m} (dy)\right).
\]
For any $\rho \in \cM$ and $(\kappa,\overline{\kappa}) \in \cD$, we introduce the functionals
\begin{align}
    & \Gamma[\rho] :=\int_{V_2 \times \cP_2(\R)} \left( \int_{0}^{T}\int_{\R \times A }f(t,x,m_t, a)m_t(dx,da)dt + \int_{\R}g(x,\overline{m})\overline{m}(dx) \right) \rho(dm,d\overline{m}), \label{LP:repr_player:cost_functional} \\
    & \Gamma^{dev}[\rho](\kappa,\overline{\kappa}) :=\int_{V_2 \times \cP_2(\R)} \bigg( \int_{0}^{T}\int_{\R \times A }f(t,x,m_t, a)\kappa_t(dx,da \vert m)dt \label{LP:dev_player:cost_functional} \\
    & \qquad \qquad \qquad \qquad + \int_{\R} g(x,\overline{m})\overline{\kappa}(dx \vert m )\bigg)\rho(dm,d\overline{m}) \notag.
\end{align}

\begin{definition}[Coarse correlated equilibrium in the LP formulation]\label{LP:def:CCE}
A probability measure $\rho \in \cM$ is a coarse correlated equilibrium for the mean-field game in the linear programming formulation if
\begin{equation}\label{eq:LP:def_CCE}
\Gamma[\rho] \leq \Gamma^{dev}[\rho](\kappa,\overline{\kappa}) \quad \forall (\kappa,\overline{\kappa}) \in \cD.
\end{equation}
We refer to such equilibria also as LP-coarse correlated equilibria and LP-CCEs.
We denote the set of all LP-CCEs by $\cE$.
\end{definition}

The mediator's objective functional  $\Gamma^{0}:\cE \to \R$ is defined by
\begin{multline}\label{LP:mediator_functional}
    \Gamma^{0}(\rho):=\int_{V_2 \times \cP_2(\R)} \bigg( \int_{0}^{T}\int_{\R\times A} f^{0}(t,x,m_t,a) m_t(dx,da) dt \\
    + \int_{\R}g^{0}(x,\overline{m})\overline{m}(dx) \bigg) \rho(dm,d\overline{m}).
\end{multline}
\begin{definition}\label{LP:def:optimal_LP_CCE}
An LP-CCE $\rho^* \in \cE$ is optimal for the moderator's optimization problem in the LP formulation if 
\begin{equation}\label{LP:mediator_value}
    V^{\text{LP}}:=\inf_{\rho \in \cE} \Gamma^{0}(\rho) = \Gamma^{0}(\rho^*).
\end{equation}
We refer to $\rho^*$ as an optimal LP-CCE.
\end{definition}

\smallskip
We remark that our notion of LP-CCEs extends the notion of LP-NE considered in \cite{dumitrescu2021EJP,LinProgFictDumitrescu,leutscher_thesis}.
We recall the analogue of \cite[Definition 3.4]{dumitrescu2021EJP} in our context, so neglecting absorption and optimal stopping:
\begin{definition}[LP formulation of the MFG, Definition 3.4 in \cite{dumitrescu2021EJP}]\label{LP:def:LPNE}
Fix a pair $(m,\overline{m}) \in V_2 \times \cP_2(\R)$, and consider the set $\cR[m]$ as given by Definition \ref{LP:def:pre_martingale_constraint}.
Let $F[m,\overline{m}]:V_2 \times \cP_2(\R) \to \R$ be defined as
\begin{equation}\label{eq:function_F}
    F[m,\overline{m}](\eta,\overline{\eta}) = \int_0^T \int_{\R \times A} f(t,x,m_t,a)\eta_t(dx,da)dt + \int_{\R} g(x,\overline{m})\overline{\eta}(dx).
\end{equation}
We say that $(m^\star,\overline{m}^\star) \in V_2 \times \cP_2(\R)$ is an LP-MFG Nash equilibrium (or simply LP-NE) if $(m^\star,\overline{m}^\star) \in \cR[m^\star]$ and 
\begin{equation}\label{LP:def:LPNE:optimality}
F[m^\star,\overline{m}^\star](m^\star,\overline{m}^\star)  \leq F[m^\star,\overline{m}^\star](\eta,\overline{\eta}) \quad \forall\, (\eta,\overline{\eta}) \in \cR[m^\star].
\end{equation}
\end{definition}

\begin{proposition}\label{LP:prop:NE_is_CCE}
Let $(m^\star,\overline{m}^\star) \in V_2 \times \cP_2(\R)$ be an LP-NE. Then, $\delta_{(m^\star,\overline{m}^\star)}(dm,d\overline{m})$ is an LP-CCE.
\end{proposition}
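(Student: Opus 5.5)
We need to check two things for $\rho:=\delta_{(m^\star,\overline{m}^\star)}$: that $\rho\in\cM$, and that the deviation inequality \eqref{eq:LP:def_CCE} holds for every $(\kappa,\overline{\kappa})\in\cD$. Because $\rho$ is a Dirac mass, every integral against $\rho$ collapses to evaluation at $(m^\star,\overline{m}^\star)$, so the argument is essentially a direct unwinding of definitions.

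\emph{Consistent martingale property.} Fix $u\in\dC^{1,2}_b([0,T]\times\R)$ and $\phi\in\dC_b(V_2\times\cP_2(\R))$. The left-hand side of \eqref{eq:consistent_mtg_property} equals
\[
\phi(m^\star,\overline{m}^\star)\Big(\int_\R u(T,x)\overline{m}^\star(dx)-\int_\R u(0,x)m_0^*(dx)-\int_0^T\int_{\R\times A}(\partial_t+\cL)u(t,x,m^\star_t,a)\,m^\star_t(dx,da)\,dt\Big).
\]
The bracket vanishes by \eqref{eq:def:pre_martingale_property} applied with $m=m^\star$ and $\boldsymbol\eta=(m^\star,\overline{m}^\star)$, since $(m^\star,\overline{m}^\star)\in\cR[m^\star]$ by Definition \ref{LP:def:LPNE}. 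Hence $\rho\in\cM$. Membership of $\rho$ in $\cP(V_2\times\cP_2(\R))$ is immediate, as it is a Dirac measure.

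\emph{Optimality.} Take $(\kappa,\overline{\kappa})\in\cD$. Then
\[
\Gamma[\rho]=F[m^\star,\overline{m}^\star](m^\star,\overline{m}^\star),
\qquad
\Gamma^{dev}[\rho](\kappa,\overline{\kappa})=F[m^\star,\overline{m}^\star]\big(\kappa(m^\star),\overline{\kappa}(m^\star)\big),
\]
with $F$ as in \eqref{eq:function_F}. Property \eqref{LP:def:admissible_deviations:dynamics} of Definition \ref{LP:def:admissible_deviations} gives $(\kappa(m^\star),\overline{\kappa}(m^\star))\in\cR[m^\star]$. Inequality \eqref{LP:def:LPNE:optimality} therefore yields $\Gamma[\rho]\le\Gamma^{dev}[\rho](\kappa,\overline{\kappa})$. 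Properties (2)–(4) of Definition \ref{LP:def:admissible_deviations} are not needed; the LP-NE condition is stronger because it allows all of $\cR[m^\star]$.

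\emph{Well-definedness.} The only point requiring care is that the integrals above are finite, so the comparison is meaningful. Both $(m^\star,\overline{m}^\star)$ and $(\kappa(m^\star),\overline{\kappa}(m^\star))$ lie in $\cR[m^\star]\subseteq\cR_0$. The uniform second-moment bound \eqref{uniform_bound} of Lemma \ref{lemma:compactness_a_priori_R0} applies to both. Combined with the quadratic growth of $\bar f,\bar g$ and the linear growth of $\hat f,\hat g$ (so the mean-field arguments are finite), this shows all integrals are finite. This check is routine, and I do not expect a genuine obstacle.
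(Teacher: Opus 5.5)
Your proposal is correct and follows the paper's proof essentially verbatim. The Dirac mass reduces the consistent martingale property to $(m^\star,\overline{m}^\star)\in\cR[m^\star]$, and property (1) of Definition~\ref{LP:def:admissible_deviations} places $(\kappa(m^\star),\overline{\kappa}(m^\star))$ in $\cR[m^\star]$, so the LP-NE inequality gives the LP-CCE inequality; your extra finiteness check via \eqref{uniform_bound} is a harmless addition that the paper leaves implicit.
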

\begin{proof}
Set $\rho(dm,d\overline{m}) = \delta_{(m^\star,\overline{m}^\star)}(dm,d\overline{m})$.
The consistent martingale property \eqref{eq:consistent_mtg_property} reduces to property \eqref{eq:def:pre_martingale_property} with fixed flow of measures $m^\star \in V_2$.
As $(m^\star,\overline{m}^\star) \in \cR[m^\star]$, we have $\rho \in \cM$ and the cost $\Gamma[\rho]$ reduces to $F[m^\star,\overline{m}^\star](m^\star,\overline{m}^\star)$.
Let $(\kappa,\overline{\kappa}) \in \cD$.
By definition, $(\kappa,\overline{\kappa})(m^\star) \in \cR[m^\star]$, so that the optimality property for the LP-NE \eqref{LP:def:LPNE:optimality} implies
\begin{equation*}
\Gamma[\rho] = F[m^\star,\overline{m}^\star](m^\star,\overline{m}^\star) \leq F[m^\star,\overline{m}^\star](\kappa(m^\star),\overline{\kappa}(m^\star)) = \Gamma^{dev}[\rho](\kappa,\overline{\kappa}),
\end{equation*}
where, in the second equality, we exploited again the fact that $\rho$ is equal to a Dirac delta.
As $(\kappa,\overline{\kappa})$ is arbitrary in $\cD$, this concludes the proof.
\end{proof}

\subsection{Relation with coarse correlated solution of the MFG}\label{sec:LP:relation}
In this section, we clarify the relation between LP-CCEs in the sense of Definition \ref{LP:def:CCE} and coarse correlated solutions to the MFG in the sense of Definition \ref{def:CCE}.
We start by showing that, starting from a correlated flow which satisfies the consistency condition \eqref{eq:CCE:consistency}, it is possible to define a measure $\rho$ that satisfies the consistent martingale property.

\begin{proposition}\label{LP:relation:prop:consistent_strategies}
Let $(\lambda,\mu)$ be a correlated measure flow in the sense of Definition \ref{probabilistic:def_corr_flow} and suppose that the consistency condition \eqref{eq:CCE:consistency} is satisfied.
Then, $\rho = \P \circ (\mu,\mu^x_T)^{-1} \in \cM$, where $\mu^x_T$ denotes the marginal over $\R$ of $\mu_T$.
\end{proposition}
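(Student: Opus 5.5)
Apply Itô's formula to $u(t,X_t)$ for $u\in \dC^{1,2}_b([0,T]\times\R)$, where $X$ solves \eqref{strong_formulation:repr_player:dynamics}, and then condition on $\mu$. Itô gives
\[
u(T,X_T)-u(0,\xi)=\int_0^T (\partial_t+\cL)u(t,X_t,\mu_t,\lambda_t)\,dt+\int_0^T\sigma\partial_x u(t,X_t)\,dW_t .
\]
The stochastic integral is a true martingale because $\partial_x u$ is bounded. Multiply by $\phi(\mu,\mu^x_T)$, with $\phi\in\dC_b(V_2\times\cP_2(\R))$, and take expectations.

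The random variable $\phi(\mu,\mu^x_T)$ is $\cF_0$-measurable and bounded. Hence $\E[\phi(\mu,\mu^x_T)\int_0^T\sigma\partial_xu\,dW_t]=\E[\phi\,\E[\int\cdots dW\mid\cF_0]]=0$, since $W$ is an $\bbF$-Brownian motion. It remains to identify the other terms.

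\textbf{Initial term.} Since $\mu$ is independent of $\xi$, we get $\E[\phi\, u(0,\xi)]=\E[\phi\int u(0,x)m_0^*(dx)]$.

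\textbf{Terminal term.} Consistency \eqref{eq:CCE:consistency} at $t=T$ gives $\E[u(T,X_T)\mid\mu]=\int u(T,x)\mu^x_T(dx)$. Since $\phi(\mu,\mu^x_T)$ is $\sigma(\mu)$-measurable, $\E[\phi\, u(T,X_T)]=\E[\phi\int u(T,x)\mu^x_T(dx)]$.

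\textbf{Integral term.} Use Fubini and then, for each $t$, condition on $\mu$. The integrand $(\partial_t+\cL)u(t,x,\mu_t,a)$ is a function of $(X_t,\lambda_t)$ and of $\mu_t$, and $\mu_t$ is $\sigma(\mu)$-measurable. By the consistency condition (a conditional "freezing" lemma for $\sigma(\mu)$-measurable arguments),
\[
\E\big[(\partial_t+\cL)u(t,X_t,\mu_t,\lambda_t)\mid\mu\big]=\int_{\R\times A}(\partial_t+\cL)u(t,x,\mu_t,a)\,\mu_t(dx,da).
\]
Integrability follows because $b$ is bounded and $u\in\dC^{1,2}_b$. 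Combining the three terms gives exactly \eqref{eq:consistent_mtg_property} with $\rho=\P\circ(\mu,\mu^x_T)^{-1}$, after the change of variables to the image measure.

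\textbf{Measurability and integrability.} These preliminary points are where I expect the main work.
\begin{itemize}
\item $(\mu,\mu^x_T)$ must define a random variable in $V_2\times\cP_2(\R)$, and $\phi(\mu,\mu^x_T)$ must be measurable. For this, check that the $\sigma$-field on $\dM(\cP_2)$ generated by the evaluations $m\mapsto m_t(B)$ makes the map into $(V_2,\tau^{(2)})$ measurable. The Borel $\sigma$-field of $\tau^{(2)}$ is generated by the maps $m\mapsto\int_0^T\int g\,dm_t\,dt$, and these are measurable by joint measurability of $(t,\omega)\mapsto\mu_t(\omega)$ together with a monotone class argument.
\item $\mu$ must lie in $V_2$ almost surely, i.e.\ have finite second moments. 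By consistency, $\int|x|^2\mu_t(dx,da)=\E[|X_t|^2\mid\mu]$. Standard SDE estimates with bounded drift and $m_0^*\in\cP_{\bar p}$ give $\sup_t\E|X_t|^2<\infty$, hence $\int_0^T\int|x|^2\mu_t\,dt<\infty$ a.s., and $\mu^x_T\in\cP_2(\R)$ a.s.
\end{itemize}

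\textbf{Consistency at every $t$ versus Fubini.} A final subtlety is that consistency holds a.s.\ for each fixed $t$, while the Fubini step needs joint measurability in $(t,\omega)$. This causes no difficulty, because we only need the conditional identity for $dt$-a.e.\ $t$ inside the time integral.
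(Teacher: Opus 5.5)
Your proposal is correct and follows essentially the paper's argument. You apply It\^o's formula to $u(t,X_t)$, multiply by the $\cF_0$-measurable weight $\phi(\mu,\mu^x_T)$, and use the consistency condition together with Fubini to replace $(X_t,\lambda_t)$ by integration against $\mu_t$. You also make the same measurability and second-moment checks showing that $\rho$ is a probability measure on $V_2\times\cP_2(\R)$. The only cosmetic difference is that the paper first records the joint-law identity $\P\circ(X_t,\lambda_t,\mu,\mu^x_T)^{-1}=m_t(dx,da)\rho(dm,d\overline{m})$, whereas you invoke an equivalent conditional freezing lemma.
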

\begin{proof}
We first notice that $\rho$ is indeed a probability measure on $V_2\times\cP_2(\R)$.
Indeed, identify the measurable flow $\mu$ with its $dt$-a.e. equivalence class in $V_2$.
This identification is measurable thanks to the joint measurability assumption in Definition \ref{probabilistic:def_corr_flow}: for every continuous function $g:[0,T]\times\R\times A\to\R$ with at most quadratic growth in $x$, uniformly in $(t,a)$, the map
\[
    \omega\mapsto \int_0^T\int_{\R\times A} g(t,x,a)\mu_t(\omega)(dx,da)dt
\]
is $\cF_0$-measurable.
Moreover, by the consistency condition \eqref{eq:CCE:consistency} and the standard moment estimate for \eqref{strong_formulation:repr_player:dynamics},
\[
    \E\left[\int_0^T\int_{\R\times A}|x|^2\mu_t(dx,da)dt\right]=\E\left[\int_0^T|X_t|^2dt\right]<\infty.
\]
Hence $\mu$ is $V_2$-valued, up to the $dt$-a.e. identification.
Since $\mu_T^x$ is an $\cF_0$-measurable $\cP_2(\R)$-valued random variable, $\rho=\P\circ(\mu,\mu_T^x)^{-1}$ belongs to $\cP(V_2 \times \cP_2(\R))$.

Let $X$ be the state of the representative player when she plays the recommendation $\lambda$.
Notice that the consistency condition \eqref{eq:CCE:consistency} implies that, for any pair of functions $\phi: V_2 \times \cP_2(\R) \to \R$, $\psi:\R \times A \to \R$ bounded and measurable, it holds
\begin{align*}
    \E[\phi(\mu,\mu^x_T)\psi(X_t,\lambda_t)] &= \E[\phi(\mu,\mu^x_T) \E[\psi(X_t,\lambda_t) \vert \mu] ] =   \E\left[ \int_{\R \times A}\psi(x,a)\mu_t(dx,da)\phi(\mu,\mu^x_T) \right] \\
   & = \int_{V_2 \times \cP_2(\R)} \int_{\R \times A}\psi(x,a)m_t(dx,da)\phi(m,\overline{m}) \rho(dm,d\overline{m}).
\end{align*}
By the arbitrariness of $\phi$ and $\psi$, it follows that the joint law of $(X_t,\lambda_t,\mu,\mu^x_T)$ at any time $t$ can be expressed in terms of $\rho$ as
\begin{equation}\label{LP:relation:prop:consistent_strategies:consistency_laws}
    \P\circ(X_t,\lambda_t,\mu,\mu^x_T)^{-1}(dx,da,dm,d\overline{m}) = m_t(dx,da) \rho(dm,d\overline{m}).
\end{equation}
Analogously, we have
\[
\P\circ(X_T,\mu,\mu^x_T)^{-1}(dx,dm,d\overline{m}) = \overline{m}(dx) \rho(dm,d\overline{m}).
\]
Let $u \in \dC^{1,2}_b([0,T] \times \R)$ and $\phi \in \dC_b(V_2 \times \cP_2(\R))$.
As $\partial_x u$ is bounded, by applying It\^{o}'s formula and taking expectations, we have
\begin{align*}
    \E & \Big[ \phi(\mu,\mu^x_T) \Big( \int_{\R}u(T,x)\mu^{x}_T(dx) - \int_{\R}u(0,x)m^{*}_{0}(dx) \Big) \Big] = \E[\phi(\mu,\mu^x_T) \E[ u(T,X_T) - u(0,\xi) \vert \mu ]  ]  \\
    & = \E [\phi(\mu,\mu^x_T) \big( u(T,X_T) - u(0,\xi) \big)] = \E\left[ \phi(\mu,\mu^x_T) \int_0^T (\partial_t + \cL )u(t,X_t,\mu_t,\lambda_t)dt \right] \\
    & = \int_0^T\E\left[ \phi(\mu,\mu^x_T) \E\left[ (\partial_t + \cL )u(t,X_t,\mu_t,\lambda_t) \Big\vert \mu \right] \right]dt \\
    & =  \int_0^T \E\left[ \phi(\mu,\mu^x_T) \int_{\R \times A} (\partial_t + \cL )u(t,\cdot,\mu_t,\cdot) \mu_t(dx,da) \right]dt  \\
    & =   \E\left[ \phi(\mu,\mu^x_T) \int_0^T\int_{\R \times A} (\partial_t + \cL )u(t,\cdot,\mu_t,\cdot) \mu_t(dx,da) dt \right] \\
    & = \int_{V_2 \times \cP_2(\R)} \phi(m,\overline{m}) \left( \int_0^T \int_{\R \times A} (\partial_t + \cL )u(t,x,m_t,a)m_t(dx,da)dt \right) \rho(dm,d\overline{m})
\end{align*}
where we applied Fubini's theorem and \eqref{LP:relation:prop:consistent_strategies:consistency_laws}. Hence, we have \eqref{eq:consistent_mtg_property} and we conclude.
\end{proof}
Notice that, in Proposition \ref{LP:relation:prop:consistent_strategies}, whenever a correlated flow $(\lambda,\mu)$ in the probabilistic formulation is considered in the LP formulation, we identify the measurable flow $(\mu_t)_{t\in[0,T]}$ with its $dt$-a.e. equivalence class in $V_2$. The terminal marginal $\mu^x_T$ is kept separately, since it is not encoded in this equivalence class.

Turning to the admissible deviations, we show that any $\beta \in \bbA$ induces an admissible deviation in the LP formulation $(\kappa,\overline{\kappa})$.
\begin{proposition}\label{LP:relation:prop_deviations}
Let $\beta \in \bbA$ be an admissible deviation.
For any $m \in V_2$, let $X^m$ be the unique strong solution of
\begin{equation}\label{eq:relation:prop_deviations:eq_fixed_flow}
dX^m_t = b(t,X^m_t,m_t,\beta_t)dt + \sigma dW_t, \quad X^m_0 = \xi.
\end{equation}
Define $(\kappa,\overline{\kappa})$ by
\begin{equation}\label{eq:relation:prop_deviations:kernels_fixed_flow}
\begin{aligned}
    \kappa_t(C \times D \vert m) & = \E[ \1_{C}(X^m_t)\1_{D}(\beta_t)], \quad && C \in \cB_{\R}, \, D \in \cB_{A}, \\
    \overline{\kappa}( C \vert m) & = \E[ \1_{C}(X^m_T)], && C \in \cB_{\R}.
\end{aligned}
\end{equation}
Then, $(\kappa,\overline{\kappa})$ is an admissible deviation in the sense of Definition \ref{LP:def:admissible_deviations}.
\end{proposition}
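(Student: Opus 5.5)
We check the four properties of Definition \ref{LP:def:admissible_deviations} in turn; throughout, $m\in V_2$ is a deterministic flow, so \eqref{eq:relation:prop_deviations:eq_fixed_flow} is a classical SDE with bounded drift, Lipschitz in $x$. Hence for each $m$ it has a unique strong solution with $\E[\sup_t|X^m_t|^{\bar p}]\le C$, where $C$ depends only on $\|\bar b\|_\infty$, $\sigma$, $T$ and the $\bar p$-moment of $m_0^*$, uniformly in $m$. This shows $\kappa(m)\in V_2$, $\overline\kappa(m)\in\cP_2(\R)$, and gives uniform square-integrability, which will be used for continuity.

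\emph{Property (1).} For $u\in\dC^{1,2}_b$, apply It\^o's formula to $u(t,X^m_t)$ and take expectations; the stochastic integral vanishes since $\partial_x u$ is bounded. Then rewrite $\E[(\partial_t+\cL)u(t,X^m_t,m_t,\beta_t)]$ through $\kappa_t(\cdot|m)$ by Fubini. This gives \eqref{eq:def:pre_martingale_property}, so $(\kappa,\overline\kappa)(m)\in\cR[m]\subseteq\cR_0$.

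\emph{Property (2)} is immediate: the $A$-marginal of $\kappa_t(\cdot|m)$ is $\P\circ\beta_t^{-1}$, which does not involve $m$, because $\beta$ is a fixed process.

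\emph{Property (3).} Couple the two solutions on the same space with the same $\beta$, $W$ and $\xi$. Then
\[
|X^m_t-X^{m'}_t|\le \int_0^t L|X^m_s-X^{m'}_s|ds+\int_0^t|b(s,X^{m'}_s,m_s,\beta_s)-b(s,X^{m'}_s,m'_s,\beta_s)|ds,
\]
and the last integrand is at most $d_b(m,m')$. Gronwall's lemma gives $\sup_t|X^m_t-X^{m'}_t|\le Te^{LT}d_b(m,m')$ pathwise. Since $(X^m_t,\beta_t)$ and $(X^{m'}_t,\beta_t)$ form a coupling of $\kappa_t(m)$ and $\kappa_t(m')$, we get $\cW_2(\kappa_t(m),\kappa_t(m'))\le Te^{LT}d_b(m,m')$, and similarly for $\overline\kappa$. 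Integrating in $t$ yields the bound with $C_\kappa=(T+1)Te^{LT}$.

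\emph{Property (4)} is the main obstacle, because $\tau^{(2)}$-convergence $m^n\to m$ does not give $d_b(m^n,m)\to0$. Instead, use the structure of $b$ from (S3): $b(t,x,m_t,a)=\bar b(t,x,y_t,a)$ with $y_t=\int\hat b(t,z)m_t(dz,da)$. Along $(m^n,\overline m^n)\to(m,\overline m)$ in $\cR_0$, Lemma \ref{lemma:compactness_a_priori_R0} lets us use the stable topology. I would first aim to show $y^n_t\to y_t$ in $L^1(dt)$, testing with $g(t,x,a)=\hat b(t,x)h(t)$ for $h\in L^\infty$. This gives weak convergence in $L^1$. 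To upgrade it to strong convergence, I would use the uniform moment bound \eqref{uniform_bound} together with the continuity of $t\mapsto m_t$ that elements of $\cR[m]$ inherit from the martingale constraint. Failing that, I would avoid strong convergence and instead prove $\E\int_0^T|X^{m^n}_t-X^m_t|dt\to0$ directly. The idea is to write the drift difference as $\bar b(s,X^m_s,y^n_s,\beta_s)-\bar b(s,X^m_s,y_s,\beta_s)$ and use continuity of $\bar b$ in $y$, boundedness, and dominated convergence along a.e.-convergent subsequences of $y^n$.

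Once $\E\sup_t|X^{m^n}_t-X^m_t|\to0$ along every subsequence, the uniform $\bar p$-moments with $\bar p>2$ give convergence of integrals of continuous test functions of quadratic growth. Hence $(\kappa,\overline\kappa)(m^n)\to(\kappa,\overline\kappa)(m)$ in $\tau^{(2)}\otimes\tau^{(2)}$.
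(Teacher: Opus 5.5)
Your proofs of properties (1)–(3), and the closing Vitali step of (4), are the same as the paper's. The gap is at the crux of (4). The paper obtains $\E[\sup_t|X^{m^n}_t-X^m_t|^2]\to0$ directly from \cite[Lemma 3.9]{dumitrescu2021EJP}. Your self-contained route instead hinges on \emph{strong} convergence $y^n\to y$ in $L^1([0,T])$, where $y^n_t=\int\hat b(t,z)m^n_t(dz,da)$, and neither of your two proposed ways establishes it.

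The fallback is circular. A.e.-convergent subsequences of $(y^n)$ exist only if $y^n\to y$ in $dt$-measure, i.e.\ strongly in $L^1$. From weak convergence alone nothing can be said about $\bar b(s,x,y^n_s,a)$, since $\bar b$ is nonlinear in $y$; oscillating $y^n$ are the obvious obstruction.

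The primary route appeals to ``continuity of $t\mapsto m_t$'' for elements of $\cR[m]$, which fails for three reasons. First, the joint flow $m_t(dx,da)$ is in general only measurable in $t$, because the action component can jump. Second, your sequence lies in $\cR_0$, the closed convex hull, not in some $\cR[\tilde m]$. Third, continuity of each $t\mapsto y^n_t$ separately gives no compactness in $L^1$. What you need is a time-regularity bound \emph{uniform in} $n$, and the moment bound \eqref{uniform_bound} does not provide one.

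The missing ingredient is the argument of Lemma \ref{lemma:L1_convergence}, run as follows.
\begin{itemize}
\item For $h$ with bounded derivatives, test the martingale constraint with $u(t,x)=-\psi(t)h(t,x)$, $\psi\in\dC^1_c((0,T))$. This bounds the total variation of $t\mapsto\int h(t,x)m_t(dx,da)$ uniformly on $\bigcup_{\tilde m}\cR[\tilde m]$.
\item The bound passes to convex combinations and to $\tau^{(2)}$-limits, hence to all of $\cR_0$.
\item BV-compactness, together with identification of the weak limit, then yields $L^1$ convergence.
\end{itemize}
Since $\hat b$ is only bounded and continuous, you must add an approximation step. Take smooth $h_k$ with bounded derivatives and $\|h_k\|_\infty\le\|\hat b\|_\infty$, converging to $\hat b$ uniformly on $[0,T]\times[-R,R]$. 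Using \eqref{uniform_bound}, the error is at most $2T\sup_{[0,T]\times[-R,R]}|\hat b-h_k|+CR^{-2}$. Alternatively, cite \cite[Lemma 2.11]{dumitrescu2021EJP}, as the paper does in Lemma \ref{LP:lemma:continuity_G_u}.

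Once $y^n\to y$ in $L^1([0,T])$ is available, your Gronwall estimate gives $\E[\sup_t|X^{m^n}_t-X^m_t|]\to0$ along every subsequence, because the drift difference is bounded and continuous in $y$. The rest of your argument then goes through.
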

\begin{proof}
By Assumption \ref{standing_assumptions}, for any $m \in V_2$ there exists a unique strong solution of \eqref{eq:relation:prop_deviations:eq_fixed_flow}, so that the family of kernels \eqref{eq:relation:prop_deviations:kernels_fixed_flow} is well defined.
Consider the map $(\kappa,\overline{\kappa}):V_2 \to V_2 \times \cP_2(\R)$ so that $\kappa(m) = (\kappa_t(dx,da \vert m) )_{t \in [0,T]}$ and $\overline{\kappa}(m) = \overline{\kappa}(dx \vert m)$.
We show that it satisfies the requirements of Definition \ref{LP:def:admissible_deviations}.
As for point \eqref{LP:def:admissible_deviations:dynamics}, let $u \in \dC^{1,2}_b([0,T] \times \R)$.
By It\^{o}'s formula and Fubini's theorem, exploiting the boundedness of $\partial_x u$ and $\sigma$, we deduce
\begin{multline*}
    \int_{\R} u(T,x) \overline{\kappa} (dx\vert m) - \int_{\R} u(0,x) m_0^*(dx) = \E [u(T,X^m_T)] - \E[u(0,\xi)] \\
    = \int_0^T \E\left[  (\partial_t + \cL )u(t,X^m_t,m_t,\beta_t)\right] dt =  \int_0^T \int_{\R \times A} (\partial_t + \cL )u(t,x,m_t,a)\kappa_t(dx,da \vert m) dt,
\end{multline*}
so that $\kappa(m)$ satisfies \eqref{eq:def:pre_martingale_property} for any $m \in V_2$ and thus $(\kappa,\overline{\kappa})(m) \in \cR[m]$.
Property \eqref{LP:def:admissible_deviations:independence} is straightforward.
As for property \eqref{LP:def:admissible_deviations:stability}, fix $m,m'\in V_2$ and set $\Delta_t:=X^m_t-X^{m'}_t$. By coupling the laws $\kappa_t(\cdot|m)$ and $\kappa_t(\cdot|m')$ through the joint law of $(X^m_t,\beta_t,X^{m'}_t,\beta_t)$, we get
\[
    \cW_2(\kappa_t(\cdot|m),\kappa_t(\cdot|m'))\leq \E[|\Delta_t|^2]^{1/2}, \qquad \cW_2(\overline{\kappa}(\cdot|m),\overline{\kappa}(\cdot|m'))\leq \E[|\Delta_T|^2]^{1/2}.
\]
Moreover,
\[
    \Delta_t=\int_0^t\left(b(s,X^m_s,m_s,\beta_s)-b(s,X^{m'}_s,m'_s,\beta_s)\right)ds.
\]
By adding and subtracting $b(s,X^{m'}_s,m_s,\beta_s)$, we obtain
\[
\begin{aligned}
    &\left|b(s,X^m_s,m_s,\beta_s)-b(s,X^{m'}_s,m'_s,\beta_s)\right| \\
    &\leq \left|b(s,X^m_s,m_s,\beta_s)-b(s,X^{m'}_s,m_s,\beta_s)\right|+\left|b(s,X^{m'}_s,m_s,\beta_s)-b(s,X^{m'}_s,m'_s,\beta_s)\right|.
\end{aligned}
\]
By the Lipschitz continuity of $b$ in the state variable and by the definition of $d_b$, for $dt$-a.e. $s\in[0,T]$,
\[
    \left|b(s,X^m_s,m_s,\beta_s)-b(s,X^{m'}_s,m'_s,\beta_s)\right|\leq L|\Delta_s|+d_b(m,m').
\]
Therefore, Gronwall's lemma gives
\[
    \sup_{t\in[0,T]}|\Delta_t|\leq C d_b(m,m')\quad \P\text{-a.s.}
\]
In particular,
\[
    \sup_{t\in[0,T]}\E[|\Delta_t|^2]^{1/2}\leq C d_b(m,m').
\]
Consequently,
\[
    d_{\cR_0}\big((\kappa(m),\overline{\kappa}(m)),(\kappa(m'),\overline{\kappa}(m'))\big) \leq C d_b(m,m'),
\]
up to changing the constant $C$. This proves condition \eqref{LP:def:admissible_deviations:stability}.
It remains to verify the continuity property \eqref{LP:def:admissible_deviations:continuity}.
Let $((m^n,\overline m^n))_{n\geq1}$ and $(m,\overline m)$ be elements of $\cR_0$ such that $(m^n,\overline m^n)\to(m,\overline m)$ in $\tau^{(2)}\otimes\tau^{(2)}$.
Let $q_t(da) = \delta_{\beta_t}(da)$ be the relaxed control associated to the deviation $\beta$.
Then, by \cite[Lemma 3.9]{dumitrescu2021EJP}, it holds
\begin{equation}\label{eq:LP:relation:prop_deviations:strong_convergence}
\E \left[\sup_{t \in [0,T]} \vert X^{m^n}_t - X^m_t \vert^2 \right] \overset{n \to \infty}{\longrightarrow} 0,
\end{equation}
Let $\varphi:[0,T]\times\R\times A\to\R$ be continuous with at most quadratic growth in $x$, uniformly in $(t,a)$.
By definition of $\kappa$, we have
\[
    \int_0^T\int_{\R\times A}\varphi(t,x,a)\kappa_t(dx,da\vert m^n)dt=\E\left[\int_0^T\varphi(t,X^{m^n}_t,\beta_t)dt\right].
\]
We notice that \eqref{eq:LP:relation:prop_deviations:strong_convergence} implies $X^{m^n}\to X^m$ in probability uniformly on $[0,T]$.
Hence
\[
    \int_0^T\left|\varphi(t,X^{m^n}_t,\beta_t)-\varphi(t,X^m_t,\beta_t)\right|dt\to0
\]
in probability.
By the quadratic growth of $\varphi$, the compactness of $A$, and standard moment estimates for SDEs with bounded drift, the family $(\int_0^T\left|\varphi(t,X^{m^n}_t,\beta_t)\right|dt )_{n \geq 1}$ is uniformly integrable.
Indeed, $m_0^*\in\cP_{\bar{p}}(\R)$ with $\bar{p}>2$ gives
\[
    \sup_{n\geq1}\E\left[\sup_{t\in[0,T]}|X^{m^n}_t|^{\bar p}\right]<\infty.
\]
Thus, Vitali's theorem gives
\[
    \E\left[\int_0^T\left|\varphi(t,X^{m^n}_t,\beta_t)-\varphi(t,X^m_t,\beta_t)\right|dt\right]\to0.
\]
Hence $\kappa(m^n)\to\kappa(m)$ in $\tau^{(2)}$.
The same argument applied at terminal time implies $\overline{\kappa}(m^n)\to\overline{\kappa}(m)$ in $\tau^{(2)}$.
\end{proof}

The next proposition is a technical result which is needed in the proof of Theorem \ref{theorem: equivalence of formulations} (see Step 2 therein).

\begin{proposition}\label{LP:relation:prop:full_martingale_property}
Assume, without loss of generality, that $(\Omega,\cF,\P)$ is Polish.
Let $(\lambda,\mu)$ be a correlated measure flow with $\rho = \P \circ (\mu,\mu^x_T)^{-1}$, and $\beta$ an admissible deviation.
Let $X^\beta$ be the solution of \eqref{strong_formulation:dev_player:dynamics} with deviation $\beta$ and flow of measures $\mu$.
Then, there exists an admissible deviation $(\kappa,\overline{\kappa}) \in \cD$ so that
\begin{equation}\label{eq:relation:prop:deviation:disintegration}
\begin{aligned}
    \E[\1_B(X^\beta_t) \1_C(\beta_t) \1_D(\mu,\mu^x_T)] & = \int_D \kappa_t(B \times C \vert m) \rho(dm,d\overline{m}),  && \forall B \in \cB_{\R}, \, C \in \cB_{A}, \, D \in \cB_{V_2 \times \cP_2(\R)}, \\
    \E[\1_B(X^\beta_T) \1_D(\mu,\mu^x_T)] & = \int_D \overline{\kappa}(B \vert m) \rho(dm,d\overline{m}), && \forall B \in \cB_{\R}, \, D \in \cB_{V_2 \times \cP_2(\R)}.
\end{aligned}
\end{equation}
\end{proposition}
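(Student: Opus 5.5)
The natural candidate is the deviation already built in Proposition \ref{LP:relation:prop_deviations}. For each fixed $m \in V_2$, let $X^m$ be the strong solution of \eqref{eq:relation:prop_deviations:eq_fixed_flow} driven by the same $\beta$, $\xi$, $W$, and define $(\kappa,\overline{\kappa})$ by \eqref{eq:relation:prop_deviations:kernels_fixed_flow}. That proposition already gives $(\kappa,\overline{\kappa}) \in \cD$. So what remains is the disintegration identity \eqref{eq:relation:prop:deviation:disintegration}, i.e. that conditionally on $(\mu,\mu^x_T)$ the pair $(X^\beta_t,\beta_t)$ has law $\kappa_t(\cdot\vert\mu)$.

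\textbf{Step 1 (solution map).} By strong existence and pathwise uniqueness (Yamada--Watanabe), there is a measurable map $\Phi:V_2 \times \R \times \dC([0,T]) \times \dC([0,T]) \to \dC([0,T])$ with $X^m = \Phi(m,\xi,W,\cdot)$ for every deterministic $m$. The path argument through which $\beta$ enters has to be handled as well. Because $\beta$ is $\bbF^{\xi,W}$-progressively measurable, I would write $\beta_t = \tilde\beta(t,\xi,W)$ for a progressively measurable functional $\tilde\beta$ of the canonical path, and absorb $\tilde\beta$ into the drift. This gives a solution functional $X^m_t = \Psi_t(m,\xi,W)$.

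The delicate point is joint measurability of $\Psi$ in $m$. I would obtain it either from the Lipschitz stability in $d_b$ proved in Proposition \ref{LP:relation:prop_deviations}, or by constructing $\Psi$ as the a.s. limit of Picard iterates. Each Picard iterate is jointly measurable in $(m,\xi,W)$ because $b$ depends on $m$ only through $\int\hat b(t,y)m_t(dy,du)$, which is measurable in $m$.

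\textbf{Step 2 (freezing).} Since $\mu$ is $\cF_0$-measurable and independent of $(\xi,W)$, the process $\Psi(\mu,\xi,W)$ solves \eqref{strong_formulation:dev_player:dynamics} with flow $\mu$. To see this, plug $m=\mu(\omega)$ into the pathwise integral equation: it holds for all $m$ simultaneously on a full-measure set, by the measurable version chosen in Step 1. Pathwise uniqueness then gives $X^\beta = \Psi(\mu,\xi,W)$ a.s.

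Next apply the freezing lemma. Independence of $\mu$ from $(\xi,W)$ (Definition \ref{probabilistic:def_corr_flow}(iii)), together with the fact that $\mu^x_T$ is a function of $\mu$, yields
\[
\E[\1_B(X^\beta_t)\1_C(\beta_t)\1_D(\mu,\mu^x_T)] = \E\big[\1_D(\mu,\mu^x_T)\, h(\mu)\big],
\]
where
\[
h(m)=\E[\1_B(\Psi_t(m,\xi,W))\1_C(\tilde\beta(t,\xi,W))]=\kappa_t(B\times C\vert m).
\]
Rewriting the right-hand side as an integral against $\rho=\P\circ(\mu,\mu^x_T)^{-1}$ gives the first identity in \eqref{eq:relation:prop:deviation:disintegration}. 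The terminal identity follows in the same way with $t=T$.

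The Polish assumption on $\Omega$ is used only to guarantee regular conditional distributions and the functional representation of $\beta$. If needed, one can instead pass to the canonical space of $(\xi,W,U)$.

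The main obstacle is Step 1: securing a version of the solution map that is jointly measurable in $(m,\xi,W)$. Measurability in $m$ is needed for the composition with $\mu$ to make sense. I would handle it through the Picard-iteration construction, which also makes the almost-sure identification in Step 2 rigorous, since it holds simultaneously for all $m$ off a single null set.
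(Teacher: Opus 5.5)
Your argument is correct, but it is organized differently from the paper's. The paper conditions on the flow. It takes a regular conditional probability $\P^m=\P(\cdot\,\vert\,\mu=m)$, which is the only place the Polish assumption is used, and notes that by independence $(\xi,W,\beta)$ has the same law under $\P^m$ as under $\P$. It then observes that under $\P^m$ the process $X^\beta$ solves the fixed-flow equation with flow $m$, and invokes uniqueness in law to get $\P^m\circ(X^\beta_t,\beta_t)^{-1}=\kappa_t(\cdot\,\vert\, m)$ for $\rho$-a.e. $m$, after which it integrates over $D$.

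You instead build a jointly measurable solution map $\Psi$, identify $X^\beta=\Psi(\mu,\xi,W)$ pathwise, and conclude by the freezing lemma, i.e.\ Fubini on the product of the independent laws of $(\mu,\mu^x_T)$ and $(\xi,W)$. Your route exploits the additive noise: the equation is a pathwise ODE with drift Lipschitz in $x$, so the Picard iterates converge on a single full-measure set simultaneously for all $m$. It dispenses with conditional probabilities and with the Polish assumption. It also yields the measurability of $m\mapsto\kappa_t(B\times C\,\vert\, m)$, which is needed to make sense of $\int_D\kappa_t(B\times C\,\vert\, m)\rho(dm,d\overline m)$ and which the paper leaves implicit. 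The paper's route is shorter on the page, but its appeal to uniqueness in law with the exogenous input $\beta$ tacitly rests on the same fact you make explicit: the solution is a measurable functional of $(\xi,W,\beta)$.

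Two minor remarks. First, Lipschitz stability in $d_b$ alone would not give Borel measurability in $m$, because the sup-type pseudo-distance $d_b$ is unrelated to the $\tau^{(2)}$-Borel structure of $V_2$; the Picard construction is the one to keep. Second, $\Psi$ lives on $V_2$, whose $dt$-a.e. classes do not determine $\mu^x_T$. It is therefore cleaner to freeze directly on the pair $(\mu,\mu^x_T)$, viewed as a $V_2\times\cP_2(\R)$-valued variable with law $\rho$ independent of $(\xi,W)$, rather than to argue that $\mu^x_T$ is a function of $\mu$.
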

\begin{proof}
Let $\P^m(\cdot):=\P(\cdot \, \vert \, \mu=m)$ be a regular conditional probability of $\P$ given $\mu$.
Since $\mu$ is independent of $(\xi,W,\beta)$, under $\P^m$ the law of $(\xi,W,\beta)$ coincides with its law under $\P$, for $\P\circ\mu^{-1}$-a.e. $m\in V_2$.
For every $m\in V_2$, let $X^m$ be the unique strong solution of \eqref{eq:relation:prop_deviations:eq_fixed_flow} and define $(\kappa,\overline\kappa)$ as in \eqref{eq:relation:prop_deviations:kernels_fixed_flow}. By Proposition
\ref{LP:relation:prop_deviations}, $(\kappa,\overline\kappa)\in\cD$.
We now identify these kernels with the conditional laws of the deviating state process.
Under $\P^m$, the process $X^\beta$ satisfies
\[
dX^\beta_t=b(t,X^\beta_t,m_t,\beta_t)dt+\sigma dW_t,
\qquad X^\beta_0=\xi.
\]
Therefore, by uniqueness in law for the fixed-flow equation, for every
$t\in[0,T]$ and for $\rho$-a.e. $(m,\overline{m} )$,
\[
    \kappa_t(B\times C\vert m) = \P^m\big((X^\beta_t,\beta_t)\in B\times C\big) = \E\left[\1_B(X^\beta_t)\1_C(\beta_t)\mid \mu=m\right],
\]
and similarly
\[
    \overline\kappa(B\vert m) = \P^m\big(X^\beta_T\in B\big) = \E\left[\1_B(X^\beta_T)\mid \mu=m\right].
\]
Let now $D\in\cB_{V_2\times\cP_2(\R)}$. Since $\rho=\P\circ(\mu,\mu^x_T)^{-1}$, we obtain
\begin{multline*}
    \int_D \kappa_t(B\times C\vert m)\rho(dm,d\overline{m} ) = \E\left[ \1_D(\mu,\mu^x_T)\kappa_t(B\times C\vert \mu) \right] \\
    = \E\left[\1_D(\mu,\mu^x_T) \E\left[\1_B(X^\beta_t)\1_C(\beta_t)\mid \mu\right] \right] = \E\left[ \1_B(X^\beta_t)\1_C(\beta_t)\1_D(\mu,\mu^x_T) \right].
\end{multline*}
This proves the first identity in \eqref{eq:relation:prop:deviation:disintegration}. The second one follows in the same way, using the conditional law of $X^\beta_T$.
This concludes the proof.
\end{proof}

\section{Solution to the LP problem}\label{section: solution to LP problem}
The goal of this section is to prove the following result:
\begin{theorem}\label{LP:thm:solution_moderator_optimization_problem}
Under Assumptions \ref{standing_assumptions}, there exists an optimal LP-CCE for the moderator's optimization problem.
\end{theorem}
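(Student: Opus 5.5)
The plan is the standard direct method: show that $\cE$ is nonempty and compact in a suitable topology on $\cP(V_2\times\cP_2(\R))$, and that $\Gamma^0$ is lower semicontinuous on $\cE$. The infimum in \eqref{LP:mediator_value} is then attained.

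\textbf{Step 1: restriction to $\cR_0$.} First I would show that every $\rho\in\cM$ is concentrated on $\cR_0$; this is the content of the Lemma \ref{LP:lemma:restriction_to_R_0} announced above. The idea is as follows. Take $\phi$ in \eqref{eq:consistent_mtg_property} ranging over a countable family that is measure-determining, and $u$ over a countable family dense in $\dC^{1,2}_b$. This shows that $\rho$-a.e. $(m,\overline m)$ satisfies \eqref{eq:def:pre_martingale_property} relative to its own flow $m$, so $(m,\overline m)\in\cR[m]\subseteq\cR_0$.

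\textbf{Step 2: compactness of $\cM$.} By Lemma \ref{lemma:compactness_a_priori_R0}, $\cP(\cR_0)$ is compact for weak convergence, with $\cR_0$ carrying $\tau^{(2)}\otimes\tau^{(2)}$. Hence it suffices to show that $\cM$ is closed in $\cP(\cR_0)$. For fixed $u$ and $\phi$, the integrand in \eqref{eq:consistent_mtg_property} must be bounded and continuous on $\cR_0$. It is bounded because $u,\partial_t u,\partial_x u,\partial_x^2u$ and $b$ are bounded. For continuity, $m\mapsto\int_0^T\int(\partial_t+\cL)u(t,x,m_t,a)\,m_t(dx,da)dt$ must be continuous along $m^n\to m$. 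I would use that $\int\hat b(t,y)m^n_t(dy)$ converges for a.e. $t$ along subsequences; I am not sure of this, and it may need to come from the stable topology or from an equicontinuity estimate in $t$ obtained from the martingale constraint. Joint continuity of $\bar b$, together with the coincidence of $\tau^{(2)}$ and $\bar\tau^{(2)}$ on $\cR_0$ (Lemma \ref{lemma:compactness_a_priori_R0}), then handles the product of a merely measurable-in-$t$ integrand with converging measures. So $\cM$ is closed, hence compact. It is also convex, since the constraint is linear in $\rho$.

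\textbf{Step 3: closedness of $\cE$.} Fix $(\kappa,\overline\kappa)\in\cD$. I claim $\rho\mapsto\Gamma^{dev}[\rho](\kappa,\overline\kappa)-\Gamma[\rho]$ is continuous on $\cM$. Indeed, on $\cR_0$ the maps $(m,\overline m)\mapsto\int\!\!\int f(t,x,m_t,a)\kappa_t(dx,da|m)dt+\int g(x,\overline m)\overline\kappa(dx|m)$ and $(m,\overline m)\mapsto F[m,\overline m](m,\overline m)$ are continuous. This uses continuity of $\kappa$ (Definition \ref{LP:def:admissible_deviations}\eqref{LP:def:admissible_deviations:continuity}), the scalar mean-field structure of $f,g$, and the local Lipschitz bounds in $y$. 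The maps are also bounded on $\cR_0$ by the quadratic growth and the uniform moment bound \eqref{uniform_bound}. There is one subtlety: $x\mapsto f$ has quadratic growth, so the integrals converge only under uniform integrability of $|x|^2$. I expect this to follow because $\cR_0$ is compact in $\tau^{(2)}$, which already encodes convergence of second moments. Thus each constraint in \eqref{eq:LP:def_CCE} defines a closed convex subset of $\cM$, and $\cE$, as an intersection of such sets, is compact and convex.

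\textbf{Step 4: existence and the main obstacle.} $\cE\neq\emptyset$ because an LP-NE exists under these assumptions (\cite{dumitrescu2021EJP,leutscher_thesis}), and Proposition \ref{LP:prop:NE_is_CCE} turns it into an LP-CCE. The same continuity argument as in Step 3, now with $f^0,g^0$, makes $\Gamma^0$ continuous, and even linear, on $\cE$. A continuous function on a nonempty compact set attains its minimum, which gives $\rho^*$. I expect the main obstacle to be the continuity on $\cR_0$ of the nonlinear integrands $m\mapsto\int\!\!\int h(t,x,m_t,a)m_t(dx,da)dt$ (Steps 2 and 3), where $h$ depends on $m_t$ through the time-dependent average $\int\hat h(t,y)m_t(dy)$. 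For this I would first try to prove that, on $\cR_0$, $t\mapsto\int\hat h(t,y)m_t(dy)$ is equicontinuous. For $\hat f,\hat f^0\in\dC^{1,2}$, apply the martingale identity on $[s,t]$ with $u=\hat f$ (after truncation), which gives uniform convergence in $t$. For $\hat b$, which is merely continuous, I would fall back on the stable-topology equivalence.
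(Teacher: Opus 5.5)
Your plan follows the paper's proof step for step. It restricts $\cM$ to $\cP(\cR_0)$ via a countable determining class (Lemma \ref{LP:lemma:restriction_to_R_0}), and gets compactness of $\cM$ from closedness in $\cP(\cR_0)$ using continuity of $\cG^u$ on $\cR_0$ (Lemmas \ref{LP:lemma:continuity_G_u} and \ref{lemma:M_convex_compact}). It obtains continuity of $F$, $F^0$ and $\Delta^{(\kappa,\overline{\kappa})}$ from the local Lipschitz bounds in $y$, property \eqref{LP:def:admissible_deviations:continuity} of Definition \ref{LP:def:admissible_deviations} and \eqref{uniform_bound}. It then proves nonemptiness through an LP-NE and Proposition \ref{LP:prop:NE_is_CCE}, and minimizes the continuous linear $\Gamma^0$ on the compact set $\cE$. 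For $\hat f,\hat f^0$, your time-regularity argument from the martingale constraint is also the paper's mechanism. Lemma \ref{lemma:L1_convergence} uses a uniform $BV$ bound and $L^1$-compactness rather than equicontinuity, but either works.

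The one step that fails as written is the drift term. There, for the merely continuous $\hat b$, you propose to fall back on the coincidence of $\tau^{(2)}$ and $\bar{\tau}^{(2)}$ on $\cR_0$. Because $\bar b$ is nonlinear in $y$, you need $\psi^n(t)=\int\hat b(t,y)m^n_t(dy)\to\psi(t)$ strongly in $L^1([0,T])$, hence $dt$-a.e. along subsequences, as you note yourself in Step 2. Stable convergence alone only gives $\int_0^T\varphi(t)\psi^n(t)dt\to\int_0^T\varphi(t)\psi(t)dt$ for bounded measurable $\varphi$, i.e. weak convergence in time. For example, $\delta_{\sin(nt)}$ in the state variable converges stably to the arcsine law at every $t$, while $\hat b(\sin(nt))$ does not converge strongly. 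In that case the drift integral with $\bar b(t,x,\psi^n(t),a)$ can converge to the wrong limit. What excludes such oscillations on $\cR_0$ is the martingale constraint, so this step has to use it.

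The paper imports the required strong convergence from \cite[Lemma 2.11]{dumitrescu2021EJP}. Alternatively, you can mollify $\hat b$ to obtain a bounded $h\in\dC^{1,2}([0,T]\times\R)$ with bounded derivatives that is uniformly $\varepsilon$-close to $\hat b$ on $[0,T]\times[-R,R]$. Apply Lemma \ref{lemma:L1_convergence} (or your own time-regularity argument) to $h$. The remainder is then bounded, uniformly in $n$, by $T\varepsilon$ plus a tail term of order $1/R^2$ coming from Chebyshev and \eqref{uniform_bound}. With this repair, the rest of your Step 2 goes through: uniform continuity of $\bar b(t,\cdot,\cdot,\cdot)$ on compacts, tightness, and stable convergence for the frozen integrand. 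The remainder of your proof is then complete.
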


The roadmap for proving Theorem \ref{LP:thm:solution_moderator_optimization_problem} is the following: we first prove in Lemma \ref{LP:lemma:restriction_to_R_0} that $\cM \subseteq \cP(\cR_0)$.
Thus, we consider $\cM$ with the weak topology of convergence of probability measures on $\cR_0$.
Notice that, since $\cR_0$ is compact, we have $\dC_b(\cR_0) = \dC(\cR_0)$, since any continuous function on the compact set $\cR_0$ is also bounded.
Then, in Lemma \ref{lemma:M_convex_compact}, we prove that the set $\cM$ is compact and convex, and in Lemma \ref{lemma:continuity} we prove that the functionals $\rho \mapsto \Gamma[\rho]$ and $\rho \mapsto \Gamma^{dev}[\rho](\kappa,\overline{\kappa})$ are continuous.
This will imply that the set of CCEs $\cE$ is compact and convex as well.
As any LP-NE is also an LP-CCE by Proposition \ref{LP:prop:NE_is_CCE} and there exists at least one LP-NE by \cite[Theorem 3.4]{leutscher_thesis}, $\cE$ is nonempty.
By relying on the continuity of the mediator's payoff $\Gamma^0$, this will imply the existence of an optimal LP-CCE for the mediator.

\begin{lemma}\label{LP:lemma:continuity_G_u}
For any $u \in C^{1,2}_b([0,T]\times\R)$, define the map $\cG^u:V_2 \times \cP_2(\R) \to \R$ by
\begin{multline}\label{eq:LP:map_G_u}
    \cG^u(m,\overline{m}):= \int_{\R } u(T,x) \overline{m}(dx) -\int_{\R }u(0,x)m_0^*(dx) \\
    - \int_{0}^{T}\int_{\R \times A} (\partial_t+\cL ) u(t,x,m_t,a)m_{t}(dx,da)dt.
\end{multline}
Then, the restriction of $\cG^u$ to $\cR_0$ is a continuous map from $(\cR_0,\tau^{(2)}\otimes\tau^{(2)})$ to $\R$.
\end{lemma}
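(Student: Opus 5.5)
Split $\cG^u$ into three pieces and show each is continuous along a sequence $(m^n,\overline m^n)\to(m,\overline m)$ in $(\cR_0,\tau^{(2)}\otimes\tau^{(2)})$. Since $\cR_0$ is compact and metrizable on the relevant bounded-moment set, sequential continuity suffices. The pieces are:
\[
\int_{\R}u(T,x)\overline m(dx), \qquad \int_0^T\!\!\int_{\R\times A}\Big(\partial_t u+\tfrac{\sigma^2}{2}\partial_x^2u\Big)(t,x)\,m_t(dx,da)\,dt, \qquad \int_0^T\!\!\int_{\R\times A} b(t,x,m_t,a)\,\partial_xu(t,x)\,m_t(dx,da)\,dt .
\]
The constant term $\int u(0,x)m_0^*(dx)$ plays no role. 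The first two pieces are integrals of fixed continuous bounded functions, $u(T,\cdot)$ and $\partial_tu+\frac{\sigma^2}{2}\partial^2_xu$. Their convergence is immediate from the definition of $\tau^{(2)}$ on $\cP_2(\R)$ and on $V_2$ respectively, since bounded continuous functions are admissible test functions.

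The difficulty is the drift term, because the integrand depends on the flow itself through $y^n_t:=\int\hat b(t,y)m^n_t(dy,du)$. Here $b(t,x,m,a)=\bar b(t,x,y,a)$ with $y$ equal to this scalar average.

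I first show that $y^n_t\to y_t$ for $dt$-a.e.\ $t$, possibly along a subsequence. Convergence of $\int_0^T h(t)\int\hat b(t,y)m^n_t\,dt$ for arbitrary bounded measurable $h(t)$ follows from the stable topology $\bar\tau^{(2)}$, which by Lemma \ref{lemma:compactness_a_priori_R0} coincides with $\tau^{(2)}$ on $\cR_0$. Together with the uniform bound $|y^n_t|\le\|\hat b\|_\infty$, this gives $y^n\to y$ weakly in $L^2(0,T)$, which is not enough for pointwise a.e.\ convergence. I would instead aim to prove $\int_0^T |y^n_t-y_t|\,dt\to0$. To do so, I test with $g(t,x,a)=\mathrm{sgn}(y^n_t-y_t)\hat b(t,x)$. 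This test function moves with $n$, which is the main obstacle. One route is to extract a subsequence with $\mathrm{sgn}(y^n-y)\to s$ weak-$*$ and combine stable convergence with an equi-continuity argument. A more robust route uses that elements of $\cR_0$ satisfy a martingale-type constraint, so the state marginals $t\mapsto m^{x}_t$ of elements of $\cR[m]$, and of their convex hull and closure, are equicontinuous in time in a weak sense uniform over $\cR_0$. The reason is that the drift is bounded and $\sigma$ is constant: testing \eqref{eq:def:pre_martingale_property} with functions of the form $u(t,x)=\chi(t)\varphi(x)$ gives $|\langle\varphi,m^x_t-m^x_s\rangle|\le C_\varphi|t-s|$ for a.e.\ $s,t$. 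Since $\hat b$ is continuous in $t$, this makes $t\mapsto y^n_t$ equi-Lipschitz up to the approximation of $\hat b(t,\cdot)$ by smooth functions, which uses the uniform second moments in \eqref{uniform_bound}. Arzelà--Ascoli combined with the weak convergence then upgrades to uniform convergence $y^n\to y$. I expect this step to be the technical heart of the proof.

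With $\sup_t|y^n_t-y_t|\to0$ in hand, I write
\[
\int\!\!\int \bar b(t,x,y^n_t,a)\partial_xu\,m^n_t\,dt-\int\!\!\int \bar b(t,x,y_t,a)\partial_xu\,m_t\,dt = I_n+II_n,
\]
where $I_n$ is the integral against $m^n_t$ of $\bar b(t,x,y^n_t,a)-\bar b(t,x,y_t,a)$ times $\partial_xu$. The term $II_n$ is the integral of the fixed function $\bar b(t,x,y_t,a)\partial_xu(t,x)$ against $m^n_t-m_t$. This function is bounded, measurable in $t$ and continuous in $(x,a)$, so $II_n\to0$ by the stable topology, again using Lemma \ref{lemma:compactness_a_priori_R0}. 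For $I_n$, I use the uniform moment bound \eqref{uniform_bound} to restrict to $|x|\le R$ with error $O(R^{-2})$. On the compact set $[-R,R]\times[-\|\hat b\|_\infty,\|\hat b\|_\infty]\times A$, the function $\bar b(t,\cdot)$ is uniformly continuous for each $t$, so the integrand tends to $0$ pointwise in $t$ uniformly in $(x,a)$. Dominated convergence in $t$, with the bound $2\|\bar b\|_\infty\|\partial_xu\|_\infty$, then gives $I_n\to0$. Because the limit is identified independently of the subsequence, the full sequence converges, which proves continuity.
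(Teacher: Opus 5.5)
Your argument is correct and follows the same skeleton as the paper. The terms with fixed test functions are immediate and everything reduces to the drift term. There one upgrades the convergence of $y^n_t=\int\hat b(t,y)m^n_t(dy,du)$ to strong convergence in time, then concludes via the stable topology, which coincides with $\tau^{(2)}$ on $\cR_0$.

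The difference is how the two key steps are obtained. The paper cites them: \cite[Lemma 2.11]{dumitrescu2021EJP} gives $y^n\to y$ in $L^1([0,T])$, and \cite[Lemma F.2]{dumitrescu2021EJP} handles the drift integral. You prove both directly. You extract time-equicontinuity from the martingale constraint and apply Arzel\`a--Ascoli, which gives uniform rather than $L^1$ convergence. You then use the $I_n+II_n$ split with truncation via \eqref{uniform_bound}. So your route is self-contained and gives a slightly stronger statement, whereas the paper's is two lines resting on external results.

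Your first step uses the same mechanism as the paper's own Lemma \ref{lemma:L1_convergence} (a BV bound plus compactness). That lemma, however, requires $h\in\dC^{1,2}$ with bounded derivatives, so it does not cover $\hat b$, which is only bounded and continuous. Your smoothing approximation is what removes this restriction.

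When you write it out, three details need care:
\begin{itemize}
\item Say ``equicontinuous'' rather than ``equi-Lipschitz''. The modulus comes from the uniform continuity of $\hat b$ on compacts, the size of the $\dC^2$ norms of the smooth approximants, and the second-moment tail bound.
\item Note that the Lipschitz bound on $t\mapsto\int\varphi\,d\eta^x_t$ passes from the convex hull to its closure because $\chi'(t)\varphi(x)$ is an admissible $\tau^{(2)}$ test function.
\item Each such bound holds only off a $\varphi$-dependent null set of times. So approximate $\hat b$ on $[0,T]\times[-R,R]$ by finite sums $\sum_j\chi_j(t)\varphi_j(x)$ with $\varphi_j$ twice continuously differentiable with bounded derivatives. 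Then only countably many test functions enter, and each $y^n$ has a continuous version with an $n$-independent modulus.
\end{itemize}

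Finally, $I_n$ only needs $y^n_t\to y_t$ for a.e. $t$ along subsequences, so $L^1$ convergence would already suffice. The moving test function $\mathrm{sgn}(y^n_t-y_t)\hat b(t,x)$ can be dropped.
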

\begin{proof}
Let $(m^n,\overline m^n),(m,\overline m)\in\cR_0$ be such that $(m^n,\overline m^n)\to(m,\overline m)$ in $\tau^{(2)}\otimes\tau^{(2)}$.
By Lemma \ref{lemma:compactness_a_priori_R0}, the same convergence holds with respect to $\bar\tau^{(2)}\otimes\tau^{(2)}$.
To show the continuity of $\cG^u(m,\overline{m})$, the only problematic term is given by
\[
    \int_0^T \int_{\R \times A} b(t,x,m^n_t,a) \partial_x u(t,x) m^n_t(dx,da)dt.
\]
Since $m^n \to m$ in $\overline{\tau}^{(2)}$, \cite[Lemma 2.11]{dumitrescu2021EJP} implies
\[
    \psi^n(t):=\int_{\R\times A}\hat{b}(t,y)m^n_t(dy,du) \to \psi(t):=\int_{\R\times A}\hat{b}(t,y)m_t(dy,du)
\]
in $L^1([0,T])$.
Since $\bar{b}(t,x,y,a)$ is bounded by assumption and $m^n \to m$ in $\overline{\tau}^{(2)}$, \cite[Lemma F.2]{dumitrescu2021EJP} implies
\begin{multline*}
    \int_0^T \int_{\R \times A} \bar b\big(t,x,\psi^n(t),a\big)\partial_x u(t,x)m^n_t(dx,da)dt \\
    \to \int_0^T \int_{\R \times A} \bar b\big(t,x,\psi(t),a\big)\partial_x u(t,x)m_t(dx,da)dt.
\end{multline*}
This implies the continuity of the map $\cG^u(m,\overline{m})$.
\end{proof}

\begin{lemma}\label{LP:lemma:restriction_to_R_0}
Let $\rho \in \cM$.
We have $\mathrm{supp}(\rho) \subseteq \cR_0$ and $ (m,\overline{m}) \in \cR[m]$ for any $(m,\overline{m}) \in \mathrm{supp}(\rho)$.
Consequently, $\cM \subseteq \cP(\cR_0)$.
\end{lemma}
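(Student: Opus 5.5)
The approach is to show that the set
\[
\cS:=\{(m,\overline m)\in V_2\times\cP_2(\R): (m,\overline m)\in\cR[m]\}
\]
carries full $\rho$-mass, and that $\cS$ is contained in the closed set $\cR_0$. Then $\mathrm{supp}(\rho)\subseteq\overline{\cS}\subseteq\cR_0$. It remains to argue that points of the support themselves lie in $\cR[m]$, and for this I will use closedness of the martingale relation within $\cR_0$.

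\textbf{Step 1: pointwise identity $\rho$-a.s.} Fix $u\in \dC^{1,2}_b([0,T]\times\R)$. The consistent martingale property \eqref{eq:consistent_mtg_property} says $\int \phi\,\cG^u\,d\rho=0$ for all $\phi\in\dC_b(V_2\times\cP_2(\R))$, with $\cG^u$ as in \eqref{eq:LP:map_G_u}. The map $\cG^u$ is measurable and $\rho$-integrable: $b$ is bounded and $u$ has bounded derivatives, so $|\cG^u|$ is bounded by a constant. Since $V_2\times\cP_2(\R)$ with $\tau^{(2)}\otimes\tau^{(2)}$ is metrizable (indeed Polish), bounded continuous functions are dense in $L^1(\rho)$. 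It follows that the signed measure $\cG^u\,d\rho$ vanishes, i.e.\ $\cG^u=0$ $\rho$-a.s.

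Next I choose a countable family $\{u_k\}\subset \dC^{1,2}_b$ that is dense in the sense that $u_k\to u$ together with $\partial_t u_k,\partial_xu_k,\partial^2_xu_k$ boundedly and locally uniformly. For example, take smooth compactly supported functions with rational-type approximation, plus constants and cut-off approximations. Dominated convergence, using bounded $b$ and the second-moment integrability of $m$ and $\overline m$, then gives $\cG^u(m,\overline m)=0$ for all $u$ whenever it holds for all $u_k$. Hence $\rho(\cS)=1$.

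\textbf{Step 2: support in $\cR_0$.} By definition $\cS=\bigcup_m\{(\eta,\overline\eta)\in\cR[m]:\eta=m\}\subseteq\bigcup_m\cR[m]\subseteq\cR_0$. The set $\cR_0$ is closed, being compact by Lemma \ref{lemma:compactness_a_priori_R0}. Since the space is metrizable and separable, the support is well defined and $\rho(\mathrm{supp}\rho)=1$; moreover the support is the smallest closed set of full measure. Since $\cR_0$ is closed and $\rho(\cR_0)\ge\rho(\cS)=1$, we get $\mathrm{supp}(\rho)\subseteq\cR_0$, so $\cM\subseteq\cP(\cR_0)$.

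\textbf{Step 3: every support point lies in $\cR[m]$.} This is the main obstacle, because Step 1 only gives a full-measure statement. Let $(m,\overline m)\in\mathrm{supp}(\rho)$. Every neighbourhood of this point has positive mass and therefore meets $\cS\cap\cR_0$, so there is a sequence $(m^n,\overline m^n)\in\cS\cap\cR_0$ converging to $(m,\overline m)$ in $\tau^{(2)}\otimes\tau^{(2)}$; metrizability is used here. For each $u$ we have $\cG^u(m^n,\overline m^n)=0$. By Lemma \ref{LP:lemma:continuity_G_u}, the restriction of $\cG^u$ to $\cR_0$ is continuous, and it is precisely there that the nonlinear drift term $b(t,x,m_t,a)$ is controlled through the stable topology. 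Passing to the limit gives $\cG^u(m,\overline m)=0$ for every $u$, that is, $(m,\overline m)\in\cR[m]$.

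In summary, the delicate points are the density argument in Step 1 and the use of continuity of $\cG^u$ only on $\cR_0$. The latter is why Step 2 must be established before Step 3.
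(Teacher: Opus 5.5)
Your proposal is correct and follows the paper's proof essentially step by step: you show $\cG^u=0$ $\rho$-a.e.\ via the consistent martingale property and a countable determining class, you get $\mathrm{supp}(\rho)\subseteq\cR_0$ from the closedness of the compact set $\cR_0$, and you then use continuity of $\cG^u$ on $\cR_0$ (Lemma~\ref{LP:lemma:continuity_G_u}) to pass from the a.e.\ statement to every support point. Your justification of the a.e.\ vanishing (boundedness of $\cG^u$ together with density of bounded continuous functions in $L^1(\rho)$) and your explicit construction of the countable family only fill in details the paper leaves implicit.
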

\begin{proof}
For any $u \in C^{1,2}_b([0,T]\times\R)$, consider the map $\cG^u$ defined by \eqref{eq:LP:map_G_u}.
Let $\mathcal{U} \subseteq \dC_c^{1,2}([0,T]\times\R)$ be a countable determining class for the martingale property, i.e. such that the identities $\cG^u(m,\overline{m} )=0$ for every $u\in\mathcal{U}$ imply $\cG^u(m,\overline{m} )=0$ for every $u\in\dC_b^{1,2}([0,T]\times\R)$.
Fix $u\in\mathcal{U}$.
By the definition of $\cM$, for every $\phi\in\dC_b(V_2\times\cP_2(\R),\tau^{(2)} \otimes \tau^{(2)})$,
\[
\int_{V_2\times\cP_2(\R)}\phi(m,\overline{m} )\cG^u(m,\overline{m} )\rho(dm,d\overline{m} )=0.
\]
Since $\cG^u$ is measurable, this implies $\cG^u(m,\overline{m} )=0$ for $\rho$-a.e. $(m,\overline{m} )$.
Intersecting the corresponding full-measure sets over $u\in\mathcal{U}$, we obtain a set of full $\rho$-measure on which $\cG^u(m,\overline{m} )=0$ for every $u\in\mathcal{U}$.
By the determining property of $\mathcal{U}$, it follows that $\cG^u(m,\overline{m} )=0$ for every $u\in\dC_b^{1,2}([0,T]\times\R)$, for $\rho$-a.e. $(m,\overline{m} )$.
Hence $(m,\overline{m} )\in\cR[m]$ for $\rho$-a.e. $(m,\overline{m} )$, and therefore $(m,\overline{m} )\in\cR[m]\subseteq\cR_0$ for $\rho$-a.e. $(m,\overline{m} )$.
This implies $\rho(\cR_0) = 1$. As $\cR_0$ is compact in $\tau^{(2)} \otimes \tau^{(2)}$, it is closed, and thus we conclude $\mathrm{supp}(\rho) \subseteq \cR_0$.
It remains to prove that every point $(m,\overline m)\in \mathrm{supp}(\rho)$ belongs to $\cR[m]$.
Fix $u\in\dC_b^{1,2}([0,T]\times\R)$.
By the same argument as above, $\cG^u(m,\overline{m} )=0$ for $\rho$-a.e. $(m,\overline{m} )$.
Since the topologies $\tau^{(2)}\otimes\tau^{(2)}$ and $\bar\tau^{(2)}\otimes\tau^{(2)}$ coincide on $\cR_0$, Lemma \ref{LP:lemma:continuity_G_u} implies that the restriction of $\cG^u$ to $\cR_0$ is continuous with respect to $\tau^{(2)}\otimes\tau^{(2)}$.
As $\mathrm{supp}(\rho)\subseteq\cR_0$, continuity yields $\cG^u(m,\overline{m} )=0$ for every $(m,\overline{m} )\in\mathrm{supp}(\rho)$.
Since $u\in\dC_b^{1,2}([0,T]\times\R)$ was arbitrary, we conclude that $(m,\overline{m} )\in\cR[m]$ for every $(m,\overline{m} )\in\mathrm{supp}(\rho)$.
\end{proof}

\begin{lemma}\label{lemma:M_convex_compact}
The set $\cM$ is convex and compact.
\end{lemma}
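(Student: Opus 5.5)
By Lemma \ref{LP:lemma:restriction_to_R_0}, we may regard $\cM$ as a subset of $\cP(\cR_0)$, where $\cR_0$ carries the topology $\tau^{(2)}\otimes\tau^{(2)}$ and $\cP(\cR_0)$ carries the topology of weak convergence. Lemma \ref{lemma:compactness_a_priori_R0} makes $\cR_0$ a compact metrizable space, so Prokhorov's theorem gives that $\cP(\cR_0)$ is compact. Compactness of $\cM$ therefore reduces to showing that $\cM$ is closed in $\cP(\cR_0)$.

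Convexity is immediate. For fixed $u$ and $\phi$, the left-hand side of \eqref{eq:consistent_mtg_property} is linear in $\rho$, so for $\rho_1,\rho_2\in\cM$ and $\theta\in[0,1]$ the integral against $\theta\rho_1+(1-\theta)\rho_2$ is a convex combination of two zeros. Integrability is not an issue: on $\cR_0$ the uniform second-moment bound \eqref{uniform_bound}, together with boundedness of $u$, its derivatives and $b$, makes $\cG^u$ bounded on $\cR_0$.

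For closedness, I will take $\rho^n\in\cM$ with $\rho^n\to\rho$ weakly in $\cP(\cR_0)$ and fix $u\in\dC_b^{1,2}$ and $\phi\in\dC_b(V_2\times\cP_2(\R))$. The condition can be rewritten as
\[
\int_{\cR_0}\phi\,\cG^u\,d\rho^n=0 .
\]
By Lemma \ref{LP:lemma:continuity_G_u}, the restriction of $\cG^u$ to $\cR_0$ is continuous. It is also bounded, since $|u(T,\cdot)|$, $|u(0,\cdot)|$ and $|(\partial_t+\cL)u|$ are bounded uniformly: the drift $b$ is bounded and $u\in\dC_b^{1,2}$, and only probability masses of total size $T+2$ enter. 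Hence $\phi\,\cG^u|_{\cR_0}\in \dC(\cR_0)$, and weak convergence passes the identity to the limit, giving $\int\phi\,\cG^u\,d\rho=0$. Finally, since $\rho(\cR_0)=1$ and $\rho$ is viewed as a measure on $V_2\times\cP_2(\R)$, we get $\rho\in\cM$.

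The main obstacle is the continuity of $\cG^u$ on $\cR_0$. This is exactly where the coincidence of the two topologies on $\cR_0$ (Lemma \ref{lemma:compactness_a_priori_R0}) is needed to handle the nonlinear drift term $b(t,x,m_t,a)$, and it is already provided by Lemma \ref{LP:lemma:continuity_G_u}. A minor point to check is that the test functions $\phi$ in the definition of $\cM$ are continuous on all of $V_2\times\cP_2(\R)$. This causes no difficulty, since restrictions of such $\phi$ remain continuous on $\cR_0$, and the limit identity only needs to be verified for these restrictions.
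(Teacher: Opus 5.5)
Your proof is correct and follows the paper's own argument: convexity from the linearity of \eqref{eq:consistent_mtg_property} in $\rho$, relative compactness from $\cM\subseteq\cP(\cR_0)$ with $\cR_0$ compact, and closedness by passing to the limit in $\int\phi\,\cG^u\,d\rho^n=0$ via the continuity of $\cG^u$ on $\cR_0$ given by Lemma \ref{LP:lemma:continuity_G_u}. Two small remarks that do not affect the argument. Your separate boundedness check for $\cG^u$ is redundant: a continuous function on the compact set $\cR_0$ is bounded, and in fact $\cG^u$ is bounded on all of $V_2\times\cP_2(\R)$ without the moment bound. Also, Lemma \ref{lemma:compactness_a_priori_R0} gives compactness but not metrizability of $\cR_0$, which you only need in order to argue with sequences, as the paper also does.
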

\begin{proof}
Convexity is obvious from linearity of equation \eqref{eq:consistent_mtg_property} with respect to $\rho$.
As for compactness, it is enough to recall that $\cM \subseteq \cP(\cR_0)$, with $\cR_0$ being compact in $\tau^{(2)} \otimes \tau^{(2)}$. Thus, every sequence in $\cM$ is tight, which implies that $\cM$ is relatively compact.
We just have to show that $\cM$ is closed.
Let $u \in C^{1,2}_b([0,T]\times\R)$ and recall the definition of $\cG^u$ in \eqref{eq:LP:map_G_u}.
Since $\cG^u$ is continuous on $(V_2\times\cP_2(\R),\overline{\tau}^{(2)}\otimes\tau^{(2)})$, it is in particular continuous on $(\cR_0,\overline{\tau}^{(2)}\otimes\tau^{(2)})$, and so it is continuous on  $(\cR_0,\tau^{(2)}\otimes\tau^{(2)})$.
This is enough to conclude that, for any $\phi \in \dC_b(\cR_0)$,
\[
0 = \lim_{n \to \infty} \int_{\cR_0} \phi(m,\overline{m})\cG^u(m,\overline{m}) \rho^n(dm,d\overline{m}) = \int_{\cR_0} \phi(m,\overline{m}) \cG^u(m,\overline{m}) \rho(dm,d\overline{m})
\]
for any $(\rho^n)_{n\geq 1} \subseteq \cM$ converging to $\rho$ in the weak topology on $\cR_0$, where the first equality follows from the consistent martingale property in Definition \ref{LP:def:consistent_mtg_property}.
This is enough to conclude the closedness of $\cM$ and thus the compactness in the weak topology.
\end{proof}

The following lemma extends \cite[Lemma 2.11]{dumitrescu2021EJP} to the unbounded case, provided that we request some additional regularity.
\begin{lemma}\label{lemma:L1_convergence}
Let $h \in \dC^{1,2}([0,T] \times \R)$ such that its derivatives are bounded.
Let $((m^n,\overline{m}^n))_{n \geq 1}$, $(m,\overline{m}) \in \cR_0$ such that $m^n \to m$ in $\tau^{(2)}$.
Then, it holds
\begin{equation}\label{eq:L1_convergence}
    \int_{\R \times A}h(\cdot,x)m^n_\cdot(dx,da) \to \int_{\R \times A}h(\cdot,x)m_\cdot(dx,da) 
\end{equation}
in $L^1([0,T])$.
\end{lemma}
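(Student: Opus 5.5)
Set $H^n(t):=\int_{\R\times A}h(t,x)m^n_t(dx,da)$ and $H(t):=\int_{\R\times A}h(t,x)m_t(dx,da)$. The plan rests on the fact that the martingale property lets us write $H^n(t)$ through an integral identity in time with good regularity. This yields equicontinuity in time, and that, together with weak convergence, gives $L^1$ convergence.

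\emph{Step 1: elements of $\cR_0$ are time-regular.} Take $(m,\overline m)\in\cR[m']$ for some $m'\in V_2$. For $0\le s<t\le T$, test \eqref{eq:def:pre_martingale_property} with a time-localized version of $h$, namely $u(r,x)=\chi(r)h(r,x)$ with $\chi$ a smooth cutoff approximating $\1_{[s,t]}$. The function $h$ is not bounded, only its derivatives are, so this first needs a truncation argument; it is justified by the uniform second moment bound \eqref{uniform_bound}. From this one should obtain the representation, for a.e.\ $s<t$,
\[
H(t)-H(s)=\int_s^t\int_{\R\times A}(\partial_r+\cL)h(r,x,m'_r,a)\,m_r(dx,da)\,dr .
\]
Since $\partial_r h$, $\partial_x h$, $\partial_x^2h$ and $b$ are bounded, the integrand is bounded by a constant $c$, so $|H(t)-H(s)|\le c|t-s|$. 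Both sides are affine in $(m,\overline m)$, and the constant $c$ does not depend on $m'$. The Lipschitz bound therefore passes to convex combinations. It also survives the $\tau^{(2)}$-closure: pair $H$ with smooth test functions of time, and use that $h$ has at most quadratic growth because $|h(t,x)|\le c(1+|x|^2)$. Hence every element of $\cR_0$ has a Lipschitz version of $H$, with a uniform constant $c$. In addition, \eqref{uniform_bound} gives $\sup_t|H(t)|\le c$ uniformly over $\cR_0$.

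\emph{Step 2: compactness.} By Step 1, the family $(H^n)_n$ is uniformly bounded and uniformly Lipschitz on $[0,T]$. By Arzelà--Ascoli, every subsequence has a further subsequence converging uniformly to some continuous $\tilde H$.

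\emph{Step 3: identify the limit.} Take any continuous $\varphi$ on $[0,T]$. Since $(t,x,a)\mapsto\varphi(t)h(t,x)$ is continuous with quadratic growth, convergence in $\tau^{(2)}$ gives $\int_0^T\varphi H^n\,dt\to\int_0^T\varphi H\,dt$. Along the subsequence, uniform convergence gives $\int_0^T\varphi H^n\,dt\to\int_0^T\varphi\tilde H\,dt$. Hence $\tilde H=H$ a.e. The limit is therefore independent of the subsequence, and the whole sequence converges uniformly, in particular in $L^1([0,T])$.

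\emph{Main obstacle.} The hardest step is Step 1: making the localized-in-time martingale identity rigorous for an unbounded $h$, and carrying the Lipschitz bound over to the closed convex hull $\cR_0$. If the time cutoff proves awkward, I would instead aim only for the weaker integrated statement
\[
\Big|\int_0^T\big(\psi'(r)\,H(r)+\psi(r)\,G(r)\big)\,dr\Big|=0
\]
for all smooth compactly supported $\psi$ on $(0,T)$, with $|G|\le c$. This says the distributional derivative of $H$ is bounded by $c$, and that already gives the uniform Lipschitz property.
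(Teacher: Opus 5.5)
Your proof is correct and follows the paper's argument. You test the martingale identity with $\psi(t)h(t,x)$, after extending it to linearly growing test functions via the moment bound \eqref{uniform_bound}. You carry the resulting bound to the convex hull by linearity and to the $\tau^{(2)}$-closure through the test function $\psi'(t)h(t,x)$. You then use compactness, identify the limit against $\varphi(t)h(t,x)$, and conclude with the subsequence principle.

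The only difference is that you keep the sharper estimate $\bigl|\int_0^T\psi'(t)H^n(t)\,dt\bigr|\le c\,\Vert\psi\Vert_{L^1}$, which is a uniform Lipschitz bound, and then apply Arzel\`a--Ascoli. The paper records only $\le C\Vert\psi\Vert_\infty$, a uniform BV bound, and invokes BV compactness in $L^1$. Your variant is slightly more elementary and also yields uniform convergence.
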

\begin{proof}
Define the maps
\[
    H^n(t) := \int_{\R \times A} h(t,x)m^n_t(dx,da), \quad H(t) := \int_{\R \times A} h(t,x)m_t(dx,da).
\]
Denote the set of $\dC^1$ functions with compact support in $(0,T)$ by $\dC^1_c((0,T))$.
We recall that, for any function $\phi:[0,T] \to \R$, the bounded variation norm of $\phi$ in $(0,T)$ is defined by
\[
\Vert \varphi \Vert_{BV((0,T))} := \Vert \varphi \Vert_{L^1([0,T])} + V(\varphi,(0,T)),
\]
where
\[
V(\varphi,(0,T)) :=\sup\left\{ \int_0^T \psi'(t)\phi(t)dt: \, \psi \in \dC^1_c((0,T)), \, \Vert \psi \Vert_\infty \leq 1 \right\},
\]
We claim that $(H^n)_{n \geq 1}$ are functions of bounded variation on the open interval $(0,T)$ such that
\begin{equation}\label{eq:uniform_BV}
    \sup_{n \geq 1} \Vert H^n \Vert_{BV((0,T))} < \infty.
\end{equation}
To see our claim, first notice that, since $h(t,x)$ has bounded derivatives then it has linear growth in $x$ uniformly in $t$. Therefore, we have
\begin{multline*}
    \int_0^T \vert H^n(t) \vert dt = \int_0^T \left\vert \int_{\R \times A} h(t,x)m^n_t(dx,da) \right\vert dt \leq \int_0^T \int_{\R \times A} \left\vert h(t,x)\right\vert m^n_t(dx,da)  dt \\
    \leq C\left( 1 + \int_0^T \int_{\R \times A} \vert x \vert m^n_t(dx,da)dt  \right) \leq C,
\end{multline*}
where the last inequality follows from \eqref{uniform_bound} in Lemma \ref{lemma:compactness_a_priori_R0}.
This shows that the sequence is uniformly bounded in $L^1$ with respect to $n$.
To show that $V( H^n ,(0,T)) \leq C$ for any $n \geq 1$, with $C$ independent on $n$, we first suppose that, for each $n \geq 1$, $(m^n,\overline{m}^n) \in \cR[\widetilde{m}^n]$, $\widetilde{m}^n \in V_2$.
In virtue of the uniform bound \eqref{uniform_bound} on the moments of $m \in \cR_0$, the martingale constraint \eqref{eq:def:pre_martingale_property} holds true for any $u \in \dC^{1,2}([0,T] \times \R)$ with linear growth in $x$ uniformly in $t$.
Take $\psi \in \dC^1_c((0,T))$. Exploiting the boundedness of the coefficient $b$ and of the derivatives of $h$ and the fact that $\psi(0) = \psi(T) = 0$, the martingale constraint to $u(t,x) = -\psi(t) h(t,x)$ yields, for any $n \geq 1$,
\begin{multline}\label{eq:lemma:L1_convergence:bound_psi}
    \int_0^T \psi'(t) H^n(t)dt = \int_0^T \psi'(t) \int_{\R \times A} h(t,x)m^n_t(dx,da)dt \\
    = - \int_0^T \int_{\R \times A} \psi(t)(\partial_t h(t,x) + \cL h(t,x,\widetilde{m}^n_t,a))m^n_t(dx,da)dt \\
    + \int_{\R }\psi(T)h(T,x)\overline{m}^n(dx) - \int_{\R} \psi(0)h(0,x)m^*_0(dx) \leq C \Vert \psi \Vert_\infty.
\end{multline}
This shows $V( H^n ,(0,T)) \leq C$ for any $n \geq 1$, with $C$ independent of $n$, i.e., that  \eqref{eq:uniform_BV} holds true for any $(m^n,\overline{m}^n) \in \cup_{m \in V_2} \cR[m]$.
By linearity, it holds true also on the convex hull of $\cup_{m \in V_2} \cR[m]$.
It remains to show that the bound $V( H^n ,(0,T)) \leq C$ extends to the $\tau^{(2)} \otimes \tau^{(2)}$ closure of the convex hull of $\cup_{m \in V_2} \cR[m]$.
For any $n \geq 1$, there exists $((\eta^k,\overline{\eta}^k))_{k \geq 1}$ in the convex hull of $\cup_{m \in V_2} \cR[m]$ such that $(\eta^k,\overline{\eta}^k) \to (m^n,\overline{m}^n)$ in $\tau^{(2)} \otimes \tau^{(2)}$.
Let $\psi \in \dC^1_c((0,T))$.
We have
\begin{multline*}
    \int_0^T \psi'(t) H^n(t) dt = \int_0^T \int_{\R \times A} \psi'(t) h(t,x) m^n_t(dx,da) dt \\
    = \lim_{k \to \infty} \int_0^T \int_{\R \times A} \psi'(t) h(t,x) \eta^k_t(dx,da) dt \leq C \Vert \psi \Vert_\infty
\end{multline*}
where we have used the fact that $\psi'(t) h(t,x)$ is an admissible test function for convergence in $\tau^{(2)}$, and the bound \eqref{eq:lemma:L1_convergence:bound_psi} established in the convex hull of $\cup_{m \in V_2} \cR[m]$.
By taking the supremum over $\psi \in \dC^1_c((0,T))$ so that $\Vert \psi \Vert_\infty \leq 1$, \eqref{eq:uniform_BV} follows.

Having established \eqref{eq:uniform_BV}, we show the claimed convergence of $H^n$ to $H$ in $L^1([0,T])$.
To this extent, it is enough to show that from an arbitrary subsequence we can extract a further subsequence that converges to $H(t)$ in $L^1([0,T])$.
Since the bounded variation norm is uniformly bounded, by \cite[Theorem 3.23]{ambrosio2000boundedvariation}, there exists a subsequence $n_k$ and a function $z$ such that $(H^{n_k})_{n_k \geq 1}$ converges to $z$ in $L^1([0,T])$.
To show that $z(t) = H(t)$ for $dt$-a.e. $t \in [0,T]$, we notice that, for any $\phi \in \dC_b([0,T])$, it holds
\begin{multline*}
    \int_0^T \phi(t) z(t) dt = \lim_{k \to \infty} \int_0^T \phi(t) H^{n_k}(t) = \lim_{k \to \infty} \int_0^T \int_{\R \times A} \phi(t)h(t,x) m^{n_k}_t(dx,da)dt \\
    =  \int_0^T \int_{\R \times A} \phi(t)h(t,x) m_t(dx,da)dt,
\end{multline*}
where the first equality holds since $H^{n_k} \to z$ in $L^1([0,T])$ and the last equality holds since $m^{n_k}$ converges to $m$ in $\tau^{(2)}$ and the function $\phi(t)h(t,x)$ is a test function for convergence in $\tau^{(2)}$.
Since the above equality holds true for any $\phi \in \dC_b([0,T])$, we conclude $z(t) = H (t)$ $dt$-a.e.
Since this reasoning holds for any subsequence, we conclude that the whole sequence converges to the limit in the statement.
\end{proof}

Recall the definition of the function $F[m,\overline{m}](\eta,\overline{\eta})$ in \eqref{eq:function_F} and define 
\begin{equation}\label{eq:cost_functionals:function_F0}
    F^{0}(m,\overline{m}) := \int_{0}^{T}\int_{\R\times A}f^{0}(t,x,m_{t},a)m_{t}(dx,da)dt+\int_{\R}g^{0}(x,\overline{m})\overline{m}(dx),
\end{equation}
so that, for any $\rho \in \cM$ and $(\kappa,\overline{\kappa}) \in \cD$, we have
\begin{multline}\label{eq:cost_functionals_integrated_form}
    \Gamma^{0}(\rho) = \int_{\cR_0} F^{0}(m,\overline{m})\rho(dm,d\overline{m}), \quad \Gamma[\rho] = \int_{\cR_0} F[m,\overline{m}](m,\overline{m})\rho(dm,d\overline{m}), \\
    \Gamma^{dev}[\rho](\kappa,\overline{\kappa}) = \int_{\cR_0} F[m,\overline{m}]( \kappa(m),\overline{\kappa}(m) )\rho(dm,d\overline{m}).
\end{multline}

\begin{lemma}\label{lemma:continuity_integrands}
The map $F:\cR_0 \times \cR_0 \to \R$ is continuous, and so is $F^0:\cR_0 \to \R$. 
\end{lemma}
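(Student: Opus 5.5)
Since $\cR_0$ is metrizable, we show sequential continuity. Take $((m^n,\overline m^n),(\eta^n,\overline\eta^n))\to((m,\overline m),(\eta,\overline\eta))$ in $\cR_0\times\cR_0$ with respect to $\tau^{(2)}\otimes\tau^{(2)}$. By Lemma \ref{lemma:compactness_a_priori_R0}, the convergence also holds in $\bar\tau^{(2)}\otimes\tau^{(2)}$. We split $F[m^n,\overline m^n](\eta^n,\overline\eta^n)$ into a running part and a terminal part and treat them separately. $F^0$ is the diagonal case with $(f^0,g^0)$ in place of $(f,g)$, so the same argument covers it.

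\emph{Running term.} Set $\psi^n(t):=\int\hat f(t,y)m^n_t(dy,du)$ and define $\psi$ analogously. Since $\hat f\in\dC^{1,2}$ has bounded derivatives, Lemma \ref{lemma:L1_convergence} gives $\psi^n\to\psi$ in $L^1([0,T])$. Linear growth of $\hat f$ together with \eqref{uniform_bound} gives $\sup_{n,t}|\psi^n(t)|\le C$. We write
\begin{multline*}
\int_0^T\!\!\int \bar f(t,x,\psi^n(t),a)\eta^n_t\,dt-\int_0^T\!\!\int \bar f(t,x,\psi(t),a)\eta_t\,dt\\
=\int_0^T\!\!\int\big[\bar f(t,x,\psi^n(t),a)-\bar f(t,x,\psi(t),a)\big]\eta^n_t\,dt+\Big(\int_0^T\!\!\int\bar f(t,x,\psi(t),a)(\eta^n_t-\eta_t)\,dt\Big).
\end{multline*}
Local Lipschitz continuity in $y$ and Cauchy--Schwarz bound the first term by
\[
C\Big(\int_0^T\!\!\int(1+|x|+|a|)^2\eta^n_t\,dt\Big)^{1/2}\Big(\int_0^T|\psi^n-\psi|^2dt\Big)^{1/2}.
\]
The first factor is uniformly bounded by \eqref{uniform_bound}. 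The second tends to zero, because $L^1$ convergence plus the uniform bound yields $L^2$ convergence. For the second term, $g(t,x,a):=\bar f(t,x,\psi(t),a)$ is measurable in $t$, continuous in $(x,a)$ for each fixed $t$, and has quadratic growth in $x$ uniformly, since $\psi$ is bounded and $A$ is compact. It is therefore an admissible test function for $\bar\tau^{(2)}$, and the term vanishes.

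\emph{Terminal term.} The argument is the same, and simpler. Since $\hat g$ is continuous with linear growth, it is a $\tau^{(2)}$ test function, so $y^n:=\int\hat g\,d\overline m^n\to y:=\int\hat g\,d\overline m$ in $\R$. Local Lipschitz continuity then gives
\[
\Big|\int[\bar g(x,y^n)-\bar g(x,y)]\overline\eta^n(dx)\Big|\le C|y^n-y|\to0.
\]
The function $\bar g(\cdot,y)$ is continuous with quadratic growth, so $\int\bar g(x,y)\,\overline\eta^n(dx)\to\int\bar g(x,y)\,\overline\eta(dx)$.

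\emph{Main obstacle.} The delicate step is the running cost. The mean-field argument $\psi^n(t)$ converges only in $L^1(dt)$, not pointwise, and it is integrated against the moving measures $\eta^n$ with unbounded integrands. This is handled by combining Lemma \ref{lemma:L1_convergence} with the uniform second moments of \eqref{uniform_bound} and with the stable topology, which admits test functions that are only measurable in $t$. Uniform integrability of quadratic-growth integrands is not needed beyond this point, because the fixed-$\psi$ integrand is itself a legitimate $\bar\tau^{(2)}$ test function.
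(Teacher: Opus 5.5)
Your proof is correct and follows essentially the same route as the paper. It uses the same two-term splitting of the running cost, Lemma \ref{lemma:L1_convergence} to control the mean-field argument in $L^1([0,T])$, the stable topology $\bar\tau^{(2)}$ for the fixed-$\psi$ term, and the same local-Lipschitz estimate for the terminal cost. The only cosmetic difference is your Cauchy--Schwarz step with the $L^1\to L^2$ upgrade: the paper skips it because \eqref{uniform_bound} holds for every $t$, so the Lipschitz factor $1+\int(|x|+|a|)\eta^n_t(dx,da)+|\psi^n(t)|+|\psi(t)|$ is bounded uniformly in $(n,t)$, and $L^1$ convergence of $\psi^n$ then suffices directly.
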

\begin{proof}
We study the continuity of $F[m,\overline{m}](\eta,\overline{\eta})$, since the continuity of $F^{0}(m,\overline{m})$ is completely analogous.
Let $((\eta^n,\overline{\eta}^n))_{n \geq 1} \subseteq \cR_0$, $(m^n,\overline{m}^n)_{n \geq 1} \subseteq \cR_0$, $(\eta,\overline{\eta}) \in \cR_0$ and $(m,\overline{m}) \in \cR_0$ so that $(\eta^n,\overline{\eta}^n) \to (\eta,\overline{\eta})$ and $(m^n,\overline{m}^n) \to (m,\overline{m})$ in $\tau^{(2)}\otimes\tau^{(2)}$.
Since $\hat{f} \in \dC^{1,2}([0,T] \times \R)$ by assumption, Lemma \ref{lemma:L1_convergence} implies
\begin{equation}\label{eq:convergence_L1_f_hat}
        \int_{\R \times A}\hat{f}(\cdot,x)m^n_\cdot(dx,da) \to \int_{\R \times A}\hat{f}(\cdot,x)m_\cdot(dx,da) 
\end{equation}
in $L^1([0,T])$.
Then, we have the estimates
\begin{align*}
     & \Big\vert \int_{0}^T \int_{\R \times A} f(t,x,m^n_t,a)\eta^n_t(dx,da)dt - \int_{0}^T \int_{\R \times A} f(t,x,m_t,a)\eta_t(dx,da)dt \Big\vert \\
     & \leq \Big\vert \int_{0}^T \int_{\R \times A} f(t,x,m_t,a)(\eta^n_t(dx,da) - \eta_t(dx,da))dt \Big\vert \\
     & \quad  + \Big\vert \int_{0}^T \int_{\R \times A} \big( f(t,x,m^n_t,a) - f(t,x,m_t,a)\big) \eta^n_t(dx,da)dt \Big\vert.
\end{align*}
The first term in the bound above goes to $0$ by convergence in $\bar{\tau}^{(2)}$, as $\eta^n \to \eta$ in $\tau^{(2)}$ and the topologies $\tau^{(2)} \otimes \tau^{(2)}$ and $\bar{\tau}^{(2)} \otimes \tau^{(2)}$ coincide on $\cR_0$ by Lemma \ref{lemma:compactness_a_priori_R0}.
As for the second term, by relying on the locally Lipschitz property of $\bar{f}$ with respect to $y$ (cf. Assumption \ref{standing_assumptions}(S4)), we have
\begin{align*}
    & \Big\vert \int_{0}^T \int_{\R \times A} \big( f(t,x,m^n_t,a) - f(t,x,m_t,a)\big) \eta^n_t(dx,da) dt \Big\vert \\
    & \leq \int_{0}^T \int_{\R \times A} \big\vert f(t,x,m^n_t,a) - f(t,x,m_t,a)\big\vert \eta^n_t(dx,da)dt \\
    & \leq C \int_{0}^T \left( 1 + \int_{\R \times A} (\vert x \vert + \vert a \vert)\eta^n_t(dx,da) + \Big\vert \int_{\R \times A} \hat{f}(t,y)m^n_t(dy,da)  \Big\vert +  \Big\vert \int_{\R \times A} \hat{f}(t,y)m_t(dy,da)  \Big\vert \right) \cdot \\
    & \quad \cdot \Big \vert  \int_{\R \times A} \hat{f}(t,y)m^n_t(dy,da) - \int_{\R \times A} \hat{f}(t,y)m_t(dy,da)  \Big \vert  dt \\
    & \leq C \int_{0}^T \left( 1 + \int_{\R \times A} (\vert x \vert + \vert a \vert ) \eta^n_t(dx,da) + \int_{\R \times A} \vert y \vert m^n_t(dy,da)  + \int_{\R \times A} \vert y \vert m_t(dy,da) \right) \cdot \\
    & \quad \cdot \Big \vert  \int_{\R \times A} \hat{f}(t,y)m^n_t(dy,da) - \int_{\R \times A} \hat{f}(t,y)m_t(dy,da)  \Big \vert  dt \\
    & \leq C \int_0^T \Big \vert  \int_{\R \times A} \hat{f}(t,y)m^n_t(dy,da) - \int_{\R \times A} \hat{f}(t,y)m_t(dy,da)  \Big \vert  dt
\end{align*}
which goes to $0$ by \eqref{eq:convergence_L1_f_hat}, where we exploited the fact that $\hat{f}(t,y)$ has linear growth since it belongs to $\dC^{1,2}([0,T] \times \R)$ and has bounded derivatives (cf. Assumption \ref{standing_assumptions}(S4)) and the uniform bound \eqref{uniform_bound} on the moments of any $(m,\overline{m}) \in \cR_0$ to bound the first term in the product above.
The term dependent on $g$ can be handled in a similar way.
Indeed, we have
\begin{multline*}
    \left\vert \int_{\R}g(x,\overline{m}^n)\overline{\eta}^n(dx)  -  \int_{\R}g(x,\overline{m})\overline{\eta}(dx)  \right\vert  \\
    \leq \left\vert \int_{\R}g(x,\overline{m})(\overline{\eta}^n-\overline{\eta})(dx) \right\vert + \left\vert \int_{\R}(g(x,\overline{m}) - g(x,\overline{m}^n))\overline{\eta}^n(dx) \right\vert.
\end{multline*}
The first term converges to $0$ as $\overline{\eta}^n$ converges to $\overline{\eta}$ in $\tau^{(2)}$.
As for the second term, we have 
\begin{equation}\label{eq:lemma_continuity:estimate}
\begin{aligned}
    & \left\vert \int_{\R} ( g(x,\overline{m}) - g(x,\overline{m}^n) )\overline{\eta}^n(dx)  \right\vert \leq \int_{\R} \left\vert g(x,\overline{m}) - g(x,\overline{m}^n) \right\vert \overline{\eta}^n(dx) \\
    & \leq C \left( 1 + \int_{\R} \vert x \vert \overline{\eta}^n(dx) + \Big\vert \int_{\R} \hat{g}(y)\overline{m}(dy)  \Big\vert +  \Big\vert \int_{\R} \hat{g}(y)\overline{m}^n(dy)  \Big\vert \right) \cdot \\
    & \quad \cdot \Big \vert  \int_{\R} \hat{g}(y)\overline{m}(dy) - \int_{\R} \hat{g}(y)\overline{m}^n(dy)  \Big\vert \leq C \Big\vert  \int_{\R} \hat{g}(y)(\overline{m} -\overline{m}^n)(dy) \Big\vert,
\end{aligned}
\end{equation}
where we used the local Lipschitz property of $\bar{g}$ in its second variable (cf. Assumption \ref{standing_assumptions}(S5)), the linear growth of $\hat g$, and the uniform bound \eqref{uniform_bound}.
Since $\hat{g}$ is continuous with at most linear growth and $\overline{m}^n \to \overline{m}$ in $\tau^{(2)}$, the last term in \eqref{eq:lemma_continuity:estimate} converges to $0$.
\end{proof}

\begin{lemma}\label{lemma:continuity}
Let $(\kappa,\overline{\kappa}) \in \cD$. Define the map $\Delta^{(\kappa,\overline{\kappa})}: \cP(\cR_0) \to \R$ by setting
\begin{equation}\label{eq:difference_optimality}
    \Delta^{(\kappa,\overline{\kappa})}(\rho) := \Gamma[\rho] - \Gamma^{dev}[\rho](\kappa,\overline{\kappa}). 
\end{equation}
Then, $\Delta^{(\kappa,\overline{\kappa})}$ is continuous and linear.
\end{lemma}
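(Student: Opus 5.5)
By \eqref{eq:cost_functionals_integrated_form}, for every $\rho\in\cP(\cR_0)$ we can write
\[
\Delta^{(\kappa,\overline{\kappa})}(\rho)=\int_{\cR_0}\Phi(m,\overline{m})\,\rho(dm,d\overline{m}),\qquad \Phi(m,\overline{m}):=F[m,\overline{m}](m,\overline{m})-F[m,\overline{m}]\big(\kappa(m),\overline{\kappa}(m)\big).
\]
The plan is to show that $\Phi$ is a bounded continuous function on the compact space $(\cR_0,\tau^{(2)}\otimes\tau^{(2)})$. Once this is established, both claims are immediate.

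\emph{Linearity.} For $\rho_1,\rho_2\in\cP(\cR_0)$ and $\theta\in[0,1]$ we have
\[
\Delta^{(\kappa,\overline{\kappa})}(\theta\rho_1+(1-\theta)\rho_2)=\theta\Delta^{(\kappa,\overline{\kappa})}(\rho_1)+(1-\theta)\Delta^{(\kappa,\overline{\kappa})}(\rho_2).
\]
This is affinity on the convex set $\cP(\cR_0)$, which is the meaning of linearity here. It holds because the integrand $\Phi$ does not depend on $\rho$.

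\emph{Continuity.} If $\rho^n\to\rho$ weakly on $\cR_0$, then $\int\Phi\,d\rho^n\to\int\Phi\,d\rho$ by the definition of weak convergence, since $\Phi\in\dC(\cR_0)=\dC_b(\cR_0)$.

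It remains to prove that $\Phi$ is continuous. The first term, $(m,\overline m)\mapsto F[m,\overline m](m,\overline m)$, is the composition of the diagonal embedding $\cR_0\to\cR_0\times\cR_0$ with $F$. It is therefore continuous by Lemma \ref{lemma:continuity_integrands}. The second term is the composition of the map
\[
(m,\overline m)\mapsto\big((m,\overline m),(\kappa(m),\overline\kappa(m))\big)\in\cR_0\times\cR_0
\]
with $F$. The map into the product is continuous: its first component is the identity, and its second component is continuous from $(\cR_0,\tau^{(2)}\otimes\tau^{(2)})$ into itself by property \eqref{LP:def:admissible_deviations:continuity} of Definition \ref{LP:def:admissible_deviations}. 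Moreover $\kappa(m)\in\cR_0$ by the definition of $\cD$. Lemma \ref{lemma:continuity_integrands} then gives continuity of the second term. Boundedness of $\Phi$ follows from continuity on the compact set $\cR_0$ (Lemma \ref{lemma:compactness_a_priori_R0}). Alternatively, it follows directly from the quadratic growth assumptions together with the uniform moment bound \eqref{uniform_bound}. This also shows that all the integrals defining $\Gamma$ and $\Gamma^{dev}$ are finite.

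The only delicate point is to make sure that Lemma \ref{lemma:continuity_integrands} is applied on $\cR_0\times\cR_0$ with arguments that genuinely lie in $\cR_0$. This is guaranteed for $\kappa(m)$ by the definition of $\cD$. It also requires that measures in $\cM$, or more generally in $\cP(\cR_0)$, only see points of $\cR_0$, which holds by Lemma \ref{LP:lemma:restriction_to_R_0}. With these checks, no further estimates are needed.
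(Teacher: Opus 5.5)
Your proposal is correct and follows the paper's proof essentially verbatim. You get continuity of the integrand on $(\cR_0,\tau^{(2)}\otimes\tau^{(2)})$ from property \eqref{LP:def:admissible_deviations:continuity} of Definition \ref{LP:def:admissible_deviations} together with the joint continuity of $F$ (Lemma \ref{lemma:continuity_integrands}), then continuity in $\rho$ from $\dC(\cR_0)=\dC_b(\cR_0)$ by compactness, with linearity (affinity in $\rho$) immediate. The appeal to Lemma \ref{LP:lemma:restriction_to_R_0} is not needed, since $\Delta^{(\kappa,\overline{\kappa})}$ is defined on $\cP(\cR_0)$ from the outset.
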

\begin{proof}
Linearity is obvious. For the continuity, we show first that the map
\begin{equation}\label{test_function}
    (m,\overline{m}) \mapsto \int_{0}^{T}\int_{\R \times A }f(t,x,m_t,a)\kappa_t(dx,da \vert m)dt +\int_{\R }g(x,\overline{m})\overline{\kappa}(dx \vert m )
\end{equation}
is continuous from $(\cR_0,\tau^{(2)} \otimes \tau^{(2)})$ into $\R$.
Let $(m^n,\overline m^n)\to(m,\overline m)$ in $\cR_0$.
Set $k^n_t(dx,da)=\kappa_t(dx,da\vert m^n)$, $\overline k^n(dx)=\overline\kappa(dx\vert m^n)$, $k_t(dx,da)=\kappa_t(dx,da\vert m)$ and $\overline k(dx)=\overline\kappa(dx\vert m)$.
Notice that, by Definition \ref{LP:def:admissible_deviations}, we have $(k^n,\overline{k}^n) \to (k,\overline{k})$ in $\tau^{(2)} \otimes \tau^{(2)}$.
Then, the map in \eqref{test_function} is continuous if and only if
\begin{multline*}
    \int_{0}^{T}\int_{\R \times A }f(t,x,m^n_t,a)k^n_t(dx,da)dt +\int_{\R}g(x,\overline{m}^n)\overline{k}^n(dx) \\
    \to \int_{0}^{T}\int_{\R \times A }f(t,x,m_t,a)k_t(dx,da)dt +\int_{\R}g(x,\overline{m})\overline{k}(dx),
\end{multline*}
which is granted by the continuity of the map $F[m,\overline{m}](\eta,\overline{\eta})$ given by Lemma \ref{lemma:continuity_integrands}.
This proves the desired continuity of the map $\rho \mapsto \Gamma^{dev}[\rho](\kappa,\overline{\kappa})$. Indeed, as $\cM \subseteq \cP(\cR_0)$ is endowed with the weak topology and $\dC(\cR_0)=\dC_b(\cR_0)$ since $\cR_0$ is compact in $\tau^{(2)} \otimes \tau^{(2)}$, the continuity of \eqref{test_function} implies the continuity of $\Gamma^{dev}[\rho](\kappa,\overline{\kappa})$.
Finally, to see the continuity of $\rho \mapsto \Gamma[\rho]$, it is enough to notice that, by the same reasoning as above, the map
\begin{equation*}
    (m,\overline{m}) \mapsto \int_{0}^{T}\int_{\R \times A }f(t,x,m_t,a)m_t(dx,da)dt + \int_{\R}g(x,\overline{m})\overline{m}(dx)
\end{equation*}
is a continuous map from $(\cR_0,\tau^{(2)} \otimes \tau^{(2)})$ into $\R$ as well.
This yields the continuity of $\Delta^{(\kappa,\overline{\kappa})}(\rho)$.
\end{proof}

\begin{proposition}\label{prop:properties_set_E}
$\cE$ is nonempty, convex and compact.
\end{proposition}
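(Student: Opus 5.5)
The plan is to write $\cE$ as an intersection of $\cM$ with closed half-spaces and read off each property from the lemmas already proved. By Definition \ref{LP:def:CCE} and the notation \eqref{eq:difference_optimality},
\[
\cE = \cM \cap \bigcap_{(\kappa,\overline{\kappa}) \in \cD} \big\{ \rho \in \cP(\cR_0) : \Delta^{(\kappa,\overline{\kappa})}(\rho) \leq 0 \big\}.
\]
Lemma \ref{LP:lemma:restriction_to_R_0} guarantees that every $\rho\in\cM$ lives in $\cP(\cR_0)$. There, the functionals $\Gamma[\rho]$ and $\Gamma^{dev}[\rho](\kappa,\overline{\kappa})$ are well defined and finite. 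This follows from the integrated form \eqref{eq:cost_functionals_integrated_form} and the continuity, hence boundedness, of the integrands on the compact set $\cR_0$ given by Lemma \ref{lemma:continuity_integrands}.

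\emph{Convexity.} The set $\cM$ is convex by Lemma \ref{lemma:M_convex_compact}. Each set $\{\Delta^{(\kappa,\overline{\kappa})} \leq 0\}$ is convex because $\Delta^{(\kappa,\overline{\kappa})}$ is linear in $\rho$ by Lemma \ref{lemma:continuity}. Linearity here means affine under convex combinations of probability measures, which is exactly what we need. An intersection of convex sets is convex, so $\cE$ is convex.

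\emph{Compactness.} Each $\Delta^{(\kappa,\overline{\kappa})}$ is continuous on $\cP(\cR_0)$ with the weak topology by Lemma \ref{lemma:continuity}. Hence every sublevel set $\{\Delta^{(\kappa,\overline{\kappa})}\le 0\}$ is closed, and an arbitrary intersection of closed sets is closed. Therefore $\cE$ is a closed subset of $\cM$, which is compact by Lemma \ref{lemma:M_convex_compact}, and so $\cE$ is compact.

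\emph{Nonemptiness.} By \cite[Theorem 3.4]{leutscher_thesis}, under Assumptions \ref{standing_assumptions} there exists an LP-NE $(m^\star,\overline{m}^\star)$ in the sense of Definition \ref{LP:def:LPNE}. Proposition \ref{LP:prop:NE_is_CCE} then shows that $\delta_{(m^\star,\overline{m}^\star)}\in\cE$.

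The only delicate point is this last step. One must check that the hypotheses of \cite[Theorem 3.4]{leutscher_thesis} are covered by Assumptions \ref{standing_assumptions}, namely the scalar mean-field interaction, bounded drift, quadratic growth of costs, compact $A$ and $m_0^*\in\cP_{\bar p}$. This requires specializing that result to the case without absorption or stopping, as done for Definition \ref{LP:def:LPNE}. I would argue this by direct comparison with \cite[Assumption 3.1]{dumitrescu2021EJP}, which our assumptions are modelled on. The rest of the argument is purely topological.
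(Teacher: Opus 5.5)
Your proposal is correct and follows the paper's proof essentially line by line. It uses the same representation of $\cE$ as $\cM$ intersected with the sublevel sets of the linear continuous maps $\Delta^{(\kappa,\overline{\kappa})}$, obtains convexity and compactness from Lemma \ref{lemma:M_convex_compact} and Lemma \ref{lemma:continuity}, and gets nonemptiness from \cite[Theorem 3.4]{leutscher_thesis} combined with Proposition \ref{LP:prop:NE_is_CCE}; your caveat about checking that the standing assumptions cover the hypotheses of that existence result is fair, since the paper takes this for granted.
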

\begin{proof}
We notice that the set of LP-CCEs $\cE$ admits the representation
\begin{equation}
    \cE = \cM \cap \left( \bigcap_{(\kappa,\overline{\kappa}) \in \cD} \{ \rho \in \cP(\cR_0): \, \Delta^{(\kappa,\overline{\kappa})}(\rho) \leq 0 \} \right),
\end{equation}
with $\Delta^{(\kappa,\overline{\kappa})}$ defined by \eqref{eq:difference_optimality}.
We notice that $\cE$ is not empty, as there exists at least one LP-NE by \cite[Theorem 3.4]{leutscher_thesis} and any LP-NE is an LP-CCE by Proposition \ref{LP:prop:NE_is_CCE}.
$\cM$ is compact and convex by Lemma \ref{lemma:M_convex_compact}.
Since $\Delta^{(\kappa,\overline{\kappa})}$ is a linear and continuous mapping, the sublevel sets $\{ \rho \in \cP(\cR_0): \, \Delta^{(\kappa,\overline{\kappa})}(\rho) \leq 0 \}$ are convex and closed.
Then, $\cE$ is closed and convex as it is the intersection of closed and convex sets.
As $\cE \subseteq \cM$, it is compact as well.
This concludes the proof.
\end{proof}

We are now ready to prove Theorem \ref{LP:thm:solution_moderator_optimization_problem}.

\begin{proof}[Proof of Theorem \ref{LP:thm:solution_moderator_optimization_problem}]
Recall the definition of the map $F^{0}(m,\overline{m})$ in \eqref{eq:cost_functionals:function_F0}.
By Lemma \ref{lemma:continuity_integrands}, the map $(m,\overline{m}) \mapsto F^{0}(m,\overline{m})$ is continuous from $\cR_0$ to $\R$.
Thus, the map $\Gamma^{0}$ defined in \eqref{LP:mediator_functional} is continuous from the compact set $\cE$ to $\R$, which implies that it admits a minimum on $\cE$.
\end{proof}

\subsection{Relation with the probabilistic formulation of Section \ref{section: Prob formulation}}
We conclude the existence part by studying the equivalence of the probabilistic formulation of Section \ref{section: Prob formulation} and the linear programming formulation of Section \ref{section: Linear Programming}. To establish such equivalence, we need the following extra assumption:

\begin{assumption}\label{ass: convexity relaxed control assumption}
For any $(t,x,m)\in [0,T]\times \R\times \cP_{2}(\R \times A)$, the set 
\begin{equation}
    K(t,x,m):=\{(b(t,x,m,a),z,z^0): \; a\in A,\; f(t,x,m,a) \leq z , \; f^{0}(t,x,m,a) \leq z^{0} \},
\end{equation}
is convex and closed.
\end{assumption}

Since, by Assumption \ref{standing_assumptions}(S3)--(S5), the functions $b$, $f$, $f^0$, $g$ and $g^0$ depend on $m\in\cP_2(\R\times A)$ only through its marginal law $m^x \in \cP_2(\R)$, in the following, with a slight abuse of notation, whenever $\nu\in\cP_2(\R)$, we write
\begin{multline}\label{eq:relation:abuse_of_notation}
    b(t,x,\nu,a):=\bar{b}\left(t,x,\int_{\R}\hat{b}(t,y)\nu(dy),a\right), \\ f(t,x,\nu,a):=\bar{f}\left(t,x,\int_{\R}\hat{f}(t,y)\nu(dy),a\right), \quad 
    g(x,\nu) = \bar{g}\left(x,\int_{\R}\hat{g}(y)\nu(dy)\right),
\end{multline}
and, analogously, we write $f^0(t,x,\nu,a)$ and $g^0(x,\nu)$ when the measure argument is a state marginal $\nu\in\cP_2(\R)$.

\begin{theorem}\label{theorem: equivalence of formulations}
Suppose that Assumptions \ref{standing_assumptions} and \ref{ass: convexity relaxed control assumption} hold.
Then, there exists a mean-field CCE $(\lambda^*,\mu^*)$ such that
\begin{equation}\label{eq:thm_relation:inequality_value}
    J^{0}(\lambda^*,\mu^*) \leq V^{LP}.
\end{equation}
\end{theorem}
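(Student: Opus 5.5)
We start from an optimal LP-CCE $\rho^*\in\cE$, which exists by Theorem \ref{LP:thm:solution_moderator_optimization_problem}, so that $\Gamma^0(\rho^*)=V^{LP}$. The plan is to build from $\rho^*$ a correlated flow $(\lambda^*,\mu^*)$ that is consistent, satisfies the CCE optimality condition, and has moderator cost at most $\Gamma^0(\rho^*)$.

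\textbf{Step 1 (randomization).} Using Assumption \ref{assumption:F_0}, we realize on $\cF_0$ a random element $(M,\overline M)$ with law $\rho^*$, taking it as a measurable function of $U$ so that it is independent of $(\xi,W)$. By Lemma \ref{LP:lemma:restriction_to_R_0}, $\rho^*$-a.s. $(M,\overline M)\in\cR[M]$.

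\textbf{Step 2 (pathwise superposition).} For each fixed $m$ with $(m,\overline m)\in\cR[m]$, the pair $(m,\overline m)$ solves a linear Fokker--Planck constraint with drift $b(t,x,m_t,a)$. Using the Kurtz--Stockbridge representation for relaxed controls (as in \cite{dumitrescu2021EJP,leutscher_thesis}), we obtain a relaxed control $q_t(da\mid x)$ and a weak solution $X$ whose time marginals equal $m^x_t$, with $\overline m$ the law of $X_T$. Assumption \ref{ass: convexity relaxed control assumption} together with a measurable selection (Filippov-type) lemma then lets us replace $q$ by a strict feedback $a(t,x)$ satisfying
\[
b(t,x,m_t,a(t,x))=\int b\,q_t(da\mid x),\quad f(\cdot,a(t,x))\le\int f\,q,\quad f^0(\cdot,a(t,x))\le \int f^0\,q .
\]
We choose this feedback measurably in $m$ and then construct $X$ strongly on the original space, driven by $(\xi,W)$. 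Strong existence holds because $b$ is bounded and $\sigma>0$ (Veretennikov/Zvonkin). We set $\lambda^*_t=a(t,X_t;M)$ and define $\mu^*_t$ as the joint law of $(X_t,\lambda^*_t)$ conditional on $M$. Then $(\mu^*)^x=M^x$, and the marginals $M^x_t$ and $\overline M$ are preserved. The $A$-marginal may change, but the costs only decrease, and $b,f,f^0$ depend on $m$ only through $m^x$, so the dynamics are unchanged. Consistency \eqref{eq:CCE:consistency} holds by construction, and
\[
J(\lambda^*,\mu^*)\le\Gamma[\rho^*],\qquad J^0(\lambda^*,\mu^*)\le\Gamma^0(\rho^*)=V^{LP}.
\]

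\textbf{Step 3 (optimality).} For $\beta\in\bbA$, Proposition \ref{LP:relation:prop:full_martingale_property} gives $(\kappa,\overline\kappa)\in\cD$ with $J(\beta,\mu^*)=\Gamma^{dev}[\tilde\rho](\kappa,\overline\kappa)$, where $\tilde\rho=\P\circ(\mu^*,\mu^{*,x}_T)^{-1}$. The kernels depend on $m$ only through $b(\cdot,m)$, hence only through $m^x$. Therefore $\Gamma^{dev}[\tilde\rho]=\Gamma^{dev}[\rho^*]$ for the same deviation, and since $\rho^*\in\cE$,
\[
J(\lambda^*,\mu^*)\le\Gamma[\rho^*]\le\Gamma^{dev}[\rho^*](\kappa,\overline\kappa)=J(\beta,\mu^*).
\]
One technical point is that $\kappa$ is evaluated at $M$ while $\tilde\rho$ lives on the modified flows. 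We handle this by defining $\kappa$ through $m^x$ only.

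\textbf{Main obstacle.} The hardest step is Step 2: obtaining a strict, jointly measurable (in $m$) Markov control that reproduces each $(m,\overline m)\in\cR[m]$ with a strong solution on a common probability space, while keeping the costs from increasing. I would first apply the mimicking/superposition results of Kurtz–Stockbridge pointwise in $m$, then argue measurability through a measurable selection on the Polish space $\cR_0$.
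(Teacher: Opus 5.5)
Your proposal is correct and follows essentially the same route as the paper. Both take an optimal LP-CCE $\rho^*$, represent it by a relaxed Markov control together with a strong solution driven by an independent random flow (the paper's Theorem~\ref{theorem:consistent_mtg_representation}), and pass to a strict control via Assumption~\ref{ass: convexity relaxed control assumption} and measurable selection. Both then define $\mu^*$ as a conditional law and compare with deviations through Propositions~\ref{LP:relation:prop_deviations} and~\ref{LP:relation:prop:full_martingale_property}. The measurability issue you flag as the main obstacle is handled more simply in the paper: it disintegrates the joint measure $m_t(dx,da)\,dt\,\rho^*(dm,d\overline{m})$, which yields $q$ jointly measurable in $(t,x,m,\overline{m})$ without any selection over $\cR_0$. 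Your conditioning on $M$ instead of on a continuous version of the state-marginal flow is an inessential difference.
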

\begin{proof}
By Theorem \ref{LP:thm:solution_moderator_optimization_problem}, there exists an optimal LP-CCE  $\rho^* \in \cE$.
We show that there exists a probability space $(\Omega,\cF,\bbF,\P)$, equipped with an $\bbF$-Brownian motion $W$, an independent $\cF_0$-measurable random variable $\xi$, an $\cF_0$-measurable flow of measures $\mu^*=(\mu^*_t)_{t \in [0,T]}$ with values in $\dM(\cP_2)$ and an $\bbF$-progressively measurable process $\lambda^*$ with values in $A$ such that $(\lambda^*,\mu^*)$ is a mean-field CCE and \eqref{eq:thm_relation:inequality_value} holds.

\textbf{Step 1}. We start by building the candidate optimal correlated flow $(\lambda^*,\mu^*)$.
By Theorem \ref{theorem:consistent_mtg_representation}, there exist a measurable function $q:[0,T] \times \R \times V_{2}\times \cP_{2}(\R) \to \cP(A)$, a complete filtered probability space $(\Omega,\cF,\F,\P)$, equipped with a Brownian motion $W$, an independent $\cF_0$-measurable random variable $\xi$ with law $m^{*}_0$ and an independent pair $(\mu,\overline{\mu})$ consisting of a random flow of measures $\mu\in V_{2}$ and a measure $\overline{\mu}\in \cP_{2}(\R)$ such that $\rho^* = \P \circ (\mu,\overline{\mu})^{-1}$, and an $\F$-adapted process $X$ such that
\begin{equation}\label{eq:theorem_relation:sde}
    dX_{t}=\int_{A}b(t,X_{t},\mu_{t},a)q_{t}(X_{t},\mu,\overline{\mu})(da)dt+\sigma dW_{t},\quad  X_{0} =\xi.
\end{equation}
Moreover, for any $C\in \cB_{\R},\; D\in \cB_{A}$ we have
\begin{equation}\label{eq:thm_relation:consistency}
\begin{aligned}
    & \mu_{t}(C\times D)=\E \big[ \boldsymbol{1}_{C}(X_{t})q_{t}(X_{t},\mu,\overline{\mu})(D) |\mu,\overline{\mu}\big],\; dt\text{-a.e.},\; \P\text{-a.s.}, \\
    & \overline{\mu}(C)=\E \big[ \boldsymbol{1}_{C}(X_{T})\big| \mu,\overline{\mu} \big],\; \P\text{-a.s.}
\end{aligned}
\end{equation}
Since $K(t,x,m_t)$ is closed and convex by Assumption \ref{ass: convexity relaxed control assumption}, the triple 
\begin{multline*}
    \bigg(\int_A b(t,x,m_t,a)q_t(x,m,\overline{m} )(da), \int_A f(t,x,m_t,a)q_t(x,m,\overline{m} )(da), \\
    \int_A f^{0}(t,x,m_t,a)q_t(x,m,\overline{m} )(da) \bigg)
\end{multline*}
still belongs to $K(t,x,m_t)$ (see, e.g., \cite[Theorem I.6.13]{WargaBookOptimalControl}).
By a standard measurable selection argument (see, e.g., \cite[Lemma A.9]{HaussmannRelaxedControls}), there exists a measurable function $\alpha:[0,T]\times \R\times V_2 \times \cP_2(\R) \to A$ such that 
\begin{align}
    & b(t,x,m_{t}, \alpha(t,x,m,\overline{m})) = \int_{A}b(t,x,m_{t},a)q_{t}(x,m,\overline{m})(da), \label{eq:thm_relation:drift_equality} \\
    & f(t,x,m_{t},\alpha(t,x,m,\overline{m}))\leq \int_{A}f(t,x,m_{t},a)q_{t}(x,m,\overline{m})(da), \label{eq:thm_relation:cost_inequality1} \\
    & f^{0}(t,x,m_{t},\alpha(t,x,m,\overline{m}))\leq \int_{A}f^{0}(t,x,m_{t},a)q_{t}(x,m,\overline{m})(da). \label{eq:thm_relation:cost_inequality2}
\end{align}
In particular, \eqref{eq:thm_relation:drift_equality} implies that the process $X$ defined by \eqref{eq:theorem_relation:sde} satisfies
\[
    dX_{t}=b(t,X_{t},\mu_{t},\alpha(t,X_{t},\mu,\overline{\mu}))dt+\sigma dW_{t},\quad X_{0}=\xi.
\]
Set
\begin{equation}\label{eq:thm_relation:recommendation}
    \lambda^*_t = \alpha(t,X_{t},\mu,\overline{\mu})
\end{equation}
and notice that $\lambda^*=(\lambda^*_t)_{t \geq 0}$ takes values in $A$ and it is $\bbF$-progressively measurable since $X$ is $\bbF$-progressively measurable, $(\mu,\overline{\mu})$ are $\cF_0$-measurable and $\alpha$ is a deterministic measurable function.
Thus, the process $X$ satisfies
\[
    dX_{t}=b(t,X_{t},\mu_{t},\lambda^*_t)dt+\sigma dW_{t},\quad X_{0}=\xi.
\]
We claim that there exists an $\cF_0$-measurable flow of measures $\mu^* = (\mu^*_t)_{t \in [0,T]}$ with values in $\dM(\cP_2)$ such that 
\begin{align}
    & dX_t = b(t,X_t,\mu^*_t,\lambda^*_t)dt + \sigma dW_t, \quad X_0 = \xi, \\
    & \mu^*_t ( \cdot ) = \P( (X_t,\lambda^*_t) \in \cdot \vert \mu^*), \quad \forall \, t \in [0,T], \\
    & \P \circ (X_t,\lambda^*_t,\mu^{*,x}_t)^{-1} = \P \circ (X_t,\lambda^*_t,\mu^x_t)^{-1}, \quad dt\text{-a.e.,} \label{eq:thm_relation:equality_laws}, \\
    & \P \circ (X_T,\mu^{*,x}_T)^{-1} = \P \circ (X_T,\overline{\mu})^{-1}, \label{eq:thm_relation:equality_laws_terminal_time}
\end{align}
where $\mu^{*,x} = (\mu^{*,x}_t)_{t \in [0,T]}$ denotes the flow of marginals of $\mu^*$ over $\R$.
Indeed, by Lemma \ref{lemma:continuous_version_state_marginal}, there exists an $\cF_0$-measurable random variable $\tilde{\mu}^x=(\tilde{\mu}^x_t)_{t\in[0,T]}$ with values in $\dC([0,T];\cP_2(\R))$ such that $\tilde{\mu}^x_t=\mu^x_t$, $dt$-a.e., $\tilde{\mu}^x_T=\overline{\mu}$ and $\tilde{\mu}^x_t=\P(X_t\in\cdot\vert\tilde{\mu}^x)$ for any $ t\in[0,T]$.
Choosing a jointly measurable version, we define
\begin{equation}\label{eq:thm_relation:flow_of_measures}
    \mu^*_t(\cdot):=\P((X_t,\lambda^*_t) \in \cdot \vert \tilde{\mu}^x).
\end{equation}
Since $\tilde{\mu}^x$ is $\cF_0$-measurable, $\mu^*$ is $\cF_0$-measurable. Moreover, $A$ is compact and $X$ has finite second moments, so $\mu^*$ takes values in $\dM(\cP_2)$.
Notice that $\mu_t^{*,x}=\P(X_t\in\cdot\vert \tilde{\mu}^x)=\tilde{\mu}^x_t$. Since $\tilde{\mu}^x_t=\mu^x_t$, $dt$-a.e., the convention introduced in \eqref{eq:relation:abuse_of_notation} yields
\[
    dX_t=b(t,X_t,\mu^*_t,\lambda_t^*)dt+\sigma dW_t,\quad X_0=\xi.
\]
Moreover, since $\sigma(\mu^*)\subseteq\sigma(\tilde{\mu}^x)$, the tower property gives
\begin{equation}\label{eq:thm_consistency:consistency_condition}
    \mu^*_t(\cdot) = \P( (X_t,\lambda^*_t) \in \cdot \vert \mu^*) \quad \P\text{-a.s., } \forall \, t \in [0,T].
\end{equation}
Finally, \eqref{eq:thm_relation:equality_laws} and \eqref{eq:thm_relation:equality_laws_terminal_time} follow by construction.

\smallskip
\textbf{Step 2}. We show that the correlated flow $(\lambda^*,\mu^*)$ defined by \eqref{eq:thm_relation:recommendation} and \eqref{eq:thm_relation:flow_of_measures} is a mean-field CCE.
Since the consistency condition is already granted by \eqref{eq:thm_consistency:consistency_condition}, we focus on optimality.
By the convention introduced in \eqref{eq:relation:abuse_of_notation}, we have 
\begin{equation}\label{eq:thm_relation:inequality1}
\begin{aligned}
    & J(\lambda^*,\mu^*) \\
    & = \E\bigg[ \int_{0}^{T}f(t,X_{t},\mu^*_{t},\lambda^*_t)dt+g(X_{T},\mu^*_{T}) \bigg] = \int_{0}^{T} \E [f(t,X_{t},\mu^{*,x}_{t},\lambda^*_t)]dt+\E[g(X_{T},\mu^{*,x}_{T})] \\
    &= \int_{0}^{T} \E [f(t,X_{t},\mu^x_{t},\lambda^*_t)]dt+\E[g(X_{T},\overline{\mu})] = \E \bigg[\int_{0}^{T} f(t,X_{t},\mu_{t},\lambda^*_t) dt+ g(X_{T},\overline{\mu}) \bigg] \\
    & = \E\bigg[ \int_{0}^{T}f(t,X_{t},\mu_{t},\alpha(t,X_{t},\mu,\overline{\mu}))dt+g(X_{T},\overline{\mu})\bigg] \\
    &\leq \E\bigg[ \int_{0}^{T}\int_{A}f(t,X_{t},\mu_{t},a)q_{t}(X_{t},\mu,\overline{\mu})(da)dt+g(X_{T},\overline{\mu})\bigg]
\end{aligned}
\end{equation}
where in the third equality we relied on \eqref{eq:thm_relation:equality_laws} and \eqref{eq:thm_relation:equality_laws_terminal_time}, in the fifth equality we relied on the definition of $\lambda^*$ (cf. \eqref{eq:thm_relation:recommendation}) and in the inequality we relied on \eqref{eq:thm_relation:cost_inequality1}.
By conditioning with respect to $(\mu,\overline{\mu})$ and relying on \eqref{eq:thm_relation:consistency}, we have 
\begin{equation}\label{eq:thm_relation:inequality2}
\begin{aligned}
    & J(\lambda^*,\mu^*)  \leq \int_{0}^{T} \E\bigg[ \E\bigg[ \int_{A}f(t,X_{t},\mu_{t},a)q_{t}(X_{t},\mu,\overline{\mu})(da) \bigg  \vert \mu,\overline{\mu} \bigg] \bigg]dt + \E\left [ \E [g(X_{T},\overline{\mu}) | \mu,\overline{\mu} ] \right] \\
    & = \int_{0}^{T} \E\bigg[ \int_{\R \times A} f(t,x,\mu_{t},a)\mu_t(dx,da) \bigg]dt + \E \bigg[ \int_{\R} g(x,\overline{\mu}) \overline{\mu}(dx) \bigg] \\
    &=\int_{\cR_0 }\bigg(\int_{0}^{T}\int_{\R\times A}f(t,x,m_{t},a)m_{t}(dx,da)dt+\int_{\R}g(x,\overline{m})\overline{m}(dx)\bigg)\rho^*(dm,d\overline{m}) \\
    & \leq \int_{\cR_0}\bigg(\int_{0}^{T}\int_{\R\times A}f(t,x,m_{t},a)\kappa_{t}(dx,da|m)dt+\int_{\R}g(x,\overline{m})\overline{\kappa}(dx|m)\bigg)\rho^*(dm,d\overline{m}) \\
    & = \Gamma^{dev}[\rho^*](\kappa,\overline{\kappa})
\end{aligned}
\end{equation}
for any $(\kappa,\overline{\kappa}) \in \cD$ admissible deviation for the LP formulation, since $\rho^* \in \cE$ is LP-CCE.
Let now $\beta \in \bbA$ be any admissible deviation and let $(\kappa,\overline{\kappa}) \in \cD$ be the associated admissible deviation in the LP formulation defined by Proposition \ref{LP:relation:prop_deviations}.
By Proposition \ref{LP:relation:prop:full_martingale_property} and by the convention introduced in \eqref{eq:relation:abuse_of_notation}, we get
\begin{align*}
    &\Gamma^{dev}[\rho^*](\kappa,\overline{\kappa}) \\
    & = \int_{\cR_0}\bigg(\int_0^T\int_{\R\times A} f(t,x,m_t,a)\kappa_t(dx,da|m)dt+\int_{\R}g(x,\overline{m})\overline{\kappa}(dx|m)\bigg)\rho^*(dm,d\overline{m}) \\
    & = \E\bigg[\int_0^T f(t,X_t^\beta,\mu_t^x,\beta_t)dt+g(X_T^\beta,\overline{\mu})\bigg] = \E\bigg[\int_0^T f(t,X_t^\beta,\mu_t^{*,x},\beta_t)dt+g(X_T^\beta,\mu_T^{*,x})\bigg] \\
    & = J(\beta,\mu^*),
\end{align*}
where, in the second to last equality, we relied on the fact that $\mu_t^{*,x}=\mu_t^x$, $dt$-a.e., and $\mu_T^{*,x}=\overline{\mu}$.
Combining \eqref{eq:thm_relation:inequality2} with the previous identity yields $J(\lambda^*,\mu^*)\leq J(\beta,\mu^*)$. Since $\beta\in\bbA$ was arbitrary, $(\lambda^*,\mu^*)$ satisfies the optimality condition and is therefore a mean-field CCE.

\smallskip
\textbf{Step 3}. We finally verify that \eqref{eq:thm_relation:inequality_value} holds.
Exploiting \eqref{eq:thm_relation:cost_inequality2}, by the same reasoning as in \eqref{eq:thm_relation:inequality1} and \eqref{eq:thm_relation:inequality2}, it is possible to show that
\begin{align*}
    J^0(\lambda^*,\mu^*) & \leq \E\bigg[ \int_{0}^{T}\int_{A}f^{0}(t,X_{t},\mu_{t},a)q_{t}(X_{t},\mu,\overline{\mu})(da)dt+g^{0}(X_{T},\overline{\mu})\bigg] \\
    = & \int_{\cR_0 }\bigg(\int_{0}^{T}\int_{\R\times A}f^{0}(t,x,m_{t},a)m_{t}(dx,da)dt+\int_{\R}g^{0}(x,\overline{m})\overline{m}(dx)\bigg)\rho^*(dm,d\overline{m}) \\
    = & \hspace{0,1cm}\Gamma^{0}(\rho^*) = V^{LP},
\end{align*}
since $\rho^* \in \cE$ is an optimal LP-CCE, and so we have $\Gamma^{0}(\rho^*) = V^{LP}$.
This concludes the proof.
\end{proof}

\subsection{The case of $m$-independent dynamics}
Let $(\lambda^*,\mu^*)$ be the mean-field CCE built from $\rho^* \in \cE$ in Theorem \ref{theorem: equivalence of formulations}.
Under stronger assumptions, it is possible to prove that the mean-field CCE $(\lambda^*,\mu^*)$ is optimal for the moderator's problem in the sense of Definition \ref{def:probabilistic_formulation:optimal_CCE}.
The first assumption that we need requires that the dynamics are independent of the measure argument $m \in \cP_2(\R \times A)$:
\begin{assumption}\label{ass: independent dynamics from m}
The drift is independent of the measure term $m$, i.e., it takes the form $b(t,x,a)$.
\end{assumption}
The following remark summarizes some useful consequences of Assumption \ref{ass: independent dynamics from m}.

\begin{remark}\label{remark:independent_dynamics_from_m}
\begin{enumerate}
    \item \label{remark: set of deviations in the case of independent dynamics}
    Under Assumption \ref{ass: independent dynamics from m}, the sets $\cR[m]$, $m\in V_2$, are all the same, independently of $m$, and so we denote them all by $\cR$.

    \item \label{remark: set of deviations in the case of independent dynamics D=R}
    Under Assumption \ref{ass: independent dynamics from m}, the deviation class $\cD$ can be identified with $\cR$.
    Indeed, if $(\kappa,\overline{\kappa})\in\cD$, then $d_b(m,m')=0$ for every $m,m'\in V_2$.
    By condition \eqref{LP:def:admissible_deviations:stability} in Definition \ref{LP:def:admissible_deviations},
    \[
        d_{\cR_0}\big((\kappa(m),\overline{\kappa}(m)),(\kappa(m'),\overline{\kappa}(m'))\big)=0,\quad m,m'\in V_2.
    \]
    By the definition of $d_{\cR_0}$, this implies $(\kappa(m),\overline{\kappa}(m))=(\kappa(m'),\overline{\kappa}(m'))$ in $\cR_0$.
    Since $m,m'\in V_2$ are arbitrary, the map $m \mapsto (\kappa(m),\overline{\kappa}(m))$ is constant.
    Conversely, every element $(\kappa,\overline{\kappa})\in\cR$ induces the constant admissible deviation $m\mapsto(\kappa,\overline{\kappa})$.
    Therefore, under Assumption \ref{ass: independent dynamics from m}, we identify $\cD$ with $\cR$.

    \item Whenever Assumption \ref{ass: independent dynamics from m} is in force, the set of LP-CCEs $\cE$ and the value $V^{LP}$ are understood with respect to the deviation class $\cD=\cR$.
\end{enumerate}
\end{remark}

For any $\rho\in\cP(\cR)$, define the functions
\begin{equation}
    \label{eq:aggregate_f_g_in_m}
    F^{\rho}(t,x,a):=\int_{\cR}f(t,x,m_{t},a)\rho(dm,d\overline{m}),\quad G^{\rho}(x):=\int_{\cR}g(x,\overline{m})\rho(dm,d\overline{m}).
\end{equation}
\begin{assumption}\label{ass: averaged_convexity}
For every $\rho\in\cM$ and $(t,x)\in[0,T]\times\R$, the set
\[
    \overline{K}^{\rho}(t,x):=\{(b(t,x,a),z):a\in A,\ F^{\rho}(t,x,a)\leq z\}
\]
is convex and closed.
\end{assumption}
Assumption \ref{ass: averaged_convexity} is automatically satisfied, for instance, if $A$ is convex, $a\mapsto b(t,x,a)$ is affine and $a\mapsto f(t,x,m,a)$ is convex and continuous for every $(t,x,m)$. Indeed, in this case $a\mapsto F^{\rho}(t,x,a)$ is convex as an average of convex functions, and the set $\overline{K}^{\rho}(t,x)$ is the epigraph of a convex cost along an affine drift map. Closedness follows from the compactness of $A$ and the continuity in $a$.

\begin{proposition}\label{prop:any_strong_CCE_induces_LP_CCE}
Suppose that Assumptions \ref{standing_assumptions}, \ref{ass: independent dynamics from m} and \ref{ass: averaged_convexity} hold. Let $(\lambda,\mu)$ be a mean-field CCE. Then, $\rho=\P\circ(\mu,\mu_T^x)^{-1}$ is an LP-CCE.
\end{proposition}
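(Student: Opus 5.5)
By Proposition \ref{LP:relation:prop:consistent_strategies}, $\rho=\P\circ(\mu,\mu^x_T)^{-1}\in\cM$. By Lemma \ref{LP:lemma:restriction_to_R_0}, $\rho\in\cP(\cR_0)$, and in fact $\rho\in\cP(\cR)$. It therefore only remains to check the optimality inequality \eqref{eq:LP:def_CCE}.

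Using the consistency condition as in the proof of Proposition \ref{LP:relation:prop:consistent_strategies}, we get $\Gamma[\rho]=J(\lambda,\mu)$. By Remark \ref{remark:independent_dynamics_from_m}, it suffices to show $\Gamma[\rho]\le\Gamma^{dev}[\rho](\eta,\overline\eta)$ for every fixed $(\eta,\overline\eta)\in\cR$. By Fubini, this cost equals
\[
\int_0^T\int_{\R\times A}F^{\rho}(t,x,a)\,\eta_t(dx,da)\,dt+\int_{\R}G^{\rho}(x)\,\overline\eta(dx).
\]
This is a standard LP cost for a \emph{classical} (non-mean-field) control problem with running cost $F^\rho$, terminal cost $G^\rho$ and drift $b(t,x,a)$. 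The plan is to realize $(\eta,\overline\eta)$ by a strict control $\beta\in\bbA$ with no larger cost, and then use the probabilistic CCE inequality $J(\lambda,\mu)\le J(\beta,\mu)$.

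\textbf{Step 1: relaxed process.} Invoke the Kurtz--Stockbridge representation of elements of $\cR$ (the fixed-flow analogue of Theorem \ref{theorem:consistent_mtg_representation}). This gives, on some filtered space with Brownian motion and initial condition $\xi\sim m_0^*$, a Markovian relaxed control $q_t(x)(da)$ and a state $Y$ solving $dY_t=\int_A b(t,Y_t,a)q_t(Y_t)(da)dt+\sigma dW_t$. The time-$t$ law of $(Y_t,q_t(Y_t))$ is $\eta_t$, and the law of $Y_T$ is $\overline\eta$.

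\textbf{Step 2: strict control.} By Assumption \ref{ass: averaged_convexity} and the measurable-selection argument used in Step 1 of Theorem \ref{theorem: equivalence of formulations}, choose a measurable $\alpha(t,x)\in A$ satisfying
\[
b(t,x,\alpha)=\int_A b\,dq_t(x),\qquad F^\rho(t,x,\alpha)\le\int_A F^\rho\,dq_t(x).
\]
Then $Y$ solves $dY_t=b(t,Y_t,\alpha(t,Y_t))dt+\sigma dW_t$. Because $b$ is bounded and measurable and $\sigma>0$ is constant, Veretennikov/Zvonkin pathwise uniqueness yields a strong solution on the original space, adapted to $\bbF^{\xi,W}$. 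Set $\beta_t=\alpha(t,Y_t)\in\bbA$; by uniqueness in law, $Y$ has the same law as $X^\beta$.

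\textbf{Step 3: comparison.} Since $\beta$ and $X^\beta$ are independent of $\mu$ and the drift does not depend on $m$, we have
\[
J(\beta,\mu)=\E\Big[\int_0^T F^\rho(t,X^\beta_t,\beta_t)\,dt+G^\rho(X^\beta_T)\Big]\le\Gamma^{dev}[\rho](\eta,\overline\eta),
\]
where the inequality comes from Step 2. The CCE property then gives $\Gamma[\rho]=J(\lambda,\mu)\le J(\beta,\mu)\le\Gamma^{dev}[\rho](\eta,\overline\eta)$.

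\textbf{Main obstacle.} The key difficulty is obtaining a deviation measurable with respect to $\bbF^{\xi,W}$ alone, i.e. independent of $\mu$. This is why the selection is applied to the \emph{averaged} cost $F^\rho$ rather than pointwise in $m$, and why strong existence for a merely measurable feedback drift is needed; both are handled by Zvonkin-type results. A secondary technical point is justifying the Fubini and independence step, which requires integrability from the quadratic growth assumptions and the moment bound \eqref{uniform_bound}.
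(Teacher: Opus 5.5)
Your proposal is correct and follows essentially the same route as the paper. You represent $(\eta,\overline\eta)\in\cR$ by a Markovian relaxed control, use the closed convex sets $\overline K^\rho(t,x)$ to select a feedback $\alpha(t,x)$ with the same drift and no larger averaged cost $F^\rho$, invoke Veretennikov's strong well-posedness to get an $\bbF^{\xi,W}$-adapted deviation $\beta\in\bbA$, and compare costs through the independence of $\mu$ from $(\xi,W)$. The differences are cosmetic: the paper transfers the relaxed state to the original space before doing the selection, and your phrase ``the law of $(Y_t,q_t(Y_t))$ is $\eta_t$'' should read $\eta_t(C\times D)=\E[\1_C(Y_t)\,q_t(Y_t)(D)]$.
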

\begin{proof}
Let $(\lambda,\mu)$ be a mean-field CCE, we define $\rho:=\P\circ (\mu,\mu^{x}_{T})^{-1}$. From Proposition \ref{LP:relation:prop:consistent_strategies}, we know that $\rho\in \cM$.
It is easy to see that $J(\lambda,\mu)=\Gamma[\rho]$.
We show that for any $(\kappa,\overline{\kappa})\in \cR$, there exists $\beta^{(\kappa,\overline{\kappa})}\in \bbA$ such that
\begin{equation}
    J(\beta^{(\kappa,\overline{\kappa})},\mu)\leq \Gamma^{dev}[\rho](\kappa,\overline{\kappa}).
\end{equation}
This implies
\begin{equation*}
    \Gamma[\rho]=J(\lambda,\mu)\leq J(\beta^{(\kappa,\overline{\kappa})},\mu)\leq \Gamma^{dev}[\rho](\kappa,\overline{\kappa}),
\end{equation*}
for any $(\kappa,\overline{\kappa}) \in \cR$, where the first inequality is due to the optimality property of mean-field CCEs.
To show our claim, by Fubini's theorem, using the quadratic growth of $f,g$ and the uniform second-moment bounds on $\cR_0$, we have
\begin{align*}
    \Gamma^{dev}[\rho](\kappa,\overline{\kappa}) & =\int_{\cR}\bigg(\int_{0}^{T}\int_{\R\times A}f(t,x,m_{t},a)\kappa_{t}(dx,da)dt+\int_{\R}g(x,\overline{m})\overline{\kappa}(dx)\bigg)\rho(dm,d\overline{m}) \\
    & =\int_{0}^{T}\int_{\R\times A}F^{\rho}(t,x,a)\kappa_{t}(dx,da)dt+\int_{\R}G^{\rho}(x)\overline{\kappa}(dx).
\end{align*}
By Assumption \ref{ass: averaged_convexity}, the set $\overline{K}^{\rho}(t,x)$ is closed and convex.
Let $(\kappa,\overline{\kappa})\in \cR$. By \cite[Theorem C.6]{dumitrescu2021EJP}, there exist a measurable function $q^{(\kappa,\overline{\kappa})}:[0,T]\times\R\to \cP(A)$ and a complete filtered probability space $(\overline{\Omega},\overline{\cF},\overline{\bbF},\overline{\P})$ equipped with an $\overline{\bbF}$-Brownian motion $\overline{W}$ and an $\overline{\bbF}$-adapted process $X$ such that
\begin{equation}
    \label{eq:deviator_dynamics_ind}
    dX_{t}=\int_{A}b(t,X_{t},a)q^{(\kappa,\overline{\kappa})}_{t}(X_{t})(da)dt+\sigma d\overline{W}_{t},\quad \overline{\P} \circ X_{0}^{-1}=m^*_0,
\end{equation}
and, for any $C\in \cB_{\R}$ and $D\in \cB_{A}$ holds
\begin{align}
    \label{eq:prop_repr_dev_ind_dyn_t}
    \kappa_{t}(C\times D)&=\E^{\overline{\P}}\big[ \boldsymbol{1}_{C}(X_{t})q^{(\kappa,\overline{\kappa})}_{t}(X_{t})(D) \big],\; dt\text{-a.e.,} \\
    \label{eq:prop_repr_dev_ind_dyn}
    \overline{\kappa}(C)&=\E^{\overline{\P}}\big[ \boldsymbol{1}_{C}(X_{T})\big].
\end{align}
Since $b(t,x,a)$ is bounded by Assumption \ref{standing_assumptions} and $\sigma>0$ is constant, \cite[Theorem 1]{veretennikov1980strongsolutions} implies strong well-posedness for \eqref{eq:deviator_dynamics_ind}. Therefore, we may solve the same equation on the original probability space, driven by $W$ and with initial condition $\xi$. By uniqueness in law, the representation identities \eqref{eq:prop_repr_dev_ind_dyn_t}--\eqref{eq:prop_repr_dev_ind_dyn} remain valid for this solution.
Since $\overline{K}^{\rho}(t,x)$ is closed and convex, we have 
\begin{equation}
    \bigg( \int_{A}b(t,x,a)q^{(\kappa,\overline{\kappa})}_{t}(x)(da),\int_{A}F^{\rho}(t,x,a)q^{(\kappa,\overline{\kappa})}_{t}(x)(da) \bigg)\in \overline{K}^{\rho}(t,x).
\end{equation}
Hence, by a measurable selection argument (see again \cite[Theorem A.9]{HaussmannRelaxedControls}), there exists a deterministic measurable function $\alpha^{(\kappa,\overline{\kappa})}:[0,T]\times \R \to A$ such that
\begin{align}
    \label{eq:deterministic_control_via_meas_sel}
    b(t,x,\alpha^{(\kappa,\overline{\kappa})}(t,x))&=\int_{A}b(t,x,a)q^{(\kappa,\overline{\kappa})}_{t}(x)(da), \notag\\
    F^{\rho}(t,x,\alpha^{(\kappa,\overline{\kappa})}(t,x))&\leq\int_{A}F^{\rho}(t,x,a)q^{(\kappa,\overline{\kappa})}_{t}(x)(da).
\end{align}
Moreover, by employing again \cite{veretennikov1980strongsolutions}, the process $X$ is the unique strong solution to
\begin{equation}
    dX_{t}=b(t,X_{t},\alpha^{(\kappa,\overline{\kappa})}(t,X_{t}))dt+\sigma dW_{t},\quad X_{0}=\xi.
\end{equation}
In particular, the process $X$ can be taken $\F^{\xi,W}$-adapted, which implies that
\[
\beta^{(\kappa,\overline{\kappa})}:=(\alpha^{(\kappa,\overline{\kappa})}(t,X_{t}))_{t\in [0,T]}
\]
belongs to $\bbA$. Therefore,
\begin{align*}
    & J(\beta^{(\kappa,\overline{\kappa})},\mu) =\E\bigg[ \int_{0}^{T}f(t,X_{t},\mu_{t},\beta_{t}^{(\kappa,\overline{\kappa})})dt+g(X_{T},\mu_{T}) \bigg]\\
    &=\int_{0}^{T}\E\big[\E\big[ f(t,X_{t},\mu_{t},\beta_{t}^{(\kappa,\overline{\kappa})})\big|\xi,W\big] \big]dt+\E\big[\E\big[g(X_{T},\mu_{T}^{x}) \big|\xi,W\big] \big] \\
    &=\int_{0}^{T}\E\big[ F^{\rho}(t,X_{t},\beta_{t}^{(\kappa,\overline{\kappa})}) \big]dt+\E\big[ G^{\rho}(X_{T})\big] =\int_{0}^{T}\E\big[ F^{\rho}(t,X_{t},\alpha^{(\kappa,\overline{\kappa})}(t,X_t))\big]dt+\E\big[ G^{\rho}(X_{T})\big]\\
    &\leq \int_{0}^{T}\E\bigg[\int_{A}F^{\rho}(t,X_{t},a)q^{(\kappa,\overline{\kappa})}_{t}(X_{t})(da) \bigg]dt+\E[G^{\rho}(X_{T})] \\
    &=\int_{0}^{T}\int_{\R\times A}F^{\rho}(t,x,a)\kappa_{t}(dx,da)dt+\int_{\R}G^{\rho}(x)\overline{\kappa}(dx) =\Gamma^{dev}[\rho](\kappa,\overline{\kappa}),
\end{align*}
where the third equality follows from the tower property and the definition of $F^{\rho}$ and $G^{\rho}$ and the inequality is due to \eqref{eq:deterministic_control_via_meas_sel}.
Indeed, since $X$ and $\beta^{(\kappa,\overline{\kappa})}$ are $\F^{\xi,W}$-adapted and $(\mu,\mu_T^x)$ is independent of $(\xi,W)$ with law $\rho$, for every $t\in[0,T]$,
\[
    \E\big[f(t,X_t,\mu_t,\beta_t^{(\kappa,\overline{\kappa})})\vert\xi,W\big]
    =\int_{V_2\times\cP_2(\R)}f(t,X_t,m_t,\beta_t^{(\kappa,\overline{\kappa})})\rho(dm,d\overline{m})
    =F^{\rho}(t,X_t,\beta_t^{(\kappa,\overline{\kappa})}),
\]
and similarly
\[
    \E\big[g(X_T,\mu_T^x)\vert\xi,W\big] = \int_{V_2\times\cP_2(\R)}g(X_T,\overline{m})\rho(dm,d\overline{m}) =G^{\rho}(X_T).
\]
This concludes the result.
\end{proof}

\begin{theorem}\label{theorem: equivalence of formulations case of independent dynamics}
Suppose that Assumptions \ref{standing_assumptions}, \ref{ass: convexity relaxed control assumption}, \ref{ass: independent dynamics from m} and \ref{ass: averaged_convexity} hold.
Then, there exists a mean-field CCE $(\lambda^*,\mu^*)$ which is optimal for the moderator.
\end{theorem}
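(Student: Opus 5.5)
The plan is to combine Theorem \ref{theorem: equivalence of formulations} with Proposition \ref{prop:any_strong_CCE_induces_LP_CCE}, which together give the chain of inequalities
\[
J^0(\lambda^*,\mu^*) \leq V^{LP} \leq J^0(\lambda,\mu)
\]
for every mean-field CCE $(\lambda,\mu)$. Here $(\lambda^*,\mu^*)$ is the mean-field CCE built from an optimal LP-CCE $\rho^*$.

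\textbf{Step 1 (the candidate).} Since Assumptions \ref{standing_assumptions} and \ref{ass: convexity relaxed control assumption} hold, Theorem \ref{theorem: equivalence of formulations} applies. Together with Theorem \ref{LP:thm:solution_moderator_optimization_problem}, it produces a mean-field CCE $(\lambda^*,\mu^*)$ with $J^0(\lambda^*,\mu^*)\leq V^{LP}$. We have to make sure the LP notions are the ones described in Remark \ref{remark:independent_dynamics_from_m}, that is, $\cD=\cR$. Nothing changes here, because under Assumption \ref{ass: independent dynamics from m} every element of $\cD$ is constant, so $\cD$ and $\cR$ coincide.

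\textbf{Step 2 (lower bound).} Let $(\lambda,\mu)$ be an arbitrary mean-field CCE and set $\rho=\P\circ(\mu,\mu^x_T)^{-1}$. By Proposition \ref{prop:any_strong_CCE_induces_LP_CCE}, which uses Assumptions \ref{ass: independent dynamics from m} and \ref{ass: averaged_convexity}, $\rho\in\cE$. It remains to check that $\Gamma^0(\rho)=J^0(\lambda,\mu)$. I would use the consistency condition \eqref{eq:CCE:consistency} in the form \eqref{LP:relation:prop:consistent_strategies:consistency_laws} from the proof of Proposition \ref{LP:relation:prop:consistent_strategies}: the joint law of $(X_t,\lambda_t,\mu,\mu^x_T)$ equals $m_t(dx,da)\rho(dm,d\overline m)$, and similarly at the terminal time. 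I would then integrate $f^0(t,X_t,\mu_t,\lambda_t)$ and $g^0(X_T,\mu_T)$ and apply Fubini. This is justified by the quadratic growth of $f^0,g^0$ together with the second-moment bounds. Since $g^0$ depends on $\mu_T$ only through $\mu_T^x$, this gives $J^0(\lambda,\mu)=\Gamma^0(\rho)\geq V^{LP}$. One point to handle is that $\mu$ is identified with its $dt$-a.e. class in $V_2$, while the time-integral cost is insensitive to this identification and the terminal marginal $\mu^x_T$ is carried separately. So the identity holds.

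\textbf{Step 3 (conclusion).} Combining the two steps gives $J^0(\lambda^*,\mu^*)\leq V^{LP}\leq J^0(\lambda,\mu)$ for every mean-field CCE $(\lambda,\mu)$, so $(\lambda^*,\mu^*)$ is optimal in the sense of Definition \ref{def:probabilistic_formulation:optimal_CCE}. As a byproduct, the probabilistic and LP values coincide.

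The main obstacle is a technicality rather than a conceptual step. Different CCEs may live on different filtered probability spaces, whereas $(\lambda^*,\mu^*)$ is constructed on a specific space. Since Definition \ref{def:CCE} fixes the space, I would argue as follows. $J^0$ depends only on the joint law, and the inequality $V^{LP}\leq J^0(\lambda,\mu)$ holds on any space satisfying Assumption \ref{assumption:F_0}. It is therefore enough to transfer $(\lambda^*,\mu^*)$ to the given space. For this I would use the uniform variable $U$ to realize $(\mu,\overline\mu)$, and strong existence for the equation with feedback $\alpha$ in the independent-dynamics case, via Veretennikov as in the proof of Proposition \ref{prop:any_strong_CCE_induces_LP_CCE}.
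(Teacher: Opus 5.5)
Your proposal is correct and follows the paper's proof: Theorem \ref{theorem: equivalence of formulations} gives $J^0(\lambda^*,\mu^*)\le V^{LP}$, and Proposition \ref{prop:any_strong_CCE_induces_LP_CCE} gives $V^{LP}\le\Gamma^0(\rho)=J^0(\lambda,\mu)$ for any mean-field CCE $(\lambda,\mu)$. You also supply two details the paper leaves implicit: the justification of $\Gamma^0(\rho)=J^0(\lambda,\mu)$ via the consistency laws, and the transfer of $(\lambda^*,\mu^*)$ to the fixed probability space.
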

\begin{proof}
By Theorem \ref{theorem: equivalence of formulations}, there exists a mean-field CCE $(\lambda^*,\mu^*)$ such that $J^{0}(\lambda^*,\mu^*) \leq V^{LP}$.
Let $(\lambda,\mu)$ be any other mean-field CCE and define $\rho = \P \circ (\mu,\mu^x_T)^{-1}$. By Proposition \ref{prop:any_strong_CCE_induces_LP_CCE}, $\rho\in \cE$, which implies
\begin{equation}
    \label{eq:Gamma0_ineq_for_rho}
    V^{LP} = \inf_{\rho' \in \cE} \Gamma^{0}(\rho') \leq \Gamma^{0}(\rho) = J^{0}(\lambda,\mu).
\end{equation}
Combining \eqref{eq:thm_relation:inequality_value} and \eqref{eq:Gamma0_ineq_for_rho}, we conclude that
\begin{equation}
    J^{0}(\lambda^{*},\mu^{*})\leq J^{0}(\lambda,\mu)
\end{equation}
for any mean-field CCE $(\lambda,\mu)$. This completes the proof.
\end{proof}

\section{No-Regret Learning Algorithm}\label{section: Learning algorithm}
Regret-minimizing (or no-regret) algorithms are learning schemes in which an agent iteratively updates its decisions based on past performances. Over time, the agent learns so effectively that its long-run average performance is just as good as if it had known the single best fixed action from the very beginning. In the context of correlated equilibria, Hart and Mas-Colell \cite{MasCollel} showed that if agents make decisions based on regret-minimizing algorithms, the empirical distribution of joint play converges to the set of correlated equilibria. This convergence property was later extended to coarse correlated equilibria (see \cite{Blum_Mansour_2007}). The objective of this section is to design an adaptive regret-minimization algorithm that approximates an optimal mean-field LP-CCE. 

\smallskip
In the rest of the paper, we will assume that Assumptions \ref{standing_assumptions} and \ref{ass: independent dynamics from m} are in force, so in particular the state drift does not depend on the measure term $m$. 
Notice that, according to Remark \ref{remark:independent_dynamics_from_m}-(\ref{remark: set of deviations in the case of independent dynamics D=R}), we take $\cD = \cR$ as the set of admissible deviations.

\subsection{A Lagrangian formulation with External Regret}\label{subsection: Lagrangian approach}
Following \cite{Blum_Mansour_2007} and \cite[Definition 24]{muller2022learningcorrelatedequilibriameanfield}, we provide the definition of external regret in the mean-field LP framework.
Recall the notation $\boldsymbol{\eta} = (\eta,\overline{\eta})$ for a pair in $\cR$, the definition of $F[\boldsymbol{m}](\boldsymbol{\eta}) = F[\boldsymbol{m}](\eta,\overline{\eta})$ in \eqref{eq:function_F} and the expression of $\Gamma[\rho]$, $\Gamma^{dev}[\rho](\boldsymbol{\kappa})$ in terms of $F$ given in \eqref{eq:cost_functionals_integrated_form}.

\begin{definition}[LP-External Regret]\label{def: external regret}
The external regret $\cE\cR:\cP(\cR)\to \R$ is given by
\begin{equation}\label{eq: external regret}
    \cE\cR(\rho) :=\sup_{\boldsymbol{\kappa}\in\cR}\big(\Gamma[\rho]-\Gamma^{dev}[\rho](\boldsymbol{\kappa}) \big) = \sup_{\boldsymbol{\kappa}\in\cR} \int_{\cR} \big(F[\boldsymbol{m}](\boldsymbol{m}) - F[\boldsymbol{m}](\boldsymbol{\kappa})\big)\rho(d\boldsymbol{m}).
\end{equation}
\end{definition}
The following lemma is straightforward from Definition \ref{def: external regret}, and thus its proof is omitted:
\begin{lemma}\label{equivalence of external regret and CCE}
    Let $\rho\in\cM$, the following are equivalent:
    \begin{enumerate}
        \item $\cE\cR(\rho)\leq 0$;
        \item $\rho$ is a mean-field LP-CCE.
    \end{enumerate}
\end{lemma}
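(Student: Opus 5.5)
The equivalence is a direct unwinding of the definitions, once the standing convention $\cD=\cR$ is in place. Here $\cD=\cR$ is the identification of Remark~\ref{remark:independent_dynamics_from_m}, valid under Assumption~\ref{ass: independent dynamics from m}. The plan has three steps.

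\textbf{Step 1: rewrite the LP-CCE condition.} Fix $\rho\in\cM$. By Definition~\ref{LP:def:CCE}, $\rho$ is an LP-CCE exactly when $\Gamma[\rho]\le\Gamma^{dev}[\rho](\boldsymbol{\kappa})$ for every admissible deviation. Since every admissible deviation is a constant map $m\mapsto\boldsymbol{\kappa}$ with $\boldsymbol{\kappa}\in\cR$, the condition becomes: for every $\boldsymbol{\kappa}\in\cR$,
\[
\Gamma[\rho]-\Gamma^{dev}[\rho](\boldsymbol{\kappa})\le 0 .
\]

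\textbf{Step 2: check that every quantity is a finite real number.} By Lemma~\ref{LP:lemma:restriction_to_R_0}, $\rho$ is supported on $\cR_0$. The integrands $F[\boldsymbol{m}](\boldsymbol{m})$ and $F[\boldsymbol{m}](\boldsymbol{\kappa})$ are continuous on the compact set $\cR_0$, by Lemma~\ref{lemma:continuity_integrands}, and hence bounded. So $\Gamma[\rho]$ and each $\Gamma^{dev}[\rho](\boldsymbol{\kappa})$ are finite reals. This also justifies the second expression for $\cE\cR(\rho)$ in \eqref{eq: external regret}: the difference of the two integrals equals the integral of the difference of the integrands.

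\textbf{Step 3: compare with the supremum.} A supremum of real numbers is $\le 0$ if and only if each term is $\le 0$. Hence
\[
\cE\cR(\rho)=\sup_{\boldsymbol{\kappa}\in\cR}\big(\Gamma[\rho]-\Gamma^{dev}[\rho](\boldsymbol{\kappa})\big)\le 0
\]
holds if and only if the family of inequalities in Step~1 holds, which is the LP-CCE property. This gives both directions at once.

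The only step needing any care is the identification of deviations in Step~1. The definition of LP-CCE quantifies over $\cD$, while the external regret quantifies over $\cR$. Under Assumption~\ref{ass: independent dynamics from m}, Remark~\ref{remark:independent_dynamics_from_m} shows these index the same family of inequalities, so no further argument is needed.
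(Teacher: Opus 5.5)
Your proposal is correct and is essentially the paper's argument. The paper omits the proof as straightforward from Definition~\ref{def: external regret} together with the identification $\cD=\cR$ of Remark~\ref{remark:independent_dynamics_from_m}, and your Steps 1 and 3 are exactly that unwinding; Step 2 (finiteness via continuity of $F$ on the compact $\cR_0$) is a harmless extra check that rules out any $\infty-\infty$ issue.
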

From Lemma \ref{equivalence of external regret and CCE} we reformulate LP mediator's problem \eqref{LP:mediator_value} as follows
\begin{equation}
    \label{eq: constrained problem external regret}
    \inf_{\rho\in \cM}\Gamma^{0}(\rho)\quad\text{ under the constraint }\quad \cE\cR(\rho)\leq 0.
\end{equation}

We need a further technical assumption, stating that one can find an LP-CCE which strictly outperforms all admissible deviations. This assumption has to be checked on a case-by-case basis (see the examples in Section \ref{section: examples} and Appendix \ref{app:strictCCE}).

\begin{assumption}\label{assumptions: extra ass on algorithmic part}
There exists $\overline{\rho}\in \cM$ such that $\cE\cR(\overline{\rho})<0$.
\end{assumption}

We introduce a dual problem for the constrained optimization problem (\ref{eq: constrained problem external regret}) by using Lagrange multipliers. To do so, we introduce the Lagrangian function $L: \cM\times \R_{+}\to \R\cup \{\infty\}$ as
\begin{equation}
    \label{eq: Lagrange equation}
    L(\rho,\lambda):=\Gamma^{0}(\rho)+\lambda\cE\cR(\rho) = \Gamma^{0}(\rho) + \lambda \sup_{\boldsymbol{\kappa} \in \cR} \left( \int_{\cR} (F[\boldsymbol{m}](\boldsymbol{m}) - F[\boldsymbol{m}](\boldsymbol{\kappa}))\rho(d\boldsymbol{m})  \right).
\end{equation}

\begin{lemma}\label{lemma:external_regret_continuous}
The external regret is convex, continuous and bounded.
\end{lemma}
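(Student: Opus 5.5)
The plan is to write $\cE\cR$ as a supremum of affine functionals and check uniform bounds and equicontinuity. For each $\boldsymbol{\kappa}\in\cR$ set
\[
\ell_{\boldsymbol{\kappa}}(\rho):=\int_{\cR}\big(F[\boldsymbol{m}](\boldsymbol{m})-F[\boldsymbol{m}](\boldsymbol{\kappa})\big)\rho(d\boldsymbol{m}),
\]
which equals $\Delta^{(\kappa,\overline{\kappa})}(\rho)$ from Lemma \ref{lemma:continuity} for the constant deviation $m\mapsto\boldsymbol{\kappa}$ (admissible by Remark \ref{remark:independent_dynamics_from_m}). Each $\ell_{\boldsymbol{\kappa}}$ is linear in $\rho$, so $\cE\cR=\sup_{\boldsymbol{\kappa}}\ell_{\boldsymbol{\kappa}}$ is convex on $\cP(\cR)$ as a pointwise supremum of affine maps. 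This takes care of convexity at once.

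For boundedness, I will use the quadratic growth of $\bar f,\bar g$, the linear growth of $\hat f,\hat g$, and the uniform moment bound \eqref{uniform_bound} on $\cR_0\supseteq\cR$. Together these give $|F[\boldsymbol{m}](\boldsymbol{\eta})|\le C$ for all $\boldsymbol{m},\boldsymbol{\eta}\in\cR_0$. Hence $|\ell_{\boldsymbol{\kappa}}(\rho)|\le 2C$ uniformly, and $|\cE\cR|\le 2C$. In particular $\cE\cR$ is finite, so $L$ is real-valued.

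Continuity is the main step. Lower semicontinuity is immediate, since $\cE\cR$ is a supremum of functionals that are continuous by Lemma \ref{lemma:continuity}. For upper semicontinuity I will show that the family $\{\boldsymbol{m}\mapsto F[\boldsymbol{m}](\boldsymbol{\kappa})\}_{\boldsymbol{\kappa}\in\cR}$ is equicontinuous on the compact set $\cR_0$. By Lemma \ref{lemma:continuity_integrands}, $F$ is jointly continuous on the compact set $\cR_0\times\cR_0$, hence uniformly continuous. So $\boldsymbol{m}\mapsto F[\boldsymbol{m}](\cdot)$ is continuous from $\cR_0$ into $C(\cR_0)$ with the sup norm. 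The integrand $\phi_{\boldsymbol{\kappa}}(\boldsymbol{m}):=F[\boldsymbol{m}](\boldsymbol{m})-F[\boldsymbol{m}](\boldsymbol{\kappa})$ therefore ranges, as $\boldsymbol{\kappa}$ varies, over a uniformly bounded and equicontinuous subset of $C(\cR_0)$. By Arzelà–Ascoli this set is relatively compact in sup norm.

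Now take $\rho^n\to\rho$ weakly in $\cP(\cR_0)$. Weak convergence is uniform over compact subsets of $C(\cR_0)$, which one sees by a finite $\varepsilon$-net argument. Therefore $\sup_{\boldsymbol{\kappa}}|\ell_{\boldsymbol{\kappa}}(\rho^n)-\ell_{\boldsymbol{\kappa}}(\rho)|\to0$, and so $|\cE\cR(\rho^n)-\cE\cR(\rho)|\to0$.

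The delicate point is the equicontinuity, which rests entirely on joint continuity plus compactness. If needed, I could alternatively use the Lipschitz-in-$y$ estimates from the proof of Lemma \ref{lemma:continuity_integrands}. Those give
\[
|F[\boldsymbol{m}^n](\boldsymbol{\kappa})-F[\boldsymbol{m}](\boldsymbol{\kappa})|\le C\Big(\big\|\textstyle\int\hat f\,d(m^n-m)\big\|_{L^1}+\big|\textstyle\int\hat g\,d(\overline{m}^n-\overline{m})\big|\Big),
\]
with $C$ independent of $\boldsymbol{\kappa}$, and the right-hand side tends to zero by Lemma \ref{lemma:L1_convergence}.
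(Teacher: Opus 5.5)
Your proof is correct. Convexity is handled exactly as in the paper, but you reach continuity and boundedness by a more hands-on route.

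The paper's argument runs as follows. It starts from the joint continuity of $g(\boldsymbol{m},\boldsymbol{\kappa})=F[\boldsymbol{m}](\boldsymbol{m})-F[\boldsymbol{m}](\boldsymbol{\kappa})$ on $\cR^2$ (Lemma \ref{lemma:continuity_integrands}). It then cites a general result for the joint continuity of $(\boldsymbol{\kappa},\rho)\mapsto\int g(\cdot,\boldsymbol{\kappa})\,d\rho$. Berge's maximum theorem over the compact constraint set $\cR$ gives continuity of $\cE\cR$. Boundedness is then read off from continuity on the compact space $\cP(\cR)$.

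You use the same two ingredients, joint continuity of $F$ and compactness of $\cR_0$. From them you show that $\{\phi_{\boldsymbol{\kappa}}\}_{\boldsymbol{\kappa}\in\cR}$ is relatively compact in $(C(\cR_0),\|\cdot\|_\infty)$, and that weak convergence is uniform on such families. This yields $\sup_{\boldsymbol{\kappa}}|\ell_{\boldsymbol{\kappa}}(\rho^n)-\ell_{\boldsymbol{\kappa}}(\rho)|\to 0$, which is in effect a proof by hand of the special case of Berge's theorem needed here. Three small remarks:
\begin{itemize}
\item The uniform convergence already gives full continuity, so your separate lower-semicontinuity step is redundant.
\item Arzel\`a--Ascoli can be bypassed. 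By uniform continuity of $F$ on $\cR_0\times\cR_0$, the map $\boldsymbol{\kappa}\mapsto\phi_{\boldsymbol{\kappa}}$ is continuous into $C(\cR_0)$, so the family is a continuous image of the compact set $\cR$.
\item Using sequences is legitimate because $\cP(\cR)$ is metrizable.
\end{itemize}

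What each approach buys: the paper's version is shorter because it relies on standard theorems. Yours is self-contained and gives an explicit bound $|\cE\cR|\le 2C$ directly from the growth conditions and \eqref{uniform_bound}, rather than via compactness. It also proves the slightly stronger uniform-in-$\boldsymbol{\kappa}$ convergence. Your Lipschitz-in-$y$ alternative goes further and gives a quantitative modulus of continuity, independent of $\boldsymbol{\kappa}$, controlled by Lemma \ref{lemma:L1_convergence}.
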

\begin{proof}
The convexity of $\cE\cR(\rho)$ follows from the sub-linearity of the supremum.
As for continuity, we start by noticing that the map $\cR^2 \ni (\boldsymbol{m},\boldsymbol{\kappa}) \mapsto F[\boldsymbol{m}](\boldsymbol{m}) - F[\boldsymbol{m}](\boldsymbol{\kappa}) \in \R$ is jointly continuous by Lemma \ref{lemma:continuity_integrands}.
$\cR$ is a Polish space, since it is separable, complete and metrizable by \cite[Proposition 2.13]{LinProgFictDumitrescu}.
The continuity of $(\boldsymbol{\kappa},\rho) \mapsto \int_{\cR} ( F[\boldsymbol{m}](\boldsymbol{m}) - F[\boldsymbol{m}](\boldsymbol{\kappa}) )\rho(d\boldsymbol{m})$ then follows from \cite[Example 2.16]{lacker2018mean}.
Recalling that $\cR$ is compact, by Berge's maximum theorem (see \cite[Theorem 17.31]{aliprantis2013infinite}), $\cE\cR(\rho)$ is continuous.
Since $\cE\cR(\rho)$ is continuous over the compact set $\cP(\cR)$, it is bounded.
\end{proof}

\begin{proposition}\label{proposition: Lagrange formulation}
The following hold: 
\begin{enumerate}
    \item \label{prop: item: primal problem equal to dual} $\min_{\rho\in\cE}\Gamma^{0}(\rho)=\min_{\rho\in \cM}\sup_{\lambda\in \R_{+}}L(\rho,\lambda)$.
    
    \item \label{prop: item: existence of saddle-point} $L$ admits a saddle point, i.e. there exists $(\widehat{\rho},\widehat{\lambda})\in \cM\times \R_{+}$ such that
    \begin{equation}
        L(\widehat{\rho},\lambda)\leq L(\widehat{\rho},\widehat{\lambda})\leq L(\rho,\widehat{\lambda}).
    \end{equation}
    \item \label{prop: item: saddle point is minimum} If $(\widehat{\rho},\widehat{\lambda})$ is a saddle-point of $L$, it holds $L(\widehat{\rho},\widehat{\lambda})=\min_{\rho\in \cE}\Gamma^{0}(\rho)=\Gamma^{0}(\widehat{\rho})$.
\end{enumerate}
\end{proposition}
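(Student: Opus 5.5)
For item (1), I would compute the inner supremum explicitly. For fixed $\rho\in\cM$ we have $L(\rho,\lambda)=\Gamma^0(\rho)+\lambda\,\cE\cR(\rho)$, which is affine in $\lambda\in\R_+$. Hence $\sup_{\lambda\ge0}L(\rho,\lambda)$ equals $\Gamma^0(\rho)$ if $\cE\cR(\rho)\le0$ (attained at $\lambda=0$) and $+\infty$ otherwise. By Lemma \ref{equivalence of external regret and CCE}, $\{\rho\in\cM:\cE\cR(\rho)\le 0\}=\cE$, so $\sup_\lambda L(\rho,\cdot)=\Gamma^0(\rho)+\iota_{\cE}(\rho)$. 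The minimum on the right exists and equals $\min_{\cE}\Gamma^0$, which is attained by Theorem \ref{LP:thm:solution_moderator_optimization_problem}. Since $\cE\neq\emptyset$ by Proposition \ref{prop:properties_set_E}, the value is finite.

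For item (2), I would use a minimax theorem, namely Sion's. The set $\cM$ is convex and compact (Lemma \ref{lemma:M_convex_compact}), and $\R_+$ is convex. For fixed $\lambda\ge0$, the map $\rho\mapsto L(\rho,\lambda)$ is convex and continuous: $\Gamma^0$ is linear and continuous by Lemma \ref{lemma:continuity_integrands}, and $\cE\cR$ is convex and continuous by Lemma \ref{lemma:external_regret_continuous}. For fixed $\rho$, the map is affine in $\lambda$. Sion's theorem then gives
\[
\min_{\rho\in\cM}\sup_{\lambda\ge0}L=\sup_{\lambda\ge0}\min_{\rho\in\cM}L,
\]
and the left side is finite by item (1).

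The main obstacle is attainment of the supremum in $\lambda$, since $\R_+$ is not compact. Here I would use the Slater condition, Assumption \ref{assumptions: extra ass on algorithmic part}. Let $d(\lambda):=\min_{\rho\in\cM}L(\rho,\lambda)$. This function is concave and upper semicontinuous, being an infimum of affine functions. Moreover
\[
d(\lambda)\le L(\overline\rho,\lambda)=\Gamma^0(\overline\rho)+\lambda\,\cE\cR(\overline\rho)\to-\infty \quad\text{as }\lambda\to\infty,
\]
since $\cE\cR(\overline\rho)<0$. Combined with $d(0)\ge\min_\cM\Gamma^0>-\infty$ (using boundedness of the continuous $\Gamma^0$ on compact $\cM$), every superlevel set $\{d\ge d(0)\}$ lies in the bounded interval $[0,(\Gamma^0(\overline\rho)-d(0))/|\cE\cR(\overline\rho)|]$. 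So the supremum of $d$ is attained at some $\widehat\lambda$. Pick $\widehat\rho$ attaining the minimum in item (1). Equality of the primal and dual values then gives
\[
L(\widehat\rho,\lambda)\le\sup_{\lambda'}L(\widehat\rho,\lambda')=\min_{\cE}\Gamma^0=d(\widehat\lambda)\le L(\rho,\widehat\lambda),
\]
and taking $\rho=\widehat\rho$ and $\lambda=\widehat\lambda$ yields the saddle inequalities.

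For item (3), let $(\widehat\rho,\widehat\lambda)$ be any saddle point. The left inequality $L(\widehat\rho,\lambda)\le L(\widehat\rho,\widehat\lambda)<\infty$ for all $\lambda\ge0$ forces $\cE\cR(\widehat\rho)\le0$, so $\widehat\rho\in\cE$. Setting $\lambda=0$ gives $\Gamma^0(\widehat\rho)\le L(\widehat\rho,\widehat\lambda)=\Gamma^0(\widehat\rho)+\widehat\lambda\,\cE\cR(\widehat\rho)\le\Gamma^0(\widehat\rho)$, hence equality. For any $\rho\in\cE$, the right inequality gives $L(\widehat\rho,\widehat\lambda)\le\Gamma^0(\rho)+\widehat\lambda\,\cE\cR(\rho)\le\Gamma^0(\rho)$. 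Therefore $\Gamma^0(\widehat\rho)=L(\widehat\rho,\widehat\lambda)=\min_\cE\Gamma^0$.
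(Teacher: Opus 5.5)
Your proposal is correct and follows essentially the same route as the paper. You compute the inner supremum explicitly for (1), use Sion's theorem plus the Slater condition (Assumption \ref{assumptions: extra ass on algorithmic part}) to make the concave u.s.c.\ dual function coercive and attain its maximum for (2), and use the two saddle inequalities for (3). The differences are cosmetic: you bound the superlevel set of the dual function explicitly rather than invoking Lemma \ref{appendix: lemma: minimum_concave_usc}, and in (3) you compare directly with every $\rho\in\cE$ through the right saddle inequality instead of passing through the min--sup value identity.
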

\begin{proof} (\ref{prop: item: primal problem equal to dual}) Let $\lambda\in \R_{+}$ be given. By the continuity of $\Gamma^0$ and Lemma \ref{lemma:external_regret_continuous}, the function $L(\rho,\lambda)$ is finite for any $(\rho,\lambda)\in\cM\times\R_+$. Moreover, for $\rho\in\cM$ fixed, we have that
\begin{align*}
    \sup_{\lambda\in \R_{+}}\big( \Gamma^{0}(\rho)+\lambda\cE\cR(\rho)\big)
    &=\Gamma^{0}(\rho)+\sup_{\lambda\in \R_{+}}\lambda \cE\cR(\rho).
\end{align*}
It is clear that
\begin{equation}
    \sup_{\lambda\in \R_{+}}\lambda\cE\cR(\rho)=\begin{cases}
        +\infty,\quad &\cE\cR(\rho)>0 \\
        0,\quad &\cE\cR(\rho)\leq 0.
    \end{cases}
\end{equation}
Hence, 
\begin{align*}
    \min_{\rho\in\cM}\sup_{\lambda\in \R_{+}}\big( \Gamma^{0}(\rho)+\lambda\cE\cR(\rho) \big)&=\min_{\rho\in\cM}\bigg(\Gamma^{0}(\rho)+\sup_{\lambda\in \R_{+}}\lambda \cE\cR(\rho)(\boldsymbol{1}_{\cE\cR(\rho)\leq 0}(\rho)+\boldsymbol{1}_{\cE\cR(\rho)> 0}(\rho)\big)\bigg) \\
    &=\min_{\rho\in\cM}\big(\Gamma^{0}(\rho)+ (\infty)\boldsymbol{1}_{\cE\cR(\rho) > 0}(\rho)\big)
    = \min_{\rho\in\cE}\Gamma^{0}(\rho).
\end{align*}
where we recall that the minima exist since $\cR$ and $\cM$ are compact by Lemmata \ref{lemma:compactness_a_priori_R0} and \ref{lemma:M_convex_compact} and $\Gamma^0$ is continuous.

(\ref{prop: item: existence of saddle-point}) We now turn to the existence of saddle points. To this end, we apply Sion's minimax theorem \cite{SionMinMax}. Note that $\cM\subset\cP(\cR)\subset\cM_{s}(\cR)$, where $\cM_{s}(\cR)$ denotes the set of finite signed measures on $\cR$. When equipped with the weak topology $\sigma(\cM_{s}(\cR),C(\cR))$, $\cM_{s}(\cR)$ is a Hausdorff linear topological space.
For any $\lambda \in \R_+$ fixed, since $\Gamma^{0}$ is linear and continuous and $\cE\cR$ is convex and continuous by Lemma \ref{lemma:external_regret_continuous}, 
 $\rho \mapsto L(\rho,\lambda)$ is convex and continuous.
On the other hand, for fixed $\rho \in \cM$, $\lambda \mapsto L(\rho,\lambda)$ is continuous and linear, so in particular it is concave.
Thus, since $\cM$ is compact, Sion's minimax theorem implies
\[
v := \min_{\rho \in \cM} \sup_{\lambda \in \R_+} L(\rho,\lambda) = \sup_{\lambda \in \R_+}\min_{\rho \in \cM}  L(\rho,\lambda) .
\]
We then use Assumption \ref{assumptions: extra ass on algorithmic part} to deduce the existence of a saddle point.
Let $\bfJ(\lambda) := \min_{\rho \in \cM} L(\rho,\lambda) = \min_{\rho \in \cM} (\Gamma^{0}(\rho) +\lambda\cE\cR(\rho))$. Since it is the infimum of linear functions, $\bfJ(\lambda)$ is concave and upper semi-continuous.
By definition of $\bfJ(\lambda)$, it holds
\[
\lim_{\lambda \to \infty}\bfJ(\lambda) \leq \lim_{\lambda \to \infty} (\Gamma^{0}(\overline{\rho}) + \lambda\cE\cR(\overline{\rho})) = -\infty,
\]
where we exploited the fact that $\cE\cR(\overline{\rho}) < 0$ by Assumption \ref{assumptions: extra ass on algorithmic part}.
Moreover, it holds
\[
    M := \sup_{\lambda \in \R_+} \bfJ(\lambda) = \sup_{\lambda \in \R_+} \min_{\rho \in \cM} L(\rho,\lambda) = \min_{\rho \in \cM} \sup_{\lambda \in \R_+} L(\rho,\lambda) = \min_{\rho \in \cE}\Gamma^{0}(\rho)<\infty,
\]
where we used part \ref{prop: item: primal problem equal to dual}.
Since $\bfJ(\lambda)$ is concave, upper-semicontinuous and $\sup_{\lambda \in \R_+}\bfJ(\lambda) < \infty$, we can apply Lemma \ref{appendix: lemma: minimum_concave_usc} to deduce that there exists $\widehat{\lambda} \in \R_+$ such that
\begin{equation}
    \bfJ(\widehat{\lambda}) = \max_{\lambda \in \R_+}\bfJ(\lambda) = \max_{\lambda \in \R_+}\min_{\rho \in \cM}L(\rho,\lambda).
\end{equation}
Let now $\widehat{\rho}$ be such that $\sup_{\lambda \in \R_+} L(\widehat{\rho},\lambda) = \min_{\rho \in \cM}\sup_{\lambda \in \R_+} L(\rho,\lambda)$, which exists by part \ref{prop: item: primal problem equal to dual}, since $\cE$ is compact and $\Gamma^0$ is continuous.
We show that $(\widehat{\rho},\widehat{\lambda})$ is a saddle-point of $L(\rho,\lambda)$.
It holds
\begin{equation*}
    \sup_{\lambda \in \R_+} L(\widehat{\rho},\lambda) = \min_{\rho \in \cM}\sup_{\lambda \in \R_+} L(\rho,\lambda) = v , \qquad  \inf_{\rho \in \cM} L(\rho,\widehat{\lambda}) = \max_{\lambda \in \R_+}\min_{\rho \in \cM} L(\rho,\lambda) = v,
\end{equation*}
which implies
\begin{equation*}
    L(\widehat{\rho},\widehat{\lambda}) \geq \inf_{\rho \in \cM}L(\rho,\widehat{\lambda}) = v, \qquad L(\widehat{\rho},\widehat{\lambda}) \leq \sup_{\lambda \in \R_+}L(\widehat{\rho},\lambda) = v,
\end{equation*}
i.e. $L(\widehat{\rho},\widehat{\lambda}) = v$  and $L(\widehat{\rho},\widehat{\lambda}) = \min_{\rho \in \cM}L(\rho,\widehat{\lambda}) = \max_{\lambda \in \R_+}L(\widehat{\rho},\lambda)$.
This is enough to conclude that $(\widehat{\rho},\widehat{\lambda})$ is a saddle-point for $L(\rho,\lambda)$.

(\ref{prop: item: saddle point is minimum})
It remains to show that, if $(\widehat{\rho},\widehat{\lambda})$ is a saddle-point of $L(\rho,\lambda)$, $\widehat{\rho}\in \cE$ and it attains $\inf_{\rho\in \cE}\Gamma^{0}(\rho)$. To this end, since $(\widehat{\rho},\widehat{\lambda})$ is saddle-point it holds
\begin{equation*}
    L(\widehat{\rho},\lambda)\leq L(\widehat{\rho},\widehat{\lambda}),\quad \text{for any }\lambda\in \R_{+},
\end{equation*}
or, equivalently, 
\begin{equation}
    \label{eq: lambda minus lambda star for external regret}
    (\lambda-\widehat{\lambda})\cE\cR(\widehat{\rho})\leq 0,\quad \text{for any }\lambda\in \R_{+}.
\end{equation}
Arguing by contradiction, we assume that $\cE\cR(\widehat{\rho})>0$. Then, for any $\lambda>\widehat{\lambda}$, we have that $(\lambda-\widehat{\lambda})\cE\cR(\widehat{\rho})>0$, which contradicts \eqref{eq: lambda minus lambda star for external regret}. This proves $\cE\cR(\widehat{\rho}) \leq 0$, and so $\widehat{\rho} \in \cE$.
Next, we show that $\widehat{\lambda}\cE\cR(\widehat{\rho})=0$.
Since $\cE\cR(\widehat{\rho})\leq 0$, we obtain $\widehat{\lambda}\cE\cR(\widehat{\rho})\leq 0$. It remains to show that $\widehat{\lambda}\cE\cR(\widehat{\rho})\geq 0$. From \eqref{eq: lambda minus lambda star for external regret}, we have
\begin{equation*}
    \lambda\cE\cR(\widehat{\rho})\leq \widehat{\lambda}\cE\cR(\widehat{\rho}),\quad \text{for any }\lambda\in \R_{+},
\end{equation*}
therefore, taking $\lambda=0$, we conclude $\widehat{\lambda}\cE\cR(\widehat{\rho})\geq 0$.
Since $\widehat{\lambda}\cE\cR(\widehat{\rho}) = 0$, we have
\begin{equation}
    \inf_{\rho\in \cE}\Gamma^{0}(\rho)=\inf_{\rho\in\cM}\sup_{\lambda\in \R_{+}}\big( \Gamma^{0}(\rho)+\lambda\cE\cR(\rho) \big)=\Gamma^{0}(\widehat{\rho})+\widehat{\lambda}\cE\cR(\widehat{\rho})=\Gamma^{0}(\widehat{\rho})
\end{equation}
thus concluding the proof.
\end{proof}

\begin{remark}
A similar reformulation of the optimal CCE problem appears in \cite{FarinaOptimalCCE}. There, the authors focus on static finite games with $N$ players. Leveraging the linearity of the underlying functionals and the bilinear form of external regret, they demonstrate the existence of optimal equilibria in extensive-form games under mild assumptions.
\end{remark}

\subsection{Primal-Dual Algorithm}\label{section: algorithm}
The aim of this subsection is to develop an algorithm to approximate an optimal LP-CCE. Based on the findings of subsection \ref{subsection: Lagrangian approach}, the optimal LP-CCE problem has been transformed to a saddle-point problem. The latter admits a solution which can be approximated via \textit{primal-dual schemes}.
We draw inspiration from the works of Chambolle and Pock \cite{ChambollePockFirstOrderPrimalDual} and Chambolle and Contreras \cite{Chambolle_Contreras}. Therein, the authors introduce primal-dual proximal-type schemes and show the theoretical convergence to the solution of the saddle-point problem.
It is important to notice that they work on a finite-dimensional space,  while our problem is infinite-dimensional. 

\smallskip
Proximal-type schemes involve an extra regularized term which guarantees the convergence of the algorithm.
To this end, we introduce the Bregman divergence.
For more details about Bregman divergence, we refer to \cite{Mirror_Descent_with_Relative_Smoothness, Mirror_Descent-Ascent_Mean-Field}.
\begin{definition}\label{algo:def:Bregman_divergence}
Let $\psi:\cP(\cR) \to \R$ be a continuous and linearly differentiable function (in the sense of Definition \ref{appendix: definition: flat derivative}), with linear derivative $\frac{\delta \psi}{\delta \rho}:\cP(\cR) \times \cR \to \R$.
The Bregman divergence is a map $D_{\psi}:\cP(\cR) \times \cP(\cR) \to \R$, given by
\begin{equation}
    \label{eq: Bregman divergence for the algo}
    D_{\psi}(\rho',\rho):=\psi(\rho')-\psi(\rho)-\int_{\cR}\frac{\delta\psi}{\delta\rho}(\rho,\boldsymbol{m})(\rho'-\rho)(d\boldsymbol{m}),\quad \rho',\rho\in \cP(\cR).
\end{equation}
We refer to $\psi$ as a Bregman potential.
\end{definition}

The numerical scheme is given by Algorithm \ref{algorithm: Primal Dual for Optimal LP-CCE}. 
\begin{algorithm}
\caption{Primal-Dual for Optimal LP-CCE}
\label{algorithm: Primal Dual for Optimal LP-CCE}

\textbf{Initialize:} number of iterations $N\in\N$, $\lambda^{(0)}>0$, $\rho^{(0)}\in\cM$, $\psi:\cP(\cR) \to \R$ convex and linearly differentiable.

\For{$n=0$ to $N-1$}
{Calculate:
    \begin{align*}
        \label{algo:primal part} \quad\rho^{(n+1)}  & \in \text{argmin}_{\rho \in \cM}\big(\Gamma^{0}(\rho)+\lambda^{(n)}\cE\cR(\rho)+\frac{1}{2}D_{\psi}(\rho,\rho^{(n)})\big) \tag{Primal} \\
        \label{algo: dual part} \quad \lambda^{(n+1)}&=\text{argmin}_{\lambda\in \R_{+}}\big(-\lambda\cE\cR(\rho^{(n+1)})+\frac{\sqrt{N}}{2}|\lambda-\lambda^{(n)}|^{2} \big), \tag{Dual} 
    \end{align*}}
    \textbf{Return:} $(\rho^{(n)},\lambda^{(n)})_{0\leq n\leq N}$.
\end{algorithm}

Algorithm \ref{algorithm: Primal Dual for Optimal LP-CCE} is well-defined.\ Indeed, the \eqref{algo:primal part} update is performed over the compact set $\cM$ (cf. Lemma \ref{lemma:M_convex_compact}), and its objective function is continuous (cf. Lemmata \ref{lemma:continuity_integrands}, and \ref{lemma:external_regret_continuous}); hence, the minimum in the \eqref{algo:primal part} part is attained.\ The \eqref{algo: dual part} update is well-defined as well, since its objective is a continuous, coercive, and strictly convex quadratic function on $\R_{+}$. Consequently, the corresponding argmin sets are nonempty at every iteration.

\smallskip
To show the convergence of Algorithm \ref{algorithm: Primal Dual for Optimal LP-CCE} we need the notion of \textit{primal-dual gap}. Following \cite{Chambolle_Contreras}, let $(\rho,\lambda)\in \cM\times \R_{+}$ be arbitrary, and define the map $\text{Gap}_{(\rho,\lambda)}:\cM\times \R_{+} \to \R$ by 
\begin{equation}
    \label{eq: Primal-Dual gap}
    \text{Gap}_{(\rho,\lambda)}(\widehat{\rho},\widehat{\lambda}):=L(\widehat{\rho},\lambda)-L(\rho,\widehat{\lambda}),
\end{equation}
where $L(\rho,\lambda)$ is defined by \eqref{eq: Lagrange equation}.
The following results establish the three-point inequality for both the dual and primal parts of the algorithm, and will play a key role in the proof of the convergence result. 

\begin{lemma}\label{lemma: displacement convexity of dual part}
Let $G:\R_{+}\to \R\cup \{+\infty\}$ be a convex continuous function and $\sigma > 0$. Let also $\overline{\lambda}\in \R_{+}$, and $\widehat{\lambda}\in\text{argmin}_{\lambda\in \R_{+}}\big( G(\lambda)+\sigma |\lambda-\overline{\lambda}|^{2}\big)$. Then, for any $\lambda\in \R_{+}$, we have
\begin{equation*}
    G(\widehat{\lambda})-G(\lambda)\leq \sigma \big( |\overline{\lambda}-\lambda|^{2}-|\widehat{\lambda}-\lambda|^{2}-|\widehat{\lambda}-\overline{\lambda}|^{2} \big).
\end{equation*}
\end{lemma}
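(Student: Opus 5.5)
The argument is the standard three-point inequality for a proximal step with a quadratic regularizer. It combines the first-order optimality condition for $\widehat{\lambda}$ on the closed convex set $\R_+$ with the polarization identity for squared distances. Set $\Phi(\lambda) := G(\lambda)+\sigma|\lambda-\overline{\lambda}|^2$. Since $G$ is convex and the quadratic is $2\sigma$-strongly convex, $\Phi$ is $2\sigma$-strongly convex on $\R_+$. We may assume $G(\widehat\lambda)<\infty$; otherwise $\widehat\lambda$ could only be a minimizer if $\Phi\equiv+\infty$, and then the inequality is vacuous or must be read with the usual conventions. The case $G(\lambda)=+\infty$ on the right-hand side is trivial.

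\textbf{Main step.} The key claim is that strong convexity and minimality give
\[
\Phi(\lambda)\ \ge\ \Phi(\widehat{\lambda}) + \sigma|\lambda-\widehat{\lambda}|^2 \qquad \text{for all } \lambda\in\R_+ .
\]
To avoid subdifferential calculus with the constraint, I would prove this directly by a convex-combination argument. Fix $\lambda\in\R_+$ and, for $\theta\in(0,1]$, let $\lambda_\theta := \widehat\lambda+\theta(\lambda-\widehat\lambda)\in\R_+$. Minimality gives $\Phi(\widehat\lambda)\le\Phi(\lambda_\theta)$. Convexity of $G$ gives $G(\lambda_\theta)\le(1-\theta)G(\widehat\lambda)+\theta G(\lambda)$. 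The quadratic satisfies the exact identity
\[
|\lambda_\theta-\overline\lambda|^2=(1-\theta)|\widehat\lambda-\overline\lambda|^2+\theta|\lambda-\overline\lambda|^2-\theta(1-\theta)|\lambda-\widehat\lambda|^2 .
\]
Combining these three facts, subtracting $(1-\theta)\Phi(\widehat\lambda)$, dividing by $\theta$, and letting $\theta\downarrow0$ yields
\[
\Phi(\widehat\lambda)\le G(\lambda)+\sigma|\lambda-\overline\lambda|^2-\sigma|\lambda-\widehat\lambda|^2 .
\]

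\textbf{Conclusion.} Expanding $\Phi(\widehat\lambda)=G(\widehat\lambda)+\sigma|\widehat\lambda-\overline\lambda|^2$ and rearranging gives exactly
\[
G(\widehat{\lambda})-G(\lambda)\le \sigma\big(|\overline\lambda-\lambda|^2-|\widehat\lambda-\lambda|^2-|\widehat\lambda-\overline\lambda|^2\big).
\]
The only delicate point is the limiting step, which requires finiteness of $G(\widehat\lambda)$ and $G(\lambda)$. Continuity of $G$ is not really needed beyond ensuring that the argmin is well defined. The interval structure of $\R_+$, which makes $\lambda_\theta$ admissible, is what replaces the normal-cone condition.
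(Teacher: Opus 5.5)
Your proof is correct, but it follows a different route from the paper. The paper proves this lemma in one line. It notes that $\sigma|\lambda|^2$ is a Bregman potential with divergence $\sigma|\lambda'-\lambda|^2$ and invokes the general Bregman three-point lemma \cite[Lemma 3.2]{ConvergenceAnalysisUsingBregman}. That lemma is proved through the first-order (subdifferential) optimality condition of the proximal step together with the three-point identity for the divergence; the constraint $\lambda\in\R_+$ is absorbed into the closed convex objective.

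You instead work directly with the convex combination $\lambda_\theta$ and the exact identity $|\lambda_\theta-\overline{\lambda}|^2=(1-\theta)|\widehat{\lambda}-\overline{\lambda}|^2+\theta|\lambda-\overline{\lambda}|^2-\theta(1-\theta)|\lambda-\widehat{\lambda}|^2$. This gives $\Phi(\widehat{\lambda})\le\Phi(\lambda)-\sigma(1-\theta)|\lambda-\widehat{\lambda}|^2$ for every $\theta\in(0,1]$. So your "delicate limiting step" is harmless: $\theta$ enters only through the factor $1-\theta$, and you only need $\Phi(\widehat{\lambda})<\infty$ to subtract.

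Your route buys a self-contained argument with no subdifferential calculus or normal cone to $\R_+$. It also shows that only convexity of $G$, existence of $\widehat{\lambda}$ and finiteness of $G(\widehat{\lambda})$ are used. Continuity of $G$ is not needed at all, since the statement already assumes the argmin is nonempty.

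The price is that the exact polarization identity is special to the quadratic. For a general Bregman potential you would have to differentiate at $\theta=0$ and use the three-point identity for the divergence. That is exactly what the paper does for the primal step in Theorem \ref{appendix: theorem: three-point inequality}, so citing the Bregman lemma keeps both three-point inequalities in a single framework.
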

\begin{proof}
Following \cite{ConvergenceAnalysisUsingBregman}, we know that the Bregman function $\psi(\lambda):=\sigma |\lambda|^{2}$, gives Bregman divergence $D_{\psi}(\lambda',\lambda)=\sigma |\lambda'-\lambda|^{2}$. Hence, by applying \cite[Lemma 3.2]{ConvergenceAnalysisUsingBregman}, we conclude.
\end{proof}

For the (\ref{algo:primal part}) part, we use the Bregman divergence to derive the three-point inequality, similar to \cite[Lemma 2.5]{Mirror_Descent-Ascent_Mean-Field}; see Appendix \ref{appendix: technical results: learning part} for details.

\begin{lemma}\label{lemma: three-points inequality for primal part}
Let $\psi:\cP(\cR)\to\mathbb R$ be convex and linearly differentiable, and let $F:\cP(\cR)\to\mathbb R$ be convex and linearly differentiable.    
Let $\overline{\rho}\in \cP(\cR)$, and $\widehat{\rho}\in \operatorname{argmin}_{\rho\in \cM} \big( F(\rho)+ \tau D_{\psi}(\rho,\overline{\rho}) \big)$. Then, for any $\rho\in \cM$, the following holds:
\begin{equation}\label{eq:three-points inequality for primal part}
    F(\widehat{\rho})-F(\rho)\leq \tau \big( D_{\psi}(\rho,\overline{\rho})-D_{\psi}(\rho,\widehat{\rho})-D_{\psi}(\widehat{\rho},\overline{\rho}) \big).
\end{equation}
\end{lemma}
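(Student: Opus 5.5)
The argument is the standard Bregman three-point argument, carried out with linear (flat) derivatives on the convex set $\cM$. Fix $\rho\in\cM$ and set $\rho_\varepsilon:=\widehat{\rho}+\varepsilon(\rho-\widehat{\rho})$ for $\varepsilon\in(0,1]$. By Lemma \ref{lemma:M_convex_compact}, $\cM$ is convex, so $\rho_\varepsilon\in\cM$. Write $\Phi(\rho'):=F(\rho')+\tau D_{\psi}(\rho',\overline{\rho})$.

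\textbf{First-order condition.} Minimality of $\widehat{\rho}$ gives $\Phi(\rho_\varepsilon)-\Phi(\widehat{\rho})\geq 0$. Divide by $\varepsilon$ and let $\varepsilon\downarrow0$. Using the definition of linear derivative (Definition \ref{appendix: definition: flat derivative}), the limit is
\[
\int_{\cR}\Big(\frac{\delta F}{\delta\rho}(\widehat{\rho},\boldsymbol{m})+\tau\frac{\delta\psi}{\delta\rho}(\widehat{\rho},\boldsymbol{m})-\tau\frac{\delta\psi}{\delta\rho}(\overline{\rho},\boldsymbol{m})\Big)(\rho-\widehat{\rho})(d\boldsymbol{m})\geq 0 .
\]
Here I use that, as a function of its first argument, $D_\psi(\cdot,\overline{\rho})$ is $\psi$ minus a linear functional. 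Its linear derivative at $\widehat{\rho}$ is therefore $\frac{\delta\psi}{\delta\rho}(\widehat{\rho},\cdot)-\frac{\delta\psi}{\delta\rho}(\overline{\rho},\cdot)$.

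\textbf{Convexity of $F$.} Since $F$ is convex and linearly differentiable, it satisfies the supporting-hyperplane inequality
\[
F(\rho)-F(\widehat{\rho})\geq\int_{\cR}\frac{\delta F}{\delta\rho}(\widehat{\rho},\boldsymbol{m})(\rho-\widehat{\rho})(d\boldsymbol{m}).
\]
This follows by applying convexity along the segment and taking the derivative at $\varepsilon=0$. Combining it with the first-order condition gives
\[
F(\widehat{\rho})-F(\rho)\leq\tau\int_{\cR}\Big(\frac{\delta\psi}{\delta\rho}(\widehat{\rho},\cdot)-\frac{\delta\psi}{\delta\rho}(\overline{\rho},\cdot)\Big)(\rho-\widehat{\rho}).
\]

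\textbf{Three-point identity.} It remains to check the purely algebraic identity
\[
\int\Big(\tfrac{\delta\psi}{\delta\rho}(\widehat{\rho})-\tfrac{\delta\psi}{\delta\rho}(\overline{\rho})\Big)(\rho-\widehat{\rho})=D_\psi(\rho,\overline{\rho})-D_\psi(\rho,\widehat{\rho})-D_\psi(\widehat{\rho},\overline{\rho}).
\]
Expanding the three divergences via \eqref{eq: Bregman divergence for the algo}, the $\psi$-values cancel: $\psi(\rho)-\psi(\overline{\rho})-\psi(\rho)+\psi(\widehat{\rho})-\psi(\widehat{\rho})+\psi(\overline{\rho})=0$. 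The derivative terms combine into $-\int\frac{\delta\psi}{\delta\rho}(\overline{\rho})\big[(\rho-\overline{\rho})-(\widehat{\rho}-\overline{\rho})\big]+\int\frac{\delta\psi}{\delta\rho}(\widehat{\rho})(\rho-\widehat{\rho})$, which is exactly the left-hand side. This yields \eqref{eq:three-points inequality for primal part}.

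\textbf{Main obstacle.} The delicate step is justifying the one-sided derivative in the first-order condition. One needs $\varepsilon^{-1}\big(G(\widehat{\rho}+\varepsilon(\rho-\widehat{\rho}))-G(\widehat{\rho})\big)\to\int\frac{\delta G}{\delta\rho}(\widehat{\rho},\cdot)(\rho-\widehat{\rho})$ for $G=F,\psi$. I would write the increment as $\int_0^1\int\frac{\delta G}{\delta\rho}(\widehat{\rho}+s\varepsilon(\rho-\widehat{\rho}),\boldsymbol{m})(\rho-\widehat{\rho})(d\boldsymbol{m})\,ds$ and pass to the limit by dominated convergence. This uses the continuity and boundedness of the linear derivative on the compact space $\cP(\cR)\times\cR$, which are assumed in Definition \ref{appendix: definition: flat derivative}. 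No integrability issues arise because $\cR$ is compact (Lemma \ref{lemma:compactness_a_priori_R0}).
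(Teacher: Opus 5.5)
Your proof is correct and is essentially the paper's argument: first-order optimality of $\widehat{\rho}$ on the convex set $\cM$, the supporting-hyperplane inequality from convexity of $F$, and Bregman algebra, with the one-sided derivative justified by dominated convergence using boundedness and continuity of the flat derivative, as in the paper's appendix. The only cosmetic difference is that the paper packages the algebra by showing that the Bregman divergence of $F+\tau D_{\psi}(\cdot,\overline{\rho})$ equals $D_F+\tau D_{\psi}$ (via $D_{D_{\psi}(\cdot,\overline{\rho})}=D_{\psi}$) and then uses $D_F\geq 0$, which is equivalent to your direct three-point identity.
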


In order to apply the three-point inequality to $F(\rho) =  \Gamma^{0}(\rho) + \lambda^{(n)}\cE\cR(\rho)$, we need to ensure that the external regret is linearly differentiable.
To this end, we will rely on an envelope-theorem-like argument, which requires the following extra assumptions:

\begin{assumption}\label{ass: uniqueness maximizer}
For any $\rho \in \cP(\cR)$, there exists a unique $\boldsymbol{\kappa}^*(\rho):=(\kappa^*(\rho),\overline{\kappa}^{*}(\rho)) \in \cR$ such that
\[
    \int_{\cR} ( F[\boldsymbol{m}](\boldsymbol{m}) - F[\boldsymbol{m}](\boldsymbol{\kappa}^*(\rho)) )\rho(d\boldsymbol{m}) = \max_{\boldsymbol{\kappa} \in \cR} \int_{\cR} ( F[\boldsymbol{m}](\boldsymbol{m}) - F[\boldsymbol{m}](\boldsymbol{\kappa}) )\rho(d\boldsymbol{m})   = \cE\cR(\rho).
\]
\end{assumption}
As for Assumption \ref{assumptions: extra ass on algorithmic part}, this assumption has to be checked on a case-by-case basis (see again the examples in Section \ref{section: examples} and Appendix \ref{app:strictCCE}).
The next lemma establishes that, under Assumption \ref{ass: uniqueness maximizer}, $\cE\cR(\rho)$ is linearly differentiable.
\begin{lemma}\label{lemma: external regret derivative}
The external regret is linearly differentiable, with linear derivative given by
\begin{equation}\label{eq:external_regret:linear_derivative}
    \frac{ \delta \cE\cR }{\delta \rho}(\rho,\boldsymbol{m}) = F[\boldsymbol{m}](\boldsymbol{m}) - F[\boldsymbol{m}](\boldsymbol{\kappa}^*(\rho)).
\end{equation}
\end{lemma}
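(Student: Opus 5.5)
This is a Danskin-type envelope argument. Write
\[
\Phi(\rho,\boldsymbol{\kappa}) := \int_{\cR}\big(F[\boldsymbol{m}](\boldsymbol{m}) - F[\boldsymbol{m}](\boldsymbol{\kappa})\big)\rho(d\boldsymbol{m}),
\qquad \cE\cR(\rho)=\max_{\boldsymbol{\kappa}\in\cR}\Phi(\rho,\boldsymbol{\kappa}).
\]
For each fixed $\boldsymbol{\kappa}$, the map $\rho\mapsto\Phi(\rho,\boldsymbol{\kappa})$ is linear. By Lemma \ref{lemma:continuity_integrands} its integrand $h_{\boldsymbol{\kappa}}(\boldsymbol{m}):=F[\boldsymbol{m}](\boldsymbol{m})-F[\boldsymbol{m}](\boldsymbol{\kappa})$ is continuous, hence bounded on the compact set $\cR$. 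Its flat derivative is therefore $h_{\boldsymbol{\kappa}}$ itself. Following Definition \ref{appendix: definition: flat derivative}, the goal is to show that for $\rho,\rho'\in\cP(\cR)$ and $\rho^\varepsilon:=\rho+\varepsilon(\rho'-\rho)$,
\[
\lim_{\varepsilon\downarrow 0}\frac{\cE\cR(\rho^\varepsilon)-\cE\cR(\rho)}{\varepsilon} = \int_{\cR} h_{\boldsymbol{\kappa}^*(\rho)}(\boldsymbol{m})(\rho'-\rho)(d\boldsymbol{m}).
\]
We also need the candidate derivative $(\rho,\boldsymbol{m})\mapsto h_{\boldsymbol{\kappa}^*(\rho)}(\boldsymbol{m})$ to be jointly continuous and bounded. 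If the definition is the integral form $\psi(\rho')-\psi(\rho)=\int_0^1\int\frac{\delta\psi}{\delta\rho}(\rho^s,\cdot)\,d(\rho'-\rho)\,ds$, that form follows from the directional derivative together with this continuity, via the fundamental theorem of calculus applied to $s\mapsto\cE\cR(\rho^s)$.

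\textbf{Lower bound.} Since $\boldsymbol{\kappa}^*(\rho)$ is admissible at $\rho^\varepsilon$,
\[
\cE\cR(\rho^\varepsilon)-\cE\cR(\rho)\ \ge\ \Phi(\rho^\varepsilon,\boldsymbol{\kappa}^*(\rho))-\Phi(\rho,\boldsymbol{\kappa}^*(\rho)) = \varepsilon\int h_{\boldsymbol{\kappa}^*(\rho)}\,d(\rho'-\rho).
\]

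\textbf{Upper bound.} Symmetrically,
\[
\cE\cR(\rho^\varepsilon)-\cE\cR(\rho)\ \le\ \varepsilon\int h_{\boldsymbol{\kappa}^*(\rho^\varepsilon)}\,d(\rho'-\rho).
\]
It then suffices to show $\boldsymbol{\kappa}^*(\rho^\varepsilon)\to\boldsymbol{\kappa}^*(\rho)$ as $\varepsilon\to0$ and that $\boldsymbol{\kappa}\mapsto\int h_{\boldsymbol{\kappa}}\,d(\rho'-\rho)$ is continuous.

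\textbf{Continuity of the maximizer.} This is the main step, and I would obtain it from Berge's maximum theorem exactly as in Lemma \ref{lemma:external_regret_continuous}. The map $(\rho,\boldsymbol{\kappa})\mapsto\Phi(\rho,\boldsymbol{\kappa})$ is jointly continuous on $\cP(\cR)\times\cR$, by the continuity of $(\boldsymbol{m},\boldsymbol{\kappa})\mapsto h_{\boldsymbol{\kappa}}(\boldsymbol{m})$ on the compact set $\cR^2$ and \cite[Example 2.16]{lacker2018mean}. The constraint set $\cR$ is compact and constant. Hence the argmax correspondence is upper hemicontinuous with nonempty compact values. By Assumption \ref{ass: uniqueness maximizer} it is single-valued, so $\rho\mapsto\boldsymbol{\kappa}^*(\rho)$ is continuous. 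Concretely, along $\varepsilon_k\to0$ any limit point of $\boldsymbol{\kappa}^*(\rho^{\varepsilon_k})$ maximizes $\Phi(\rho,\cdot)$ and so equals $\boldsymbol{\kappa}^*(\rho)$, and compactness of $\cR$ then gives convergence of the whole family.

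\textbf{Conclusion.} Joint continuity of $h$ on the compact set $\cR^2$ makes $h$ uniformly continuous. Therefore $\sup_{\boldsymbol{m}}|h_{\boldsymbol{\kappa}^*(\rho^\varepsilon)}(\boldsymbol{m})-h_{\boldsymbol{\kappa}^*(\rho)}(\boldsymbol{m})|\to0$, which closes the squeeze. The same uniform continuity, combined with continuity of $\boldsymbol{\kappa}^*$, gives joint continuity and boundedness of $(\rho,\boldsymbol{m})\mapsto h_{\boldsymbol{\kappa}^*(\rho)}(\boldsymbol{m})$, as required for a linear derivative. This yields \eqref{eq:external_regret:linear_derivative}, up to the usual normalization constant, which is irrelevant because it integrates to zero against $\rho'-\rho$.

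I expect the only delicate point to be the continuity of $\rho\mapsto\boldsymbol{\kappa}^*(\rho)$. It hinges on uniqueness of the best deviation: without Assumption \ref{ass: uniqueness maximizer}, one only gets a one-sided directional derivative equal to $\max$ over the maximizers.
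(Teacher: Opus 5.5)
Your proposal is correct and follows essentially the same route as the paper: the two-sided squeeze using suboptimality of $\boldsymbol{\kappa}^*(\rho)$ at $\rho^\varepsilon$ and of $\boldsymbol{\kappa}^*(\rho^\varepsilon)$ at $\rho$; continuity of $\rho\mapsto\boldsymbol{\kappa}^*(\rho)$ from Berge's maximum theorem together with Assumption \ref{ass: uniqueness maximizer}; and the fundamental theorem of calculus along $s\mapsto\cE\cR(\rho^s)$ to obtain the integral form. The only cosmetic differences are that you pass to the limit via uniform continuity of $h$ on $\cR^2$ where the paper uses dominated convergence, and that you make explicit the joint continuity and boundedness of the candidate derivative required by the definition.
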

\begin{proof}
In the following, to ease the notation, we set
\[
    g(\boldsymbol{m},\boldsymbol{\kappa}) := F[\boldsymbol{m}](\boldsymbol{m}) - F[\boldsymbol{m}](\boldsymbol{\kappa}), \quad J(\rho,\boldsymbol{\kappa}) := \int_{\cR} g(\boldsymbol{m},\boldsymbol{\kappa})\rho(d\boldsymbol{m}),
\]
so that
\[
    \cE\cR(\rho) = \sup_{\boldsymbol{\kappa} \in \cR} J(\rho,\boldsymbol{\kappa}).
\]
We recall that $g$ is jointly continuous by Lemma \ref{lemma:continuity} and so is $J$.
Convexity is straightforward from the definition of $\cE\cR$, by exploiting the sub-linearity of the supremum.
We rely again on Berge's maximum theorem (see \cite[Theorem 17.31]{aliprantis2013infinite}) to deduce that the correspondence
\[
    \rho \mapsto \mu(\rho) = \{\boldsymbol{\kappa} \in \cR: \, \cE\cR(\rho) = J(\rho,\boldsymbol{\kappa}) \}
\]
is nonempty, compact and upper hemi-continuous.
By Assumption \ref{ass: uniqueness maximizer}, we have $\mu(\rho) = \{ \boldsymbol{\kappa}^*(\rho) \}$. From the upper hemi-continuity of $\mu(\rho)$, we deduce that the map $\rho \mapsto \boldsymbol{\kappa}^*(\rho)$ is continuous.

For $t \in [0,1]$, $\rho, \rho' \in \cP(\cR)$, set $\rho_t = \rho + t (\rho' - \rho)$.
We prove that
\begin{equation}\label{eq:regret:first_order_variation}
    \lim_{t \to 0} \frac{ \cE\cR(\rho_t) - \cE\cR(\rho) }{t} = \int_{\cR} g(\boldsymbol{m},\boldsymbol{\kappa}^*(\rho))(\rho' - \rho)(d\boldsymbol{m}). 
\end{equation}
Consider $\boldsymbol{\kappa}^*(\rho_t)$ and $\boldsymbol{\kappa}^*(\rho)$ and recall that the map $t \mapsto \boldsymbol{\kappa}^*(\rho_t)$ is continuous from $[0,1]$ in $\cR$ endowed with the topology $\tau^{(2)}\otimes \tau^{(2)}$.
Since $\boldsymbol{\kappa}^*(\rho_t)$ is sub-optimal for $\cE\cR(\rho)$, we have
\begin{align}\label{eq:lemma:linear_derivative:estimate1}
    &\frac{1}{t}\left( \cE\cR(\rho_t) - \cE\cR(\rho) \right) \notag\\
    &= \frac{1}{t} \left( \sup_{\boldsymbol{\kappa} \in \cR}\int_{\cR}g(\boldsymbol{m},\boldsymbol{\kappa})\rho_t(d\boldsymbol{m}) - \sup_{\boldsymbol{\kappa} \in \cR}\int_{\cR}g(\boldsymbol{m},\boldsymbol{\kappa})\rho(d\boldsymbol{m}) \right) \notag\\
    &\leq \frac{1}{t} \int_{\cR}g(\boldsymbol{m},\boldsymbol{\kappa}^*(\rho_t))(\rho_t - \rho)(d\boldsymbol{m}) =  \int_{\cR}g(\boldsymbol{m},\boldsymbol{\kappa}^*(\rho_t))(\rho' - \rho)(d\boldsymbol{m}).
\end{align}
Therefore, by taking the $\limsup$ in \eqref{eq:lemma:linear_derivative:estimate1}, we obtain
\begin{align*}
    &\limsup_{t \downarrow 0} \frac{1}{t}\left( \cE\cR(\rho_t) - \cE\cR(\rho) \right)  \leq \limsup_{t \downarrow 0} \int_{\cR}g(\boldsymbol{m},\boldsymbol{\kappa}^*(\rho_t))(\rho' - \rho)(d\boldsymbol{m}) \\
    &= \lim_{t \downarrow 0}\int_{\cR}g(\boldsymbol{m},\boldsymbol{\kappa}^*(\rho_t))\rho'(d\boldsymbol{m}) -  \lim_{t \downarrow 0}\int_{\cR}g(\boldsymbol{m},\boldsymbol{\kappa}^*(\rho_t))\rho(d\boldsymbol{m}) \\
    &= \int_{\cR}g(\boldsymbol{m},\boldsymbol{\kappa}^*(\rho))(\rho' - \rho)(d\boldsymbol{m}).
\end{align*}
The last equality follows by dominated convergence theorem: indeed, as $g(\boldsymbol{m},\boldsymbol{\kappa})$ is jointly continuous and $t \mapsto \boldsymbol{\kappa}^*(\rho_t)$ is continuous, the integrands in the limits converge to $g(\boldsymbol{m},\boldsymbol{\kappa}^*(\rho))$ pointwise.
Since $g(\boldsymbol{m},\boldsymbol{\kappa})$ is bounded, as it is jointly continuous over the compact space $\cR^2$, the dominated convergence theorem can be applied, implying that we can exchange the limit and integral over $\cR$.
As for the lower bound, we exploit the fact that $\boldsymbol{\kappa}^*(\rho)$ is sub-optimal for $\cE\cR(\rho_t)$, to get
\begin{align*}
    \liminf_{t \downarrow 0} \frac{1}{t}\left( \cE\cR(\rho_t) - \cE\cR(\rho) \right)  &\geq \liminf_{t \downarrow 0} \int_{\cR}g(\boldsymbol{m},\boldsymbol{\kappa}^*(\rho))(\rho' - \rho)(d\boldsymbol{m}) \\
    &= \int_{\cR}g(\boldsymbol{m},\boldsymbol{\kappa}^*(\rho))(\rho' - \rho)(d\boldsymbol{m}).
\end{align*}
Finally, we verify that \eqref{eq:regret:first_order_variation} is enough to conclude the linear differentiability of $\cE\cR(\rho)$, i.e., that the identity
\begin{equation}\label{eq:lemma:linear_derivative:def_linear_derivative}
    \cE\cR(\rho') - \cE\cR(\rho) = \int_0^1 \int_{\cR} \frac{\delta \cE\cR}{\delta \rho}(\rho + t(\rho'-\rho) ,\boldsymbol{m})(\rho'-\rho)(d\boldsymbol{m})dt
\end{equation}
holds for any $\rho, \rho' \in \cP(\cR)$.
To this extent, fix $\rho, \rho' \in \cP(\cR)$ and consider again $\rho_t=\rho + t(\rho'-\rho)$.
Set $f(t) := \cE\cR(\rho_t)$. By the same reasoning as above, we have
\begin{align*}
    f'(t) &= \lim_{h \to 0} \frac{\cE\cR(\rho_{t+h}) - \cE\cR(\rho_t)}{h} =  \lim_{h \to 0} \frac{\cE\cR(\rho_{t} + h(\rho'-\rho) ) - \cE\cR(\rho_t)}{h} \\
    &= \int_{\cR} \frac{\delta \cE\cR}{\delta \rho}(\rho_t,\boldsymbol{m})(\rho'-\rho)(d\boldsymbol{m}) = \int_{\cR} g(\boldsymbol{m},\boldsymbol{\kappa}^*(\rho_t))(\rho'-\rho)(d\boldsymbol{m})
\end{align*}
which is bounded and continuous in $t$ since $g(m,\kappa)$ is bounded, continuous and $t \mapsto \boldsymbol{\kappa}^*(\rho_t)$ is continuous.
Thus, by the fundamental theorem of calculus, we have
\[
\cE\cR(\rho') - \cE\cR(\rho) = f(1) - f(0)= \int_0^1 f'(t)dt = \int_0^1 \int_{\cR} \frac{\delta \cE\cR}{\delta \rho}(\rho + t(\rho'-\rho) ,\boldsymbol{m})(\rho'-\rho)(d\boldsymbol{m})dt,
\]
which concludes the proof.
\end{proof}

We are now in the position to prove the convergence of Algorithm \ref{algorithm: Primal Dual for Optimal LP-CCE}.
\begin{theorem}\label{theorem: Convergence of Primal Dual algorithm}
Assume that Assumptions \ref{assumptions: extra ass on algorithmic part} and \ref{ass: uniqueness maximizer} hold.
Set
\[
\overline{C} = \sup_{\rho \in \cP(\cR)}\cE\cR^2(\rho) < \infty.
\]
Let $(\rho^{(n)},\lambda^{(n)})_{n}$ be a sequence generated by Algorithm \ref{algorithm: Primal Dual for Optimal LP-CCE}. 
Then, for any $(\rho,\lambda) \in \cM \times \R_+$, the sequences
\begin{equation}\label{eq:thm:algo_convergence:series}
    \widehat{\rho}^{(N)}:=\frac{1}{N}\sum_{n=0}^{N-1}\rho^{(n+1)}, \qquad \widehat{\lambda}^{(N)}:=\frac{1}{N}\sum_{n=0}^{N-1}\lambda^{(n)}
\end{equation}
satisfy the following estimate
\begin{equation}\label{eq: convergence rate of primal-dual gap}
    \text{Gap}_{(\rho,\lambda)}(\widehat{\rho}^{(N)},\widehat{\lambda}^{(N)})\leq \frac{1}{ 2\sqrt{N}}\bigg( |\lambda-\lambda^{(0)}|^{2}+D_{\psi}(\rho,\rho^{(0)}) + \overline{C}\bigg).
\end{equation}
Moreover, for any optimal LP-CCE $\rho^{*}\in \cE$, the external regret of $\widehat{\rho}^{(N)}$ satisfies the following estimate:
\begin{equation}\label{eq: external regret inequality}
    \cE\cR(\widehat{\rho}^{(N)})\leq \frac{1}{N}\sum_{n=0}^{N-1}\cE\cR(\rho^{(n+1)})\leq \frac{1}{2\sqrt{N}}\bigg( D_{\psi}(\rho^*,\rho^{(0)}) + |\lambda^{(0)}-1|^{2} + \overline{C}\bigg) +\Gamma^{0}(\rho^{*})-\Gamma^{0}(\widehat{\rho}^{(N)}).
\end{equation}
Finally, the sequence $(\widehat{\rho}^{(N)},\widehat{\lambda}^{(N)})_{N\geq 1}$ converges (up to a subsequence) to a saddle-point $(\widehat{\rho}, \widehat{\lambda})$ of $L(\rho,\lambda)$. 
In particular, $\widehat{\rho}$ is an optimal LP-CCE.
\end{theorem}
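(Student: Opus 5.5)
The plan is the standard Chambolle--Pock / mirror-descent-ascent telescoping argument, with the primal step handled by Lemma \ref{lemma: three-points inequality for primal part} and the dual step by Lemma \ref{lemma: displacement convexity of dual part}.

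\textbf{Primal step.} Apply Lemma \ref{lemma: three-points inequality for primal part} with $F(\rho)=\Gamma^0(\rho)+\lambda^{(n)}\cE\cR(\rho)$ and $\tau=\tfrac12$. This $F$ is convex and linearly differentiable, by linearity of $\Gamma^0$ and by Lemma \ref{lemma: external regret derivative}, which uses Assumption \ref{ass: uniqueness maximizer}. For every $\rho\in\cM$ this gives
\[
L(\rho^{(n+1)},\lambda^{(n)})-L(\rho,\lambda^{(n)})\le \tfrac12\big(D_\psi(\rho,\rho^{(n)})-D_\psi(\rho,\rho^{(n+1)})-D_\psi(\rho^{(n+1)},\rho^{(n)})\big).
\]

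\textbf{Dual step.} Apply Lemma \ref{lemma: displacement convexity of dual part} with $G(\lambda)=-\lambda\cE\cR(\rho^{(n+1)})$, which is linear, and $\sigma=\sqrt N/2$. This gives
\[
\lambda\,\cE\cR(\rho^{(n+1)})-\lambda^{(n+1)}\cE\cR(\rho^{(n+1)})\le \tfrac{\sqrt N}{2}\big(|\lambda^{(n)}-\lambda|^2-|\lambda^{(n+1)}-\lambda|^2-|\lambda^{(n+1)}-\lambda^{(n)}|^2\big).
\]
The primal inequality evaluates the regret at $\lambda^{(n)}$, while the dual one evaluates it at $\lambda^{(n+1)}$. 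The mismatch is $(\lambda^{(n+1)}-\lambda^{(n)})\cE\cR(\rho^{(n+1)})$. I would bound it by Young's inequality:
\[
|(\lambda^{(n+1)}-\lambda^{(n)})\cE\cR(\rho^{(n+1)})|\le \tfrac{\sqrt N}{2}|\lambda^{(n+1)}-\lambda^{(n)}|^2+\tfrac{1}{2\sqrt N}\overline C,
\]
so the negative quadratic term from the dual step absorbs the first part. Here $\overline C<\infty$ by Lemma \ref{lemma:external_regret_continuous}.

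\textbf{Per-step gap.} Adding the two inequalities yields
\[
L(\rho^{(n+1)},\lambda)-L(\rho,\lambda^{(n)})\le \tfrac12(\text{Bregman telescoping})+\tfrac{\sqrt N}{2}(\text{quadratic telescoping})+\tfrac{\overline C}{2\sqrt N}.
\]
The key observation is that $L(\cdot,\lambda)$ is convex in $\rho$ and $L(\rho,\cdot)$ is affine in $\lambda$. By Jensen's inequality, averaging over $n=0,\dots,N-1$ therefore gives $\mathrm{Gap}_{(\rho,\lambda)}(\widehat\rho^{(N)},\widehat\lambda^{(N)})$ on the left. After telescoping and dropping the nonpositive tail terms, the right-hand side becomes
\[
\frac1N\Big(\tfrac12 D_\psi(\rho,\rho^{(0)})+\tfrac{\sqrt N}{2}|\lambda-\lambda^{(0)}|^2\Big)+\frac{\overline C}{2\sqrt N}.
\]
This is bounded by the claimed quantity for $N\ge1$, since $D_\psi/(2N)\le D_\psi/(2\sqrt N)$.

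\textbf{Matching the constants.} This is the main bookkeeping obstacle. The Bregman weight $\tfrac12$ is not scaled by $\sqrt N$ while the dual weight is. So I will bound crudely, using $1/N\le 1/\sqrt N$, and check that the Young splitting is compatible with exactly $\sqrt N/2$.

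\textbf{Regret estimate.} Take $\rho=\rho^*$ optimal and $\lambda=\lambda^{(n)}+1$, or more simply redo the per-step inequality with $\lambda$ replaced by $\lambda+1$. Use that $\cE\cR(\rho^*)\le 0$, so $L(\rho^*,\lambda^{(n)})\le\Gamma^0(\rho^*)$. The averaged inequality with $\lambda=1$ then reads
\[
\frac1N\sum_n\big(\Gamma^0(\rho^{(n+1)})+\cE\cR(\rho^{(n+1)})\big)-\Gamma^0(\rho^*)\le \text{RHS with }\lambda=1.
\]
Linearity of $\Gamma^0$ turns the averaged $\Gamma^0$ into $\Gamma^0(\widehat\rho^{(N)})$. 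Convexity of $\cE\cR$ gives the first inequality in \eqref{eq: external regret inequality}.

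\textbf{Convergence to a saddle point.} $\cM$ is compact by Lemma \ref{lemma:M_convex_compact}, so $\widehat\rho^{(N)}$ has convergent subsequences. For $\widehat\lambda^{(N)}$ I need boundedness. I would get it by plugging in Slater's point $\overline\rho$ from Assumption \ref{assumptions: extra ass on algorithmic part} with $\lambda=0$:
\[
\Gamma^0(\widehat\rho^{(N)})-\Gamma^0(\overline\rho)-\widehat\lambda^{(N)}\cE\cR(\overline\rho)\le O(N^{-1/2}),
\]
and $-\cE\cR(\overline\rho)>0$ bounds $\widehat\lambda^{(N)}$. The gap bound tends to zero for each fixed $(\rho,\lambda)$. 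By continuity of $L$ (Lemma \ref{lemma:external_regret_continuous} and continuity of $\Gamma^0$), a limit $(\widehat\rho,\widehat\lambda)$ along a subsequence satisfies $L(\widehat\rho,\lambda)\le L(\rho,\widehat\lambda)$ for all $(\rho,\lambda)$, hence it is a saddle point. Proposition \ref{proposition: Lagrange formulation}(\ref{prop: item: saddle point is minimum}) then shows that $\widehat\rho$ is an optimal LP-CCE.
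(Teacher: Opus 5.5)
Your proposal is correct and follows the paper's proof essentially step by step. It uses the same primal and dual three-point inequalities with $\tau=\tfrac12$ and $\sigma=\sqrt N/2$. It uses the same Young splitting of $(\lambda^{(n+1)}-\lambda^{(n)})\cE\cR(\rho^{(n+1)})$, the same telescoping combined with convexity in $\rho$ and affinity in $\lambda$ of $L$, and the same test point $(\rho^*,1)$ for the regret bound. For boundedness of $\widehat\lambda^{(N)}$ you use the Slater point $\overline\rho$ with $\lambda=0$ directly, where the paper argues by contradiction with an arbitrary fixed $\lambda$; this is the same argument in slightly cleaner form. The conclusion then goes through compactness of $\cM$ and Proposition~\ref{proposition: Lagrange formulation}, exactly as in the paper.

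Two small points. Drop the aside about $\lambda=\lambda^{(n)}+1$: a test multiplier that depends on $n$ would break the telescoping, and the fixed choice $\lambda=1$ is what you actually use. Also state explicitly that discarding $D_\psi(\rho^{(n+1)},\rho^{(n)})$ and the tail term $D_\psi(\rho,\rho^{(N)})$ relies on the convexity of $\psi$, via Lemma~\ref{appendix:lemma: convexity in terms of first variation}.
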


\begin{proof}
We split the proof into four steps.
    
\vspace{0.25cm}
\textbf{Step 1}: Take any arbitrary $(\rho,\lambda)\in \cM\times \R_{+}$. For $n=0,\ldots,N-1$, we apply Lemma \ref{lemma: displacement convexity of dual part} to $G(\lambda)=-\lambda\cE\cR(\rho^{(n+1)})$ with $\overline{\lambda}=\lambda^{(n)}$ and $\sigma = \frac{\sqrt{N}}{2}$ and $\widehat{\lambda}=\lambda^{(n+1)}$, we obtain
    \begin{equation}
        \label{eq: displacement convexity of dual part}
        (\lambda-\lambda^{(n+1)})\cE\cR(\rho^{(n+1)})\leq \frac{\sqrt{N}}{2}\big( |\lambda^{(n)}-\lambda|^{2}-|\lambda^{(n+1)}-\lambda|^{2}-|\lambda^{(n+1)}-\lambda^{(n)}|^{2} \big).
    \end{equation}
    Similarly, applying Lemma \ref{lemma: three-points inequality for primal part} to $F(\rho)=\Gamma^{0}(\rho)+\lambda^{(n)}\cE\cR(\rho)$ with $\tau=\frac{1}{2}$, $\overline{\rho}=\rho^{(n)}$, $\widehat{\rho}=\rho^{(n+1)}$ and $\rho\in \cM$ yields
    \begin{align}
        \label{eq: displacement convexity of primal part I}
        \big(\Gamma^{0}(\rho^{(n+1)})+\lambda^{(n)}\cE\cR(\rho^{(n+1)}) \big)&-\big(\Gamma^{0}(\rho)+\lambda^{(n)}\cE\cR(\rho) \big) \notag \\
        &\leq \frac{1}{2}\bigg( D_{\psi}(\rho,\rho^{(n)})-D_{\psi}(\rho,\rho^{(n+1)})-D_{\psi}(\rho^{(n+1)},\rho^{(n)}) \bigg).
    \end{align}
    
    \textbf{Step 2:}
    The goal of this step is to use inequalities from Step 1 in order to bound $\text{Gap}_{(\rho,\lambda)}(\rho^{(n+1)},\lambda^{(n)})$ for any $n\in\N$, where $\text{Gap}_{(\rho,\lambda)}$ is defined by \eqref{eq: Primal-Dual gap}.
    Throughout the following computations, we use the notation $\pm$ to indicate addition and subtraction of terms. We have
    \begin{equation}\label{eq:thm:algorithm:estimate1}
    \begin{aligned}
        \text{Gap}_{(\rho,\lambda)} &(\rho^{(n+1)},\lambda^{(n)}) = L(\rho^{(n+1)},\lambda)-L(\rho,\lambda^{(n)}) \\
        & = \Gamma^{0}(\rho^{(n+1)}) + \lambda\cE\cR(\rho^{(n+1)}) - \Gamma^{0}(\rho) - \lambda^{(n)}\cE\cR(\rho) \\
        & \quad \pm \big(\Gamma^{0}(\rho^{(n+1)})+\lambda^{(n)}\cE\cR(\rho^{(n+1)}) - \Gamma^{0}(\rho) -\lambda^{(n)}\cE\cR(\rho) \big) \\
        & \quad \pm (\lambda-\lambda^{(n+1)})\cE\cR(\rho^{(n+1)}) \\
        & = \big(\Gamma^{0}(\rho^{(n+1)})+\lambda^{(n)}\cE\cR(\rho^{(n+1)}) - \Gamma^{0}(\rho) -\lambda^{(n)}\cE\cR(\rho) \big) + (\lambda-\lambda^{(n+1)})\cE\cR(\rho^{(n+1)}) \\
        & \quad + (\lambda^{(n+1)} - \lambda^{(n)})\cE\cR(\rho^{(n+1)}) \\
        & \leq \frac{1}{2}\bigg( D_{\psi}(\rho,\rho^{(n)})-D_{\psi}(\rho,\rho^{(n+1)})-D_{\psi}(\rho^{(n+1)},\rho^{(n)}) \bigg) \\
        & \quad + \frac{\sqrt{N}}{2}\big( |\lambda^{(n)}-\lambda|^{2}-|\lambda^{(n+1)}-\lambda|^{2}-|\lambda^{(n+1)}-\lambda^{(n)}|^{2} \big) \\
        & \quad + \frac{\sqrt{N}}{2} | \lambda^{(n+1)} - \lambda^{(n)} |^{2} + \frac{1}{2\sqrt{N}} \cE\cR^2(\rho^{(n+1)})
    \end{aligned}
    \end{equation}
    where in the last inequality we used \eqref{eq: displacement convexity of dual part}, \eqref{eq: displacement convexity of primal part I} and Young's inequality.
    Since $\psi$ is convex and differentiable, Lemma \ref{appendix:lemma: convexity in terms of first variation} implies $D_\psi(\rho^{(n+1)},\rho^{(n)}) \geq 0$, so that we can bound the last term in \eqref{eq:thm:algorithm:estimate1} as follows:
    \begin{equation}\label{eq:thm:algorithm:estimate2}
    \begin{aligned}
        \text{Gap}_{(\rho,\lambda)} & (\rho^{(n+1)},\lambda^{(n)}) \\
        & \leq \frac{1}{2}\bigg( D_{\psi}(\rho,\rho^{(n)})-D_{\psi}(\rho,\rho^{(n+1)}) \bigg) + \frac{\sqrt{N}}{2}\big( |\lambda^{(n)}-\lambda|^{2}-|\lambda^{(n+1)}-\lambda|^{2} \big) + \frac{1}{2\sqrt{N}} \overline{C} \\
        & = \frac{1}{2} \left( \sqrt{N} |\lambda - \lambda^{(n)} |^2 + D_{\psi}(\rho,\rho^{(n)})  \right) - \frac{1}{2} \left( \sqrt{N} |\lambda - \lambda^{(n+1)} |^2 + D_{\psi}(\rho,\rho^{(n+1)})  \right) + \frac{\overline{C}}{2\sqrt{N}} \\
        & = R^{(n)} -R^{(n+1)} + \frac{\overline{C}}{2\sqrt{N}},
        \end{aligned}
    \end{equation}
    where we have set $R^{(n)} =  \frac{1}{2} ( \sqrt{N} |\lambda - \lambda^{(n)} |^2 + D_{\psi}(\rho,\rho^{(n)}) )$.
    Notice that $R^{(n)} \geq 0$ for any $n \in \N$, since $D_{\psi}(\rho,\rho^{(N)}) \geq 0$, for all $N \in \N$.
    Summing from $n=0$ to $N-1$ and dividing by $N$, we get
    \begin{equation}\label{eq: bounded sum of GAPs}
    \begin{aligned}
        & \text{Gap}_{(\rho,\lambda)}\left(\frac{1}{N}\sum_{n=0}^{N-1}\rho^{(n+1)},\frac{1}{N}\sum_{n=0}^{N-1}\lambda^{(n)} \right) \leq \frac{1}{N}\sum_{n=0}^{N-1}\text{Gap}_{(\rho,\lambda)}(\rho^{(n+1)},\lambda^{(n)})\\
        & \leq \frac{1}{N}\sum_{n=0}^{N-1}\big(R^{(n)}-R^{(n+1)}\big) + \frac{\overline{C}}{2\sqrt{N}} =\frac{1}{N} ( R^{(0)}-R^{(N)}) + \frac{\overline{C}}{2\sqrt{N}} \leq \frac{1}{N} R^{(0)} + \frac{\overline{C}}{2\sqrt{N}},
    \end{aligned}
    \end{equation}
    where we exploited the fact that $\rho \mapsto L(\rho,\lambda)$ is convex and $\lambda\mapsto L(\rho,\lambda)$ is linear, the fact that the sum $\sum_{n=0}^{N-1}(R^{(n)} -R^{(n +1)})$ is telescopic and the positivity of $R^{(N)}$.
    Expanding on $R^{(0)}$, we get
    \begin{equation}\label{eq:thm:algo_convergene:final_estimate}
    \begin{aligned}
        & \text{Gap}_{(\rho,\lambda)}\left(\frac{1}{N}\sum_{n=0}^{N-1}\rho^{(n+1)},\frac{1}{N}\sum_{n=0}^{N-1}\lambda^{(n)} \right)  \leq \frac{1}{N}\sum_{n=0}^{N-1}\text{Gap}_{(\rho,\lambda)}(\rho^{(n+1)},\lambda^{(n)})\\
        & \leq \frac{1}{2N}D_{\psi}(\rho,\rho^{(0)}) + \frac{1}{2\sqrt{N}}| \lambda - \lambda^{(0)} |^2 + \frac{\overline{C}}{2\sqrt{N}} \leq \frac{1}{2\sqrt{N}} \left( D_{\psi}(\rho,\rho^{(0)}) + | \lambda - \lambda^{(0)} |^2 + \overline{C} \right),
    \end{aligned}
    \end{equation}
    i.e. \eqref{eq: convergence rate of primal-dual gap}.
    
    \vspace{0.25cm}
    \textbf{Step 3:}
    We now turn our attention to proving \eqref{eq: external regret inequality}. Taking $(\rho,\lambda)=(\rho^{*},1)$ with $\rho^{*}\in \cE$ an optimal LP-CCE, by definition of $L(\rho,\lambda)$ (cf. \eqref{eq: Lagrange equation}) we get 
    \begin{equation}\label{eq: regret ineq Gap}
        \text{Gap}_{(\rho^{*},1)}(\rho^{(n+1)},\lambda^{(n)})=\Gamma^{0}(\rho^{(n+1)})+\cE\cR(\rho^{(n+1)})-\Gamma^{0}(\rho^{*})-\lambda^{(n)}\cE\cR(\rho^{*}).
    \end{equation}
    Now, since $\cE\cR(\rho^{*})\leq 0$, rearranging terms in \eqref{eq: regret ineq Gap}, we get
    \begin{equation*}
        \Gamma^{0}(\rho^{(n+1)})+\cE\cR(\rho^{(n+1)})-\Gamma^{0}(\rho^{*})\leq \text{Gap}_{(\rho^{*},1)}(\rho^{(n+1)},\lambda^{(n)}).
    \end{equation*}
    Summing from $n=0$ to $N-1$, multiplying by $\frac{1}{N}$, and using \eqref{eq:thm:algo_convergene:final_estimate}, we deduce
    \begin{multline*}
        \frac{1}{N}\sum_{n=0}^{N-1}\big( \Gamma^{0}(\rho^{(n+1)})+\cE\cR(\rho^{(n+1)})-\Gamma^{0}(\rho^{*})\big) \leq\frac{1}{N}\sum_{n=0}^{N-1}\text{Gap}_{(\rho^{*},1)}(\rho^{(n+1)},\lambda^{(n)}) \\
        \leq \frac{1}{2\sqrt{N}}\bigg( D_{\psi}(\rho^*,\rho^{(0)}) + |\lambda^{(0)}-1|^{2} + \overline{C}\bigg).
    \end{multline*}
    Then, rearranging terms and utilizing linearity of the map $\rho\mapsto \Gamma^{0}(\rho)$, we conclude
    \[
        \cE\cR(\widehat{\rho}^{(N)}) \leq \frac{1}{N}\sum_{n=0}^{N-1}\cE\cR(\rho^{(n+1)}) \leq \frac{1}{2\sqrt{N}} \bigg( D_{\psi}(\rho^*,\rho^{(0)}) + |\lambda^{(0)}-1|^2 +\overline{C} \bigg) +\Gamma^0(\rho^*)-\Gamma^0(\widehat{\rho}^{(N)})
    \]
    where in the first inequality we exploited the convexity of $\rho\mapsto\cE\cR(\rho)$ (cf. Lemma \ref{lemma:external_regret_continuous}).

    \textbf{Step 4:}
    We are now in the position to show the convergence of the primal-dual algorithm.
    We first show that the sequence $(\widehat{\lambda}^{(N)})_{N}$ is bounded.\ Arguing by contradiction, suppose that $(\widehat{\lambda}^{(N)})_{N}$ admits a subsequence $(\widehat{\lambda}^{(N_k)})_{k \geq 1}$ such that $\widehat{\lambda}^{(N_k)}\to \infty$ as $k\to\infty$. 
    Let $\overline{\rho} \in \cM$ be such that $\cE\cR(\overline{\rho})<0$, which exists by Assumption \ref{assumptions: extra ass on algorithmic part}.
    Fix any $\lambda\in\R_+$.
    We have
    \begin{equation}
        \text{Gap}_{(\overline{\rho},\lambda)}(\widehat{\rho}^{(N_k)},\widehat{\lambda}^{(N_k)})=  L(\widehat{\rho}^{(N_k)},\lambda) -L(\overline{\rho},\widehat{\lambda}^{(N_k)})\leq \epsilon_{N_k}
    \end{equation}
    with $\epsilon_{N}:=\frac{1}{2\sqrt{N}}\big( D_{\psi}(\rho^{(0)},\overline{\rho})+|\lambda^{(0)}-\lambda|^{2} + \overline{C}\big)$. Relying on the definition of $L$ (cf. (\ref{eq: Lagrange equation})) and rearranging terms, we obtain 
    \begin{equation*}
        -\widehat{\lambda}^{(N_k)}\cE\cR(\overline{\rho})\leq \epsilon_{N_k}+\Gamma^{0}(\overline{\rho})-L(\widehat{\rho}^{(N_k)},\lambda).
    \end{equation*}
    Since $\lambda$ is fixed, $\Gamma^0$ and $\cE\cR$ are continuous on the compact set $\cM$, the sequence $(L(\widehat{\rho}^{(N_k)},\lambda))_{k\geq 1}$ is bounded.
    Since $\epsilon_{N_k} \to 0$, we deduce $\lim_{k \to \infty } -\widehat{\lambda}^{(N_k)}\cE\cR(\overline{\rho}) < \infty$. On the other hand, since $\cE\cR(\overline{\rho})< 0$, we have $\lim_{k \to \infty } -\widehat{\lambda}^{(N_k)}\cE\cR(\overline{\rho}) = \infty$, which leads us to a contradiction.
    
    Since $\cM$ is compact and by boundedness of $(\widehat{\lambda}^{(N)})_{N \geq 1}$, we can extract a convergent subsequence of $(\widehat{\rho}^{(N)},\widehat{\lambda}^{(N)})_{N}$ with cluster point $(\widehat{\rho},\widehat{\lambda})$. By continuity of the gap, passing to the limit in \eqref{eq: convergence rate of primal-dual gap}, we obtain
    \begin{equation}
        \label{eq: primal-dual gap for the cluster point}
        \text{Gap}_{(\rho,\lambda)}(\widehat{\rho},\widehat{\lambda})\leq 0,
    \end{equation}
    for any $(\rho,\lambda)\in \cM\times \R_{+}$. Taking arbitrary $\rho\in\cM$ and $\lambda=\widehat{\lambda}$ in (\ref{eq: primal-dual gap for the cluster point}) we obtain $L(\widehat{\rho},\widehat{\lambda})\leq L(\rho,\widehat{\lambda})$, and taking $\rho=\widehat{\rho}$ and arbitrary $\lambda\in \R_{+}$ we have $L(\widehat{\rho},\lambda)\leq L(\widehat{\rho},\widehat{\lambda})$, which shows that $(\widehat{\rho},\widehat{\lambda})$ is a saddle-point of $L(\rho,\lambda)$. From Proposition \ref{proposition: Lagrange formulation} we deduce that
    \begin{equation*}
        \Gamma^{0}(\widehat{\rho})=\min_{\rho\in \cE}\Gamma^{0}(\rho)=\inf_{\rho\in\cM}\sup_{\lambda\geq 0}L(\rho,\lambda)=L(\widehat{\rho},\widehat{\lambda}),
    \end{equation*}
    i.e. $\widehat{\rho}$ is an optimal LP-CCE.
\end{proof}

Theorem \ref{theorem: Convergence of Primal Dual algorithm} implies that Algorithm \ref{algorithm: Primal Dual for Optimal LP-CCE} can be used to learn an LP-CCE, even in the case in which the moderator does not have any optimality criterion.
We state this result in the following corollary:

\begin{corollary}\label{algo:corollary:CCE_learning}
Let $\Gamma^0(\rho) \equiv c$, for some constant $c \in \R$.
For any $\rho \in \cE$, the sequence $\widehat{\rho}^{(N)}$ generated by Algorithm \ref{algorithm: Primal Dual for Optimal LP-CCE} satisfies
\begin{equation}\label{eq:corollary:external_regret_bound}
    \cE\cR(\widehat{\rho}^{(N)})\leq \frac{1}{N}\sum_{n=0}^{N-1}\cE\cR(\rho^{(n+1)})\leq \frac{1}{2\sqrt{N}}\bigg(D_{\psi}(\rho,\rho^{(0)}) + |\lambda^{(0)}-1|^{2} + \overline{C}\bigg)
\end{equation}
and it converges (up to a subsequence) to an LP-CCE.
\end{corollary}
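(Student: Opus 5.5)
The corollary is a specialization of Theorem~\ref{theorem: Convergence of Primal Dual algorithm}, and the plan is to derive it directly from that result. When $\Gamma^0\equiv c$, every element of $\cE$ is an optimal LP-CCE: $\Gamma^0(\rho)=c=\min_{\rho'\in\cE}\Gamma^0(\rho')$. So the fixed $\rho\in\cE$ in the statement can play the role of $\rho^*$ in \eqref{eq: external regret inequality}. There the correction term $\Gamma^0(\rho^*)-\Gamma^0(\widehat{\rho}^{(N)})=c-c$ vanishes, since $\widehat{\rho}^{(N)}$ is a convex combination of elements of the convex set $\cM$ (Lemma~\ref{lemma:M_convex_compact}) and $\Gamma^0$ is constant there. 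This gives \eqref{eq:corollary:external_regret_bound} immediately, including the first inequality, which comes from the convexity of $\cE\cR$ (Lemma~\ref{lemma:external_regret_continuous}).

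We need the hypotheses of the theorem. Assumption~\ref{ass: uniqueness maximizer} is part of the standing requirements for the algorithm and does not involve $\Gamma^0$, so it carries over unchanged. Assumption~\ref{assumptions: extra ass on algorithmic part}, the existence of a $\overline{\rho}\in\cM$ with $\cE\cR(\overline{\rho})<0$, likewise does not depend on $\Gamma^0$ and is taken as in force. The final part of Theorem~\ref{theorem: Convergence of Primal Dual algorithm} then says that $(\widehat{\rho}^{(N)},\widehat{\lambda}^{(N)})$ has a subsequence converging to a saddle point $(\widehat{\rho},\widehat{\lambda})$ of $L$. By Proposition~\ref{proposition: Lagrange formulation}(\ref{prop: item: saddle point is minimum}), $\widehat{\rho}\in\cE$, i.e.\ it is an LP-CCE.

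If one prefers not to rely on Assumption~\ref{assumptions: extra ass on algorithmic part} for the convergence claim, there is an alternative route using only the bound just proved. The right-hand side of \eqref{eq:corollary:external_regret_bound} tends to $0$. Compactness of $\cM$ gives a subsequence $\widehat{\rho}^{(N_k)}\to\widehat{\rho}\in\cM$, and continuity of $\cE\cR$ (Lemma~\ref{lemma:external_regret_continuous}) then yields $\cE\cR(\widehat{\rho})\le 0$. Lemma~\ref{equivalence of external regret and CCE} concludes that $\widehat{\rho}$ is an LP-CCE.

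The only point that needs care is checking that the hypotheses of Theorem~\ref{theorem: Convergence of Primal Dual algorithm} hold when $\Gamma^0$ is constant. The rest is bookkeeping, and the direct compactness argument sidesteps the saddle-point machinery entirely.
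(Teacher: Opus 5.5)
Your proposal is correct and follows the paper's own (implicit) argument. The corollary is Theorem~\ref{theorem: Convergence of Primal Dual algorithm} with $\rho^*=\rho$; this choice is legitimate because every element of $\cE$ is optimal when $\Gamma^0$ is constant. The correction term $\Gamma^0(\rho^*)-\Gamma^0(\widehat{\rho}^{(N)})$ then vanishes, and the saddle-point limit is an LP-CCE by Proposition~\ref{proposition: Lagrange formulation}.

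Your alternative compactness route is also valid, and slightly sharper. Steps 1--3 of the theorem's proof use only Assumption~\ref{ass: uniqueness maximizer} and $\cE\cR(\rho)\le 0$. Hence subsequential convergence to an LP-CCE holds without the Slater-type Assumption~\ref{assumptions: extra ass on algorithmic part}; you only need $\cE\neq\emptyset$, which Proposition~\ref{prop:properties_set_E} provides.
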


We conclude with some useful remarks.

\begin{remark}
\begin{enumerate}
    \item In Fictitious Play algorithms, an agent updates her strategy at each iteration by computing a best response, assuming the other agents play their previously estimated average strategies (see, for instance, \cite{CardaliaguetHadikhanloo,LinProgFictDumitrescu,PerrinLauriereEliePerolat}, among others). In contrast, our approach belongs to the class of \textit{no-regret} algorithms. Instead of evaluating the best response against the previous estimation at each time step, we recommend a strategy that performs better than any other fixed strategy. Consequently, the time-averaged sequence converges to a true CCE, as established in Theorem~\ref{theorem: Convergence of Primal Dual algorithm}. To the best of our knowledge, our algorithm is the first no-regret framework introduced in the context of continuous time mean-field games.
    \item Compared to other algorithms for mean-field games, our approach does not require any monotonicity condition. This aligns with existing literature on correlated MFGs (see, for instance, \cite{campi2024coarse,muller2022learningcorrelatedequilibriameanfield}). A crucial question remains as to when no-regret algorithms converge to a Nash equilibrium. For example, the authors of \cite{ConvergenceNo-RegretToNash} design a no-regret algorithm for potential games and demonstrate its convergence. We believe this approach could be extended to a continuous time and state framework for mean-field games, a direction we leave for future research.
    \item A no-regret bound similar to \eqref{eq: external regret inequality} appears in \cite[Theorem D.1]{FarinaOptimalCCE} in the context of computing optimal CCEs for zero-sum extensive-form games. Additionally, analogous results are presented in \cite{AnagnostidesFarinaPanageasSandholm}, where the authors demonstrate that (online) mirror descent schemes converge to a CCE in two-player general-sum games, and in \cite{CaiDaskalakis2025proximal}, where the authors introduce the concept of \textit{proximal regret} for online learning.
\end{enumerate}
\end{remark}

\section{Parametrized Primal-Dual Scheme}\label{Section: Implementation and Examples}

The aim of this section is to apply the no-regret primal-dual algorithm developed in Section \ref{section: Learning algorithm} to a parametrized class of LP-CCEs, thereby making numerical computations feasible.
Throughout the section, Assumption \ref{ass: independent dynamics from m} is in force, so that the drift is of the form $b(t,x,a)$ and, according to Remark \ref{remark:independent_dynamics_from_m}, the deviation class is identified with $\cR$.

To parametrize the consistent martingale property \eqref{eq:consistent_mtg_property}, we combine neural networks with randomized Markovian policies.
The overall methodology can be summarized as follows:
\begin{enumerate}
    \item First, the moderator draws a correlation signal $\zeta\in Z$ according to a distribution $\Xi^\varphi$, parameterized by $\varphi\in\Phi$.
    
    \item Given the realized signal $\zeta$, we consider a neural network with parameters $\theta\in\Theta$ and introduce a randomized recommendation whose density with respect to the Lebesgue measure on $A$ is denoted by $\pi^\theta_t(a|x,\zeta)$ (see \eqref{eq: randomized parametrized policy}).
    
    \item For each randomized recommendation $\pi^\theta_t(a|x,\zeta)$, we solve the associated Fokker--Planck equation with aggregate drift $B^\theta(t,x;\zeta)$ (see \eqref{eq:parameterized:aggregate_drift}).
    This allows us to construct a related pair $\boldsymbol{m}^{\theta,\zeta}=(m^{\theta,\zeta},\overline{m}^{\theta,\zeta})\in\cR$.
    Under the assumption that the map $(\theta,\zeta)\mapsto\boldsymbol{m}^{\theta,\zeta}$ is continuous (see Assumption \ref{ass:parametric_FP_solver_continuity}) randomization with respect to $\Xi^\varphi$ over these parametrized pairs generates a consistent LP-correlated flow $\rho^{\varphi,\theta}\in\cM$.
    
    \item Finally, for each parametrized consistent LP-correlated flow, we compute the moderator's cost, the representative player's cost, and the external regret.
    This gives parametrized functionals $J^0(\varphi,\theta)$, $J(\varphi,\theta)$ and $ER(\varphi,\theta)$.
\end{enumerate}

The result is a parametrized Primal-Dual Gradient Descent Scheme (see Algorithm \ref{algorithm:Primal_Dual_Gradient_Descent}).\
In this parametrized formulation, the optimization is performed only over the finite-dimensional parameters $\varphi$ and $\theta$.
The working flow of the scheme is displayed in Figure \ref{fig:network_architecture_compact}.

\begin{figure}[htbp]
    \centering
    \resizebox{0.78\textwidth}{!}{
    \begin{tikzpicture}[
        block/.style={rectangle, draw, fill=blue!10, text width=2.8cm, align=center, minimum height=0.9cm, font=\scriptsize, rounded corners},
        nn/.style={draw, fill=orange!20, text width=2.6cm, align=center, minimum height=1.25cm, font=\scriptsize\bfseries},
        midblock/.style={rectangle, draw, fill=purple!10, text width=3.4cm, align=center, minimum height=1cm, font=\scriptsize\bfseries, rounded corners},
        rhoblock/.style={rectangle, draw, fill=cyan!12, text width=3.6cm, align=center, minimum height=1cm, font=\scriptsize\bfseries, rounded corners},
        proc/.style={rectangle, draw, fill=green!10, text width=3.9cm, align=center, minimum height=1.15cm, font=\scriptsize, rounded corners},
        loss/.style={rectangle, draw, fill=red!15, text width=11.8cm, align=center, minimum height=1.6cm, font=\scriptsize, rounded corners},
        opt/.style={rectangle, draw, fill=yellow!20, text width=8.6cm, align=center, minimum height=2.1cm, font=\scriptsize, rounded corners},
        arrow/.style={-Stealth, semithick},
        line/.style={-, semithick}
    ]

        \node [block] (input_zeta) at (-2.6, 1.7) {Mechanism $\zeta\sim \Xi^{\varphi}$};
        \node [block] (input_tx) at (2.6, 1.7) {Input $(t,x)$};

        \node [nn] (nn) at (0,0) {NN Policy \\ $\pi^\theta_t(a|x,\zeta)$ via \eqref{eq: randomized parametrized policy}};

        \draw [arrow] (input_zeta.south) -- ++(0,-0.25) -| (nn.135);
        \draw [arrow] (input_tx.south) -- ++(0,-0.25) -| (nn.45);

        \node [midblock] (fp_solver) at (0, -1.9) {Parametric FP solver \\ Find $p^{\theta,\zeta}$ and $\boldsymbol m^{\theta,\zeta}$};

        \draw [arrow] (nn.south) -- (fp_solver.north);

        \node [rhoblock] (rho_construct) at (0, -3.45) {Correlated flow \\ Construct $\rho^{\varphi,\theta}$ as the law of $\boldsymbol m^{\theta,\cZ^\varphi}$};

        \draw [arrow] (fp_solver.south) -- (rho_construct.north);

        \node [proc] (rep_cost) at (-2.6, -5.7) {\textbf{Representative player} \\ Calculate $J(\varphi,\theta)$ };
        \node [proc] (best_dev) at (2.6, -5.7) {\textbf{Best deviation} \\ Calculate $V^{dev}(\varphi,\theta)=\inf_{\boldsymbol{\kappa}\in \cR}\Gamma^{dev}[\rho^{\varphi,\theta}](\boldsymbol{\kappa})$};

        \coordinate (branch_down) at ($(rho_construct.south) + (0,-0.35)$);
        \draw [arrow] (rho_construct.south) -- (branch_down);
        \draw [arrow] (branch_down) -| (rep_cost.north);
        \draw [arrow] (branch_down) -| (best_dev.north);

        \node [loss] (loss) at (0, -7.8) {$P((\varphi,\theta),(\varphi^{(n)},\theta^{(n)});\lambda^{(n)})=J^0(\varphi,\theta)+\lambda^{(n)}ER(\varphi,\theta)+\frac12D_\psi((\varphi,\theta),(\varphi^{(n)},\theta^{(n)}))$ \\ $ER(\varphi,\theta)=J(\varphi,\theta)-V^{dev}(\varphi,\theta)$};

        \draw [arrow] (rep_cost.south) -- ++(0,-0.25) -| ([xshift=-1.4cm]loss.north);
        \draw [arrow] (best_dev.south) -- ++(0,-0.25) -| ([xshift=1.4cm]loss.north);

        \node [opt] (optim) at (0, -11.6) {\textbf{Primal-Dual update} \\ $\widehat{\theta}\leftarrow \operatorname{Proj}_{\Theta}\left(\theta^{(n)}-\nabla_{\theta}P((\varphi^{(n)},\theta),(\varphi^{(n)},\theta^{(n)});\lambda^{(n)})\right)$ \\ $\widehat{\varphi}\leftarrow \operatorname{Proj}_{\Phi}\left(\varphi^{(n)}-\nabla_{\varphi}P((\varphi,\widehat{\theta}),(\varphi^{(n)},\theta^{(n)});\lambda^{(n)})\right)$ \\ $\lambda^{(n+1)}=\big(\lambda^{(n)}+\frac{1}{\sqrt{N}}ER(\widehat{\varphi},\widehat{\theta})\big)_{+}$};

        \draw [arrow] (loss.south) -- (optim.north);

        \coordinate (left_corridor) at ($(optim.west) + (-4.6, 0)$);

        \draw [line] (optim.west) -- (left_corridor);

        \draw [arrow] (left_corridor) |- (nn.west)
            node[pos=0.25, right, font=\footnotesize, align=left] {Params \\ Update};

        \draw [arrow] (left_corridor) |- (input_zeta.west);

    \end{tikzpicture}}
    \caption{Architecture of the Primal-Dual Gradient Descent Scheme.}
    \label{fig:network_architecture_compact}
\end{figure}

Let $k,\ell,m\in\N$ and let $\Phi\subset\R^k$ and $\Theta\subset\R^\ell$ be compact and convex sets, and $Z\subset\R^m$, possibly equal to the $\R^m$.
The set $Z$ is the set of correlation signals.
For each $\varphi\in\Phi$, let $\Xi^\varphi$ be a probability distribution on $(Z,\cB_Z)$.
The parameter $\varphi$ controls the mediator's randomization over signals, while the parameter $\theta$ controls the randomized recommendation sent after the signal has been realized.

\begin{assumption}\label{ass:parametrization_setting}
\begin{enumerate}
    \item The action space is a compact convex interval $A=[a_{\min},a_{\max}]\subset\R$.
    \item The initial distribution $m_0^*\in\cP_2(\R)$ admits a square-integrable density with respect to the Lebesgue measure, still denoted by $m_0^*$.
\end{enumerate}
\end{assumption}

\subsection{Neural parametrization of the martingale constraint}\label{subsection:neural_parametrization_martingale_constraint}

For each realization $\zeta\in Z$ of the mediator's correlation device, we parameterize the recommended action through a neural network with parameters $\theta\in\Theta$.
Let $L\geq 1$ be the number of hidden layers and let $\phi$ be a continuously differentiable activation function.
Starting from the input
\[
	h_0(t,x,\zeta):=(t,x,\zeta)^\top,
\]
we define recursively
\[
	h_{r+1}(t,x,\zeta;\theta):=\phi(\theta_r h_r(t,x,\zeta;\theta)+\theta_r^0),\quad r=0,\ldots,L-1,
\]
and set
\[
	h(t,x,\zeta;\theta):=\theta_L h_L(t,x,\zeta;\theta).
\]
Here $\theta:=((\theta_r)_{r\leq L},(\theta_r^0)_{r\leq L-1})$ collects the weight matrices and bias vectors.
Since the output $h(t,x,\zeta;\theta)$ is real-valued, we project it smoothly onto the compact action set $A=[a_{\min},a_{\max}]$.
Let $\phi_A:\R\to A$ be a smooth projection-type map and define
\[
	\mu_\theta(t,x,\zeta):=\phi_A(h(t,x,\zeta;\theta)).
\]
The randomized recommendation is then obtained by sampling around this mean action.
Let $\lambda_{\mathrm{pol}}\in\R$ be a learnable log-standard deviation parameter, which is included among the components of $\theta$, and set $\sigma_{\lambda_{\mathrm{pol}}}:=\exp(\lambda_{\mathrm{pol}})$.
For $a\in A$, define
\begin{equation}\label{eq: randomized parametrized policy}
	\pi^\theta_t(a|x,\zeta):=\left( \int_A\exp\left(-\frac{(y-\mu_\theta(t,x,\zeta))^2}{2\sigma_{\lambda_{\mathrm{pol}}}^2}\right)dy \right)^{-1} \exp\left(-\frac{(a-\mu_\theta(t,x,\zeta))^2}{2\sigma_{\lambda_{\mathrm{pol}}}^2}\right) .
\end{equation}
Finally, we set
\[
	q^\theta_t(da|x,\zeta):=\pi^\theta_t(a|x,\zeta)da.
\]
Thus, for each $(\theta,\zeta)\in\Theta\times Z$, $q^\theta(\cdot|\cdot,\zeta)$ is a Markovian relaxed control.
The role of $\zeta$ is to encode the mediator's randomization: different realizations of the correlation signal generate different randomized recommendations, while the distribution of $\zeta$ is controlled by $\varphi$.

\begin{lemma}\label{lemma:regularity_parametrized_policy}
Suppose that $\phi$ and $\phi_A$ are continuously differentiable.
Then the map $(t,x,\zeta,\theta)\mapsto\mu_\theta(t,x,\zeta)$ is continuously differentiable.
Moreover, for every $a\in A$, the map $(t,x,\zeta,\theta)\mapsto\pi^\theta_t(a|x,\zeta)$ is continuously differentiable.
\end{lemma}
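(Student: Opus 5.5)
The plan is a direct composition argument, proceeding layer by layer through the network. First I show that $(t,x,\zeta,\theta)\mapsto h(t,x,\zeta;\theta)$ is $C^1$, by induction on the depth. The input $h_0(t,x,\zeta)=(t,x,\zeta)^\top$ is linear, hence smooth, in $(t,x,\zeta)$ and independent of $\theta$. Suppose $h_r$ is $C^1$ in $(t,x,\zeta,\theta)$. The map $(\theta_r,\theta_r^0,h_r)\mapsto\theta_r h_r+\theta_r^0$ is bilinear plus affine, hence $C^\infty$, and composing it with the $C^1$ map $(t,x,\zeta,\theta)\mapsto(\theta_r,\theta_r^0,h_r(t,x,\zeta;\theta))$ gives a $C^1$ map. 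Applying the $C^1$ activation $\phi$ componentwise, the chain rule shows that $h_{r+1}$ is $C^1$. The output layer $\theta_L h_L$ is again bilinear in $(\theta_L,h_L)$, so $h$ is $C^1$. Then $\mu_\theta=\phi_A\circ h$ is $C^1$ by the chain rule, since $\phi_A$ is $C^1$. This gives the first claim.

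For the second claim, I would write $\pi^\theta_t(a|x,\zeta)=E(a,\mu_\theta,\lambda_{\mathrm{pol}})/Z(\mu_\theta,\lambda_{\mathrm{pol}})$, where
\[
E(a,\mu,s):=\exp\left(-\frac{(a-\mu)^2}{2e^{2s}}\right),\qquad Z(\mu,s):=\int_A E(y,\mu,s)\,dy .
\]
Here $\lambda_{\mathrm{pol}}$ is one coordinate of $\theta$, so the map $\theta\mapsto\lambda_{\mathrm{pol}}$ is linear and smooth. The function $E$ is $C^\infty$ in $(a,\mu,s)$, because $e^{-2s}$ is smooth and never vanishes.

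The main step is showing that $Z$ is $C^1$. I would use differentiation under the integral sign. The interval $A=[a_{\min},a_{\max}]$ is compact by Assumption \ref{ass:parametrization_setting}, and the partial derivatives $\partial_\mu E$ and $\partial_s E$ are continuous in $(y,\mu,s)$. On any compact neighbourhood of a point $(\mu,s)$ these derivatives are therefore bounded uniformly in $y\in A$. Hence $Z$ is $C^1$, and its partial derivatives are the integrals of the corresponding partial derivatives of $E$; continuity of these integrals follows from dominated convergence.

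Positivity of $Z$ is what allows the quotient. Since $E>0$ everywhere and $A$ has positive length (it is a nondegenerate interval), we get $Z(\mu,s)>0$. The quotient $E/Z$ is therefore $C^1$ in $(a,\mu,s)$.

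Composing with the $C^1$ map $(t,x,\zeta,\theta)\mapsto(\mu_\theta(t,x,\zeta),\lambda_{\mathrm{pol}})$ shows that, for each fixed $a\in A$, the map $(t,x,\zeta,\theta)\mapsto\pi^\theta_t(a|x,\zeta)$ is $C^1$. In fact it is even jointly $C^1$ in $a$ as well.

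I expect no real obstacle. The only points needing care are the differentiation under the integral, which is justified by the compactness of $A$ together with the local uniform bounds above, and the strict positivity of the normalizing constant, which is needed to divide.
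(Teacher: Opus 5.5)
Your proof is correct and follows the same route as the paper. The paper's proof just cites the differentiability of the network layers, the smoothness of $\phi_A$, and the strict positivity of the normalizing factor; you supply the details: induction over the layers with the chain rule, differentiation under the integral sign over the compact interval $A$, and positivity of the normalizing constant because $A$ has positive length.
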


\begin{proof}
The claim follows from the differentiability of the neural-network layers, the smoothness of $\phi_A$, and the strict positivity of the normalizing factor in the definition of $\pi^\theta$.
\end{proof}

For $(\theta,\zeta)\in\Theta\times Z$, define the aggregate drift induced by the randomized recommendation by
\begin{equation}\label{eq:parameterized:aggregate_drift}
	B^\theta(t,x;\zeta):=\int_A b(t,x,a)q^\theta_t(da|x,\zeta)=\int_A b(t,x,a)\pi^\theta_t(a|x,\zeta)da.
\end{equation}

\begin{proposition}\label{prop:parametric_FP_wellposedness}
Suppose that Assumptions \ref{standing_assumptions}, \ref{ass: independent dynamics from m} and \ref{ass:parametrization_setting} hold.
Assume moreover that the activation function $\phi$ and the projection map $\phi_A$ are continuously differentiable.
Then, for every $(\theta,\zeta)\in\Theta\times Z$, the Fokker-Planck equation
\begin{equation}\label{eq:parametric_FP_equation}
\left\{
\begin{aligned}
    \partial_t p^{\theta,\zeta}(t,x) &=  -\partial_x\left(B^\theta(t,x;\zeta)p^{\theta,\zeta}(t,x)\right) +\frac{\sigma^2}{2}\partial_{xx}p^{\theta,\zeta}(t,x), \\
    p^{\theta,\zeta}(0,x) &= m_0^*(x),
\end{aligned} \right.
\end{equation}
admits a unique weak probability-density solution with finite second moments.
\end{proposition}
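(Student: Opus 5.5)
Since $b$ is bounded (Assumption \ref{standing_assumptions}(S3)) and $q^\theta_t(\cdot|x,\zeta)$ is a probability measure on $A$, the aggregate drift $B^\theta(\cdot,\cdot;\zeta)$ defined in \eqref{eq:parameterized:aggregate_drift} is bounded and measurable. The plan is to obtain existence and uniqueness of the Fokker--Planck equation \eqref{eq:parametric_FP_equation} from the corresponding SDE, via a probabilistic representation, rather than through PDE estimates. Smoothness of $\pi^\theta$ (Lemma \ref{lemma:regularity_parametrized_policy}) is not needed for this part; only boundedness and measurability of $B^\theta$ matter. Note that $B^\theta$ is generally \emph{not} Lipschitz in $x$, because $\pi^\theta$ depends on $x$. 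However, $\sigma>0$ is constant, so we are in the setting used earlier in Proposition \ref{prop:any_strong_CCE_induces_LP_CCE}.

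\textbf{Existence.} On the original probability space, consider
\[
dX_t = B^\theta(t,X_t;\zeta)\,dt + \sigma\, dW_t, \qquad X_0=\xi\sim m_0^*.
\]
By \cite[Theorem 1]{veretennikov1980strongsolutions}, this SDE has a pathwise unique strong solution. Set $p_t := \mathrm{Law}(X_t)$. Applying It\^o's formula to $u\in \dC^{1,2}_b$ and taking expectations shows that $(p_t)$ is a weak solution of \eqref{eq:parametric_FP_equation} in the distributional sense:
\[
\int u(t,x)\,p_t(dx) - \int u(0,x)\,m_0^*(dx) = \int_0^t \int \bigl(\partial_s u + B^\theta \partial_x u + \tfrac{\sigma^2}{2}\partial_{xx}u\bigr)\,p_s(dx)\,ds .
\]
Second moments are finite: $|X_t|\le |\xi| + T\|b\|_\infty + \sigma |W_t|$, and $m_0^*\in\cP_2$. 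For $t>0$, $p_t$ has a density by Girsanov's theorem, since the drift is bounded and the law is therefore absolutely continuous with respect to that of $\xi+\sigma W_t$. The same holds at $t=0$ by Assumption \ref{ass:parametrization_setting}(2). Hence $p_t$ is a probability density solution, and by construction it satisfies the martingale property \eqref{eq:def:pre_martingale_property}.

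\textbf{Uniqueness.} This is the main obstacle, because the drift is merely bounded and measurable. I would apply the superposition principle (Figalli / Trevisan) to any weak measure-valued solution $(\tilde p_t)$ with finite second moments. The bounded drift gives the integrability condition $\int_0^T\int (|B^\theta| + \sigma^2)\,d\tilde p_t\,dt<\infty$. The principle then produces a weak solution of the martingale problem for the SDE whose one-dimensional marginals are $\tilde p_t$. Uniqueness in law for the SDE (Veretennikov's strong uniqueness implies weak uniqueness via Yamada--Watanabe, or directly Stroock--Varadhan for nondegenerate constant diffusion) then forces $\tilde p_t = p_t$ for all $t$.

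As an alternative route for uniqueness, I would use an $L^2$ duality argument. Solve the backward Kolmogorov equation $\partial_t v + B^\theta\partial_x v + \frac{\sigma^2}{2}\partial_{xx} v=0$ with smooth terminal data; for bounded measurable drift and uniformly parabolic operators this has a $W^{1,2}_p$ solution by Krylov's theory. Testing the difference of two solutions against $v$ then gives zero. This route requires an approximation step, since $v$ is not $\dC^{1,2}$. For that reason I would present the superposition argument as the primary proof.
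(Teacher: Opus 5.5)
Your proof is correct, but it takes a different route from the paper's.

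\textbf{The paper's route.} It first uses Lemma \ref{lemma:regularity_parametrized_policy} and (S3) to note that $B^\theta(\cdot,\cdot;\zeta)$ is bounded and locally Lipschitz in $x$. Classical theory then gives a unique non-explosive strong solution, and It\^o's formula gives a weak probability solution. Both the existence of a density and the uniqueness of probability solutions are taken from the PDE literature (Bogachev--Krylov--R\"ockner--Shaposhnikov, Theorem 6.3.1, Corollary 6.3.2 and Theorem 9.4.3). These rely on the constant, uniformly elliptic diffusion and the bounded, locally Lipschitz drift.

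\textbf{Your route.} You use only boundedness and measurability of $B^\theta$: Veretennikov's theorem for strong well-posedness, Girsanov for absolute continuity at $t>0$, and the superposition principle plus uniqueness in law of the SDE for uniqueness of the Fokker--Planck solution.

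\textbf{What each buys.} Your argument covers any measurable randomized Markov policy, with no regularity of the network in $x$. It also stays on the probabilistic side, in the same spirit as the use of Veretennikov in Proposition \ref{prop:any_strong_CCE_induces_LP_CCE}. The paper's argument is shorter: it uses the $C^1$ regularity of $\phi$ and $\phi_A$, which is needed anyway for the gradient scheme, and lets the cited PDE theorems do the work.

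\textbf{Two small remarks.} Your comment that $B^\theta$ is ``generally not Lipschitz in $x$, because $\pi^\theta$ depends on $x$'' undersells the hypotheses. Under the stated differentiability assumptions, $B^\theta$ is locally Lipschitz in $x$ uniformly in $t$; only global Lipschitz continuity may fail, and local Lipschitz is exactly what the paper uses. Second, Figalli's superposition theorem is stated for narrowly continuous curves of probability measures. You should add the one-line observation that, with bounded coefficients, any weak solution admits such a representative. Neither point affects the validity of your proof.
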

\begin{proof}
Fix $(\theta,\zeta)\in\Theta\times Z$.
Let $\xi$ have law $m_0^*(x)dx$, independently of a Brownian motion $W$, and consider
\[
	dX_t^{\theta,\zeta}=B^\theta(t,X_t^{\theta,\zeta};\zeta)dt+\sigma dW_t,\qquad X_0^{\theta,\zeta}=\xi.
\]
By \eqref{eq:parameterized:aggregate_drift}, Lemma \ref{lemma:regularity_parametrized_policy}, and the standing assumptions on $b$, the aggregate drift $B^\theta(\cdot,\cdot;\zeta)$ is bounded and locally Lipschitz in $x$.
Thus, the SDE admits a unique non-explosive strong solution.
Setting $\mu_t^{\theta,\zeta}:=\cL(X_t^{\theta,\zeta})$, It\^o's formula shows that $\mu^{\theta,\zeta}$ is a weak probability solution of \eqref{eq:parametric_FP_equation} (see also \cite[Proposition 1.3.1]{bogachev2015fokker}).
Since $\sigma>0$ is constant, the equation is uniformly non-degenerate, and \cite[Theorem 6.3.1 and Corollary 6.3.2]{bogachev2015fokker} imply that $\mu_t^{\theta,\zeta}$ admits a density $p^{\theta,\zeta}(t,\cdot)$. Moreover, boundedness of $B^\theta$ and $m_0^*\in\cP_2(\R)$ give
\[
	\sup_{t\in[0,T]}\int_{\R}|x|^2\mu_t^{\theta,\zeta}(dx)<\infty.
\]
Finally, uniqueness of the weak probability solution follows from \cite[Theorem 9.4.3]{bogachev2015fokker}, because the diffusion coefficient is constant and uniformly elliptic, while $B^\theta(\cdot,\cdot;\zeta)$ is bounded and locally Lipschitz in $x$.
Hence \eqref{eq:parametric_FP_equation} admits a unique weak probability-density solution with finite second moments.
\end{proof}
For every $(\theta,\zeta)\in\Theta\times Z$, we define $\boldsymbol{m}^{\theta,\zeta} \in V_2 \times \cP_2(\R)$ by setting 
\[
	m_t^{\theta,\zeta}(dx,da):=q^\theta_t(da|x,\zeta)p^{\theta,\zeta}(t,x)dx,
	\qquad
	\overline{m}^{\theta,\zeta}(dx):=p^{\theta,\zeta}(T,x)dx.
\]

\begin{lemma}\label{lemma:FP_induces_martingale_constraint}
For every $(\theta,\zeta)\in\Theta\times Z$, $\boldsymbol{m}^{\theta,\zeta}$ belongs to $\cR$.
\end{lemma}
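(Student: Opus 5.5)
The plan is to verify directly that $\boldsymbol{m}^{\theta,\zeta}=(m^{\theta,\zeta},\overline{m}^{\theta,\zeta})$ satisfies Definition \ref{LP:def:pre_martingale_constraint}. Under Assumption \ref{ass: independent dynamics from m}, $\cR[m]$ does not depend on $m$, so it is enough to check the martingale identity \eqref{eq:def:pre_martingale_property} and the second-moment condition defining $V_2\times\cP_2(\R)$. The natural route goes through the SDE representation already used in the proof of Proposition \ref{prop:parametric_FP_wellposedness}.

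\textbf{Step 1 (membership in $V_2\times\cP_2(\R)$).} Take the strong solution $X^{\theta,\zeta}$ of
\[
dX_t=B^\theta(t,X_t;\zeta)dt+\sigma dW_t,\qquad X_0=\xi\sim m_0^*.
\]
By the uniqueness part of Proposition \ref{prop:parametric_FP_wellposedness}, its law at time $t$ is $p^{\theta,\zeta}(t,x)dx$.
\begin{itemize}
\item Measurability of $t\mapsto m^{\theta,\zeta}_t$ follows from the joint measurability of $(t,x)\mapsto p^{\theta,\zeta}(t,x)$ and of the kernel $\pi^\theta_t(a|x,\zeta)$ (Lemma \ref{lemma:regularity_parametrized_policy}).
\item Each $m_t^{\theta,\zeta}$ is a probability measure, since $\pi^\theta_t(\cdot|x,\zeta)$ integrates to one on $A$.
\item The second moments satisfy
\[
\int_0^T\!\!\int_{\R\times A}(|x|^2+|a|^2)\,m_t^{\theta,\zeta}(dx,da)\,dt\le \int_0^T \E|X_t|^2\,dt + T\max_{a\in A}|a|^2<\infty,
\]
using the uniform moment bound from Proposition \ref{prop:parametric_FP_wellposedness} and compactness of $A$. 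The same bound gives $\overline{m}^{\theta,\zeta}\in\cP_2(\R)$.
\end{itemize}

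\textbf{Step 2 (martingale identity).} Fix $u\in\dC^{1,2}_b([0,T]\times\R)$ and apply It\^o's formula to $u(t,X_t)$. Because $\partial_x u$ is bounded, the stochastic integral is a true martingale, so taking expectations gives
\[
\E[u(T,X_T)]-\E[u(0,\xi)]=\E\int_0^T\Big(\partial_t u+\tfrac{\sigma^2}{2}\partial_{xx}u+B^\theta(t,X_t;\zeta)\partial_x u\Big)(t,X_t)\,dt .
\]
Next, substitute the definition \eqref{eq:parameterized:aggregate_drift} of $B^\theta$ and use Fubini, which is justified by the boundedness of $b$ and of the derivatives of $u$. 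This rewrites the right-hand side as
\[
\int_0^T\!\!\int_\R\!\int_A(\partial_t+\cL)u(t,x,a)\,\pi^\theta_t(a|x,\zeta)\,p^{\theta,\zeta}(t,x)\,da\,dx\,dt
=\int_0^T\!\!\int_{\R\times A}(\partial_t+\cL)u\;dm_t^{\theta,\zeta}\,dt .
\]
The left-hand side equals $\int u(T,x)\,\overline{m}^{\theta,\zeta}(dx)-\int u(0,x)\,m_0^*(dx)$. This is exactly \eqref{eq:def:pre_martingale_property}.

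Since the drift does not depend on $m$, the identity holds relative to every $m\in V_2$, so $\boldsymbol{m}^{\theta,\zeta}\in\cR[m]=\cR$.

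The only point needing care is identifying the law of the SDE solution with the density $p^{\theta,\zeta}$, including at the terminal time $T$ rather than only $dt$-a.e. The uniqueness in Proposition \ref{prop:parametric_FP_wellposedness} handles this, together with the weak continuity of $t\mapsto\mathrm{Law}(X_t)$.

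Alternatively, one could avoid the SDE and test the weak Fokker--Planck formulation directly with $u$. Doing so requires the weak formulation to hold up to time $T$ for $\dC^{1,2}_b$ test functions, which is typically obtained by a cutoff argument using the finite second moments.
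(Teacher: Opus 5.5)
Your proof is correct and is essentially the paper's argument. The paper tests the weak Fokker--Planck formulation of \eqref{eq:parametric_FP_equation} directly with $u\in\dC^{1,2}_b([0,T]\times\R)$ and then unfolds $B^\theta$ into the $a$-integral against $\pi^\theta$, exactly as in your Step~2, whereas you re-derive that weak identity via It\^o's formula on the SDE that the paper itself used to construct $p^{\theta,\zeta}$ in Proposition~\ref{prop:parametric_FP_wellposedness}. Your additional checks are details the paper leaves implicit: membership in $V_2\times\cP_2(\R)$, and identification of $\overline{m}^{\theta,\zeta}$ with the law at the terminal time. Going through It\^o also sidesteps the question of whether the weak formulation holds up to time $T$ for $\dC^{1,2}_b$ test functions.
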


\begin{proof}
Let $u\in \dC_b^{1,2}([0,T]\times\R)$.
By the weak formulation of the Fokker--Planck equation,
\begin{multline*}
	\int_{\R}u(T,x)\overline{m}^{\theta,\zeta}(dx)-\int_{\R}u(0,x)m_0^*(x)dx \\
    =
    \int_0^T\int_{\R}
    \left(
    \partial_tu(t,x)
    +\frac{\sigma^2}{2}\partial_{xx}u(t,x)
    +B^\theta(t,x;\zeta)\partial_xu(t,x)
    \right)
    p^{\theta,\zeta}(t,x)dxdt.
\end{multline*}
Using the definition of $B^\theta$ and $m^{\theta,\zeta}_t(dx,da)$, the right-hand side is equal to
\[
	\int_0^T\int_{\R\times A}(\partial_t+\cL)u(t,x,a)m^{\theta,\zeta}_t(dx,da)dt.
\]
Hence $\boldsymbol{m}^{\theta,\zeta}$ satisfies the martingale property, and therefore $\boldsymbol{m}^{\theta,\zeta}\in\cR$.
\end{proof}

\subsection{Parametrized LP-correlated flows}\label{subsection:parametrized_LP_correlated_flows}

In order to associate to a pair $(\varphi,\theta)$ a measure $\rho^{\varphi,\theta}\in\cM$, we need the following continuity assumption.

\begin{assumption}\label{ass:parametric_FP_solver_continuity}
The map
\[
	\Theta \times Z \ni (\theta,\zeta) \longmapsto \boldsymbol{m}^{\theta,\zeta} \in \cR
\]
is continuous with respect to the topology $\tau^{(2)} \otimes \tau^{(2)}$.
\end{assumption}

For each $\varphi\in\Phi$, let $\cZ^\varphi$ be a $Z$-valued random variable with law $\Xi^\varphi$.
For any $(\varphi,\theta)\in\Phi\times\Theta$, consider the map $\boldsymbol{m}^{\theta,\cZ^\varphi}$.
Notice that, thanks to Assumption \ref{ass:parametric_FP_solver_continuity}, the map $\boldsymbol{m}^{\theta,\cZ^\varphi}$ is an $\cR$-valued random variable.
Define
\[
    \rho^{\varphi,\theta} = \P \circ \big( \boldsymbol{m}^{\theta,\cZ^\varphi} \big)^{-1} \in \cP(\cR).
\]
We denote the family of parametrized LP-correlated flows by
\[
	\cP\cM:=\{\rho^{\varphi,\theta}:(\varphi,\theta)\in\Phi\times\Theta\}.
\]

\begin{lemma}\label{lemma:parametrized_flows_are_admissible}
Under Assumption \ref{ass:parametric_FP_solver_continuity}, $\rho^{\varphi,\theta}\in\cM$ for every $(\varphi,\theta)\in\Phi\times\Theta$.
In particular, $\cP\cM\subseteq\cM$.
\end{lemma}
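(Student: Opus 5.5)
To prove Lemma~\ref{lemma:parametrized_flows_are_admissible}, we need to show that $\rho^{\varphi,\theta}$ satisfies the consistent martingale property \eqref{eq:consistent_mtg_property}. The natural route is to reduce this to the pointwise membership $\boldsymbol{m}^{\theta,\zeta}\in\cR$ from Lemma~\ref{lemma:FP_induces_martingale_constraint}. Under Assumption~\ref{ass: independent dynamics from m} the drift does not depend on the measure argument, so $\cR[m]=\cR$ for every $m$ by Remark~\ref{remark:independent_dynamics_from_m}.

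\textbf{Step 1: $\rho^{\varphi,\theta}$ is a well-defined Borel probability measure on $V_2\times\cP_2(\R)$ concentrated on $\cR$.} By Assumption~\ref{ass:parametric_FP_solver_continuity}, the map $\zeta\mapsto\boldsymbol{m}^{\theta,\zeta}$ is continuous, hence Borel, from $Z$ into $(\cR,\tau^{(2)}\otimes\tau^{(2)})$. Therefore $\boldsymbol{m}^{\theta,\cZ^\varphi}$ is an $\cR$-valued random variable. Its law $\rho^{\varphi,\theta}$ satisfies $\rho^{\varphi,\theta}(\cR)=1$, and $\cR\subseteq\cR_0$ is a measurable (indeed closed) subset. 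We regard $\rho^{\varphi,\theta}$ as an element of $\cP(V_2\times\cP_2(\R))$ by extension by zero.

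\textbf{Step 2: verify \eqref{eq:consistent_mtg_property}.} Fix $u\in\dC^{1,2}_b([0,T]\times\R)$ and $\phi\in\dC_b(V_2\times\cP_2(\R))$, and recall the map $\cG^u$ of \eqref{eq:LP:map_G_u}. The left-hand side of \eqref{eq:consistent_mtg_property} equals
\[
\int \phi(\boldsymbol{m})\,\cG^u(\boldsymbol{m})\,\rho^{\varphi,\theta}(d\boldsymbol{m})=\E\big[\phi(\boldsymbol{m}^{\theta,\cZ^\varphi})\,\cG^u(\boldsymbol{m}^{\theta,\cZ^\varphi})\big].
\]
By Lemma~\ref{lemma:FP_induces_martingale_constraint}, $\cG^u(\boldsymbol{m}^{\theta,\zeta})=0$ for every $\zeta\in Z$, so the integrand vanishes identically and the expectation is zero.

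\textbf{Integrability.} The one point needing care is that the integrand be measurable and integrable, so that the change of variables is legitimate. Measurability follows from the continuity of $\cG^u$ on $\cR_0$ (Lemma~\ref{LP:lemma:continuity_G_u}) composed with the Borel map of Step~1. Integrability follows from boundedness: $\cG^u$ is bounded on the compact set $\cR_0$, thanks to the uniform moment bound \eqref{uniform_bound} together with $u\in\dC^{1,2}_b$ and $b$ bounded. In any case the integrand is identically zero.

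This is therefore essentially a bookkeeping argument. The only potential obstacle is the measurability in Step~1, and that is exactly what Assumption~\ref{ass:parametric_FP_solver_continuity} provides. The inclusion $\cP\cM\subseteq\cM$ follows immediately.
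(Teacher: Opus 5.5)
Your proof is correct and follows the paper's argument. Both use Lemma~\ref{lemma:FP_induces_martingale_constraint} to get $\cG^u(\boldsymbol{m}^{\theta,\zeta})=0$ for every $\zeta$, then multiply by the test function and integrate against $\Xi^\varphi$ (equivalently, against $\rho^{\varphi,\theta}$ by change of variables). One small remark on your bookkeeping: boundedness of $\cG^u$ already follows from $u\in\dC^{1,2}_b$ and the boundedness of $b$, so the moment bound \eqref{uniform_bound} is not needed.
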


\begin{proof}
We just have to check the consistent martingale property.
Let $\Psi\in \dC_b(\cR)$ and $u\in \dC_b^{1,2}([0,T]\times\R)$.
For every $\zeta\in Z$, Lemma \ref{lemma:FP_induces_martingale_constraint} gives
\[
	\int_{\R}u(T,x)\overline{m}^{\theta,\zeta}(dx)
	-\int_{\R}u(0,x)m_0^*(x)dx
	-\int_0^T\int_{\R\times A}(\partial_t+\cL)u(t,x,a)m_t^{\theta,\zeta}(dx,da)dt
	=
	0.
\]
Multiplying this identity by $\Psi(\boldsymbol{m}^{\theta,\zeta})$ and integrating with respect to $\Xi^\varphi(d\zeta)$ gives the consistent martingale property in Definition \ref{LP:def:consistent_mtg_property}.
Thus $\rho^{\varphi,\theta}\in\cM$.
\end{proof}

\begin{remark}
The parametrized class $\cP\cM$ is generally a strict subset of $\cM$.
Thus the parametrized problem below searches for the best LP-CCE within the family generated by the neural-network recommendations and the parametrized correlation device.
The parametrization sacrifices convexity and full generality in exchange for numerical tractability.
\end{remark}

\subsection{Parametrized primal-dual gradient scheme}\label{subsection:parametrized_objective_primal_dual_gradient_scheme}

For $(\varphi,\theta)\in\Phi\times\Theta$, define the parametrized representative player's cost, the value of the best deviation against the parametrized correlated flow $\rho^{\varphi,\theta}$ and the parametrized moderator cost by
\[
	J(\varphi,\theta):=\Gamma[\rho^{\varphi,\theta}], \qquad V^{dev}(\varphi,\theta):=\inf_{\boldsymbol{\kappa}\in\cR}\Gamma^{dev}[\rho^{\varphi,\theta}](\boldsymbol{\kappa}),   \qquad J^0(\varphi,\theta):=\Gamma^0(\rho^{\varphi,\theta}).
\]
The external regret associated with $(\varphi,\theta)$ is then
\[
	ER(\varphi,\theta):=\cE\cR(\rho^{\varphi,\theta})=J(\varphi,\theta)-V^{dev}(\varphi,\theta).
\]
The parametrized approximation of the moderator's problem is therefore
\[
	\min_{(\varphi,\theta)\in\Phi\times\Theta}J^0(\varphi,\theta)
	\quad\text{under the constraint}\quad
	ER(\varphi,\theta)\leq 0.
\]
Let $\psi:\cP(\cR)\to\R$ be a Bregman potential, as introduced in Definition \ref{algo:def:Bregman_divergence}.
For $(\varphi,\theta),(\overline{\varphi},\overline{\theta})\in\Phi\times\Theta$, we write
\[
	D_\psi((\varphi,\theta),(\overline{\varphi},\overline{\theta})) := D_\psi(\rho^{\varphi,\theta},\rho^{\overline{\varphi},\overline{\theta}}).
\]

The following Algorithm \ref{algorithm:Primal_Dual_Gradient_Descent} is the parametrized counterpart of Algorithm \ref{algorithm: Primal Dual for Optimal LP-CCE}: the minimization over $\rho\in\cM$ is replaced by a minimization over the finite-dimensional family $\rho^{\varphi,\theta}\in\cP\cM$.

\begin{algorithm}
\caption{Parametrized Primal-Dual Scheme}
\label{algorithm:Primal_Dual_Gradient_Descent}

\textbf{Initialize:} number of iterations $N\in\N$, $\lambda^{(0)}>0$, $(\varphi^{(0)},\theta^{(0)})\in\Phi\times\Theta$, and a convex linearly differentiable function $\psi:\cP(\cR)\to\R$.

\For{$n=0$ to $N-1$}
{
Calculate:
\begin{align*}
    \label{algo:parametrized primal part}
    (\varphi^{(n+1)},\theta^{(n+1)}) &\in \mathrm{argmin}_{(\varphi,\theta)\in\Phi\times\Theta}\bigg(J^0(\varphi,\theta)+\lambda^{(n)}ER(\varphi,\theta)+\frac{1}{2}D_\psi((\varphi,\theta),(\varphi^{(n)},\theta^{(n)}))\bigg), \tag{Parametrized Primal} \\
    \label{algo:parametrized dual part}
    \lambda^{(n+1)} &= \mathrm{argmin}_{\lambda\in\R_+}\bigg(-\lambda ER(\varphi^{(n+1)},\theta^{(n+1)})+\frac{\sqrt{N}}{2}|\lambda-\lambda^{(n)}|^2\bigg). \tag{Dual}
\end{align*}
}
\textbf{Return:} $(\varphi^{(n)},\theta^{(n)},\lambda^{(n)})_{0\leq n\leq N}$.
\end{algorithm}

\begin{remark}
The scheme in Algorithm \ref{algorithm:Primal_Dual_Gradient_Descent} is formulated using a neural-network parametrization of the occupation measure. In Section \ref{section: examples}, the corresponding argmin step is implemented by gradient-based optimization (Adam optimizer). This implementation is used only as a practical realization of the parametrized scheme. A rigorous convergence analysis of the gradient-based implementation of the neural parametrized argmin scheme, including differentiability with respect to the network parameters, is left for future work.
\end{remark}

\section{Numerical Results}\label{section: examples}
In this section, we implement the Primal-Dual Gradient Scheme (Algorithm~\ref{algorithm:Primal_Dual_Gradient_Descent}) on two different mean-field games.
Our examples are based on flocking systems \cite{carmona2018probabilistic} and emission abatement games \cite{campi2025LQ}.

The neural-network architecture is specified in the tables below.
When SiLU networks are used, the activation function is $\phi(y):=\frac{y}{1+e^{-y}}$.
Whenever the network output has to be projected onto the compact action space $A=[a_{\min},a_{\max}]$, we use the smooth projection $\phi_A(x):=c+r\tanh\left(\frac{x}{r}\right)$, $r:=\frac{a_{\max}-a_{\min}}{2}$, $c:=\frac{a_{\max}+a_{\min}}{2}$.
When randomized policies over a discretized action grid are used, the network output is instead passed through a softmax layer, as described in Section \ref{Section: Implementation and Examples}\footnote{In Algorithm \ref{algorithm:Primal_Dual_Gradient_Descent}, differentiability with respect to the parameters is required. For this reason, we use smooth activation functions and smooth projections onto the action space.}.
We take $Z=\R$ as the set of correlation signals and use the Gaussian parametrized family
\[
    \Xi^\varphi=\cN(\varphi_1,\varphi_2^2),\qquad \varphi=(\varphi_1,\varphi_2)\in\R\times\R_+.
\]
Furthermore, in the following examples, we use the Bregman divergence associated with the Bregman potential
\[
    \psi\big(\rho^{(\theta,\varphi)}\big):=\left(\int_Z\left(\int_0^T\int_{\R}x\,m_t^{\theta,x}(dx|\zeta)dt\right)\Xi^\varphi(d\zeta)\right)^2,\qquad \rho^{(\theta,\varphi)}\in\cP\cM.
\]

Computing the best-deviation value 
\(V^{\mathrm{dev}}(\varphi,\theta)\) in practice requires
solving the deviating player's optimization problem over the set \(\mathcal R\)
of admissible deviations. In our examples, this problem can be reduced to the
solution of the Hamilton--Jacobi--Bellman equation associated with the deviating
player's control problem. The resulting value is then used to evaluate the
external-regret term appearing in the primal--dual scheme.

The neural-network parameters, which are the primal variables, are updated using the Adam optimizer, while the Lagrange multiplier, which is the dual variable, is updated by projected gradient ascent.

All experiments were conducted on a single NVIDIA A100 Tensor Core GPU (16GB VRAM) via Google Colab.
The models were implemented using the JAX machine learning library.\footnote{The source code used for these experiments can be found at: \hyperlink{https://github.com/JannTzou/Learning-Algorithm-for-Mean-Field-CCE.git}{https://github.com/JannTzou/Learning-Algorithm-for-Mean-Field-CCE.git}.}

Before moving to the examples, we notice that Assumptions \ref{assumptions: extra ass on algorithmic part} and \ref{ass: uniqueness maximizer} are satisfied in the examples below, provided that the numerical values of the coefficients are suitably chosen (see Appendix \ref{app:strictCCE}).

\subsection{Example 1: Simple flocking model}

We begin with an explicitly solvable flocking system, with interactions of scalar type, studied in \cite[Section 2.4.1]{carmona2018probabilistic}.
We cast the example in the framework of Section \ref{section: Prob formulation}.
As the interaction depends on the measure flow only through the average state, for any flow $\mu=(\mu_t)_{t\in[0,T]}$ with values in $\cP(\R\times A)$, we write
\begin{equation}\label{eq:examples:interaction_term}
    \bar{\mu}^x_t:=\int_{\R\times A}x\,\mu_t(dx,da).
\end{equation}
Given a correlated flow $(\lambda,\mu)$, the representative player's state evolves according to
\[
    dX_t=\lambda_tdt+dW_t,\quad X_0=\xi \sim \cN(0.1,0.11^{2}),
\]
and the representative player seeks to minimize the cost functional
\[
    J(\lambda,\mu):=\E\bigg[\int_0^T\left(\frac{1}{2}\lambda_t^2+\frac{1}{2}(X_t-\bar{\mu}^x_t)^2\right)dt\bigg].
\]
In this example, the moderator's objective is to minimize the same payoff as the representative player. Therefore, we set
\[
    J^0(\lambda,\mu):=J(\lambda,\mu).
\]
When $A=\R$, the MFG admits an explicit unique solution by \cite[Section 2.4.1]{carmona2018probabilistic}.
Since the model is a MFG of linear-quadratic type, it is enough to formulate the consistency condition in terms of the flow of means.
Therefore, we denote the MFG solution by $(\alpha^\star,\bar{\mu}^{\star,x})$, where $\alpha^\star$ is the optimal control and $\bar{\mu}^{\star,x}=(\bar{\mu}^{\star,x}_t)_{t\in[0,T]}$ is the flow of means, satisfying $\bar{\mu}^{\star,x}_t=\E[X^\star_t]$, with $X^\star$ being the state process controlled by $\alpha^\star$.
The MFG solution is given by
\[
    \alpha^\star_t = K_t(\bar{\mu}^{\star,x}_t-X^\star_t),\qquad \bar{\mu}^{\star,x}_t \equiv  \E[\xi], \quad t\in[0,T],
\]
where
\[
    K_t=\tanh(T-t).
\]
Moreover, by \cite[Section 6.7.1]{carmona2018probabilistic}, the MFC solution is unique and explicit as well, and it coincides with the MFG solution.
We stress that the explicit solution above corresponds to the unconstrained problem $A=\R$, whereas our model is formulated with the compact action space $A=[a_{\min},a_{\max}]=[-3,3]$, consistently with Assumption \ref{standing_assumptions}.
Thus, the explicit solution is not the exact solution of the compact-action MFG considered here, but it provides a useful benchmark for the numerical results (cf. Figure \ref{fig: flocking system}).

In Figure \ref{fig: flocking system}, we compare the average cost generated by Algorithm \ref{algorithm:Primal_Dual_Gradient_Descent} with the explicit unconstrained benchmark and with the discretized compact-action solution. The figure shows that the algorithm rapidly converges to the MFG solution.
In particular, the algorithm selects the parameters $\varphi_1 = 0$ and $\varphi_2 \simeq 0$.
Thus, the optimal mean-field CCE does not feature any additional randomization.
This is consistent with the findings of \cite[Section 4]{campi2025LQ}: when the MFG and MFC solutions coincide, the best performing mean-field CCE is given by the MFG solution, and no extra randomization is needed.

\begin{table}[htpb]
    \centering
    \caption{Mathematical parameters for the discretized flocking system.}
    \label{table: math params flocking}
    \begin{tabular}{@{}lccccccccc@{}}
        \toprule
        \textbf{Parameter} & $T$ & $dt$ & $\sigma$ & $[x_{\min},x_{\max}]$ & $dx$ & $[a_{\min},a_{\max}]$ & $da$\\
        \midrule
        \textbf{Value} & 5.0 & 0.02 & 1 & [-3,3] & 0.1 & [-3,3] & 0.1\\
        \bottomrule
    \end{tabular}
\end{table}

\begin{table}[htpb]
    \centering
    \caption{Hyperparameters and model constants for the flocking system.}
    \label{table: nn params flocking}
    \begin{tabular}{@{}lccccccccc@{}}
        \toprule
        \textbf{HP} & Hidden Layers & Act. & LR (Adam)  & Batch & Epochs \\
        \midrule
        \textbf{Value} & [64, 64] & SiLU & $10^{-3}$ &  256 & 10,000 \\
        \bottomrule
    \end{tabular}
\end{table}

\begin{figure}[ht]
    \centering
    \includegraphics[width=1\linewidth]{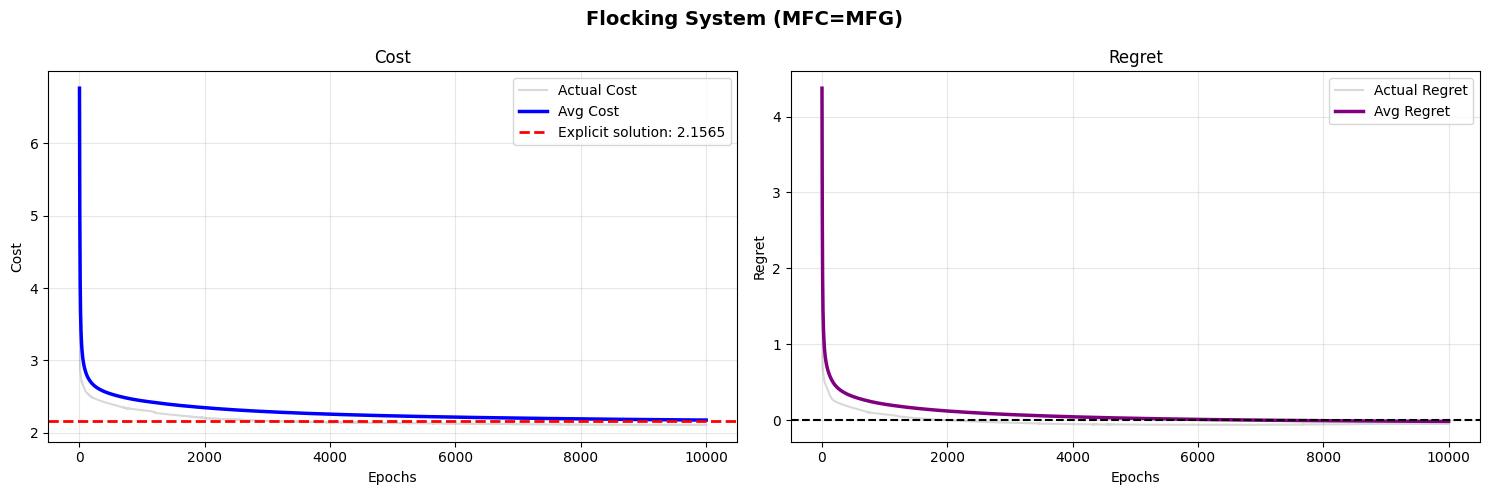}
    \caption{Approximation of the solution to the mean-field flocking system. The average cost produced by Algorithm \ref{algorithm:Primal_Dual_Gradient_Descent} converges to the true value of the MFG. Moreover, the average regret remains slightly below zero.}
    \label{fig: flocking system}
\end{figure}

\subsection{Example 2: Emission Abatement Game}

In this subsection we apply our learning algorithm to the emission abatement game introduced in \cite[Section 5]{campi2025LQ}.
We refer to \cite{campi2025LQ} for a discussion of the economic interpretation of the model.
In order to remain consistent with the convention used in \cite{campi2025LQ}, we switch here from the cost-minimization formulation adopted in the previous sections to the equivalent payoff-maximization formulation.
We consider two different objectives for the moderator: first, maximizing the representative player's payoff, and then maximizing the terminal average state $\E[\bar{\mu}^x_T]$, which, in line with \cite[Section 5]{campi2025LQ}, we interpret as terminal cumulative abatement.

As in the previous example, the interaction depends on the measure flow only through the average state.
Therefore, for any flow $\mu=(\mu_t)_{t\in[0,T]}$ with values in $\cP(\R\times A)$, we consider its flow of state averages $(\bar{\mu}^{x}_t)_{t \in [0,T]}$ defined by \eqref{eq:examples:interaction_term}.
Given a correlated flow $(\lambda,\mu)$, the representative agent's state evolves according to
\begin{equation}\label{eq: level of Emission Abatement}
    dX_t=\lambda_tdt+dW_t,\quad X_0=\xi.
\end{equation}
The representative agent seeks to maximize the payoff
\begin{equation}\label{eq: emission abatement LQ}
    J(\lambda,\mu):=\E\bigg[\int_0^T\bigg(a\bar{\mu}^x_t-\frac{b}{2}(\bar{\mu}^x_t)^2-\frac{1}{2}\lambda_t^2-\frac{1}{2}(X_t-\bar{\mu}^x_t)^2\bigg)dt\bigg].
\end{equation}
Our first goal is to apply the algorithm to learn a CCE that outperforms the standard Nash equilibrium in terms of payoff. Therefore, we set
\[
    J^0(\lambda,\mu):=J(\lambda,\mu)
\]
under the payoff convention.

\smallskip
In \cite{campi2025LQ}, the authors introduce a tractable class of linear CCEs.
More precisely, given a random variable $Z$ with finite first and second moments, they consider a recommendation-flow of state averages pair $(\lambda,\bar{\mu}^x)$ given by 
\[
    \bar{\mu}^x_t=\bar{\mu}^x_0+tZ,\quad \lambda_t=K_t(\bar{\mu}^x_t-X_t)+Z,\quad t\in[0,T],
\]
where $\bar{\mu}^x_0=\E[\xi]$ and $K_t=\tanh(T-t)$.
Writing $z_1=\E[Z]$ and $\sigma_z^2=\mathrm{Var}(Z)$, \cite[Propositions 5.3 and 5.4]{campi2025LQ} characterize the set of admissible moments for which the corresponding linear correlated flow is a CCE and improves on the Nash equilibrium payoff.
Moreover, under the condition $T>\sqrt{3}$, \cite[Proposition 5.7]{campi2025LQ} identifies the payoff-maximizing element in this linear class.
For the parameters in Table \ref{table: math params Emission Abatement}, this gives $z_1\approx 0.4954$, $\sigma_z^2\approx 0.00669$.
We refer to the corresponding equilibrium as the \textit{linear optimal CCE}.
We stress that the linear optimal CCE of \cite{campi2025LQ} is obtained in the unconstrained linear-quadratic setting, whereas our model is formulated with compact action space $A=[-3,3]$.
Thus, the linear optimal CCE is not an admissible competitor for the compact-action problem considered here, but it provides a useful benchmark for the numerical comparisons below.

In contrast with \cite{campi2025LQ}, we do not restrict the search to linear CCEs.
Instead, the algorithm jointly learns the recommendation policy and the parameters of the distribution of the correlation signal.

\smallskip
Evaluating the external regret $\cE\cR(\rho^{\varphi,\theta})$ requires computing the best-deviation value.
We do this by solving the HJB equation associated with the deviating player's control problem.
The Monte Carlo approximation is then used only to integrate over the correlation signal.
For each realization $\zeta$ of the correlation signal, we denote by
\[
    \bar m_t^{\theta,\zeta}:=\int_{\R\times A}x\,m_t^{\theta,\zeta}(dx,da)
\]
the corresponding average state.
Expectations with respect to the correlation signal are approximated by
\[
    \E_{\zeta\sim\Xi^\varphi}\left[\bar m_t^{\theta,\zeta}\right]\approx \frac{1}{B}\sum_{i=1}^B\bar m_t^{\theta,\zeta_i},\quad \zeta_i\sim\Xi^\varphi.
\]
Finally, Tables~\ref{table: math params Emission Abatement} and~\ref{table: nn params Emission Abatement} summarize the mathematical data of the problem together with the algorithmic hyperparameters.

\begin{table}[htpb]
    \centering
    \caption{Mathematical parameters for the Emission Abatement Game.}
    \label{table: math params Emission Abatement}
    \begin{tabular}{@{}lcccccccc@{}}
        \toprule
        \textbf{Parameter} & $T$ & $dt$ & $X_0$ & $[x_{\min},x_{\max}]$ & $dx$ & $[a_{\min},a_{\max}]$ & $a$ & $b$\\
        \midrule
        \textbf{Value} & 5.0 & 0.01 & $\cN(0.1,0.11^2)$ & [-4,4] & 0.1 & [-3,3] & 2.0 & 1.0\\
        \bottomrule
    \end{tabular}
\end{table}

\begin{table}[htpb]
    \centering
    \caption{Hyperparameters and model constants for the Emission Abatement Game.}
    \label{table: nn params Emission Abatement}
    \begin{tabular}{@{}lccccccc@{}}
        \toprule
        \textbf{HP} & Hidden Layers & Act. & LR (Adam) & MC Samples ($B$) & Epochs \\
        \midrule
        \textbf{Value} & [64, 64] & SiLU & $10^{-4}$ & 30 & 10,000 \\
        \bottomrule
    \end{tabular}
\end{table}

\begin{figure}[ht]
    \centering
    \includegraphics[width=1.0\linewidth]{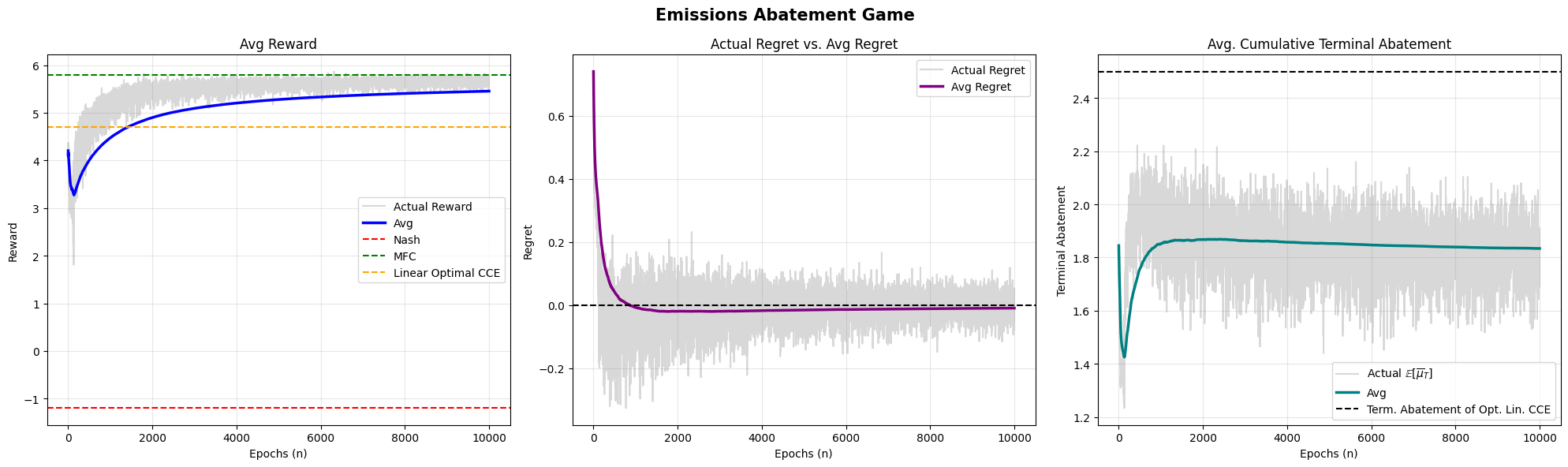}
    \caption{Optimal CCE, external regret and average terminal emission abatement for the mean-field emission abatement game.}
    \label{fig: optimal CCE emission abatement}
\end{figure}

As illustrated in Figure \ref{fig: optimal CCE emission abatement}, the average payoff generated by Algorithm \ref{algorithm:Primal_Dual_Gradient_Descent} converges to a higher level than the unconstrained linear benchmark from \cite{campi2025LQ}.
At the same time, the average external regret remains nonpositive, indicating that the learned recommendation is approximately a CCE for the system.
In the third plot, we report the terminal cumulative abatement $\E[\bar{\mu}^x_T]$.
For the learned payoff-oriented CCE, this quantity is approximately $1.8$, which is lower than the corresponding value for the unconstrained optimal linear CCE, approximately $2.5$; see Figure 5(b) in \cite{campi2025LQ}.
Thus, in this experiment, the CCE with higher payoff produces lower terminal cumulative abatement. 

\smallskip
Next, we consider the opposite case, where the coordinator aims to design an equilibrium that maximizes terminal cumulative abatement.
Equivalently, this corresponds to choosing the moderator objective
\[
    J^0(\lambda,\mu):=\E[\bar{\mu}^x_T]
\]
and maximizing it over all mean-field CCEs.
Using the same parametrization as in Tables \ref{table: math params Emission Abatement} and \ref{table: nn params Emission Abatement}, we obtain the results displayed in Figure \ref{fig: emission abatement social welfare}.

\begin{figure}[ht]
    \centering
    \includegraphics[width=1.0\linewidth]{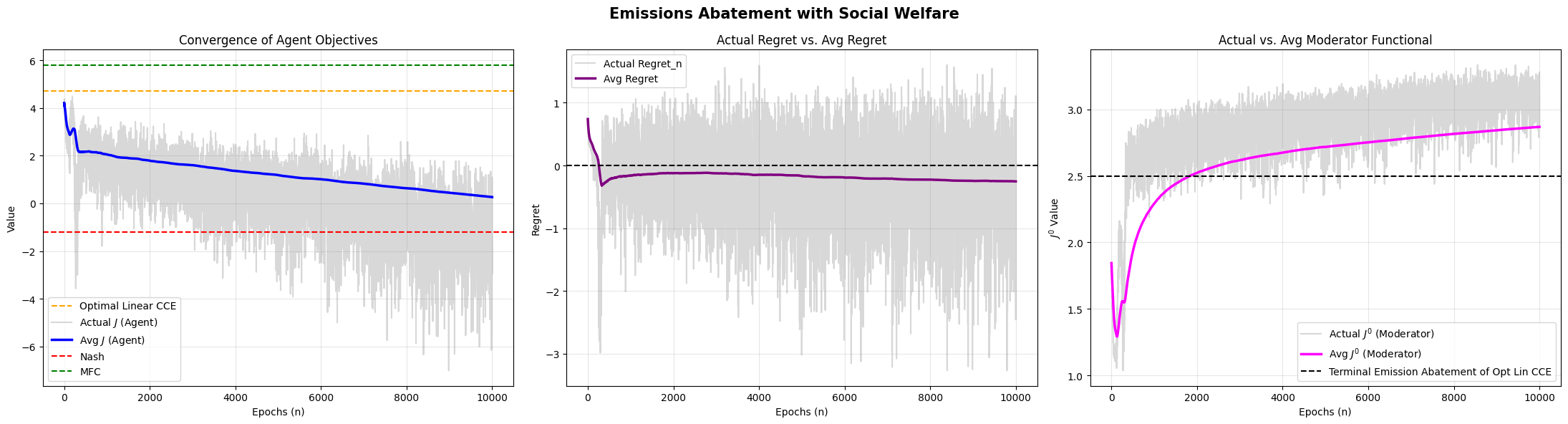}
    \caption{Approximation of a terminal-abatement-maximizing CCE for the emission abatement game.}
    \label{fig: emission abatement social welfare}
\end{figure}

The algorithm approximates an equilibrium with lower expected payoff than the payoff-oriented equilibrium reported in Figure \ref{fig: optimal CCE emission abatement}, while achieving a higher terminal cumulative abatement.
On the other hand, consistently with the findings of \cite{campi2025LQ}, the unconstrained linear benchmark yields a lower terminal cumulative abatement but higher expected payoff than the payoff-oriented equilibrium found by our learning algorithm.

\appendix
\section{Existence Part: Technical Results}\label{sec:appendix:technical_results_existence}

\begin{theorem}[Probabilistic Representation of LP-CCEs]\label{theorem:consistent_mtg_representation}
Let $\rho\in \cM$. There exist a measurable function $q:[0,T] \times \R \times V_{2}\times \cP_{2}(\R) \to \cP(A)$, a complete filtered probability space $(\Omega,\cF,\F,\P)$, with $(\Omega,\cF,\P)$ Polish, equipped with a Brownian motion $W$, an $\cF_0$-measurable random variable $\xi$ with law $m_0^*$, and a pair $(\mu,\overline{\mu})$, independent of $(W,\xi)$, consisting of a $V_2$-valued random variable $\mu$ and $\cP_2(\R)$-valued random variable $\overline{\mu}$ such that $\rho = \P \circ (\mu,\overline{\mu})^{-1}$, and an $\F$-adapted process $X$ such that
\begin{equation}\label{eq:theorem:consistent_mtg_representation:sde}
    dX_{t}=\int_{A}b(t,X_{t},\mu_{t},a)q_{t}(X_{t},\mu,\overline{\mu})(da)dt+\sigma dW_{t},\quad X_{0}=\xi.
\end{equation}
Moreover, for any $B\in \cB_{\R},\; C\in \cB_{A}$ we have
\begin{equation}\label{eq:consistency:appendix}
\begin{aligned}
    \mu_{t}(B\times C) & =\E \big[ \boldsymbol{1}_{B}(X_{t})q_{t}(X_{t},\mu,\overline{\mu})(C) |\mu,\overline{\mu}\big],\; dt\text{-a.e.},\; \P\text{-a.s.}, \\
    \overline{\mu}(B) & = \E \big[ \boldsymbol{1}_{B}(X_{T})\big| \mu,\overline{\mu} \big],\; \P\text{-a.s.}
\end{aligned}
\end{equation}
\end{theorem}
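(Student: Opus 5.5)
The plan is to reduce the statement to the fixed-flow representation result for the martingale constraint, \cite[Theorem C.6]{dumitrescu2021EJP} (already used in the proof of Proposition \ref{prop:any_strong_CCE_induces_LP_CCE}), applied pointwise in $(m,\overline{m})$, and then glue the resulting objects measurably over $\rho$.

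\textbf{Step 1 (pointwise representation).} By Lemma \ref{LP:lemma:restriction_to_R_0}, $\rho$ is concentrated on the set $S:=\{(m,\overline{m})\in\cR_0:\,(m,\overline{m})\in\cR[m]\}$. This set is closed, because it is the intersection over a countable determining class $\mathcal U$ of the zero sets of the continuous maps $\cG^u$ (Lemma \ref{LP:lemma:continuity_G_u}). For $(m,\overline{m})\in S$, the pair satisfies the linear martingale constraint with the fixed drift $(t,x,a)\mapsto b(t,x,m_t,a)$, which is bounded and Lipschitz in $x$.

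Disintegrate $m_t(dx,da)=m^x_t(dx)\,q_t(x,m,\overline{m})(da)$ and set $q_t(x,m,\overline{m})=\delta_{a_0}$ off $S$. Measurability of the kernel $q$ in all variables jointly will be obtained from a measurable disintegration theorem on $[0,T]\times\R\times A$, applied to the measure $m_t(dx,da)dt$ viewed as a measurable function of $(m,\overline{m})$. Concretely, I would define $q$ as a Radon--Nikodym density of $\int_0^T\int\mathbf 1_{\cdot}\,\mathbf 1_C(a)\,m_t(dx,da)dt$ with respect to $m^x_t(dx)dt$, taken over a countable generating algebra of $C$'s and chosen jointly measurable via a martingale-convergence (Doob) argument.

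Given this kernel, \cite[Theorem C.6]{dumitrescu2021EJP}, whose argument follows Kurtz--Stockbridge, shows the following. There is a weak solution $X^{m,\overline{m}}$ of the SDE with drift $\int_A b(t,x,m_t,a)q_t(x,m,\overline{m})(da)$, with time marginals $m^x_t$ for a.e. $t$ and terminal law $\overline{m}$. Since $\sigma>0$ is constant and the averaged drift is bounded and measurable, Veretennikov's theorem \cite{veretennikov1980strongsolutions} upgrades this to pathwise uniqueness and strong existence. Hence $X^{m,\overline{m}}=\Phi(m,\overline{m},\xi,W)$ for a measurable solution map $\Phi$.

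\textbf{Step 2 (gluing).} Take the product space carrying $(\xi,W)$ and an independent pair $(\mu,\overline{\mu})$ with law $\rho$. This space is Polish: it is a product of $\R$, $\dC([0,T])$ and $\cR_0$, the last being compact metrizable. Use the filtration generated by $(\xi,\mu,\overline{\mu})$ at time $0$ together with $W$, and set $X:=\Phi(\mu,\overline{\mu},\xi,W)$. The SDE \eqref{eq:theorem:consistent_mtg_representation:sde} then holds pathwise.

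Independence gives $\P((X_t,\cdot)\in\cdot\,|\,\mu=m,\overline{\mu}=\overline{m})=\text{Law}(X^{m,\overline{m}}_t)$. Combined with the disintegration from Step 1, this yields \eqref{eq:consistency:appendix} for a.e. $t$, and the terminal identity follows in the same way.

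\textbf{Main obstacle.} The delicate point is the joint measurability in $(m,\overline{m})$ of both the kernel $q$ and the solution map $\Phi$. For $\Phi$, the plan is to use the Yamada--Watanabe type construction. Alternatively, one can build $X^{m,\overline{m}}$ as the limit of Picard-free Euler or mollified-drift approximations: each approximation is measurable in the parameters, and pathwise uniqueness forces convergence in probability, so the limit inherits measurability. The other point requiring care is checking that the time marginals produced by \cite[Theorem C.6]{dumitrescu2021EJP} match $m^x_t$ and $\overline{m}$. This relies on uniqueness for the linear Fokker--Planck equation with a bounded drift and a nondegenerate constant diffusion.
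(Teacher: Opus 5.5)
Your proposal is correct and follows essentially the same route as the paper. Both arguments disintegrate the occupation measure to obtain a relaxed Markov kernel $q$, apply Theorem C.6 of \cite{dumitrescu2021EJP} for each fixed $(m,\overline{m})$ in the support, use strong well-posedness (and hence uniqueness in law) of the SDE with bounded averaged drift to transfer the marginals $m_t$ and $\overline{m}$ to the strong solution, and glue over $\rho$ using the independence of $(\mu,\overline{\mu})$ from $(\xi,W)$. The only differences are technical: the paper gets the jointly measurable kernel by disintegrating $\Q(dt,dx,da,dm,d\overline{m})=m_t(dx,da)dt\,\rho(dm,d\overline{m})$ with respect to its $(t,x,m,\overline{m})$-marginal, which suffices because the identity is needed only $\rho$-a.e., instead of your parametrized Radon--Nikodym/Doob construction; and it defers the measurability of $(m,\overline{m})\mapsto P^{(m,\overline{m})}$ to the methods of Lacker's appendices rather than building an explicit solution map.
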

\begin{proof}
The proof is divided in four steps.

\textbf{Step 1}.
Denote by $m^x_t(dx)$ the marginal of $m_t(dx,da)$ over $\R$.
The first step is to show that there exists a measurable map $q:[0,T] \times \R \times V_2 \times \cP_2(\R) \to \cP(A)$ such that, for $\rho$-a.e. $(m,\overline{m})$, it holds
\begin{equation}\label{eq:disintegration:measure_m_t}
        m_t(dx,da) dt =  q_t(x, m , \overline{m})(da) m^x_t(dx)dt.
\end{equation}
To this extent, define a positive measure $\Q$ on $[0,T] \times \R \times A \times V_2 \times \cP_2(\R)$ by setting
\[
\Q(dt,dx,da,dm,d\overline{m}) := m_t(dx,da)dt\rho(dm,d\overline{m}).
\]
Denote the marginal of $\Q$ over $[0,T] \times \R \times V_2 \times \cP_2(\R)$ by $\Q^{t,x,m,\overline{m}}$.
Since $(A,\mathcal{B}_A)$ is a Polish space, by the disintegration theorem (see, e.g. \cite[Theorem 14.D.10]{RandomMeasuresBook}) there exists a measurable function $q:[0,T] \times \R \times V_2 \times \cP_2(\R) \to \cP(A)$ such that
\begin{equation}\label{eq:disintegration_Q1}
    \Q(dt,dx,da,dm,d\overline{m}) = m_t(dx,da) dt \rho(dm,d\overline{m}) = q_t(x,m,\overline{m})(da)\Q^{t,x,m,\overline{m}}(dt,dx,dm,d\overline{m}).
\end{equation}
Equation \eqref{eq:disintegration_Q1} implies that the stochastic kernels $q_t(x,m,\overline{m})m^{x}_t(dx)dt$ and $m_t(dx,da)dt$ are both disintegration kernels of $\Q$ with respect to $\rho(dm,d\overline{m})$.
Since the space $([0,T] \times \R \times A, \mathcal{B}_{[0,T] \times \R \times A})$ is Polish, uniqueness of the disintegration kernel implies that the kernels $q_t(x,m,\overline{m})m^{x}_t(dx)dt$ and $m_t(dx,da)dt$ are equal $\rho$-a.e., i.e. \eqref{eq:disintegration:measure_m_t} holds for $\rho$-a.e. $(m,\overline{m})$ in $V_2 \times \cP_2(\R)$.

\smallskip
\noindent
\textbf{Step 2}.
Define the function $B(t,x,m,\overline{m})$ by setting
\begin{equation}\label{eq:appendix:aggregate_drift}
    B(t,x,m,\overline{m}) :=  \int_A b(t,x,m_t,a)q_t(x,m,\overline{m})(da)
\end{equation}
Notice that $B(t,x,m,\overline{m})$ is jointly measurable in $(t,x,m,\overline{m})$, since $q_{t}(x,m,\overline{m})(da)$ is jointly measurable in $(x,m,\overline{m})$ by the disintegration theorem.
Moreover, since $b$ is bounded by Assumption \ref{standing_assumptions}, so is $B$.
Let $(\Omega,\F,\mathbb{F},\P)$ be a filtered probability space satisfying the usual assumptions, supporting a $\mathbb{F}$-Brownian motion $W$, an $\cF_0$-measurable $\R$-valued random $\xi$ with law $m^*_{0}$ and a $\F_0$-measurable pair of $\cF_0$-measurable random variables $(\mu,\overline{\mu})$ with values in $V_2 \times \cP_2(\R)$ so that $\P \circ (\mu,\overline{\mu})^{-1} = \rho$.
Assume that $\xi$, $W$ and $(\mu,\overline{\mu})$ are mutually independent.
This can be realized on the canonical probability space $\R \times \dC([0,T],\R) \times V_2 \times \cP_2(\R)$, which is Polish.
Then, the following holds:
\begin{enumerate}
    \item \label{mimicking:lemma_strong_existence:punto_eq_deterministica}
    For every $(m,\overline{m}) \in V_2 \times \cP_2(\R)$, the equation
    \begin{equation} \label{mimicking:lemma_strong_existence:eq_deterministica}
        dX^{(m,\overline{m})}_{t}=B(t,X^{(m,\overline{m})}_{t},m,\overline{m})dt+\sigma dW_t,\quad X_0^{(m,\overline{m})}=\xi.
    \end{equation}
    admits a pathwise unique strong solution.
    Moreover, let 
    \begin{equation}\label{eq:appendix:def_P_m}
        P^{(m,\overline{m})}:=\P\circ (X^{(m,\overline{m})})^{-1}.
    \end{equation}
    Then, the map $V_2 \times \cP_2(\R) \ni (m,\overline{m}) \mapsto P^{(m,\overline{m})} \in \cP(\dC([0,T];\R))$ is measurable.
    \item \label{mimicking:lemma_strong_existence:esistenza_sol_forte}
    There exists a continuous $\mathbb{F}$-adapted process $X$ solution to
    \begin{equation}\label{mimicking:lemma_strong_existence:eq_aleatoria}
        dX_t=B(t,X_t,\mu,\overline{\mu})dt + \sigma dW_t, \quad X_0=\xi.
    \end{equation}
    $X$ can be taken adapted to the $\P$-augmentation of the filtration $\mathbb{F}^{\xi,\mu,\overline{\mu},W}$, where $\cF^{\xi,W,\mu,\overline{\mu}}_t = \sigma( \xi,\mu,\overline{\mu}, W_s: \, 0 \leq s \leq t )$.
    
    \item \label{mimicking:lemma_strong_existence:punto_legge_soluzione}
    The joint law of $X$ and $(\mu,\overline{\mu})$ is given by 
    \begin{equation}\label{eq:appendix:disintegration_X_mu}
        \P\circ(X,\mu,\overline{\mu})^{-1}=P^{(m,\overline{m})}(dx)\rho(dm,d\overline{m}).
    \end{equation}
\end{enumerate}
These properties can be proved by the same methods as in \cite[Appendix~A]{LackerCloseLoop} and \cite[Appendix~A]{lacker_leflem2022} (see also \cite[Lemma E.11]{campi2024coarse}).
Notice that, in particular, \eqref{eq:appendix:disintegration_X_mu} implies that
\begin{multline}\label{eq:appendix:disintegration_time_t}
    \E[ \Phi(X_t,\mu,\overline{\mu}) ] = \int_{V_2 \times \cP_2(\R)} \int_{\dC([0,T];\R)} \Phi(x_t,m,\overline{m})P^{(m,\overline{m})}(dx)\rho(dm,d\overline{m}) \\
    = \int_{V_2 \times \cP_2(\R)} \E[\Phi(X^{(m,\overline{m})}_t,m,\overline{m})]\rho(dm,d\overline{m}) 
\end{multline}
for any $t \in [0,T]$ and any $\Phi:\R \times V_2 \times \cP_2(\R) \to \R$ bounded and measurable.

\smallskip
\noindent
\textbf{Step 3}.
We introduce the set
\[
    \cA:=\left\{(m,\overline{m})\in\operatorname{supp}(\rho):m_t(dx,da)dt=q_t(x,m,\overline{m})(da)m_t^x(dx)dt\right\}.
\]
By Step 1, we have $\rho(\cA) = 1$.
Since $\cA\subseteq\operatorname{supp}(\rho)$, Lemma \ref{LP:lemma:restriction_to_R_0} implies that, for every $(m,\overline{m})\in\cA$, it holds $(m,\overline{m})\in\cR[m]$ and therefore
\[
    (q_t(x,m,\overline{m})(da)m_t^x(dx),\overline{m})\in\cR[m].
\]
Then, for any $(m,\overline{m}) \in \cA$, the unique strong solution $X^{(m,\overline{m})}$ of \eqref{mimicking:lemma_strong_existence:eq_deterministica} is such that 
\begin{equation}\label{eq:consistency_fixed_flow}
\begin{aligned}
    m_{t}(C\times D) &=\E^{\P}\big[\boldsymbol{1}_{C}(X_{t}^{(m,\overline{m})})q_{t}(X_{t}^{(m,\overline{m})},m,\overline{m})(D)\big],\quad C\in \cB_{\R},\; D\in \cB_{A}, \quad dt\text{-a.e.,}\\
    \overline{m}(C)&= \E^{\P}[\boldsymbol{1}_{C}(X_{T}^{(m,\overline{m})})],\; C\in \cB_{\R}.
\end{aligned}
\end{equation}
To see our claim, we rely on \cite[Theorem C.6]{dumitrescu2021EJP}.
According to such result, for any $(m,\overline{m}) \in \cA$, since the disintegration of measure $m_t(dx,da)dt = q_t(x,m,\overline{m})(da)m^x_t(dx)dt$ holds, there exists a filtered probability space $(\tilde{\Omega},\tilde{\cF},\tilde{\bbF},\tilde{\P})$, possibly dependent on the pair $(m,\overline{m})$, equipped with an $\tilde{\bbF}$-Brownian motion $\tilde{W}$ and an $\tilde{\bbF}$-adapted process $\tilde{X}^{(m,\overline{m})}$ such that 
\begin{equation}\label{eq:SDE with aggregate drift}
    d\tilde{X}^{(m,\overline{m})}_{t}=\int_{A}b(t,\tilde{X}^{(m,\overline{m})}_{t},m_{t},a)q_{t}(\tilde{X}^{(m,\overline{m})}_{t},m,\overline{m})(da)dt+\sigma d\tilde{W}_{t}, \quad \tilde{\P}\circ (\tilde{X}_{0}^{(m,\overline{m})})^{-1}=m_0^*
\end{equation}
and such that 
\begin{equation}\label{eq:consistency_fixed_flow:space_Omega_tilde}
\begin{aligned}
    m_{t}(C\times D) &=\E^{\tilde{\P}}\big[\boldsymbol{1}_{C}(\tilde{X}_{t}^{(m,\overline{m})})q_{t}(\tilde{X}_{t}^{(m,\overline{m})},m,\overline{m})(D)\big],\quad C\in \cB_{\R},\; D\in \cB_{A}, \quad dt\text{-a.e.,}\\
    \overline{m}(C)&= \E^{\tilde{\P}}[\boldsymbol{1}_{C}(\tilde{X}_{T}^{(m,\overline{m})})],\; C\in \cB_{\R}.
\end{aligned}
\end{equation}
By definition of $B$ (cf. \eqref{eq:appendix:aggregate_drift}), equations \eqref{eq:SDE with aggregate drift} and \eqref{mimicking:lemma_strong_existence:eq_deterministica} coincide.
Thus, by point \ref{mimicking:lemma_strong_existence:punto_eq_deterministica} of Step 2, \eqref{eq:SDE with aggregate drift} admits a pathwise unique strong solution for any $(m,\overline{m}) \in V_2 \times \cP_2(\R)$.
Since strong uniqueness implies uniqueness in law, we deduce that $\tilde{\P} \circ (\tilde{X}^{(m,\overline{m})})^{-1} = \P \circ (X^{(m,\overline{m})})^{-1} = P^{(m,\overline{m})}$ for any $(m,\overline{m}) \in V_2 \times \cP_2(\R)$.
Therefore, for any $(m,\overline{m}) \in \cA$, we have
\begin{equation}\label{eq:appendix:equality_laws}
    m_t(C \times D) = \E^{\tilde{\P}}\big[\boldsymbol{1}_{C}(\tilde{X}_{t}^{(m,\overline{m})})q_{t}(\tilde{X}_{t}^{(m,\overline{m})},m,\overline{m})(D)\big] = \E^{\P}\big[\boldsymbol{1}_{C}(X_{t}^{(m,\overline{m})})q_{t}(X_{t}^{(m,\overline{m})},m,\overline{m})(D)\big]
\end{equation}
for $dt$-a.e. $t \in [0,T]$, $C \in \cB_{\R}$, $D \in \cB_{A}$.

\smallskip
\noindent
\textbf{Step 4}. We are now ready to prove \eqref{eq:consistency:appendix}.
Let $g:[0,T]\to \R$ and $\psi:V_{2}\times \cP_{2}(\R)\to \R$ be measurable and bounded functions.
Then, we have
\begin{equation}\label{eq:appendix:equalities_integrals}
\begin{aligned}
    \int_{0}^{T}\E \big[ \boldsymbol{1}_{B}(X_{t})&q_{t}(X_{t},\mu,\overline{\mu})(C)\psi(\mu,\overline{\mu})\big]g(t)dt \\
    &=\int_{V_{2}\times \cP_{2}(\R)}\int_{0}^{T}\E \big[ \boldsymbol{1}_{B}(X^{(m,\overline{m})}_{t})q_{t}(X^{(m,\overline{m})}_{t},m,\overline{m})(C)\big]g(t)dt\psi(m,\overline{m})\rho(dm,d\overline{m}) \\
    &=\int_{V_{2}\times \cP_{2}(\R)}\int_{0}^{T}m_{t}(B\times C)g(t)dt\psi(m,\overline{m})\rho(dm,d\overline{m}),
\end{aligned}
\end{equation}
where in the first equality we relied on \eqref{eq:appendix:disintegration_time_t} with $f(x,m,\overline{m}) = \boldsymbol{1}_{B}(x)q_t(x,m,\overline{m})(C)\psi(m,\overline{m})$ and in the second equality we relied on \eqref{eq:appendix:equality_laws}.
Since \eqref{eq:appendix:equalities_integrals} holds for any $g$ and $\psi$, we deduce that
\begin{equation*}
    \mu_t(B \times C) = \E[ \boldsymbol{1}_{B}(X_t) q_t(X_t,\mu,\overline{\mu})(C) \vert \mu,\overline{\mu} ] \quad \P\text{-a.s., } dt\text{-a.e.}
\end{equation*}
for any $B \in \cB_{\R}$, $C \in \cB_{A}$.
By the same reasoning, one can show that
\begin{equation*}
    \overline{\mu}(B)=\E^{\P}\big[ \boldsymbol{1}_{B}(X_{T})\big| \mu,\overline{\mu} \big],\; \P\text{-a.s., } \; B\in \cB_{\R}.
\end{equation*}
Thus, \eqref{eq:consistency:appendix} holds.
\end{proof}

\begin{lemma}\label{lemma:continuous_version_state_marginal}
Let $(m,\overline{m}) \mapsto P^{(m,\overline{m})}$ be defined by \eqref{eq:appendix:def_P_m}.
Let $\rho\in\cM$, and let $(\Omega,\cF,\F,\P)$, $X$ and $(\mu,\overline{\mu})$ with law $\rho$ be such that
\[
    \P\circ(X,\mu,\overline{\mu})^{-1}=P^{(m,\overline{m})}(dx)\rho(dm,d\overline{m}).
\]
Then, there exists an $\cF_0$-measurable random variable $\tilde{\mu}^x=(\tilde{\mu}^x_t)_{t\in[0,T]}$ with values in $\dC([0,T];\cP_2(\R))$ such that
\[
    \tilde{\mu}^x_t=\mu^x_t,\quad dt\text{-a.e.},\qquad \tilde{\mu}^x_T=\overline{\mu},
\]
and
\[
    \tilde{\mu}^x_t=\P(X_t\in\cdot\vert\tilde{\mu}^x),\quad t\in[0,T].
\]
\end{lemma}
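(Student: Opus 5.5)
The idea is to build $\tilde{\mu}^x$ directly as a measurable function of $(\mu,\overline{\mu})$, namely the time-marginal flow of the conditional law $P^{(\mu,\overline{\mu})}$. For every $(m,\overline{m})\in V_2\times\cP_2(\R)$ define
\[
    \nu^{(m,\overline{m})}_t := P^{(m,\overline{m})}\circ (x\mapsto x_t)^{-1} = \P\circ (X^{(m,\overline{m})}_t)^{-1}, \qquad t\in[0,T].
\]
The drift $B$ in \eqref{mimicking:lemma_strong_existence:eq_deterministica} is bounded and $m_0^*\in\cP_{\bar p}(\R)$. Hence $X^{(m,\overline{m})}$ has continuous paths with $\E[\sup_t|X^{(m,\overline{m})}_t|^{\bar p}]<\infty$, and by uniform integrability $t\mapsto\nu^{(m,\overline{m})}_t$ belongs to $\dC([0,T];\cP_2(\R))$. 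The map $(m,\overline{m})\mapsto\nu^{(m,\overline{m})}$ is measurable, being the composition of the measurable map $(m,\overline{m})\mapsto P^{(m,\overline{m})}$ from Step 2 of Theorem \ref{theorem:consistent_mtg_representation} with the continuous map sending a path law to its marginal flow. On the exceptional set where $\nu^{(m,\overline m)}$ fails to be in $\dC([0,T];\cP_2(\R))$ (in fact empty), set it to a fixed element. Then put $\hat\mu^x:=\nu^{(\mu,\overline{\mu})}$. It is $\cF_0$-measurable, since $(\mu,\overline{\mu})$ is.

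The next step is to identify $\hat\mu^x$ with $\mu^x$ and $\overline{\mu}$. Take $\rho$-a.e.\ $(m,\overline{m})$ in the set $\cA$ of Step 3 of Theorem \ref{theorem:consistent_mtg_representation}. By \eqref{eq:consistency_fixed_flow} with $D=A$, $m^x_t=\nu^{(m,\overline{m})}_t$ for $dt$-a.e.\ $t$, and $\overline{m}=\nu^{(m,\overline{m})}_T$. Since $\rho(\cA)=1$, it follows $\P$-a.s.\ that $\hat\mu^x_t=\mu^x_t$ for $dt$-a.e.\ $t$ and $\hat\mu^x_T=\overline{\mu}$. Define $\tilde{\mu}^x:=\hat\mu^x$ on the full-measure event where these hold, and modify it on the null complement (e.g.\ by $\nu^{(m_1,\overline m_1)}$ for a fixed $(m_1,\overline m_1)\in\cA$). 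Completeness of $\cF_0$ keeps this modification $\cF_0$-measurable.

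The last step, and the main point, is the conditional-law identity for \emph{every} $t$, not just $dt$-a.e. Since $\sigma(\tilde{\mu}^x)\subseteq\sigma(\mu,\overline{\mu})$ up to null sets, the tower property reduces it to $\P(X_t\in\cdot\,\vert\,\mu,\overline{\mu})=\nu^{(\mu,\overline{\mu})}_t$ for all $t$. This follows from the disintegration $\P\circ(X,\mu,\overline{\mu})^{-1}=P^{(m,\overline{m})}(dx)\rho(dm,d\overline{m})$, through \eqref{eq:appendix:disintegration_time_t} with $\Phi(x,m,\overline m)=\1_C(x)\psi(m,\overline m)$, which holds for each fixed $t$. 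Indeed, $\E[\1_C(X_t)\psi(\mu,\overline{\mu})]=\E[\nu^{(\mu,\overline{\mu})}_t(C)\psi(\mu,\overline{\mu})]$. Then $\E[\1_C(X_t)\,\vert\,\tilde{\mu}^x]=\E[\nu^{(\mu,\overline{\mu})}_t(C)\,\vert\,\tilde\mu^x]=\tilde\mu^x_t(C)$ a.s., since $\tilde\mu^x_t$ is $\sigma(\tilde\mu^x)$-measurable. Because $\R$ has a countable determining class of sets $C$, this gives the identity of measures a.s.\ for every fixed $t$, which is the claimed statement. The care needed is only in measurability of $(m,\overline m)\mapsto\nu^{(m,\overline m)}$ into $\dC([0,T];\cP_2(\R))$ and in the null-set modification. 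I expect the measurability to be the main obstacle, handled via the Borel structure of $\dC([0,T];\cP_2(\R))$ being generated by the evaluations $\nu\mapsto\nu_t(C)$.
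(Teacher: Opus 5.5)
Your proposal is correct and follows essentially the same route as the paper: set $\tilde\mu^x_t:=P^{(\mu,\overline\mu)}\circ e_t^{-1}$, identify it with $\mu^x$ (for $dt$-a.e.\ $t$) and with $\overline\mu$ at time $T$ via \eqref{eq:consistency_fixed_flow} on the full-$\rho$-measure set $\cA$, and obtain the conditional-law identity for each $t$ from the disintegration and the tower property. You are more careful than the paper about measurability. Your closing remark on the evaluation-generated Borel structure is the right justification, since marginalization is continuous into weakly topologized flows but not into $\cW_2$-topologized ones. The null-set modification is harmless but unnecessary, because the identities are only claimed $\P$-a.s.
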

\begin{proof}
For every $(m,\overline{m})$, define the marginal flow
\[
    \tilde{m}^x_t:=P^{(m,\overline{m})}\circ e_t^{-1},\quad t\in[0,T],
\]
where $e_t:\dC([0,T];\R)\to\R$ is the evaluation map.
Since $P^{(m,\overline{m})}$ is the law of a continuous solution with finite second moments, standard moment estimates imply that the map $t\mapsto \tilde{m}^x_t$ is continuous in $\cP_2(\R)$.
Moreover, by the consistency identities \eqref{eq:consistency_fixed_flow} in Theorem \ref{theorem:consistent_mtg_representation}, for $\rho$-a.e. $(m,\overline{m})$ it holds $\tilde{m}^x_t=m^x_t$ $dt$-a.e. and $\tilde{m}^x_T=\overline{m}$.
Using the measurable dependence of $(m,\overline{m})\mapsto P^{(m,\overline{m})}$, we can therefore define an $\cF_0$-measurable random flow $\tilde{\mu}^x$ by setting
\[
    \tilde{\mu}^x_t:=P^{(\mu,\overline{\mu})}\circ e_t^{-1},\quad t\in[0,T].
\]
Then $t\mapsto\tilde{\mu}^x_t$ is continuous $\P$-a.s., $\tilde{\mu}^x_t=\mu^x_t$, $dt$-a.e., and $\tilde{\mu}^x_T=\overline{\mu}$.
By the definition of $\tilde{\mu}^x$ and the conditional law identity above, we have $\tilde{\mu}^x_t(\cdot) = \P(X_t\in\cdot\vert\mu,\overline{\mu})$, $t \in [0,T]$.
Since $\tilde{\mu}^x$ is a measurable function of $(\mu,\overline{\mu})$, for every bounded measurable function $\varphi:\R\to\R$ and every $t\in[0,T]$, we have
\[
    \E[\varphi(X_t)\vert \tilde{\mu}^x] = \E\left[\E[\varphi(X_t)\vert \mu,\overline{\mu}]\vert \tilde{\mu}^x\right] = \int_{\R}\varphi(x)\tilde{\mu}^x_t(dx).
\]
Therefore, $\tilde{\mu}^x_t=\P(X_t\in\cdot\vert\tilde{\mu}^x)$, for any $t\in[0,T]$.
\end{proof}

\section{Learning Part: Technical Results}\label{section: Appendix: Learning Part: Technical Results}

\subsection{Technical results for Section \ref{section: Learning algorithm}}\label{appendix: technical results: learning part}

This section summarizes the technical results used in Section \ref{section: Learning algorithm}.

\subsubsection*{Auxiliary results for Section \ref{subsection: Lagrangian approach} }
\begin{lemma}\label{appendix: lemma: minimum_concave_usc}
Let $\bfJ:\R_+ \to \R$ be concave, upper semi-continuous and so that $\lim_{\lambda \to \infty} \bfJ(\lambda) = -\infty$ and $\sup_{\lambda \in \R_+}\bfJ(\lambda) < \infty$.
Then, there exists $\lambda^* \in \R_+$ which attains the maximum.
\end{lemma}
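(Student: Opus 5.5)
The plan is to reduce to a compact interval and apply Weierstrass' theorem for upper semi-continuous functions. Since $\lim_{\lambda\to\infty}\bfJ(\lambda)=-\infty$, we first show that the supremum is actually approached on a bounded set. Set $M:=\sup_{\lambda\in\R_+}\bfJ(\lambda)$. By assumption $M<\infty$, and $M>-\infty$ because $\bfJ$ is real-valued (for instance $M\ge \bfJ(0)$). Using the divergence at infinity, choose $R>0$ such that $\bfJ(\lambda)<\bfJ(0)-1\le M-1$ for all $\lambda> R$. Then
\[
M=\sup_{\lambda\in[0,R]}\bfJ(\lambda),
\]
since every $\lambda>R$ gives a value strictly below $\bfJ(0)$, which is already attained in $[0,R]$.

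On the compact interval $[0,R]$ the function $\bfJ$ is upper semi-continuous. An upper semi-continuous function on a compact set attains its supremum. Concretely, take a maximizing sequence $\lambda_k\in[0,R]$ with $\bfJ(\lambda_k)\to M$ and extract a subsequence converging to some $\lambda^*\in[0,R]$. Upper semi-continuity then gives
\[
\bfJ(\lambda^*)\ge\limsup_k \bfJ(\lambda_k)=M.
\]
Hence $\bfJ(\lambda^*)=M$, and $\lambda^*$ is a maximizer over all of $\R_+$.

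Concavity is not needed for this argument; it only guarantees, for instance, that the maximizer set is an interval. The only delicate point is making sure that the reduction to $[0,R]$ loses nothing. This is handled by the comparison with the value $\bfJ(0)$, which is finite because $\bfJ$ is real-valued. No other obstacle is expected.
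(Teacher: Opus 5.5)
Your proof is correct and follows essentially the same route as the paper: use $\lim_{\lambda\to\infty}\mathbf{J}(\lambda)=-\infty$ to confine the supremum to a compact interval $[0,R]$, then apply upper semi-continuity there. The only difference is that the paper thresholds at $M-1$ with $M=\sup\mathbf{J}$, while you threshold at $\mathbf{J}(0)-1$, which has the small advantage that $M<\infty$ is no longer needed as a separate hypothesis (and, as you note, concavity plays no role in either argument).
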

\begin{proof}
We define $M:=\sup_{\lambda\in \R_+}\bfJ(\lambda)$. Since $\lim_{\lambda\to\infty}\bfJ(\lambda)=-\infty$, there exists $R>0$ such that
\[
\bfJ(\lambda)\leq M-1,\quad \lambda\geq R.
\]
Hence, we have that
\[
    M=\sup_{\lambda\in \R_{+}}\bfJ(\lambda)=\sup_{\{\lambda:\bfJ(\lambda)>M-1\}}\bfJ(\lambda)\leq \sup_{[0,R]}\bfJ(\lambda)\leq M.
\]
Therefore, the maximum, if it exists, is attained in the compact set $[0,R]$. Since $\bfJ$ is upper-semi continuous over $[0,R]$, we deduce that there exists a maximum point $\lambda^*$.
\end{proof}

\subsubsection*{Auxiliary results for Section \ref{section: algorithm} }
We recall that $(\cR,\tau^{(2)} \otimes \tau^{(2)})$ is a compact separable and metrizable space (for the metrizability, see \cite[Proposition 2.13]{LinProgFictDumitrescu}); in particular, it is a Polish space. We endow $\cP(\cR)$ with the topology of weak convergence of probability measures on $\cR$. Since $\cR$ is compact and metrizable, $\cP(\cR)$ is compact and metrizable as well.
In this context, we introduce a tailor-made definition of linear derivative, inspired from \cite[Definition 5.43]{carmona2018probabilistic} (see also \cite[Definition 5.3]{GuZhang2026information} for flat derivatives on $\cP_{2}(\cP_{2}(\R))$).

\begin{definition}[Flat derivative]\label{appendix: definition: flat derivative}
Let $\phi:\cP(\cR)\to\R$. We say that $\phi$ admits a flat derivative (or linear derivative) if there exists a bounded and continuous function $\frac{\delta \phi}{\delta\rho}:\cP(\cR) \times \cR\to \R$, such that, for all $\rho,\rho' \in \cP(\cR)$, it holds that
\[
    \phi(\rho')-\phi(\rho)=\int_{0}^{1}\int_{\cR}\frac{\delta \phi}{\delta\rho}(\rho+\epsilon(\rho'-\rho),\boldsymbol{m})(\rho'-\rho)(d\boldsymbol{m})d\epsilon.
\]
\end{definition}

\begin{lemma}\label{appendix:lemma: linear derivative linear functional}
Let $\phi:\cR\to \R$ be continuous. Set $\Phi(\rho):=\int_{\cR}\phi(\boldsymbol{m})\rho(d\boldsymbol{m}),\; \rho\in \cP(\cR)$, then the linear derivative of $\Phi$ has the form
\[
    \frac{\delta \Phi}{\delta \rho}(\rho,\boldsymbol{m})=\phi(\boldsymbol{m}).
\]
\end{lemma}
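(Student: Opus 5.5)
The plan is to verify Definition \ref{appendix: definition: flat derivative} directly with the candidate $\frac{\delta \Phi}{\delta \rho}(\rho,\boldsymbol{m}) := \phi(\boldsymbol{m})$, which does not depend on $\rho$. Two properties have to be checked: that this candidate is bounded and continuous on $\cP(\cR)\times\cR$, and that the integral identity in the definition holds for every pair $\rho,\rho'\in\cP(\cR)$.

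For regularity, recall that $(\cR,\tau^{(2)}\otimes\tau^{(2)})$ is compact and metrizable. A continuous $\phi$ on $\cR$ is therefore bounded. The map $(\rho,\boldsymbol{m})\mapsto\phi(\boldsymbol{m})$ is the composition of the continuous projection onto the second factor with $\phi$, so it is jointly continuous on $\cP(\cR)\times\cR$ for the product of the weak topology with $\tau^{(2)}\otimes\tau^{(2)}$. This settles boundedness and continuity.

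For the identity, first note that $\Phi$ is well defined on $\cP(\cR)$, since $\phi$ is bounded and measurable. It extends linearly to finite signed measures, so $\Phi(\rho')-\Phi(\rho)=\int_{\cR}\phi(\boldsymbol{m})(\rho'-\rho)(d\boldsymbol{m})$. The right-hand side of the defining identity is
\[
\int_0^1\int_{\cR}\phi(\boldsymbol{m})(\rho'-\rho)(d\boldsymbol{m})\,d\epsilon .
\]
Its inner integral does not depend on $\epsilon$, so integrating over $[0,1]$ gives exactly $\int_{\cR}\phi\,d(\rho'-\rho)$, which matches the left-hand side. No Fubini argument is needed, because the integrand is constant in $\epsilon$.

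I do not expect a real obstacle. The only point needing care is the interpretation of continuity of the derivative jointly in $(\rho,\boldsymbol{m})$, and compactness of $\cR$ already gives it.
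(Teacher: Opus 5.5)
Your proposal is correct and matches what the paper intends: it omits the proof as a direct check of Definition~\ref{appendix: definition: flat derivative}, and that is exactly what you carry out. Boundedness and joint continuity of $(\rho,\boldsymbol{m})\mapsto\phi(\boldsymbol{m})$ follow from compactness of $\cR$, and the defining identity holds because $\Phi$ is linear in $\rho$ and the integrand does not depend on $\epsilon$.
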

The proof is straightforward from Definition \ref{appendix: definition: flat derivative} and it is therefore omitted.

\begin{lemma}\label{appendix:lemma: convexity in terms of first variation}
Let $F:\cP(\cR)\to \R$ admit first-order linear derivative. Then $F$ is convex if and only if for any $\rho,\rho'\in \cP(\cR)$, we have 
\begin{equation}\label{eq:appendix:lemma:convexity wrt first variation}
    F(\rho')-F(\rho)\geq \int_{\cR}\frac{\delta F}{\delta \rho}(\rho,\boldsymbol{m})(\rho'-\rho)(d\boldsymbol{m}).
\end{equation}
Moreover for Bregman divergence with Bregman potential $F$ (cf. Definition \ref{algo:def:Bregman_divergence}) holds
\begin{equation}\label{eq:appendix:lemma:convexity:positive_Bregman_divergence}
    D_{F}(\rho,\rho') \geq 0.
\end{equation}
\end{lemma}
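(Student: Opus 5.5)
The plan is to reduce both claims to the scalar function $\varphi(\epsilon):=F(\rho+\epsilon(\rho'-\rho))$, $\epsilon\in[0,1]$, which is well defined because $\cP(\cR)$ is convex. By Definition \ref{appendix: definition: flat derivative}, applied between $\rho$ and $\rho_\epsilon:=\rho+\epsilon(\rho'-\rho)$ and after a change of variables, we get
\[
\varphi(\epsilon)-\varphi(0)=\int_0^\epsilon \int_{\cR}\frac{\delta F}{\delta\rho}(\rho_s,\boldsymbol{m})(\rho'-\rho)(d\boldsymbol{m})\,ds .
\]
Here we use that $\rho_\epsilon+s(\rho'-\rho)/\epsilon$ traces the same segment. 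The integrand $s\mapsto \int_{\cR}\frac{\delta F}{\delta\rho}(\rho_s,\boldsymbol{m})(\rho'-\rho)(d\boldsymbol{m})$ is continuous. This follows because $\frac{\delta F}{\delta\rho}$ is bounded and jointly continuous, $s\mapsto\rho_s$ is weakly continuous, and $\cR$ is compact, so continuity passes to the integral against the fixed signed measure $\rho'-\rho$ (weak convergence of $\rho_s$ together with uniform continuity on the compact set $\{\rho_s\}\times\cR$). Hence $\varphi\in C^1([0,1])$, with $\varphi'(s)=\int_{\cR}\frac{\delta F}{\delta\rho}(\rho_s,\boldsymbol{m})(\rho'-\rho)(d\boldsymbol{m})$. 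In particular, $\varphi'(0)$ equals the right-hand side of \eqref{eq:appendix:lemma:convexity wrt first variation}.

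For the forward direction, assume $F$ is convex. Then $\varphi$ is convex on $[0,1]$, so $\varphi(1)-\varphi(0)\geq \varphi'(0)$, which is exactly \eqref{eq:appendix:lemma:convexity wrt first variation}.

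For the converse, assume the inequality holds for all pairs. Fix $\rho_0,\rho_1$ and $t\in[0,1]$, and set $\rho_t=(1-t)\rho_0+t\rho_1$. Apply the inequality with base point $\rho_t$ and target $\rho_0$, and again with base point $\rho_t$ and target $\rho_1$. Multiply the first by $(1-t)$, the second by $t$, and sum. The derivative terms cancel, since $(1-t)(\rho_0-\rho_t)+t(\rho_1-\rho_t)=0$ and the integral is linear in the signed measure. What remains is $(1-t)F(\rho_0)+tF(\rho_1)\geq F(\rho_t)$, which is convexity.

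Nonnegativity of the Bregman divergence then follows directly. By Definition \ref{algo:def:Bregman_divergence},
\[
D_F(\rho,\rho')=F(\rho)-F(\rho')-\int_{\cR}\frac{\delta F}{\delta\rho}(\rho',\boldsymbol{m})(\rho-\rho')(d\boldsymbol{m}),
\]
which is $\geq 0$ by \eqref{eq:appendix:lemma:convexity wrt first variation} with the roles of $\rho$ and $\rho'$ exchanged, for convex $F$.

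The main technical point is the $C^1$ regularity of $\varphi$, specifically the continuity of $s\mapsto\varphi'(s)$. This rests on the boundedness and joint continuity built into Definition \ref{appendix: definition: flat derivative}. If that continuity were unavailable, I would argue the forward direction directly instead. Convexity gives $\big(\varphi(\epsilon)-\varphi(0)\big)/\epsilon\leq \varphi(1)-\varphi(0)$. The left side equals an average of $\varphi'$ over $[0,\epsilon]$, and by continuity of $\varphi'$ at $0$ this average converges to $\varphi'(0)$ as $\epsilon\to0$.
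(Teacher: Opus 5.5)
Your proposal is correct and follows essentially the same route as the paper. Both restrict $F$ to the segment $\rho_t=(1-t)\rho+t\rho'$ and identify the right derivative at $0$ through the flat-derivative formula; the paper does this by dominated convergence on the rescaled integral, which is exactly your fallback argument, while you go via full $C^1$ regularity. Both then use convexity of the scalar function for the forward direction, the $(1-t)$ and $t$ combination at base point $\rho_t$ for the converse, and the same rearrangement for $D_F\geq 0$.
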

\begin{proof}
We start by proving that convexity implies \eqref{eq:appendix:lemma:convexity wrt first variation}.
Let $\rho,\rho'\in \cP(\cR)$. For $t\in [0,1]$, we define $\rho_{t}:=(1-t)\rho+t\rho'$. We define the function $f:[0,1]\to \R$ to be $f(t):=F(\rho_{t})$ with $f(0)=F(\rho)$ and $f(1)=F(\rho')$. Notice that, $f$ is continuous and convex, moreover by using the definition of linear derivative, we have
\[
\frac{1}{t}(F(\rho_t)-F(\rho))=\int_0^1\int_{\cR}\frac{\delta F}{\delta \rho}\big(\rho+\epsilon t(\rho'-\rho),\boldsymbol{m}\big)(\rho'-\rho)(d\boldsymbol{m})d\epsilon.
\]
Since the linear derivative $\frac{\delta F}{\delta \rho}(\rho,\boldsymbol{m})$ is jointly continuous in $(\rho,\boldsymbol{m})$ and bounded, we rely on the dominated convergence theorem to exchange the integral and the limit to get
\begin{multline*}
    f'(0^{+}) = \lim_{t\downarrow 0}\frac{1}{t}(f(t)-f(0)) =\lim_{t \downarrow 0} \frac{1}{t}(F(\rho_t) - F(\rho)) \\
    = \int_0^1\int_{\cR} \lim_{t \downarrow 0} \frac{\delta F}{\delta \rho}\big( \rho + \epsilon t (\rho' - \rho) ,\boldsymbol{m} \big)(\rho' - \rho)(d\boldsymbol{m}) d\epsilon = \int_{\cR} \frac{\delta F}{\delta \rho} ( \rho,\boldsymbol{m} )(\rho' - \rho)(d\boldsymbol{m}).
\end{multline*}
On the other hand, by convexity of $f$, we have
\[
    f'(0^{+}) = \lim_{t \downarrow 0} \frac{f(t)-f(0)}{t}\leq f(1)-f(0),
\]
which yields
\[
    \int_{\cR} \frac{\delta F}{\delta \rho} ( \rho,\boldsymbol{m} )(\rho' - \rho)(d\boldsymbol{m})\leq F(\rho')-F(\rho).
\]
To show that \eqref{eq:appendix:lemma:convexity wrt first variation} implies convexity of $F$, we apply twice \eqref{eq:appendix:lemma:convexity wrt first variation} for $(\rho',\rho)=(\rho,\rho_{t})$ and $(\rho',\rho)=(\rho',\rho_{t})$, so that we get 
\[
    F(\rho)-F(\rho_{t})\geq \int_{\cR}\frac{\delta F}{\delta\rho}(\rho_{t},\boldsymbol{m})(\rho-\rho_{t})(d\boldsymbol{m}),
\]
and,
\[
    F(\rho')-F(\rho_{t})\geq \int_{\cR}\frac{\delta F}{\delta\rho}(\rho_{t},\boldsymbol{m})(\rho'-\rho_{t})(d\boldsymbol{m}).
\]
Multiplying the first equation by $(1-t)$ and the second equation by $t$, adding them up, and recalling definition of $\rho_{t}$, we find
\begin{align}
    (1-t)F(\rho)+tF(\rho')-F(\rho_{t})&\geq \int_{\cR}\frac{\delta F}{\delta\rho}(\rho_{t},\boldsymbol{m})\big((1-t)(\rho-\rho_{t})+t(\rho'-\rho_{t})\big)(d\boldsymbol{m}) \notag \\
    &=\int_{\cR}\frac{\delta F}{\delta\rho}(\rho_{t},\boldsymbol{m})\big((1-t)\rho+t\rho'-\rho_{t})(d\boldsymbol{m})=0,
\end{align}
which implies 
\begin{equation}
    (1-t)F(\rho)+tF(\rho')-F(\rho_{t})\geq 0.
\end{equation}
Finally, \eqref{eq:appendix:lemma:convexity:positive_Bregman_divergence} follows by definition of Bregman divergence (cf. Definition \ref{algo:def:Bregman_divergence}), by rearranging the terms in \eqref{eq:appendix:lemma:convexity wrt first variation}.
\end{proof}

\begin{proposition}[First order optimality condition on convex subsets of $\cP(\cR)$]\label{appendix: proposition: optimality condition}
    Let $\cK\subset \cP(\cR)$ be convex. Let $G:\cP(\cR) \to \R$ be a continuous function which admits a flat derivative.
    Assume that there exists $\widehat{\rho}\in \cK$ such that
    \begin{equation}
        \widehat{\rho}\in \operatorname{argmin}_{\rho \in \cK}G(\rho).
    \end{equation}
    Then, for any $\rho\in \cK$, it holds
    \begin{equation}
        \label{eq: optimality condition for proximal functional}
        \int_{\cR}\frac{\delta G}{\delta\rho}(\widehat{\rho},\boldsymbol{m})(\rho-\widehat{\rho})(d\boldsymbol{m})\geq 0.
    \end{equation}
\end{proposition}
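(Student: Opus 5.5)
The plan is the standard directional-derivative argument, using convexity of $\cK$ to move from $\widehat{\rho}$ along segments that stay inside $\cK$. Fix $\rho\in\cK$ and set, for $t\in[0,1]$, $\rho_t:=\widehat{\rho}+t(\rho-\widehat{\rho})=(1-t)\widehat{\rho}+t\rho$. Since $\cK$ is convex, $\rho_t\in\cK$ for every $t\in[0,1]$. Minimality of $\widehat{\rho}$ then gives $G(\rho_t)-G(\widehat{\rho})\geq 0$ for all $t\in(0,1]$.

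Next I will express this increment through the flat derivative of Definition \ref{appendix: definition: flat derivative}, applied to the pair $(\widehat{\rho},\rho_t)$. Since $\rho_t-\widehat{\rho}=t(\rho-\widehat{\rho})$, this yields
\[
0\leq \frac{G(\rho_t)-G(\widehat{\rho})}{t}=\int_0^1\int_{\cR}\frac{\delta G}{\delta\rho}\big(\widehat{\rho}+\epsilon t(\rho-\widehat{\rho}),\boldsymbol{m}\big)(\rho-\widehat{\rho})(d\boldsymbol{m})\,d\epsilon .
\]

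Finally, let $t\downarrow 0$. For each fixed $\epsilon$, the measures $\widehat{\rho}+\epsilon t(\rho-\widehat{\rho})$ converge weakly (indeed in total variation) to $\widehat{\rho}$. The flat derivative is jointly continuous on $\cP(\cR)\times\cR$, so the integrand converges pointwise in $(\epsilon,\boldsymbol{m})$ to $\frac{\delta G}{\delta\rho}(\widehat{\rho},\boldsymbol{m})$. It is also bounded by the definition of the flat derivative. Because $\rho-\widehat{\rho}$ is a finite signed measure, I will split it as $\rho$ minus $\widehat{\rho}$ and apply dominated convergence to each positive part separately, and then once more in $\epsilon$ over $[0,1]$. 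This is exactly the computation of $f'(0^+)$ already carried out in the proof of Lemma \ref{appendix:lemma: convexity in terms of first variation}. Passing to the limit preserves the non-strict inequality and gives \eqref{eq: optimality condition for proximal functional}.

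The only delicate point is this limit exchange. It is covered by the joint continuity and boundedness of $\frac{\delta G}{\delta\rho}$, so no real obstacle is expected. Note that continuity of $G$ itself is not used beyond guaranteeing that the argmin makes sense.
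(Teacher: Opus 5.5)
Your proof is correct and follows the paper's argument: you move along the segment $\rho_t=(1-t)\widehat{\rho}+t\rho\in\cK$, use minimality, divide by $t$, and pass to the limit $t\downarrow 0$ through the flat-derivative representation, exactly as in the $f'(0^+)$ computation of Lemma~\ref{appendix:lemma: convexity in terms of first variation}. Your explicit dominated-convergence justification (joint continuity and boundedness of $\frac{\delta G}{\delta\rho}$, with $\rho$ and $\widehat{\rho}$ handled separately) spells out what the paper leaves to that reference.
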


\begin{proof}
Let $\rho\in \cK$ be arbitrary. We define $\rho_{t}:=(1-t)\widehat{\rho}+t\rho = \widehat{\rho} + t(\rho - \widehat{\rho})$, $t\in [0,1]$. Since $\cK$ is convex, $\rho_t\in \cK$ for every $t\in[0,1]$. Since $\widehat{\rho}$ minimizes $G$ over $\cK$, we have
\begin{equation}
    G(\rho_t)-G(\widehat{\rho})\geq 0,\quad t\in[0,1].
\end{equation}
Dividing by $t>0$ and taking the limit as $t\downarrow 0$, we obtain
\begin{equation}
    0 \leq \lim_{t\downarrow 0}\frac{G(\rho_t)-G(\widehat{\rho})}{t} = \int_{\cR}\frac{\delta G}{\delta\rho}(\widehat{\rho},\boldsymbol{m})(\rho-\widehat{\rho})(d\boldsymbol{m}),
\end{equation}
where the last equality follows by the same computations as in the proof of Lemma \ref{appendix:lemma: convexity in terms of first variation}.
\end{proof}

\begin{theorem}[Three-point inequality]\label{appendix: theorem: three-point inequality}
Let $\cK\subset \cP(\cR)$ be convex. Let $G:\cP(\cR)\to \R \cup \{+\infty\}$ be a convex and continuous function, which admits a flat derivative on $\cP(\cR)$, and let $\tau>0$ be fixed. Let $D_{\psi}$ be the Bregman divergence with Bregman potential $\psi$, and let $\overline{\rho} \in \cP(\cR)$.
Assume that there exists $\widehat{\rho}\in \cK$ such that 
\[
    \widehat{\rho}\in \operatorname{argmin}_{\rho\in \cK}\bigg( G(\rho)+\tau D_{\psi}(\rho,\overline{\rho}) \bigg).
\]
Then, for any $\rho\in \cK$, it holds
\begin{equation}\label{eq:theorem: three-point inequality}
    G(\widehat{\rho})-G(\rho)\leq \tau \left( D_{\psi}(\rho,\overline{\rho})-D_{\psi}(\rho,\widehat{\rho})-D_{\psi}(\widehat{\rho},\overline{\rho}) \right).
\end{equation}
\end{theorem}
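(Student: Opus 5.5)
The plan is to combine the first-order optimality condition of Proposition \ref{appendix: proposition: optimality condition} with the convexity inequality of Lemma \ref{appendix:lemma: convexity in terms of first variation} and a Bregman three-point identity. I assume, as in Lemma \ref{lemma: three-points inequality for primal part}, that $\psi$ is convex and linearly differentiable. Set $H(\rho):=G(\rho)+\tau D_\psi(\rho,\overline{\rho})$.

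\textbf{Step 1 (derivative of $H$).} Since $\overline{\rho}$ is fixed, $\rho\mapsto D_\psi(\rho,\overline{\rho})$ equals $\psi(\rho)$ plus a term that is affine in $\rho$. By Lemma \ref{appendix:lemma: linear derivative linear functional}, its flat derivative is $\frac{\delta\psi}{\delta\rho}(\rho,\boldsymbol m)-\frac{\delta\psi}{\delta\rho}(\overline{\rho},\boldsymbol m)$. Hence $H$ admits the flat derivative
\[
\frac{\delta G}{\delta\rho}(\rho,\boldsymbol m)+\tau\Big(\frac{\delta\psi}{\delta\rho}(\rho,\boldsymbol m)-\frac{\delta\psi}{\delta\rho}(\overline{\rho},\boldsymbol m)\Big).
\]
This function is bounded and continuous, and $H$ is continuous because $G$ and $\psi$ are. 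Note that $G$ must be finite wherever it is used, since it has a flat derivative.

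\textbf{Step 2 (optimality condition).} $\widehat{\rho}$ minimizes $H$ over the convex set $\cK$, so Proposition \ref{appendix: proposition: optimality condition} gives, for every $\rho\in\cK$,
\[
\int_{\cR}\frac{\delta G}{\delta\rho}(\widehat{\rho},\boldsymbol m)(\rho-\widehat{\rho})(d\boldsymbol m)\ge -\tau\int_{\cR}\Big(\frac{\delta\psi}{\delta\rho}(\widehat{\rho},\boldsymbol m)-\frac{\delta\psi}{\delta\rho}(\overline{\rho},\boldsymbol m)\Big)(\rho-\widehat{\rho})(d\boldsymbol m).
\]
Since $G$ is convex, Lemma \ref{appendix:lemma: convexity in terms of first variation} gives
\[
G(\rho)-G(\widehat{\rho})\ge \int_{\cR}\frac{\delta G}{\delta\rho}(\widehat{\rho},\boldsymbol m)(\rho-\widehat{\rho})(d\boldsymbol m).
\]
Chaining the two inequalities yields
\[
G(\widehat{\rho})-G(\rho)\le \tau\int_{\cR}\Big(\frac{\delta\psi}{\delta\rho}(\widehat{\rho},\boldsymbol m)-\frac{\delta\psi}{\delta\rho}(\overline{\rho},\boldsymbol m)\Big)(\rho-\widehat{\rho})(d\boldsymbol m).
\]

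\textbf{Step 3 (three-point identity).} Expand the three divergences using Definition \ref{algo:def:Bregman_divergence}. In $D_\psi(\rho,\overline{\rho})-D_\psi(\rho,\widehat{\rho})-D_\psi(\widehat{\rho},\overline{\rho})$ the $\psi$-values cancel: $\psi(\rho)-\psi(\overline{\rho})-\psi(\rho)+\psi(\widehat{\rho})-\psi(\widehat{\rho})+\psi(\overline{\rho})=0$. The remaining linear terms are
\[
-\!\int\frac{\delta\psi}{\delta\rho}(\overline{\rho})(\rho-\overline{\rho})+\int\frac{\delta\psi}{\delta\rho}(\widehat{\rho})(\rho-\widehat{\rho})+\int\frac{\delta\psi}{\delta\rho}(\overline{\rho})(\widehat{\rho}-\overline{\rho}).
\]
Combining the first and third terms gives $-\int\frac{\delta\psi}{\delta\rho}(\overline{\rho})(\rho-\widehat{\rho})$. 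The whole expression therefore equals exactly the right-hand side obtained in Step 2, which proves \eqref{eq:theorem: three-point inequality}.

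\textbf{Main obstacle.} The only delicate point is Step 1: checking that $H$ satisfies the hypotheses of Proposition \ref{appendix: proposition: optimality condition}. This needs the derivative of $D_\psi(\cdot,\overline{\rho})$ to exist in the sense of Definition \ref{appendix: definition: flat derivative}. That follows from linearity of the flat derivative together with Lemma \ref{appendix:lemma: linear derivative linear functional}, applied to the bounded continuous function $\boldsymbol m\mapsto\frac{\delta\psi}{\delta\rho}(\overline{\rho},\boldsymbol m)$. After that check, the argument is pure algebra.
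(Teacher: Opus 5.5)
Your proof is correct and uses the same ingredients as the paper: first-order optimality of $\widehat{\rho}$ for $G+\tau D_\psi(\cdot,\overline{\rho})$, nonnegativity of $D_G$ from convexity of $G$, and a purely algebraic Bregman identity. The only difference is packaging: the paper writes the algebra as $D_{D_\psi(\cdot,\overline{\rho})}(\rho',\rho)=D_\psi(\rho',\rho)$ together with additivity of Bregman divergences in the potential, and this is your three-point identity in another form.
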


\begin{proof}
The proof follows the same approach as in \cite[Lemma 2.5]{Mirror_Descent-Ascent_Mean-Field}; we also refer the reader to the proof of \cite[Lemma 3]{Mirror_Descent_with_Relative_Smoothness}. 
Set
\begin{equation}\label{appendix: eq: minimizing movement scheme with Bregman divergence}
    G_{\tau}(\rho;\overline{\rho}):=G(\rho)+\tau D_{\psi}(\rho,\overline{\rho}),\quad \rho,\overline{\rho}\in \cP(\cR).
\end{equation}
Let $\rho\in\cP(\cR)$ and $\rho'\in\cK$.
We start by noticing that
\begin{equation}\label{appendix: eq: linear derivative of Bregman divergence}
    \frac{\delta D_{\psi}}{\delta\rho}(\rho,\overline{\rho},\boldsymbol{m})=\frac{\delta\psi}{\delta\rho}(\rho,\boldsymbol{m})-\frac{\delta\psi}{\delta\rho}(\overline{\rho},\boldsymbol{m}),\quad \rho,\overline{\rho}\in \cP(\cR).
\end{equation}
Indeed, using the definition of linear derivative and Lemma \ref{appendix:lemma: linear derivative linear functional}, we obtain
\begin{align}
    \frac{\delta D_{\psi}}{\delta\rho}(\rho,\overline{\rho},\boldsymbol{m})&=\frac{\delta }{\delta \rho}\bigg( \psi(\rho)-\psi(\overline{\rho})-\int_{\cR}\frac{\delta \psi}{\delta\rho}(\overline{\rho},\boldsymbol{m})(\rho-\overline{\rho})(d\boldsymbol{m}) \bigg) \notag \\
    &=\frac{\delta\psi}{\delta\rho}(\rho,\boldsymbol{m})-\frac{\delta\psi}{\delta\rho}(\overline{\rho},\boldsymbol{m}).
\end{align}
Then, given $\overline{\rho}\in \cP(\cR)$, we show that
\begin{equation}\label{appendix: eq: Bregman divergence of Bregman divergence}
    D_{D_{\psi}(\cdot,\overline{\rho})}(\rho',\rho)=D_{\psi}(\rho',\rho),\quad \rho',\rho\in \cP(\cR).
\end{equation}
Indeed, by direct computations, it holds 
\begin{equation*}
\begin{aligned}
    D_{D_{\psi}(\cdot,\overline{\rho})}(\rho',\rho)&=D_{\psi}(\rho',\overline{\rho})-D_{\psi}(\rho,\overline{\rho})-\int_{\cR}\frac{\delta D_{\psi}}{\delta\rho}(\rho,\overline{\rho},\boldsymbol{m})(\rho'-\rho)(d\boldsymbol{m})  \\
    &=D_{\psi}(\rho',\overline{\rho})-D_{\psi}(\rho,\overline{\rho})-\int_{\cR}\frac{\delta \psi}{\delta \rho}(\rho,\boldsymbol{m})(\rho'-\rho)(d\boldsymbol{m})  \\
    &\quad +\int_{\cR}\frac{\delta \psi}{\delta \rho}(\overline{\rho},\boldsymbol{m})(\rho'-\rho)(d\boldsymbol{m})  \\
    &=\psi(\rho')-\psi(\rho)-\int_{\cR}\frac{\delta \psi}{\delta \rho}(\overline{\rho},\boldsymbol{m})(\rho'-\rho)(d\boldsymbol{m})  \\
    &\quad-\int_{\cR}\frac{\delta \psi}{\delta \rho}(\rho,\boldsymbol{m})(\rho'-\rho)(d\boldsymbol{m})+\int_{\cR}\frac{\delta \psi}{\delta \rho}(\overline{\rho},\boldsymbol{m})(\rho'-\rho)(d\boldsymbol{m}) \\
    &=\psi(\rho')-\psi(\rho)-\int_{\cR}\frac{\delta \psi}{\delta \rho}(\rho,\boldsymbol{m})(\rho'-\rho)(d\boldsymbol{m})=D_{\psi}(\rho',\rho).
\end{aligned}
\end{equation*}
From the additivity of the Bregman divergence with respect to the potential and \eqref{appendix: eq: Bregman divergence of Bregman divergence}, we obtain
\begin{align}\label{appendix:eq:inequality with diff Bregman diver.}
    D_{G_{\tau}(\cdot,\overline{\rho})}(\rho',\rho)&=D_{G}(\rho',\rho)+\tau D_{D_{\psi}(\cdot,\overline{\rho})}(\rho',\rho)\notag \\
    &=D_{G}(\rho',\rho)+\tau D_{\psi}(\rho',\rho)\geq \tau D_{\psi}(\rho',\rho),
\end{align}
where the last inequality holds by convexity of $G$ and \eqref{eq:appendix:lemma:convexity:positive_Bregman_divergence}. Expanding $D_{G_{\tau}(\cdot,\overline{\rho})}$ in \eqref{appendix:eq:inequality with diff Bregman diver.}, we find that
\begin{equation}\label{appendix:eq:inequality with the integral and G}
    G_{\tau}(\rho';\overline{\rho})-G_{\tau}(\rho;\overline{\rho})-\int_{\cR}\frac{\delta G_{\tau}}{\delta \rho}(\rho,\boldsymbol{m};\overline{\rho})(\rho'-\rho)(d\boldsymbol{m})\geq \tau D_{\psi}(\rho',\rho).
\end{equation}
Let now $\rho$ be equal to $\widehat{\rho}\in \operatorname{argmin}_{\rho\in \cK}G_{\tau}(\rho;\overline{\rho})$. By \eqref{appendix: eq: linear derivative of Bregman divergence}, the map $\rho\mapsto G_{\tau}(\rho;\overline{\rho})$ admits a flat derivative. Since $\widehat{\rho}$ minimizes $G_{\tau}(\rho;\overline{\rho})$ over $\cK$, from Proposition \ref{appendix: proposition: optimality condition} we obtain
\begin{equation}\label{appendix:eq:optimality condition in action}
    \int_{\cR}\frac{\delta G_{\tau}}{\delta\rho}(\widehat{\rho},\boldsymbol{m};\overline{\rho})(\rho'-\widehat{\rho})(d\boldsymbol{m})\geq 0,\quad\text{for any }\rho'\in \cK.
\end{equation}
Therefore, using \eqref{appendix:eq:optimality condition in action} in \eqref{appendix:eq:inequality with the integral and G}, we deduce that
\begin{equation}
    G_{\tau}(\rho';\overline{\rho})-G_{\tau}(\widehat{\rho};\overline{\rho})\geq \tau D_{\psi}(\rho',\widehat{\rho}),\quad \rho'\in\cK.
\end{equation}
By the definition of $G_\tau$, this gives
\begin{equation}
    G(\widehat{\rho})-G(\rho')\leq \tau \left( D_{\psi}(\rho',\overline{\rho})-D_{\psi}(\rho',\widehat{\rho})-D_{\psi}(\widehat{\rho},\overline{\rho}) \right),\quad \rho'\in\cK.
\end{equation}
Since $\rho'\in\cK$ is arbitrary, we conclude \eqref{eq:theorem: three-point inequality}.
\end{proof}

\section{Numerical results: how to check Assumptions \ref{assumptions: extra ass on algorithmic part} and \ref{ass: uniqueness maximizer}}\label{app:strictCCE}
We show here that Assumptions \ref{assumptions: extra ass on algorithmic part} and Assumption \ref{ass: uniqueness maximizer} holds true in the setting of the emission abatement game (Example 2), as the flocking model (Example 1) is a particular case of the emission abatement game.

\begin{lemma}\label{lemma:strict_feasibility_emission_example}
Consider the emission abatement example with the parameters of Table \ref{table: math params Emission Abatement}.
Then Assumption \ref{assumptions: extra ass on algorithmic part} is satisfied.
\end{lemma}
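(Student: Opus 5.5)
The plan is to exhibit an explicit consistent correlated flow in the probabilistic formulation whose LP image has strictly negative external regret. Recall that Example 2 uses the payoff convention, so strictly negative regret means that the recommended payoff strictly exceeds the best deviation payoff. Once we have a correlated flow $(\lambda,\mu)$ satisfying the consistency condition \eqref{eq:CCE:consistency}, Proposition \ref{LP:relation:prop:consistent_strategies} gives $\overline{\rho}:=\P\circ(\mu,\mu^x_T)^{-1}\in\cM$. Moreover $\Gamma[\overline{\rho}]=J(\lambda,\mu)$. Hence it suffices to show
\[
J(\lambda,\mu)>\sup_{\boldsymbol{\kappa}\in\cR}\Gamma^{dev}[\overline{\rho}](\boldsymbol{\kappa}).
\]

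\textbf{Step 1 (candidate).} I take the linear CCE class of \cite{campi2025LQ}:
\[
\bar\mu^x_t=\bar\mu^x_0+tZ,\qquad \lambda_t=K_t(\bar\mu^x_t-X_t)+Z,\qquad K_t=\tanh(T-t).
\]
I choose the moments $(z_1,\sigma_z^2)$ in the \emph{interior} of the region characterized in \cite[Propositions 5.3--5.4]{campi2025LQ} for $T=5$, $a=2$, $b=1$. In that region the unconstrained external regret is strictly negative, say equal to $-2\delta<0$. I take $Z$ to be a bounded random variable with these moments, for example a two-point law, so that $|Z|$ is small. Since $A=[-3,3]$, I replace $\lambda$ by its clipped version $\lambda^c_t:=\Pi_A(\lambda_t)$ and define $\mu$ consistently as the conditional law of $(X^c_t,\lambda^c_t)$ given $Z$. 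Consistency then holds by construction.

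\textbf{Step 2 (deviation bound).} Integrating the running payoff against $\overline{\rho}$ shows that the deviator faces the deterministic functions $F^{\overline\rho},G^{\overline\rho}$ of \eqref{eq:aggregate_f_g_in_m}. The term $a\bar\mu^x_t-\frac b2(\bar\mu^x_t)^2$ does not depend on the deviator's state or action, so it is common to both sides. The tracking term averages to
\[
-\tfrac12\big(x-\E[\bar\mu^x_t]\big)^2-\tfrac12\mathrm{Var}(\bar\mu^x_t).
\]
Every $\boldsymbol{\kappa}\in\cR$ is a relaxed control with values in the compact set $A$, and $A$-valued controls are a subset of the unconstrained ones. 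Hence the supremum of $\Gamma^{dev}[\overline\rho]$ over $\cR$ is bounded above by the value of the unconstrained LQ tracking problem. This value is exactly the deviation value computed in \cite{campi2025LQ}, via a Riccati equation with solution $K_t$.

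\textbf{Step 3 (clipping error, the main obstacle).} It remains to show
\[
|J(\lambda^c,\mu)-J(\lambda,\mu^{\mathrm{lin}})|<\delta,
\]
where $\mu^{\mathrm{lin}}$ is the unclipped linear flow. Since $|K_t|\le1$ and $|Z|$ is small, clipping is active only on the event $\{|\bar\mu^x_t-X_t|\gtrsim 3-|Z|\}$. Conditionally on $Z$, $X_t-\bar\mu^x_t$ is Gaussian (an Ornstein--Uhlenbeck-type process) with variance bounded uniformly on $[0,5]$. The tail probability is therefore explicitly small, of order $e^{-c\cdot 9/2}$ with $c$ depending on the variance bound.

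I would first couple the clipped and unclipped states through the same $(\xi,W,Z)$. By Gronwall, $\E\sup_t|X^c_t-X_t|^2$ is bounded by a constant times the integrated tail probability. Cauchy--Schwarz with the quadratic growth of the payoff then bounds the payoff difference. The delicate point is to make the constants explicit enough to check against $\delta$ numerically for the tabled parameters. If this margin is too thin, the fallback is to shrink $|z_1|$ and $\sigma_z$ within the admissible region, which keeps $\delta$ bounded away from zero while reducing the clipping region.

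Combining Steps 2 and 3 gives
\[
\cE\cR(\overline\rho)\le -2\delta+\delta<0,
\]
which is Assumption \ref{assumptions: extra ass on algorithmic part}.
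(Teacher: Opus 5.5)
Your route (clip the linear CCE of \cite{campi2025LQ}, then control the clipping error) differs from the paper's, and as written it has a genuine gap. After clipping, the flow $\mu$ you define is no longer the linear flow. For $Z\neq 0$ the projection onto $[-3,3]$ is not symmetric around $Z$, so in general $\E[X^c_t\mid Z]\neq\bar\mu^x_0+tZ$. The deviator therefore faces a different target $\frm_t$ and a different penalty $\mathrm{Var}(\bar m_t)$ from those in \cite{campi2025LQ}. Consequently the claim in Step 2 that your bound ``is exactly the deviation value computed in \cite{campi2025LQ}'' is false for $\overline\rho$. Your final line $\cE\cR(\overline\rho)\le-2\delta+\delta$ relies on it, because Step 3 only perturbs the committing side. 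A second perturbation estimate for the best-deviation value is missing; it is doable, via Lipschitz dependence of the LQ tracking value on the target and the variance term. Step 3 itself is also never carried out. For a lemma about the specific parameters of Table \ref{table: math params Emission Abatement}, that explicit numerical comparison is the whole content.

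Your fallback also goes in the wrong direction. In the unconstrained linear class, consider the deviation $\beta_t=-K_t(X^\beta_t-\bar\mu^x_0-tz_1)+z_1$. It shows, in the cost convention, that the regret is at least $\frac12\sigma_z^2T-\frac16\sigma_z^2T^3$. Hence $\delta\le\frac12\sigma_z^2\bigl(\frac{T^3}{6}-\frac{T}{2}\bigr)\to0$ as $\sigma_z\to0$. At $\sigma_z=0$ the flow is deterministic, so $\overline\rho=\delta_{\boldsymbol m}$ with $\boldsymbol m\in\cR$. Then $\boldsymbol\kappa=\boldsymbol m$ is a feasible deviation, so $\cE\cR\ge0$. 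Meanwhile the clipping error does not vanish with $Z$: the feedback part $-K_t(X_t-\bar\mu^x_t)$ is Gaussian with variance of order one and is clipped regardless of $Z$.

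The missing idea is that no feedback term is needed, so nothing has to be clipped. The paper takes $\lambda_t=\varepsilon c$ with $\varepsilon=\pm1$ equiprobable and $c=1\in A$. The flow is then exactly $\bar\mu^x_t=\E[\xi]+\varepsilon ct$, and everything is explicit. Net of the common terms, the committing cost is $\frac12c^2T+\frac12\mathrm{Var}(\xi)T+\frac{T^2}{4}\approx8.78$. Since $X^\beta$ is independent of $\varepsilon$, every $\beta\in\bbA$ pays at least $\frac{c^2T^3}{6}+\frac12\frac{T}{T+1}\mathrm{Var}(\xi)\approx20.84$, using the unconstrained LQ value as a lower bound. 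The comparison passes to $\cR$ through the measurable selection of Proposition \ref{prop:any_strong_CCE_induces_LP_CCE}, which is also what your relaxed-versus-strict comparison in Step 2 needs. The price of not tracking the noise, $T^2/4$, is dominated by the $T^3$ penalty paid by the uninformed deviator.
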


\begin{proof}
We work in the cost-minimization convention, i.e. we minimize the cost
\[
    J(\lambda,\mu)=\E\left[\int_0^T\left(-a\bar{\mu}^x_t+\frac{b}{2}(\bar{\mu}^x_t)^2+\frac{1}{2}\lambda_t^2+\frac{1}{2}(X^\lambda_t-\bar{\mu}^x_t)^2\right)dt\right].
\]
It is enough to construct a mean-field CCE $(\lambda,\mu)$ such that $J(\lambda,\mu)<\inf_{\beta\in\bbA}J(\beta,\mu)$.
Then, we set $\overline{\rho} = \P \circ (\mu,\mu^x_T)^{-1}$. By Proposition \ref{prop:any_strong_CCE_induces_LP_CCE}, $\overline{\rho} \in \cE$ and
\[
\Gamma[\overline{\rho}] = J(\lambda,\mu) < \inf_{\beta \in \bbA} J(\beta,\mu) \leq \inf_{\boldsymbol{\kappa} \in \cR} \Gamma^{dev}[\overline{\rho}](\boldsymbol{\kappa}),
\]
i.e. $\cE\cR(\overline{\rho}) < 0$ and Assumption \ref{assumptions: extra ass on algorithmic part} follows.

Let $\varepsilon$ be independent of $(\xi,W)$, with $\P(\varepsilon=1)=\P(\varepsilon=-1)=1/2$, and set $c=1$.
Define $\lambda_t:=\varepsilon c$ and $X^\lambda_t:=\xi+\varepsilon ct+W_t$, for $t\in[0,T]$, and let $\mu_t$ be the conditional law of $(X^\lambda_t,\lambda_t)$ given $\varepsilon$.
Since $c=1\in[-3,3]$, the recommendation $\lambda$ is admissible and, by construction, the correlated flow $(\lambda,\mu)$ satisfies the consistency condition \eqref{eq:CCE:consistency}.
Since $\bar{\mu}^x_t=\E[\xi]+\varepsilon ct$, we have $\E[\bar{\mu}^x_t]=\E[\xi]$, $\E[(\bar{\mu}^x_t)^2]=(\E[\xi])^2+c^2t^2$, $\lambda_t^2=c^2$, and $X^\lambda_t-\bar{\mu}^x_t=\xi-\E[\xi]+W_t$, so that 
\[
    J(\lambda,\mu)=-a\E[\xi]T+\frac{b}{2}(\E[\xi])^2T+\frac{b}{6}c^2T^3+\frac{1}{2}c^2T+\frac{1}{2}\mathrm{Var}(\xi)T+\frac{T^2}{4}.
\]
Let now $\beta\in\bbA$ be an admissible deviation.
Since $\beta$ is $\bbF^{\xi,W}$-progressively measurable and $\varepsilon$ is independent of $(\xi,W)$, the process $X^\beta$ is independent of $\varepsilon$.
Therefore,
\[
    \E\left[(X^\beta_t-\bar{\mu}^x_t)^2\right]=\E\left[(X^\beta_t-\E[\xi])^2\right]+c^2t^2.
\]
Hence,
\[
    \inf_{\beta \in \bbA} J(\beta,\mu) = \inf_{\beta\in\bbA}\E\left[\frac{1}{2}\int_0^T\left(\beta_t^2+(X^\beta_t-\E[\xi])^2\right)dt\right]+\frac{c^2T^3}{6} -a\E[\xi]T+\frac{b}{2}(\E[\xi])^2T+\frac{b}{6}c^2T^3.
\]
Let $J^{LQ}(\beta,0;y)$ denote the cost of the same linear-quadratic control problem with initial condition $Y_0=y$, zero mean-field flow, and controls taking values in the whole real line, with $\E[\int_0^T \beta_t^2dt]<\infty$.
Set
\[
    v^{LQ}(y,0):=\inf_{\beta}J^{LQ}(\beta,0;y).
\]
Since the unconstrained control class is larger than $\bbA$, we have
\[
    \inf_{\beta\in\bbA}\E\left[\frac{1}{2}\int_0^T\left(\beta_t^2+(X^\beta_t-\E[\xi])^2\right)dt\right]\geq \E\left[v^{LQ}(\xi-\E[\xi],0)\right].
\]
The unconstrained linear-quadratic problem is explicit and gives
\[
    v^{LQ}(y,0)=\frac{1}{2}\tanh(T)y^2+\frac{1}{2}\ln(\cosh T)\geq \frac{1}{2}\frac{T}{T+1}y^2.
\]
Therefore, since $\E[(\xi-\E[\xi])^2]=\mathrm{Var}(\xi)$,
\[
    \E\left[v^{LQ}(\xi-\E[\xi],0)\right]\geq \frac{1}{2}\frac{T}{T+1}\mathrm{Var}(\xi).
\]
Consequently,
\[
    \inf_{\beta \in \bbA}J(\beta,\mu)\geq \frac{1}{2}\frac{T}{T+1}\mathrm{Var}(\xi)+\frac{c^2T^3}{6} -a\E[\xi]T+\frac{b}{2}(\E[\xi])^2T+\frac{b}{6}c^2T^3.
\]
For the parameters of Table \ref{table: math params Emission Abatement}, we have $T=5$, $\mathrm{Var}(\xi)=0.11^2$, and $c=1$.
Thus, it holds
\[
    \frac{1}{2}c^2T+\frac{1}{2}\mathrm{Var}(\xi)T+\frac{T^2}{4}<\frac{1}{2}\frac{T}{T+1}\mathrm{Var}(\xi)+\frac{c^2T^3}{6}.
\]
Consequently, $J(\lambda,\mu)<J(\beta,\mu)$ for every $\beta\in\bbA$, which concludes the proof.
\end{proof}

\begin{lemma}\label{lemma:examples_uniqueness_maximizer}
Assumption \ref{ass: uniqueness maximizer} is satisfied in the emission abatement game.
\end{lemma}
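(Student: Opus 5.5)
The plan is to reduce Assumption \ref{ass: uniqueness maximizer} to the uniqueness of the optimal relaxed control in a strictly convex linear-quadratic problem on the LP set $\cR$. Existence of a maximizer for every $\rho\in\cP(\cR)$ is already known: $\cR$ is compact and $\boldsymbol{\kappa}\mapsto\int_{\cR}F[\boldsymbol m](\boldsymbol\kappa)\rho(d\boldsymbol m)$ is continuous, as in Lemma \ref{lemma:external_regret_continuous}. Since the term $F[\boldsymbol m](\boldsymbol m)$ does not depend on $\boldsymbol\kappa$, maximizing the regret is the same as minimizing $\Gamma^{dev}[\rho](\boldsymbol\kappa)$ over $\cR$. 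I work in the cost convention of Lemma \ref{lemma:strict_feasibility_emission_example}, with $b(t,x,a)=a$, $g\equiv 0$ and
\[
f(t,x,m,a)=-a\bar m^x_t+\tfrac{b}{2}(\bar m^x_t)^2+\tfrac12 a^2+\tfrac12(x-\bar m^x_t)^2 .
\]
Set $M_1(t):=\int_{\cR}\bar m^x_t\,\rho(d\boldsymbol m)$. Expanding the square and using Fubini, as in the proof of Proposition \ref{prop:any_strong_CCE_induces_LP_CCE},
\[
\Gamma^{dev}[\rho](\boldsymbol\kappa)=\int_0^T\!\!\int_{\R\times A}\Big(\tfrac12 a^2+\tfrac12 x^2-xM_1(t)\Big)\kappa_t(dx,da)\,dt+c_\rho ,
\]
where $c_\rho$ does not depend on $\boldsymbol\kappa$. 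The problem is therefore the LP relaxation of an LQ problem with running cost $\ell(t,x,a)$ that is strictly convex in $a$.

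\textbf{Step 1 (optimizers are strict controls).} Let $\boldsymbol\kappa$ be a minimizer. By \cite[Theorem C.6]{dumitrescu2021EJP}, $\kappa_t(dx,da)=q_t(x)(da)\,p_t(dx)$, where $p$ is the law of a diffusion with drift $\int_A a\,q_t(x)(da)$ and $\overline\kappa=p_T$. Replace $q_t(x)$ by $\delta_{\alpha(t,x)}$ with $\alpha(t,x):=\int_A a\,q_t(x)(da)$; this is admissible because $A$ is convex. The drift is unchanged, so by uniqueness for the Fokker--Planck equation with bounded drift and $\sigma=1$ \cite[Theorem 9.4.3]{bogachev2015fokker} the state marginals $p_t$ and the terminal law are unchanged. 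Hence the new pair lies in $\cR$. By strict Jensen applied to $a^2$, its cost is strictly smaller unless $q_t(x)$ is a Dirac mass $dt\,p_t(dx)$-a.e. So every minimizer has the form $\kappa_t=\delta_{\alpha(t,x)}(da)\,p_t(dx)$.

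\textbf{Step 2 (two minimizers coincide).} Let $\boldsymbol\kappa^1,\boldsymbol\kappa^2$ be minimizers with feedbacks $\alpha^1,\alpha^2$ and marginals $p^1,p^2$. The objective is linear and $\cR$ is convex, so $\tfrac12(\boldsymbol\kappa^1+\boldsymbol\kappa^2)$ is also a minimizer. Its disintegration in $a$ is
\[
\frac{p^1\delta_{\alpha^1}+p^2\delta_{\alpha^2}}{p^1+p^2}.
\]
By Step 1 this must be a Dirac mass, which forces $\alpha^1=\alpha^2$ a.e.\ on $\{p^1_t>0\}\cap\{p^2_t>0\}$. I then use that $p^i_t$ has a strictly positive density for $t>0$. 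This follows from the nondegenerate constant volatility and bounded drift, via Girsanov comparison with the Gaussian law, or via the Harnack inequality \cite{bogachev2015fokker}. Hence $\alpha^1=\alpha^2$ $dt\,dx$-a.e. Fokker--Planck uniqueness then gives $p^1=p^2$, so $\kappa^1=\kappa^2$ in $V_2$ and $\overline\kappa^1=p^1_T=p^2_T=\overline\kappa^2$. This proves uniqueness of $\boldsymbol\kappa^*(\rho)$.

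The main obstacle is Step 2, namely making the Dirac-forcing argument rigorous for the mixture. This needs the disintegration of the mixture to be read off pointwise, and it needs strict positivity of the densities, with care about measurability and null sets. If positivity turns out to be delicate, I would instead identify the minimizer directly: verify that any minimizer's feedback $\alpha$ is optimal for the associated HJB equation,
\[
\partial_t v+\tfrac12\partial_{xx}v+\min_{a\in[-3,3]}\big(a\,\partial_x v+\tfrac12 a^2\big)+\tfrac12(x-M_1(t))^2=0,\qquad v(T,\cdot)=0,
\]
whose minimizer in $a$ is the unique clipped value $\mathrm{proj}_{[-3,3]}(-\partial_x v)$. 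A verification argument would then show that $\alpha=\mathrm{proj}_{[-3,3]}(-\partial_x v)$ $dt\,p_t(dx)$-a.e., which pins down both the feedback and the marginals.
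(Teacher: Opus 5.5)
Your proof is correct, but the uniqueness comes from a different source than in the paper. The paper works through the strong formulation. Joint strict convexity of $\frac12a^2+\frac12x^2-M_1(t)x$ in $(x,a)$, together with the linear state equation (so $X^\beta$ is affine in $\beta$), gives a unique strong minimizer $\beta^\star\in\bbA$. Then, for any LP minimizer $\boldsymbol{\kappa}^\rho$, the paper uses the measurable-selection construction of Proposition~\ref{prop:any_strong_CCE_induces_LP_CCE} and the chain $\Gamma^{dev}[\rho](\boldsymbol{\kappa}^\star)=J^\rho(\beta^\star)\le J^\rho(\beta^{\boldsymbol{\kappa}^\rho})\le\Gamma^{dev}[\rho](\boldsymbol{\kappa}^\rho)\le\Gamma^{dev}[\rho](\boldsymbol{\kappa}^\star)$. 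This forces the kernel of $\boldsymbol{\kappa}^\rho$ to be Dirac (strict Jensen, as in your Step 1) and its feedback to coincide with $\beta^\star$.

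You never leave $\cR$. After the same Dirac-forcing step, you get uniqueness from two facts. First, $\Gamma^{dev}[\rho]$ is linear and $\cR$ is convex, so the midpoint of two minimizers is a minimizer, hence has a Dirac kernel, hence the two feedbacks agree on the common support. Second, $dX_t=\alpha(t,X_t)dt+dW_t$ is unique in law for bounded measurable $\alpha$.

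What your route buys: it uses only strict convexity of the cost in $a$ and affinity of the drift in $a$. Convexity in $x$ and linearity of the dynamics in $x$ play no role, and you need neither existence nor uniqueness of a strong optimizer. What the paper's route buys: it reuses machinery already in place and avoids the bookkeeping for the mixture's disintegration and Fokker--Planck uniqueness for merely measurable drift.

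Two simplifications to your write-up:
\begin{enumerate}
\item In Step 1 you do not need Fokker--Planck uniqueness. Since $b(t,x,a)=a$, the martingale identity only involves $\int_A a\,q_t(x)(da)=\alpha(t,x)$, so $(\delta_{\alpha(t,x)}(da)p_t(dx),\overline{\kappa})\in\cR$ directly.
\item In Step 2, the positivity of densities that you flagged as the main obstacle can be bypassed by gluing. Let $h^i$ be the density of $p^i_t\,dt$ with respect to $\frac12(p^1_t+p^2_t)\,dt$. Set $\tilde\alpha:=\alpha^1$ on $\{h^1>0\}$ and $\tilde\alpha:=\alpha^2$ elsewhere. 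Then $\tilde\alpha=\alpha^i$ holds $dt\,p^i_t(dx)$-a.e.\ for $i=1,2$, so both minimizers are generated by the same bounded feedback $\tilde\alpha$. Uniqueness in law then gives $p^1=p^2$ and $\overline{\kappa}^1=\overline{\kappa}^2$.
\end{enumerate}
Your Girsanov argument for positivity also works, so the HJB fallback is unnecessary.
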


\begin{proof}
We use the equivalent cost-minimization formulation, obtained by changing the sign of the payoff.
Fix $\rho\in\cP(\cR)$.
We show that there exists a unique $\boldsymbol{\kappa}\in\cR$ which minimizes $\Gamma^{dev}[\rho](\boldsymbol{\kappa})$.
This is equivalent to the uniqueness of the external-regret maximizer.

For $\boldsymbol{m}\in\cR$, write $\bar m^x_t:=\int_{\R\times A}x\,m_t(dx,da)$ and set $\frm^\rho_t:=\int_{\cR}\bar m^x_t\,\rho(d\boldsymbol{m})$.
The terms depending only on $\boldsymbol{m}$ are irrelevant for the optimal deviation problem.
Therefore, up to additive constants independent of the deviation, the best-deviation problem associated with $\rho$ is
\begin{equation}\label{eq:appendix:LP_ctrl_problem}
    \inf_{\boldsymbol{\kappa}\in\cR}\int_0^T\int_{\R\times A} F^{\rho}(t,x,a)\kappa_t(dx,da)dt,
\end{equation}
where $F^{\rho}(t,x,a):=\frac{1}{2}a^2+\frac{1}{2}x^2-\frm^{\rho}_t x$.
The corresponding strong control problem is
\begin{equation}\label{eq:appendix:ctrl_problem}
    \inf_{\beta\in\bbA}J^\rho(\beta),\quad J^\rho(\beta):=\E\left[\int_0^T F^{\rho}(t,X^{\beta}_t,\beta_t)dt \right], \quad dX^\beta_t=\beta_tdt+dW_t, \; X^\beta_0=\xi.
\end{equation}
Since $F^{\rho}(t,x,a)$ is strictly convex jointly in $(x,a)$, and since the dynamics are linear and $A$ is compact and convex, the strong control problem \eqref{eq:appendix:ctrl_problem} has a unique minimizer, up to $d\P\otimes dt$-a.e. equality.
Let $\beta^{\star}\in\bbA$ denote this minimizer.
We now show that, if $\boldsymbol{\kappa}^{\rho}$ is a minimizer of the LP deviation problem \eqref{eq:appendix:LP_ctrl_problem}, then
\begin{equation}\label{eq:appendix:conclusion}
    \kappa^{\rho}_t(dx,da)dt=\P\circ(X^{\beta^\star}_t,\beta^\star_t)^{-1}(dx,da)dt,\qquad \overline{\kappa}^{\rho}(dx)=\P\circ(X^{\beta^\star}_T)^{-1}(dx).
\end{equation}
This will imply the uniqueness of the LP minimizer.
Let $\boldsymbol{\kappa}^{\rho}$ be an LP minimizer.
The emission abatement game satisfies Assumptions \ref{ass: convexity relaxed control assumption} and \ref{ass: averaged_convexity}: the action space $A$ is compact and convex, the drift is affine in the control, and the cost is convex in the control.
Thus, by the measurable-selection argument in the proof of Proposition \ref{prop:any_strong_CCE_induces_LP_CCE}, there exists a strong deviation $\beta^{\boldsymbol{\kappa}^{\rho}}$ such that
\[
    J^\rho(\beta^{\boldsymbol{\kappa}^{\rho}})\leq \Gamma^{dev}[\rho](\boldsymbol{\kappa}^{\rho}).
\]
Disintegrate
\[
    \kappa^{\rho}_t(dx,da)=\kappa^{\rho,x}_t(dx)q^{\rho}_t(x)(da).
\]
Then $\beta^{\boldsymbol{\kappa}^{\rho}}_t=\alpha^{\rho}(t,X^{\rho}_t)$, where
\begin{equation}\label{eq:appendix:inequalities}
\begin{aligned}
    \alpha^\rho(t,x)&=\int_A a\,q^{\rho}_t(x)(da), \\&
    F^\rho(t,x,\alpha^{\rho}(t,x))=F^\rho\left(t,x,\int_A a\,q^{\rho}_t(x)(da)\right)\leq\int_A F^\rho(t,x,a)q^{\rho}_t(x)(da),
\end{aligned}
\end{equation}
and
\[
    dX^{\rho}_t=\alpha^{\rho}(t,X^{\rho}_t)dt+dW_t,\qquad X^{\rho}_0=\xi.
\]

Let $\boldsymbol{\kappa}^{\star}\in\cR$ be the LP deviation induced by $\beta^\star$, as in Proposition \ref{LP:relation:prop_deviations}.
Then
\begin{equation}\label{eq:appendix:chain_of_inequalities}
    \Gamma^{dev}[\rho](\boldsymbol{\kappa}^{\star})=J^\rho(\beta^\star)\leq J^\rho(\beta^{\boldsymbol{\kappa}^{\rho}})\leq\Gamma^{dev}[\rho](\boldsymbol{\kappa}^{\rho})=\inf_{\boldsymbol{\kappa}\in\cR}\Gamma^{dev}[\rho](\boldsymbol{\kappa})\leq\Gamma^{dev}[\rho](\boldsymbol{\kappa}^{\star}).
\end{equation}
All inequalities are therefore equalities. In particular, $\beta^{\boldsymbol{\kappa}^{\rho}}$ is a minimizer of the strong problem, and by uniqueness $\beta^{\boldsymbol{\kappa}^{\rho}}=\beta^\star$, $d\P\otimes dt$-a.e.
Moreover, $J^\rho(\beta^{\boldsymbol{\kappa}^{\rho}})=\Gamma^{dev}[\rho](\boldsymbol{\kappa}^{\rho})$.
We claim that this forces $q^\rho_t(x)(da)$ to be a Dirac mass for $dt\otimes\kappa^{\rho,x}_t(dx)$-a.e. $(t,x)$.
Indeed, for each fixed $(t,x)$, the map $a\mapsto F^\rho(t,x,a)$ is strictly convex.
Hence, by the equality case in Jensen's inequality, the inequality in \eqref{eq:appendix:inequalities} is strict unless $q^\rho_t(x)(da)$ is a Dirac mass.
If strict inequality held on a set of positive $dt\otimes\kappa^{\rho,x}_t(dx)$-measure, then the measurable-selection construction would give $J^\rho(\beta^{\boldsymbol{\kappa}^{\rho}})<\Gamma^{dev}[\rho](\boldsymbol{\kappa}^{\rho})$, contradicting \eqref{eq:appendix:chain_of_inequalities}.
Therefore $q^\rho_t(x)(da)$ is a Dirac mass for $dt\otimes\kappa^{\rho,x}_t(dx)$-a.e. $(t,x)$.
It follows that $\boldsymbol{\kappa}^{\rho}$ is induced by the strong deviation $\beta^{\boldsymbol{\kappa}^{\rho}}$.
Since $\beta^{\boldsymbol{\kappa}^{\rho}}=\beta^\star$ $d\P\otimes dt$-a.e., we obtain \eqref{eq:appendix:conclusion}.
Thus every LP minimizer coincides with the LP deviation induced by $\beta^\star$, and the LP best-deviation problem has a unique minimizer.
This concludes the proof.
\end{proof}

\subsection*{Acknowledgements}
Federico Cannerozzi and Ioannis Tzouanas gratefully acknowledge financial support from Deutsche Forschungsgemeinschaft (DFG, German Research Foundation) – Project-ID 317210226 – SFB 1283.

\bibliographystyle{siam}

\bibliography{bibliography}

\end{document}